\documentclass[UTF-8,reqno]{amsart}
\usepackage{enumerate}
\usepackage{verbatim,listings}
\usepackage{amssymb,url,color, booktabs}

\usepackage{mathrsfs}
\usepackage{amsmath}
\usepackage{verbatim}
\usepackage{fancyhdr}
\usepackage[nobysame]{amsrefs}
\BibSpec{article}{%
	+{}{\PrintAuthors} {author}
	+{,}{ \textrm} {title}
	+{.}{ \textit} {journal}
	+{,}{ \textbf} {volume}
	+{}{ \parenthesize} {date}
	+{,}{ } {pages}
	+{.}{ arXiv:} {eprint}
	+{.}{} {transition}
}
\BibSpec{book}{%
	+{}{\PrintAuthors} {author}
	+{,}{ \textit} {title}
	+{.}{ \textrm} {series} %+{,}{ \textrm} {series}
	+{,}{ Vol.} {volume}
	+{.}{ } {publisher}
	+{,}{ } {date}
	+{.}{} {transition}
}
\usepackage{color}
\usepackage{tikz}
\usepackage{graphicx}
\usepackage[colorlinks=true]{hyperref}
\hypersetup{
	linkcolor=blue,          % color of internal links
	citecolor=red,        % color of links to bibliography
	filecolor=blue,      % color of file links
	urlcolor=cyan
}

\newenvironment{italictext}
{\par\itshape}
{\par}

\numberwithin{equation}{section}

\newcommand{\be}{\begin{eqnarray}}
	\newcommand{\ee}{\end{eqnarray}}
\newcommand{\ce}{\begin{eqnarray*}}
	\newcommand{\de}{\end{eqnarray*}}
\newtheorem{theorem}{Theorem}[section]

\newtheorem{remark}{Remark}[section]
\newtheorem{problem}{Problem}[section]
\newtheorem{lemma}[theorem]{Lemma}
\newtheorem{definition}[theorem]{Definition}
\newtheorem{proposition}[theorem]{Proposition}
\newtheorem{Examples}[theorem]{Example}
\newtheorem{corollary}[theorem]{Corollary}

\newenvironment{proof of theorem 1.4}{{\it Proof of Theorem 1.4}.}{{\hfill 	
		$\square$\hskip - \parfillskip}}
\newenvironment{proof of theorem 1.5}{{\it Proof of Theorem 1.5}.}{{\hfill 	
		$\square$\hskip - \parfillskip}}
\newenvironment{proof of theorem 1.6}{{\it Proof of Theorem 1.6}.}{{\hfill 	
		$\square$\hskip - \parfillskip}}

\makeatletter

\newcommand{\Rmnum}[1]{\expandafter\@slowromancap\romannumeral #1@}
\makeatother

\usepackage{caption}
\def\w{\mathrm{w}}
\def\p{\partial}

\def\[{{\Big[}}
\def\]{{\Big]}}
\def\<{{\langle}}
\def\>{{\rangle}}
\def\({{\Big(}}
\def\){{\Big)}}

\def\bx{{\mathbf{x}}}

\def\min{{\mathord{{\rm min}}}}
\def\Vol{\mathord{{\rm Vol}}}

\def\={&\!\!=\!\!&}

\def\1{{\mathbf{1}}}

\def\geq{\geqslant}
\def\leq{\leqslant}
\def\ge{\geqslant}

\def\xii{\hat{\xi}}
\def\k{\kappa}

\def\div{\mathord{{\rm div}}}

\def\w{\mathrm{w}}
\def\p{\partial}

\def\ga{\gamma}

\def\[{{\Big[}}
\def\]{{\Big]}}
\def\<{{\langle}}
\def\>{{\rangle}}
\def\({{\Big(}}
\def\){{\Big)}}

\def\bx{{\mathbf{x}}}

\def\W{{\mathcal W}}

\def\K{\mathcal{K}}

\def\cc{\mathcal{C}_{\omega_0}}
\def\du{d\mu_{F}}
\def\pa{\partial}
\def\al{\alpha}
\def\be{\beta}
\def\w{\widetilde{\W}}
\def\s{\tilde{s}}
\def\f{\mathcal{F}}
\def\tt{\tilde{\tau}}
\def\q{\tilde{Q}}

\def\min{{\mathord{{\rm min}}}}
\def\Vol{\mathord{{\rm Vol}}}

\def\={&\!\!=\!\!&}
\def\bt{\begin{theorem}}
	\def\et{\end{theorem}}
\def\bl{\begin{lemma}}
	\def\el{\end{lemma}}
\def\br{\begin{remark}}
	\def\er{\end{remark}}
\def\bx{\begin{Examples}}
	\def\ex{\end{Examples}}
\def\bd{\begin{definition}}
	\def\ed{\end{definition}}
\def\bp{\begin{proposition}}
	\def\ep{\end{proposition}}
\def\bc{\begin{corollary}}
	\def\ec{\end{corollary}}

\def\geq{\geqslant}
\def\leq{\leqslant}
\def\ge{\geqslant}

\def\div{\mathord{{\rm div}}}

 \def\R{\mathbb R}
 \def\R{\mathbb R}

\def\<{\langle} \def\>{\rangle}

\def\bpf{\begin{proof}}
	\def\epf{\end{proof}}

\allowdisplaybreaks

\tikzset{
	pattern size/.store in=\mcSize,
	pattern size = 5pt,
	pattern thickness/.store in=\mcThickness,
	pattern thickness = 0.3pt,
	pattern radius/.store in=\mcRadius,
	pattern radius = 1pt}
\makeatletter
\pgfutil@ifundefined{pgf@pattern@name@_ak5ydppfi}{
	\pgfdeclarepatternformonly[\mcThickness,\mcSize]{_ak5ydppfi}
	{\pgfqpoint{0pt}{0pt}}
	{\pgfpoint{\mcSize+\mcThickness}{\mcSize+\mcThickness}}
	{\pgfpoint{\mcSize}{\mcSize}}
	{
		\pgfsetcolor{\tikz@pattern@color}
		\pgfsetlinewidth{\mcThickness}
		\pgfpathmoveto{\pgfqpoint{0pt}{0pt}}
		\pgfpathlineto{\pgfpoint{\mcSize+\mcThickness}{\mcSize+\mcThickness}}
		\pgfusepath{stroke}
}}
\makeatother
\tikzset{every picture/.style={line width=0.75pt}} %set default line width to 0.75pt

\begin{document}
\title{The Anisotropic Capillary $L_p$-Minkowski Problem}\thanks{\it{The second author was support by NSF of Zhejiang province (No. LQN26A010008).}}\thanks{\it {The third author was support by NSFC (Nos. 11871053 and 12261105).}}
%\date{\today}

\author{Shanwei Ding}
\address{School of Mathematics and Statistics\\                	
	Wuhan University\\                Wuhan 430072, P. R. China}
\email{dingsw@whu.edu.cn}
\author{Jinyu Gao}
\address{Department of Mathematics\\Zhejiang Sci-Tech University\\Hangzhou 310018, P. R. China }
\email{gaojy@zstu.edu.cn}
\author{Guanghan Li}
\address{School of Mathematics and Statistics\\                	
	Wuhan University\\                Wuhan 430072, P. R. China}
\email{ghli@whu.edu.cn}
\author{Mengliang Liu$^{*}$}\thanks{* Corresponding author}
\address{School of Mathematics and Statistics\\                	
	Wuhan University\\                Wuhan 430072, P. R. China}
\email{liumengliang@whu.edu.cn}

\thanks{{\it 2020 Mathematics Subject Classification. 53C65, 35J66, 53C42, 35J60, 53C45.}}
\thanks{{\it Keywords: Anisotropic capillary $L_{p}$-Minkowski problem, Anisotropic capillary hypersurface, Robin boundary value condition, Monge-Amp\`ere equation}}

\begin{abstract}
This paper introduces the \textit{anisotropic $\omega_0$-capillary $p$-sum} of two hypersurfaces in $\mathbb{R}_+^{n+1}$, and establishes  a theory for anisotropic capillary convex bodies. For a smooth convex hypersurface $\Sigma $ with anisotropic $\omega_0$-capillary boundary, we compute the variation of its anisotropic capillary $k$-th quermassintegral via this $p$-sum, thereby defining the associated anisotropic $\omega_0$-capillary $k$-th $p$-surface area measure on the capillary Wulff shape  $\mathcal{C}_{\omega_{0}}$. This motivates us to propose and solve the anisotropic capillary $L_{p}$-Minkowski problem for $p\geq1$.	
\end{abstract}

\maketitle
\setcounter{tocdepth}{2}
\tableofcontents

\section{Introduction}
The classical Minkowski problem is a central problem in convex geometry which is  concerned with prescribing area measure (or Gauss curvature). It is also a classical problem in differential geometry concerning the existence, uniqueness, regularity, and stability of closed convex hypersurfaces with prescribed Gauss curvature as a function of the outer normal.
Major contributions to this problem were made by Minkowski 	\cite{Mi1,Mi2}, Aleksandrov \cite{Al1,Al2,Al3,Al4}, Fenchel and Jessen \cite{F-J},  Lewy \cite{Le1,Le2},  Nirenberg \cite{Ni},  Calabi \cite{Cal}, Pogorelov \cite{Po1,Po2}, Cheng and Yau \cite{Cheng-Yau}, Caffarelli, Nirenberg, and Spruck \cite{C-N-S}, and so on.

In \cite{Lutwak1993}, Lutwak extended the classical Minkowski problem to $L_p$ form, i.e., the $L_p$-Minkowski problem, which prescribes the $L_p$-surface area measure that is related to the differential of the volume functional over convex bodies with the Firey's $p$- sums.
%When $p=1$, it is the classical Minkowski problem. In the case $p=0$, it is the prescribed cone-volume measure problem and it is called the logarithmic Minkowski problem \cite{Lucwak2013}.
The $L_p$-Minkowski problem has been extensively studied for various ranges of the parameter $p$, with foundational results established in works such as \cite{Chou-Wang, Lutwak1993, Lutwak1995, Lutwak2005, Guan-Lin-preprint, Lutwak2010, Zhu, Chen-Li-Zhu-2017, Zhu-2017}. Further developments and related investigations can be found, for instance, in \cite{Li-Xu, Ding-Li-arxiv-2024, Chen-anisotropic-Lp} and references therein.
Furthermore, the geometry of capillary hypersurfaces, i.e., those which meet other hypersurfaces subject to a constant intersection angle, has received increasing attention during the past years. Many results, which are by now classical for closed hypersurfaces, were recently generalized to the capillary setting. The capillary prescribed area measure problem was recently studied in \cite{M-W-W, M-W-W1, H-I-S, W-Z, H-H-I}. More specifically, they solved the capillary $L_p$-Minkowski problem in case $p\ge n+1$ and $p=1$, as well as the capillary $L_p$-Minkowski problem with even prescribed data in case $1<p<n+1$.

On the other hand, recently, Xia \cite{Xia13} studied the anisotropic Minkowski problem, which is a problem of prescribing the anisotropic Gauss-Kronecker curvature for a closed, strictly convex hypersurface in $\mathbb{R}^{n+1}$ as a function on its anisotropic normals in relative or Minkowski geometry. %And in \cite{M-W-W}, Mei, Wang, and Weng studied the capillary Minkowski Problem.

Inspired by the above-mentioned works, we turn to a more difficult setting, that of anisotropic capillary convex geometry, with particular emphasis on its central problem, the anisotropic capillary $L_p$-Minkowski problem. In contrast to the classical capillary prescribed measure problem, where the prescribed measure is defined on a spherical cap, the anisotropic case involves prescribed measures supported on general subsets of the sphere, arising from the presence of the Wulff shape. This feature leads to substantially greater analytical difficulties than those encountered in the isotropic case. Moreover, due to the introduction of anisotropy, each geometric quantity associated with a single convex body is replaced by a mixed quantity involving two convex bodies, which in itself poses substantial additional challenges.

In this paper, we mainly consider anisotropic capillary hypersurfaces in the half-space
\begin{align*}
	\mathbb{R}_+^{n+1}=\{y\in \mathbb{R}^{n+1} :\<y,E_{n+1}\>>0 \},
\end{align*}
where $E_{i}\ (i=1,\cdots,n+1)$ denote the standard coordimate vector in $\mathbb{R}^{n+1}$, and $E_\al\in \partial \overline{\mathbb{R}_+^{n+1}}$ for $\al=1,\cdots,n$.

Let $\mathcal{W}\subset \mathbb{R}^{n+1}$ be a given smooth, closed, and strictly convex hypersurface enclosing the origin $O$ with a support function $F:\mathbb{S}^{n}\rightarrow\mathbb{R}_+$. The Cahn-Hoffman map associated with $F$ is given by
\begin{align*}
	\Psi:\mathbb{S}^n\rightarrow\mathbb{R}^{n+1},\quad	\Psi(x)=F(x)x+\nabla^{\mathbb{S}} {F}(x),
\end{align*}
where $\nabla^{\mathbb{S}}$ denotes the covariant derivative on $\mathbb{S}^n$ with standard round metric. Let $\Sigma$ be a smooth, compact, and orientable hypersurface in $\overline{\mathbb{R}_+^{n+1}}$ with boundary $\partial\Sigma\subset\partial\overline{\mathbb{R}_+^{n+1}}$, which encloses a bounded domain $\hat{\Sigma}$. Let $\nu$ be the unit normal of $\Sigma$ pointing outward of  $\hat{\Sigma}$. Given a constant  $\omega_0 \in(-F(E_{n+1}), F(-E_{n+1}))$, we say $\Sigma$ is an anisotropic $\omega_0$-capillary hypersurface (see \cite{Jia-Wang-Xia-Zhang2023}) if
\begin{align}
	\label{equ:w0-capillary}
	\<\Psi(\nu),-E_{n+1}\>=\omega_0,\quad \text{on}\ \partial\Sigma.
\end{align}

For the convenience of description, we need a constant vector $E_{n+1}^F \in \mathbb{R}^{n+1}$ (ref. \cite{Jia-Wang-Xia-Zhang2023}) defined as
$$
E_{n+1}^F= \begin{cases}\frac{\Psi\left(E_{n+1}\right)}{F\left(E_{n+1}\right)}, & \text { if } \omega_0<0, \\[5pt]
	E_{n+1},  & \text { if } \omega_0=0,
	\\[5pt]
	-\frac{\Psi\left(-E_{n+1}\right)}{F\left(-E_{n+1}\right)}, & \text { if } \omega_0>0.\end{cases}
$$
Note that $E_{n+1}^F$ is the unique vector in the direction $\Psi\left(E_{n+1}\right)$, whose scalar product with $E_{n+1}$ is 1 (see \cite{Jia-Wang-Xia-Zhang2023}*{Eq. (3.2)}).

The classical example of an anisotropic $\omega_0$-capillary hypersurface is the $\omega_0$-capillary Wulff shape (see \cite{Jia-Wang-Xia-Zhang2023}), defined as a portion of a Wulff shape in $\overline{\mathbb{R}^{n+1}_+}$ satisfying the anisotropic capillary boundary condition \eqref{equ:w0-capillary}.
We denote ${\W_{r_0,\omega_0}}=\W_{r_0}(y_0)\cap\overline{\mathbb{R}_{+}^{n+1}}$ with $y_0=r_0\omega_0E^F_{n+1}$, where $\W_{r_0}(y_0)$ is defined in \eqref{equ:w_r(x)}. Additionally, let $\mathcal{C}_{\omega_0}:=\W_{1,\omega_0}=\mathcal{T}(\W\cap\{x_{n+1}\geq -\omega_0\})$, where $\mathcal{T}:y\rightarrow y+\omega_0E_{n+1}^F$ is the translation by $\omega_0E_{n+1}^F$ in $\mathbb{R}^{n+1}$.  See Fig. \ref{fig:9}.
\begin{figure}[h]
	\centering
	\label{fig:8}
\tikzset{every picture/.style={line width=0.75pt}} %set default line width to 0.75pt

\tikzset{every picture/.style={line width=0.75pt}} %set default line width to 0.75pt

\tikzset{every picture/.style={line width=0.75pt}} %set default line width to 0.75pt

\begin{tikzpicture}[x=0.75pt,y=0.75pt,yscale=-1,xscale=1]
%uncomment if require: \path (0,166); %set diagram left start at 0, and has height of 166

%Straight Lines [id:da3131487622660729]
\draw    (71.86,135.63) -- (71.77,47.2) -- (72.04,15.83) ;
\draw [shift={(72.06,13.83)}, rotate = 90.5] [color={rgb, 255:red, 0; green, 0; blue, 0 }  ][line width=0.75]    (10.93,-3.29) .. controls (6.95,-1.4) and (3.31,-0.3) .. (0,0) .. controls (3.31,0.3) and (6.95,1.4) .. (10.93,3.29)   ;
%Straight Lines [id:da04505495286171601]
\draw [color={rgb, 255:red, 208; green, 2; blue, 27 }  ,draw opacity=1 ] [dash pattern={on 4.5pt off 4.5pt}]  (3.79,79.19) -- (20.22,79.29) -- (95.2,79.75) -- (135.2,80) -- (166.79,80.19) ;
%Straight Lines [id:da1266067157253885]
\draw  [dash pattern={on 4.5pt off 4.5pt}]  (103.39,65.14) -- (120.19,42.74) ;
\draw [shift={(121.39,41.14)}, rotate = 126.87] [color={rgb, 255:red, 0; green, 0; blue, 0 }  ][line width=0.75]    (10.93,-3.29) .. controls (6.95,-1.4) and (3.31,-0.3) .. (0,0) .. controls (3.31,0.3) and (6.95,1.4) .. (10.93,3.29)   ;
%Straight Lines [id:da6403854177941242]
\draw    (187.2,80.4) -- (228.81,79.92) ;
\draw [shift={(230.81,79.89)}, rotate = 179.34] [color={rgb, 255:red, 0; green, 0; blue, 0 }  ][line width=0.75]    (10.93,-3.29) .. controls (6.95,-1.4) and (3.31,-0.3) .. (0,0) .. controls (3.31,0.3) and (6.95,1.4) .. (10.93,3.29)   ;
%Straight Lines [id:da7127760594122439]
\draw    (135.2,80) -- cycle ;
%Curve Lines [id:da9895003989823286]
\draw [color={rgb, 255:red, 208; green, 2; blue, 27 }  ,draw opacity=1 ]   (20.22,79.29) .. controls (29.22,57.29) and (49.2,42.08) .. (66.2,41.08) .. controls (83.2,40.08) and (120.49,62.04) .. (115.2,79.88) ;
%Curve Lines [id:da4700016324980798]
\draw    (20.22,79.29) .. controls (16.99,103.82) and (16.26,116.8) .. (42.99,128.82) .. controls (69.73,140.84) and (94.49,106.04) .. (115.2,79.88) ;
%Straight Lines [id:da4693176529160027]
\draw    (315.86,136.63) -- (315.77,50.2) -- (316.04,18.83) ;
\draw [shift={(316.06,16.83)}, rotate = 90.5] [color={rgb, 255:red, 0; green, 0; blue, 0 }  ][line width=0.75]    (10.93,-3.29) .. controls (6.95,-1.4) and (3.31,-0.3) .. (0,0) .. controls (3.31,0.3) and (6.95,1.4) .. (10.93,3.29)   ;
%Straight Lines [id:da952525604303971]
\draw [color={rgb, 255:red, 74; green, 144; blue, 226 }  ,draw opacity=1 ] [dash pattern={on 4.5pt off 4.5pt}]  (247.79,82.19) -- (264.22,82.29) -- (339.2,82.75) -- (379.2,83) -- (410.79,83.19) ;
%Straight Lines [id:da27811729983402356]
\draw  [dash pattern={on 4.5pt off 4.5pt}]  (347.39,68.14) -- (364.19,45.74) ;
\draw [shift={(365.39,44.14)}, rotate = 126.87] [color={rgb, 255:red, 0; green, 0; blue, 0 }  ][line width=0.75]    (10.93,-3.29) .. controls (6.95,-1.4) and (3.31,-0.3) .. (0,0) .. controls (3.31,0.3) and (6.95,1.4) .. (10.93,3.29)   ;
%Straight Lines [id:da6650445571743265]
\draw    (379.2,83) -- cycle ;
%Curve Lines [id:da5250440016999122]
\draw [color={rgb, 255:red, 74; green, 144; blue, 226 }  ,draw opacity=1 ]   (264.22,82.29) .. controls (273.22,60.29) and (293.2,45.08) .. (310.2,44.08) .. controls (327.2,43.08) and (364.49,65.04) .. (359.2,82.88) ;
%Curve Lines [id:da4314368100211412]
\draw    (264.22,82.29) .. controls (260.99,106.82) and (260.26,119.8) .. (286.99,131.82) .. controls (313.73,143.84) and (338.49,109.04) .. (359.2,82.88) ;

% Text Node
\draw (721,21) node    {$0$};
% Text Node
\draw (701,71) node    {$0$};
% Text Node
\draw (52,99) node  [font=\Large,color={rgb, 255:red, 0; green, 0; blue, 0 }  ,opacity=1 ]  {$\W$};
% Text Node
\draw (144,88) node  [font=\tiny,color={rgb, 255:red, 0; green, 0; blue, 0 }  ,opacity=1 ]  {$x_{n}{}_{+}{}_{1} =-\omega _{0}$};
% Text Node
\draw (97,32.4) node [anchor=north west][inner sep=0.75pt]  [font=\tiny]  {$\W\cap \{x_{n}{}_{+}{}_{1} \geq -\omega _{0}\}$};
% Text Node
\draw (198,85.4) node [anchor=north west][inner sep=0.75pt]  [font=\small]  {$\mathcal{T}$};
% Text Node
\draw (296,102) node  [font=\Large,color={rgb, 255:red, 0; green, 0; blue, 0 }  ,opacity=1 ]  {$\W$};
% Text Node
\draw (388,91) node  [font=\tiny,color={rgb, 255:red, 0; green, 0; blue, 0 }  ,opacity=1 ]  {$x_{n}{}_{+}{}_{1} =0$};
% Text Node
\draw (363,35.4) node [anchor=north west][inner sep=0.75pt]  [font=\tiny]  {$\cc$};
% Text Node
\draw (321,84.4) node [anchor=north west][inner sep=0.75pt]    {$o$};

\end{tikzpicture}

\caption{Translation of $\mathcal{T}$} \label{fig:9}
\end{figure}
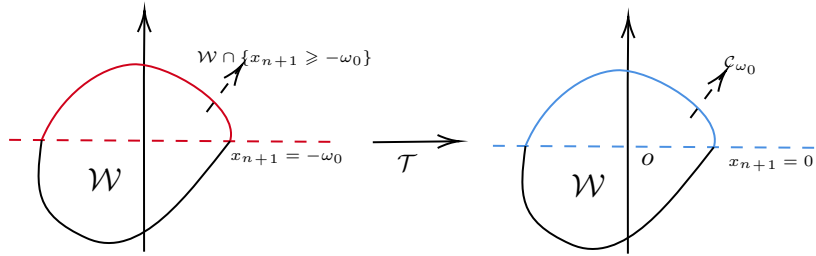 \\

Let $\Sigma$ be an anisotropic strictly convex  $\omega_0$-capillary hypersurface in $\overline{\mathbb{R}^{n+1}_+}$ with boundary on $\partial\overline{\mathbb{R}^{n+1}_+}$. We define the anisotropic capillary Gauss map by $\widetilde{\nu_{F}} = \mathcal{T}\nu_{F} = \nu_{F} + \omega_0 E_{n+1}^F$, and use it to reparametrize the anisotropic support function $\hat{s}$  on $\mathcal{C}_{\omega_0}$, where the anisotropic normal $\nu_{F}$ of $\Sigma$ and support function $\hat{s}$ are defined in Section \ref{sec 2}. The regular anisotropic capillary Minkowski problem in the half-space asks the following question:

\textbf{Anisotropic Capillary Minkowski Problem}: Given a positive function $f$ on $\mathcal{C}_{\omega_0}$, can one find a  strictly convex anisotropic $\omega_0$-capillary hypersurface $\Sigma\subset \overline{\mathbb{R}^{n+1}_+}$ such that
\begin{align}\label{equ:K^-1=f}
	\frac{1}{K(\widetilde{\nu_{{F}}}^{-1}(\xi))}=f(\xi),\ \ \ \forall \xi \in \cc.\
\end{align}
Here $K$ is the anisotropic Gauss-Kronecker curvature of $\Sigma$.

The following energy functional
\begin{align}\label{f1}
	\mathcal{E}(\Sigma)=|\Sigma|_{F}+\omega_{0}|\widehat{\partial\Sigma}|,
\end{align}
was considered by Jia, Wang, Xia and Zhang  \cite{Jia-Wang-Xia-Zhang2023}, and earlier by Philippis and Maggi \cite{P-M}.
Here $\widehat{\partial\Sigma}=\hat{\Sigma}\cap \partial\overline{\mathbb{R}^{n+1}_+}$ denotes the domain enclosed by $\partial\Sigma\subset\mathbb{R}^n=\partial\overline{\mathbb{R}^{n+1}_+}$, $|\cdot |$ denotes the volume of the domain in $ \mathbb{R}^n$ or $ \mathbb{R}^{n+1}$. Note that the anisotropic area element is defined by ${\rm d}{\mu}_{F}:=F(\nu){\rm d}\mu_g$, where ${\rm d}\mu_g$ is the volume form induced by the standard Euclidean inner product. The anisotropic area of $\Sigma$ is given by $$|\Sigma |_F=\int_{\Sigma} {\rm d}{\mu}_{F}.$$ Through direct computation, we find that the functional \eqref{f1} has an equivalent integral form as follows:
\begin{align*}
	\mathcal{E}(\Sigma)=\int_{\Sigma}\left(1+\omega_{0}G(\nu_{F})\(\nu_F, E_{n+1}^{F}\)\right){\rm d}{\mu}_{F},\
\end{align*}
where $G$ is the new metric defined in Section \ref{subsec 2.3}.
This simple observation yields the following area measure $m_{\omega_{0}}$ associated with a given strictly convex anisotropic capillary hypersurface $\Sigma$,
$$m_{\omega_{0}}(B):=\mathcal{E}(\widetilde{\nu_{{F}}}^{-1}(B))= \int_{\widetilde{\nu_{{F}}}^{-1}(B)}\left(1+\omega_{0}G(\nu_{F})\(\nu_F, E_{n+1}^{F}\)\right){\rm d}{\mu}_{F},\ $$
for any Borel set $B \subset \cc$, which is called the anisotropic capillary area measure of $\Sigma$. By a transformation, it can also be written in the following form
$$m_{\omega_{0}}(B):=\int_{B}\frac{F(\nu(\xi))+\omega_{0}\<\nu(\xi), E_{n+1}^{F}\>}{K(\widetilde{\nu_{{F}}}^{-1}(\xi))}{\rm d}\mathcal{H}^n(\xi) ,$$
for any Borel set $B \subset \cc$, where  $F(\nu(\xi))+\omega_{0}\<\nu(\xi), E_{n+1}^{F}\>$ is the support function of $\mathcal{C}_{\omega_{0}}$, and $\mathrm{d}\mathcal{H}^n(\xi)$
denotes the $n$-dimensional Hausdorff measure. %the  Euclidean volume element on $\mathcal{C}_{\omega_0}$ which induced from the ambient Euclidean space.
Therefore, we can formulate the anisotropic capillary Minkowski problem from the perspective of anisotropic capillary surface area measures:
\begin{italictext}
	Given a Borel measure $m$ on $\cc$, what conditions on $m$ characterize the existence and uniqueness of a generalized convex anisotropic capillary hypersurface with constant $\omega_{0}$ whose induced anisotropic capillary area measure satisfies $m_{\omega_{0}} = m$?
\end{italictext}

We reduce the anisotropic capillary Minkowski problem to the existence of the following Monge-Ampère type equation with a naturally Robin boundary value condition for the anisotropic capillary support function $\hat{s}: \cc \rightarrow \mathbb{R}$
\begin{equation}\label{equ:p=1 pde-inv-Gauss}
	\renewcommand{\arraystretch}{1.5}
	\left\{
	\begin{array}{rll}
		\det (\mathring{\nabla}_{ij}\hat{s}-\frac{1}{2}Q_{ijk}\mathring{\nabla}_k\hat{s}+\hat{s}\mathring{g}_{ij})&=f,  \ \ \ \ \ \ \ \ \ \ &\text{in\ } \mathcal{C}_{\omega_0},\\
		 \<{\mathring{\nabla}} \hat{s},E_{n+1}\>&=\omega_0\cdot \hat{s},  \ \ \ \ &\text{on}\ \partial\mathcal{C}_{\omega_0},
	\end{array}	
	\right.
\end{equation}
where $\mathring{g}_{ij}=G(\mathcal{T}^{-1}\xi)(\partial_{i}\xi,\partial_j\xi)$,  $\mathring{\nabla}$ is the Levi-Civita connection of $\mathring{g}$ on $\mathcal{C}_{\omega_0}$, and $Q$ is a $(0,3)$-tensor defined by \eqref{equ:Q}. The boundary condition in \eqref{equ:p=1 pde-inv-Gauss} is equivalent to
\begin{align}
	\label{equ:boundary equ s p=1}
	\mathring{\nabla}_{\mu_F}\hat{s}=\frac{\omega_0}{F(\nu)\<\mu,E_{n+1}\>}\hat{s}, \quad\text{or}\quad \<{\mathring{\nabla}} \hat{s},\mu\>=\frac{\omega_0}{\<\mu,E_{n+1}\>} \hat{s},
\end{align}
where $\nu$ is the unit outward normal vector of  $\mathcal{C}_{\omega_0} $ in $\overline{\mathbb{R}_+^{n+1}}$,  $\mu$ is the unit isotropic outward co-normal of $\partial\mathcal{C}_{\omega_0}$ in $\mathcal{C}_{\omega_0}$, $\mu_{F}=A_F(\nu)\nu$ with $A_F$ defined by \eqref{equ:A_F>0}, and $\mu_F$ is called the anisotropic outward co-normal of $\partial\mathcal{C}_{\omega_0}$ in $\mathcal{C}_{\omega_0}$. For the details of the proof, we refer to Lemma \ref{lemma:Xia-Lemma2.4}. In Section \ref{section5}, a straightforward observation shows that we are allowed to restrict $\hat{s}$ so that it satisfies the following orthogonal condition:
\begin{align}
	\label{equ:int E1--En}
	\int_{\mathcal{C}_{\omega_0}}G(\hat{\xi})(\hat{\xi},E_{\al})\hat{s}\ {\rm d}{\mu}_{F}=0,\quad \forall \al=1,\cdots,n,
\end{align}
where $\hat{\xi}=\mathcal{T}^{-1}\xi\in \W$.  % andthe new metric $G$ and anisotropic measure ${\rm d}{\mu}_{F}$ are defined in Section \ref{sec 2}.

A fundamental difficulty in analyzing equation \eqref{equ:p=1 pde-inv-Gauss} arises from the coupling of its nonlinear elliptic structure with the gradient terms on the left-hand side. This combination makes the derivation of a priori estimates, particularly $C^2$ estimates, a critical challenge from both geometric and analytic perspectives.
In the isotropic setting, the derivation of $C^2$ estimates relies crucially on the convexity of $\partial\mathcal{C}_{\theta}\subset\mathcal{C}_{\theta}$, where the spherical cap $\mathcal{C}_{\theta}$ is defined as in \cite{M-W-W},
$$\mathcal{C}_{\theta}:= \Big\{y \in \overline{\mathbb{R}^{n+1}_+}|\ \  |y+\cos \theta E_{n+1}|=1\Big\},$$
and on employing the classical barrier technique \cite{Lions-Trudinger-1986, Ma-Qiu} for a Neumann boundary condition posed on a uniformly convex domain.
%and by the classical barrier technique (see \cite{Lions-Trudinger-1986,Ma-Qiu})  with a Neumann boundary condition on a uniformly convex domain.
Therefore, naturally, in the anisotropic problem, we need the following Condition.
\begin{definition}
We call the boundary of a $\omega_0$-capillary Wulff shape $\cc \subset \widetilde{\W}=\mathcal{T}(\W)$ is anisotropically convex if $\pa \cc \subset \cc$ satisfies
$\left(\hat{h}_{\al \be}^{\pa\cc }\right)>0,$
where $\hat{h}_{\al \be}^{\pa\cc }:=\tilde{g}(\tilde{e}_{\al}, \tilde{\nabla}_{\tilde{e}_{\be}}\tilde{e}_{n})$, $\tilde{e}_{\al}$ and $\tilde{g}$ are anisotropic quantities with respect to the translated Wulff shape $\widetilde{\W}$ defined in Section \ref{section4}.
\end{definition}
This Condition was first used in \cite{Arxiv1} and is equivalent to
\begin{align}\label{condition}
	\omega_{0}< \frac{Q(z)(Y,Y,\mu_{F})\<E_{n+1},\mu\>F(\nu)}{G(z)(Y,Y)},
\end{align}for all $z \in \mathcal{W}\cap \{x_{n+1}=-\omega_{0}\}$ and $Y \in T_{z}((\mathcal{W}\cap \{x_{n+1}=-\omega_{0}\})$. For details, we refer to Remark \ref{r4.1} and Proposition \ref{g2} in Section \ref{section4}. In the special case where $\W=\mathbb{S}^n$,  the Wulff  spherical cap $\cc$ becomes an isotropic capillary spherical cap, denoted by $\mathcal{C}_{\theta}$, and Condition \eqref{condition} is equivalent to  $\theta\in(0,\frac{\pi}{2})$. In both anisotropic and isotropic cases, we aim to remove this restriction and hope to accomplish this in future work.
We aim to provide a complete extension of the known results for the capillary $L_p$-Minkowski problem with $p\ge1$ to the anisotropic case, while overcoming all the technical difficulties inherent in this extension.
The main results of this paper are summarized in the following theorems. The first theorem is a direct extension of that in \cite{M-W-W}:

\begin{theorem}\label{thm:p=1}
	Let $\omega_0 \in (-F(E_{n+1}), F(-E_{n+1}))$ such that  the boundary of $\omega_0$-capillary Wulff shape $\pa \cc \subset \cc$ is anisotropically convex, %Condition \eqref{condition},
	and $f(\xi) \in C^{2}(\cc)$ be a positive function satisfying
	\begin{align}
		\label{equ:int-fE1--En}
		\int_{\mathcal{C}_{\omega_0}}G(\hat{\xi})(\hat{\xi},E_{\al})f(\xi){~\rm d}{\mu}_{F}=0,\quad \forall \al=1,\cdots,n.
	\end{align}	
		Then
	there is a $C^{3,\ga}\  (0<\ga<1)$ strictly convex $\omega_0$-capillary hypersurface $\Sigma\subset\overline{\mathbb{R}_{+}^{n+1}}$ such that \eqref{equ:K^-1=f} holds. Moreover, $\Sigma$ is unique up to horizontal translations.
	%whose reciprocal anisotropic Gauss-Kronecker curvature is $f$ as a function on its anisotropic capillary normals. Moreover, $\Sigma$ is unique up to horizontal translations.
\end{theorem}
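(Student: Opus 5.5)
We reduce Theorem \ref{thm:p=1} to solvability of the Robin problem \eqref{equ:p=1 pde-inv-Gauss} for the anisotropic capillary support function $\hat s$ on $\cc$. By Lemma \ref{lemma:Xia-Lemma2.4}, a positive solution $\hat s$ for which the matrix $A[\hat s]:=\mathring{\nabla}_{ij}\hat{s}-\frac{1}{2}Q_{ijk}\mathring{\nabla}_k\hat{s}+\hat{s}\mathring{g}_{ij}$ is positive definite determines a strictly convex anisotropic $\omega_0$-capillary hypersurface satisfying \eqref{equ:K^-1=f}. We obtain such a solution by the method of continuity.

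\textbf{Continuity path.} We consider the family
\[
\det A[\hat s_t]=f_t:=(1-t)c_0+tf,\qquad t\in[0,1],
\]
with the Robin boundary condition and the orthogonality normalization \eqref{equ:int E1--En}. The constant $c_0$ is chosen so that $\hat s\equiv 1$ solves the problem at $t=0$; this corresponds to the capillary Wulff shape $\cc$ itself. We first check that each $f_t$ satisfies \eqref{equ:int-fE1--En}. For $f$ this is the hypothesis. For the constant $c_0$ it holds by the symmetry/divergence identity showing that $\int_{\cc} G(\hat\xi)(\hat\xi,E_\alpha)\,{\rm d}\mu_F=0$. This identity is the anisotropic Minkowski-type closing relation obtained by applying \eqref{equ:int E1--En} to $\cc$. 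Condition \eqref{equ:int-fE1--En} is necessary, since integrating $G(\hat\xi)(\hat\xi,E_\alpha)\det A[\hat s]$ gives zero by the horizontal translation invariance of the problem. It is also the solvability condition appearing in the linearized problem.

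\textbf{Openness.} The linearized operator at a solution is $L v=\mathrm{tr}(A^{-1}(\mathring\nabla^2 v-\frac12 Q\ast\mathring\nabla v+v\mathring g))$ with the Robin condition $\langle\mathring\nabla v,E_{n+1}\rangle=\omega_0 v$. We show that its kernel is exactly spanned by the functions $G(\hat\xi)(\hat\xi,E_\alpha)$, $\alpha=1,\dots,n$, which are the support functions of horizontal translations. We also show that $L$ is self-adjoint with respect to $\det A\,{\rm d}\mu_F$ once the boundary term is handled through the Robin condition. The proof uses an anisotropic capillary Alexandrov–Fenchel/Minkowski inequality, namely the mixed discriminant argument applied to $v$. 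Fredholm theory on the orthogonal complement then gives openness in $C^{3,\gamma}$. The same kernel computation gives uniqueness up to horizontal translations: the difference of two solutions is handled through the concavity of $\log\det$ (or a Brunn–Minkowski-type equality case), which forces it to lie in this kernel.

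\textbf{Closedness: a priori estimates.} We need $C^0$, $C^1$ and $C^2$ bounds for $\hat s$ under the normalization \eqref{equ:int E1--En}. For $C^0$ we use the capillary Minkowski inequality for the functional $\mathcal E$: the total measure $\int f\,{\rm d}\mu_F$ controls the mixed volume. Together with \eqref{equ:int E1--En}, which pins down the horizontal position, this gives two-sided bounds on $\hat s$. The lower bound comes from the hypersurface containing a definite capillary Wulff cap near the origin. The $C^1$ bound follows from convexity, since the gradient of the support function is bounded by the diameter. The $C^2$ estimate is the main obstacle. Interior bounds follow from differentiating the equation twice and using the concavity of $\log\det$; the commutator terms coming from $Q$ and from the curvature of $\mathring g$ are bounded by $C^1$ quantities. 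On $\partial\cc$ we differentiate the Robin condition tangentially to bound the mixed derivatives $\mathring\nabla_{\alpha\mu}\hat s$. The double normal derivative is handled by a Lions–Trudinger/Ma–Qiu barrier, and this is exactly where the anisotropic convexity $(\hat h^{\partial\cc}_{\alpha\beta})>0$ is used. We would first try the test function $\log\lambda_{\max}(A)+\beta|\mathring\nabla\hat s|^2+$ a multiple of the distance to $\partial\cc$. The positive second fundamental form of $\partial\cc$ produces a good-sign term at a boundary maximum, which rules out the maximum lying on the boundary in tangential directions.

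With $C^2$ bounds in hand, the equation is uniformly elliptic and concave. Evans–Krylov and Lieberman's oblique boundary theory give $C^{2,\gamma}$ bounds, and Schauder theory gives $C^{3,\gamma}$ since $f\in C^2$. Hence the set of solvable $t$ is closed, and the solution at $t=1$ is the desired $\Sigma$.
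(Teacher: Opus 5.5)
\textbf{Overall.} Your architecture matches the paper's: continuity in $t$, a kernel spanned by $G(\hat\xi)(\hat\xi,E_\alpha)$, self-adjointness plus Fredholm theory for openness, a priori bounds for closedness, and uniqueness via the kernel or the Minkowski equality case. Your $C^1$ bound, obtained from $X(\xi)=\mathring\nabla\hat s+\hat s\,\mathcal{T}^{-1}\xi$ and the outer-radius bound of the $C^0$ step, is valid. It is also simpler than the maximum-principle argument of Lemma \ref{C1} and needs no boundary convexity.

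\textbf{Starting point of the path (minor).} $\hat s\equiv 1$ violates the Robin condition when $\omega_0\neq 0$, because $\mathring\nabla_{\mu_F}1=0$. The support function of $\mathcal{C}_{\omega_0}$ is $\ell$, not $1$. Since $\tau[\ell]=\tau[1]=\mathring g$, the correct starting point is $\hat s=\ell$ with $c_0=1$, as in the paper.

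\textbf{The genuine gap: the interior $C^2$ estimate.} It is false that the $Q$-commutator terms are controlled by $C^1$ quantities. On $(\mathcal{C}_{\omega_0},\tilde g)$ one has
$\tilde\tau_{kk,ii}=\tilde\tau_{ii,kk}+\frac12(\tilde Q_{iip}\tilde\tau_{kk,p}-\tilde Q_{kkp}\tilde\tau_{ii,p})+\tilde g_{ii}\tilde\tau_{kk}-\tilde g_{kk}\tilde\tau_{ii}+\tilde Q\ast\tilde Q\ast\tilde\tau+\tilde\nabla\tilde Q\ast\tilde\tau$.
This causes three problems:
\begin{itemize}
\item There is a third-order drift term.
\item After contracting with $\mathcal F^{ii}$, the last two terms give an error of size $C\sigma_1\sum_i\mathcal F^{ii}$ with $C=C(\|F\|_{C^3})$. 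This is exactly the order of the only good term, $\sigma_1\sum_i\mathcal F^{ii}$, which comes from $\tilde g_{ii}\tilde\tau_{kk}$.
\item Codazzi holds only up to $\frac12\tilde Q\ast\tilde\tau$, so concavity of $\log\det$ alone does not absorb the gradient terms of $\log\sigma_1$.
\end{itemize}
Your tentative term $\beta|\mathring\nabla\hat s|^2$ does not visibly repair this. Its linearization gives a multiple of $\sigma_1$, but the same commutators produce terms of order $\beta\sum_i\mathcal F^{ii}$ with no sign. The resulting bound $\sigma_1\le C(1+\sum_i\mathcal F^{ii})$ is useless.

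\textbf{The missing idea.} Pass to $\tilde s=\hat s/\ell$, which solves the Neumann problem \eqref{rewrite}, and use $P=\log\sigma_1+e^{\beta(m_2-\tilde s)}$.
\begin{itemize}
\item At an interior maximum, $\sigma_{1,p}=\sigma_1\beta e^{\beta(m_2-\tilde s)}\tilde s_p$. This turns the drift into $\frac12\beta e^{\beta(m_2-\tilde s)}\mathcal F^{ii}\tilde Q_{iip}\tilde s_p$, which cancels the $Q$-part of $-\beta e^{\beta(m_2-\tilde s)}\mathcal F^{ii}\tilde s_{ii}$.
\item The lower bound $\tilde s\ge m_1>0$ supplies the new good term $\beta m_1 e^{\beta(m_2-\tilde s)}\sum_i\mathcal F^{ii}$. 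For $\beta$ large this gives $\sum_i\mathcal F^{ii}\le C$, hence $\sigma_1\le C$.
\end{itemize}

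\textbf{Boundary case.} No barrier is needed at a boundary maximum. The condition $\tilde s_n=0$ kills the normal derivative of the exponential term, and $\tilde\tau_{\alpha n}=0$. Lemma \ref{pwf2} then gives, in a frame diagonalizing $\tilde\tau$,
$0\le\mathcal F^{nn}\sigma_{1,n}=f^*_{,n}+\sum_\alpha(-\tilde Q_{\alpha\alpha n})(\tilde\tau_{\alpha\alpha}-\tilde\tau_{nn})(\mathcal F^{\alpha\alpha}-\mathcal F^{nn})$.
Because $\tilde Q_{\alpha\alpha n}<0$, the last sum equals $-\sum_\alpha|\tilde Q_{\alpha\alpha n}|(\tilde\tau_{\alpha\alpha}-\tilde\tau_{nn})^2/(\tilde\tau_{\alpha\alpha}\tilde\tau_{nn})$. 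This forces all eigenvalues to be comparable, and the equation for $\det\tilde\tau$ then bounds them. This is where the anisotropic convexity of $\partial\mathcal{C}_{\omega_0}$ enters, not through a Lions--Trudinger barrier for the double normal derivative, which you would still have to construct for this Robin problem.
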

%The volume-preserving flows can be used to prove the isoperimetric inequality.
%We remark that, for the isotropic case, the convexity preserving result for  $\theta\in(\frac{\pi}{2},\pi)$ has not been solved yet. So the corresponding capillary Alexandrov-Fenchel inequalities are only  built for $\theta\in(0,\frac{\pi}{2}]$ in the half-space or the unit Euclidean ball. For the anisotropic capillary hypersurface in the half-space, Theorem \ref{thm:convex preservation} proves the convexity preserves along the flow  under Condition $\ref{condition}$ which enclose the case  for some Wulff shapes and $\theta\in(\frac{\pi}{2},\pi)$ (see Example \ref{exam:x^4T}).

In Section \ref{section3}, we define the anisotropic capillary $p$-sum and  the  anisotropic capillary $p$-surface area measure, and pose the corresponding prescribed measure problem, namely the anisotropic capillary $L_p$-Minkowski problem. This problem can be reduced to establishing the existence of convex solutions to a fully nonlinear elliptic PDE with a natural Robin boundary condition:
\begin{equation}\label{equ:p pde-inv-Gauss}
	\renewcommand{\arraystretch}{1.5}
	\left\{
	\begin{array}{rll}
		 \det \left(\mathring{\nabla}_{ij}\hat{s}-\frac{1}{2}Q_{ijk}\mathring{\nabla}_k\hat{s}+\hat{s}\mathring{g}_{ij}\right)&=f\hat{s}^{p-1}, & \text{in\ } \mathcal{C}_{\omega_0},\\
		  \mathring{\nabla}_{\mu_F}\hat{s}&=\frac{\omega_0}{F(\nu)\<\mu,E_{n+1}\>}\hat{s},& \text{on}\ \partial\mathcal{C}_{\omega_0}.\
	\end{array}	
	\right.
\end{equation}

 For a class of special Wulff shapes, we have the following concept of even functions in the anisotropic capillary setting.
\begin{definition}\label{gd}
Let $\xi=(\xi_{1}, \dots, \xi_{n}, \xi_{n+1}) \in \cc$, and define $\bar{\xi}:=(-\xi_{1}, \dots, -\xi_{n}, \xi_{n+1})$. If the support function $F$ of $\W$ satisfies
\begin{align*}
F(x_{1}, \dots, x_{n}, x_{n+1})&= F(x_{1}, \dots, x_{n}, -x_{n+1}), \\
F(-x_{1}, \dots, -x_{n}, x_{n+1})&= F(x_{1}, \dots, x_{n}, x_{n+1}),
\end{align*}
for all $(x_{1}, \dots, x_{n}, x_{n+1}) \in \mathbb{S}^{n}$, then $\W$ (or equivalently $F$) is called symmetric.
Given a symmetric $\W$, a function $f \in C^{2}(\cc)$ is called an anisotropic capillary even function on $\cc$ if it satisfies
$$f(\xi) = f(\bar{\xi}),\ \forall \xi\in \cc.$$
\end{definition}
The following theorem generalizes the result of Mei, Wang, and Weng \cite{M-W-W1} to the anisotropic capillary setting.
\begin{theorem}\label{thm:p>1}
	Let $\omega_0 \in (-F(E_{n+1}), F(-E_{n+1}))$ such that the boundary of $\omega_0$-capillary Wulff shape $\pa \cc \subset \cc$ is anisotropically convex, and $f \in C^{2}(\cc)$ be a positive function.
	\begin{enumerate}
		\item If $1<p<n+1$, assume that $\W$ is symmetric, $\omega_{0}\leq 0$,  and that $f$ is even  on $\mathcal{C}_{\omega_{0}}$. Then there exists a unique smooth anisotropic capillary even solution $\hat{s}$ to Eq. \eqref{equ:p pde-inv-Gauss}.
		\item If $p=n+1$, there exists a unique (up to a dilation) positive smooth solution $\hat{s}$ and a positive constant $\eta$ solving
		\begin{align}
			\begin{cases}
				\det \left(\hat{s}_{ij}-\frac{1}{2}Q_{ijk}\hat{s}_{k}+\hat{s}\mathring{g}_{ij}\right)&=\eta f\hat{s}^{n},\ \  \ \text{in\ } \mathcal{C}_{\omega_0},\\
				\ \ \ \ \ \ \ \ \ \ \ \ \ \ \ \ \ \ \ \ \<{\mathring{\nabla}} \hat{s},E_{n+1}\>&=\omega_0\cdot \hat{s},\ \ \text{on}\ \partial\mathcal{C}_{\omega_0}.\
			\end{cases}
		\end{align}
		\item If $p>n+1$, there exists a unique smooth  solution $\hat{s}$ to Eq. \eqref{equ:p pde-inv-Gauss}.
	\end{enumerate}
\end{theorem}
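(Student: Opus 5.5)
The approach is the continuity (degree) method for the Robin problem \eqref{equ:p pde-inv-Gauss}, following the scheme of Mei--Wang--Weng \cite{M-W-W1} but with the anisotropic operator $\mathring{\nabla}_{ij}\hat{s}-\frac12 Q_{ijk}\mathring{\nabla}_k\hat{s}+\hat{s}\mathring{g}_{ij}$ on $\cc$. The constant function $\hat{s}\equiv1$ is the support function of $\cc$ itself, so it satisfies the boundary condition with left-hand side $1$. Hence $f\equiv1$ gives a starting solution in each range of $p$, after rescaling $\hat{s}\equiv c$ with $c^{p-1}=1$, or with $\eta=1$ when $p=n+1$. We connect to the given $f$ by $f_t=(1-t)+tf$; for $1<p<n+1$ all $f_t$ stay even. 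Openness follows from the implicit function theorem once the linearized operator with its Robin condition is shown invertible on the relevant class (even functions in case (1)). Closedness reduces to uniform $C^{2,\gamma}$ estimates, after which Evans--Krylov and Schauder theory for oblique boundary problems give higher regularity.

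\textbf{Uniqueness and $C^0$ estimates.} For $p>n+1$, compare two solutions at a maximum point of $\hat{s}_1/\hat{s}_2$. If this point is interior, the matrix inequality gives $f\hat{s}_1^{p-1}\le (\hat{s}_1/\hat{s}_2)^n f\hat{s}_2^{p-1}$, hence $\hat{s}_1\le\hat{s}_2$. If it is on the boundary, the Robin condition, which is homogeneous of degree one in $\hat{s}$, forces $\mathring{\nabla}_{\mu_F}\log(\hat{s}_1/\hat{s}_2)=0$, and a Hopf-type argument moves the point inside. The same maximum-principle argument gives $C^0$ bounds: at $\max\hat{s}$ one gets $\hat{s}^n\ge f\hat{s}^{p-1}$, and symmetrically at the minimum, which handles $p\neq n+1$. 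For $p=n+1$ this degenerates, and $\eta$ is determined by the same comparison (uniqueness up to dilation). We obtain a Harnack bound $\max\hat{s}/\min\hat{s}\le C$ from the $C^1$ estimate and normalize $\max\hat{s}=1$; $\eta$ is bounded by evaluating at max/min points. For $1<p<n+1$, the upper bound follows from an anisotropic capillary volume/mixed-volume inequality, integrating the equation against the support function of $\cc$. The positive lower bound is where evenness enters: an even convex capillary body with $\omega_0\le0$ and symmetric $\W$ contains a neighborhood of the origin's vertical axis, so $\min\hat{s}$ cannot degenerate without the measure concentrating, which contradicts $f>0$ with $p<n+1$. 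I expect this lower bound to be delicate.

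\textbf{Gradient and $C^2$ estimates.} $C^1$ bounds come from convexity: $|\mathring{\nabla}\hat{s}|^2+\hat{s}^2$ is controlled by $\max\hat{s}$ through the Cahn--Hoffman position vector. The main obstacle is the $C^2$ estimate. Here we consider the largest eigenvalue of $A_{ij}=\mathring{\nabla}_{ij}\hat{s}-\frac12Q_{ijk}\hat{s}_k+\hat{s}\mathring{g}_{ij}$ plus a barrier involving the support function of $\cc$. In the interior the $Q$ terms are lower order, by the Codazzi property of the anisotropic tensor, so the estimate follows from concavity of $\log\det$ as in Xia's anisotropic Minkowski problem. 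On the boundary, tangential second derivatives are obtained by differentiating the Robin condition tangentially. Double normal derivatives are controlled via the Lions--Trudinger/Ma--Qiu barrier, which requires exactly that $(\hat{h}^{\pa\cc}_{\alpha\beta})>0$, i.e.\ the anisotropic convexity assumption. Mixed derivatives come from the reflected maximum principle. Once this boundary $C^2$ bound is in hand, the determinant equation becomes uniformly elliptic, and the argument closes in all three cases.
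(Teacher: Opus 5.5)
Your overall plan (continuity in $t$, a priori estimates, with anisotropic convexity of $\partial\cc$ driving the boundary $C^2$ bound) follows the paper. Your $C^1$ bound via $X=\mathring{\nabla}\hat s+\hat s\,\mathcal{T}^{-1}\xi$ is a valid and simpler substitute for Lemma \ref{C1}. The existence argument, however, has concrete gaps.

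\textbf{Starting point and $C^0$ bounds.} The initial solution is wrong. In the normalization of \eqref{equ:p pde-inv-Gauss}, the anisotropic capillary support function of $\cc$ is $\ell$ from \eqref{ell}, not $1$. The constant $1$ violates the Robin condition whenever $\omega_0\neq0$: its $\mu_F$-derivative vanishes, while the right-hand side equals $\omega_0/(F(\nu)\langle\mu,E_{n+1}\rangle)\neq0$. Since $\tau[\ell]=\mathring g$, the correct initial pair is $(\hat s,f_0)=(\ell,\ell^{1-p})$. Equivalently, as in the paper, work with $\tilde s=\hat s/\ell$, which satisfies the Neumann condition in \eqref{rewrite} with $\tilde f=f\ell^{p-1}$.

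The same issue breaks your $C^0$ argument ``at $\max\hat s$''. On $\partial\cc$ the Robin condition gives $\mathring\nabla_{\mu_F}\hat s$ the sign of $-\omega_0$. So for $\omega_0<0$ the maximum (and for $\omega_0>0$ the minimum) can sit on $\partial\cc$ with $\mathring\nabla\hat s\neq0$; this already happens for $\hat s=\ell$ when $\W$ is a sphere. There you have no Hessian information, so the maximum principle must be applied to $\hat s/\ell$, as in Lemma \ref{C01}.

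\textbf{Openness.} This is asserted, not proved. For $p>n+1$ it does follow from the maximum principle. Write $v=\hat s w$; then $w$ satisfies a Neumann condition and $L_{\hat s}v=\hat s\,\mathcal{G}^{ij}w_{ij}+b^kw_k+(n+1-p)\det\tau\,w$. For $1<p<n+1$ the zeroth-order coefficient is positive and this fails. The missing ingredient is Lemma \ref{lemma:kerL=0}:
\begin{enumerate}
\item Pair $Lv=0$ with $\tilde s$ and integrate by parts. The boundary terms vanish because $\tilde\tau_{\alpha n}=0$ and $\tilde s_n=v_n=0$, giving $(n+1-p)\int v\det\tilde\tau=0$.
\item The anisotropic capillary Alexandrov--Fenchel-type inequality of \cite{Arxiv1} then gives $\int v\,\mathcal{G}^{ij}\tilde\tau_{ij}[v]\le0$.
\item This contradicts $\int v\,\mathcal{G}^{ij}\tilde\tau_{ij}[v]=(p-1)\int\tilde s^{-1}v^2\det\tilde\tau$ unless $v\equiv0$.
\end{enumerate}

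\textbf{Case (1): uniqueness and lower bound.} You give no uniqueness argument here. Your ratio comparison only yields $w^{p-1-n}\le1$ at the maximum of $w=\hat s_1/\hat s_2$, which carries no information for $p<n+1$. You need the same inequality, or the $L_p$-Minkowski inequality of Section \ref{section3}. Your $C^0$ lower bound in this case is also only heuristic. The paper obtains it from a lower volume bound, the inradius estimate $\mathrm{Vol}(\Sigma)\le r(\Sigma)\,\mathcal{V}_{1,\omega_0}(\Sigma)$, and evenness to center the inscribed capillary Wulff shape.

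\textbf{Case (2).} The argument does not close as written. The Harnack bound $\max\hat s/\min\,\hat s\le C$ cannot come from your $C^1$ estimate. The bound $|\mathring\nabla\hat s|^2+\hat s^2\le C(\max\hat s)^2$ only gives $|\mathring\nabla\log\hat s|\le C\max\hat s/\min\,\hat s$, which is circular. You need a gradient bound for $v=\log\tilde s$ that uses the equation $\det(v_{ij}+v_iv_j-\frac12\tilde Q_{ijk}v_k+\delta_{ij})=e^{(p-n-1)v}\tilde f$ and the Neumann condition. It must hold uniformly for $p\in[n+1,n+2]$, as in Lemma \ref{logC1}.

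Moreover, at $p=n+1$ the linearized operator annihilates $\hat s$ by homogeneity. Continuity directly in $(\hat s,\eta)$ therefore requires showing the kernel is exactly $\mathbb{R}\hat s$ and setting up a bordered Fredholm problem; you supply neither. The paper sidesteps this:
\begin{enumerate}
\item It solves with right-hand side $\tilde f\tilde s^{n+\varepsilon}$, which is covered by case (3).
\item It normalizes $\bar u_\varepsilon=\tilde s_\varepsilon/\min\,\tilde s_\varepsilon$.
\item It uses Lemma \ref{C01} to keep $(\min\,\tilde s_\varepsilon)^{\varepsilon}$ between positive constants.
\item It passes to the limit via the $p$-independent log-gradient and $C^2$ bounds.
\end{enumerate}

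\textbf{Boundary $C^2$ sketch.} Your description mislabels the steps. Differentiating the boundary condition tangentially produces the mixed entries ($\tilde\tau_{\alpha n}=0$, Lemma \ref{pwf2}). The sign $\tilde Q_{\alpha\alpha n}<0$ is what controls the remaining entries at a boundary maximum of the test function.
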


\begin{remark}
For readers interested in examples that satisfy the special Wulff shape construction in the first part of Theorem \ref{thm:p>1}, we refer to the appendix of \cite{Arxiv1}.\end{remark}
\textbf{The rest of this paper is organized as follows.}
In Section \ref{sec 2}, we systematically introduce the preliminary notions of anisotropic capillary hypersurfaces, with particular emphasis on a precise definition of the anisotropic support function, which corresponds to the support function of isotropic capillary hypersurfaces in Euclidean space. We also define the mixed quermassintegrals for anisotropic capillary hypersurfaces and establish the well-posedness of these definitions. In Section \ref{section3}, we extend the classical theory of convex bodies to the anisotropic capillary hypersurface case, and define the anisotropic capillary $p$-sum and the $k$-th anisotropic capillary $p$-surface area measure. We propose two prescribed measure problems, which include the anisotropic capillary $L_p$-Minkowski problem. In Section \ref{section4}, we establish the a priori estimates for solutions to problems in Theorem \ref{thm:p=1} and Theorem \ref{thm:p>1}. The $C^1$ and $C^2$ estimates depend on the Condition \eqref{condition}. Nevertheless, obtaining the $C^2$ estimates remains extremely challenging, as we cannot directly apply techniques similar to those used in the isotropic case, such as Lemma 3.3 and Lemma 3.4 in \cite{M-W-W}. In particular, the Monge-Ampère type equation we consider not only involves gradient terms but is also studied in the presence of boundary conditions. Here, we draw inspiration from the works of \cite{H-I-S} and \cite{Xia13} to construct the following auxiliary function:
$$P(\xi)=\log \sigma_{1}+e^{\be(m_{2}-\s)},$$
where $\sigma_{1}$ is the sum of all anisotropic principal curvature radii, $m_{2}$ is the upper bound of the anisotropic support function, and $\s$ is the anisotropic capillary support function reparameterized by the translated Wulff shape. The function balances the interior and boundary terms, enabling effective use of the maximum principle. Besides, Condition \eqref{condition} plays a crucial role in obtaining the $C^1$ and $C^2$ estimates. It is worth noting that we can drop this condition when deriving the $C^0$ estimate. Finally, in Section \ref{section5} and Section \ref{section6}, we employ the continuity method to complete the proof of the main theorems.

\section{Preliminary}\label{sec 2}
\subsection{The Wulff shape and dual Minkowski norm}
Let $F$ be a smooth positive function on the standard sphere $(\mathbb{S}^n, g^{\mathbb{S}^n} ,\nabla^{\mathbb{S}})$ such that the matrix
\begin{equation}\label{equ:A_F>0}
	A_{F}(x)~=~\nabla^{\mathbb{S}}\nabla^{\mathbb{S}} {F}(x)+F(x)g^{\mathbb{S}^n}, %>0 \text{ (positive definite)},
	\quad x\in \mathbb{S}^n,
\end{equation}
is positive definite on $  \mathbb{S}^n$,
where %$\nabla^{\mathbb{S}}$ denotes the covariant derivative on $\mathbb{S}^n$ and,
$g^{\mathbb{S}^n}$ denotes the round metric on $\mathbb{S}^n$.
Then there exists a unique smooth strictly convex hypersurface $\W$ defined by
\begin{align*}
	\W=\{\Psi(x)|\Psi(x):=F(x)x+\nabla^{\mathbb{S}} {F}(x),~x\in \mathbb{S}^n\},
\end{align*}
whose support function is given by $F$. We call $\W$ the Wulff shape determined by the function $F\in C^{\infty}(\mathbb{S}^n)$. If $F$ is constant, the corresponding Wulff shape coincides with a round sphere in the standard Euclidean space.
The smooth function $F$ on $\mathbb{S}^n$ can be extended  to a $1$-homogeneous function on $\mathbb{R}^{n+1}$ by
\begin{equation*}
	F(y)=|y|F({y}/{|y|}), \quad y\in \mathbb{R}^{n+1}\setminus\{0\},
\end{equation*}
and setting $F(0)=0$. Then it is easy to show that $\Psi(x)=DF(x)$ for $x\in \mathbb{S}^n$, where $D$ is the Euclidean gradient.

%In this paper we add an extra condition for $F$ to make the oblique boundary condition of flow  satisfies the non-degeneracy condition (i.e., \eqref{equ:boundary-nondegenracy}).
%\begin{align}\label{equ:condition}
%	\frac{\partial^2 F}{\partial E_{n+1}\partial E_{n+1}}(x)>0, \quad\text{ for } x\in\mathbb{S}^n\setminus\{E_{n+1},-E_{n+1}\}.
%\end{align}
%Specially, it's easy to check for $\W=\mathbb{S}^{n}$,
%$F(x)=\left(x_1^2+\cdots +x_{n+1}^2\right)^{\frac{1}{2}}$ satisfies condition \eqref{equ:condition}.

The homogeneous extension $F$ defines a Minkowski norm on $\mathbb{R}^{n+1}$, that is, $F$ is a norm on $\mathbb{R}^{n+1}$ and $D^2(F^2)$ is uniformly positive definite on $\mathbb{R}^{n+1}\setminus\{0\}$. We can define a dual Minkowski norm $F^0$ on $\mathbb{R}^{n+1}$ by
\begin{align*}%\label{s2:gam0}
	F^0(\zeta):=\sup_{y\neq 0}\frac{\langle y,\zeta\rangle}{{F}(y)},\quad \zeta\in \mathbb{R}^{n+1}.
\end{align*}
The unit Wulff shape can be interpreted by $F^{0}$ as
$$\W=\{y\in \R^{n+1}: F^0(y)=1\}.$$
A Wulff shape of radius $r_0$ centered at $y_0$ is given by
\begin{align}\label{equ:w_r(x)}
	\W_{r_0}(y_0)=\{y\in\mathbb{R}^{n+1}:F^0(y-y_0)=r_0\}.
\end{align}
An $\omega_0$-capillary Wulff shape of radius $r_0$ is given by
\begin{align}\label{equ:crwo}
	\W_{r_0,\omega_0}(E):=\{y\in\overline{\mathbb{R}_+^{n+1}}:F^0(y-r_0\omega_0E)=r_0\},
\end{align}
which is a part of a Wulff shape cut by a hyperplane  $\{x_{n+1}=0\}$, here $E$ satisfies $\<E,E_{n+1}\>=1$ which guarantees that  $\W_{r_0,\omega_0}(E)$ satisfies the capillary condition \eqref{equ:w0-capillary}. If there is no confusion, we just write $\W_{r_0,\omega_0}:=\W_{r_0,\omega_0}(E^F_{n+1})$ in this paper.

%Namely, it is an intersection of a Wulff shape and $\mathbb{R}^{n+1}_+$.
%It is easy to check that the anisotropic normal of $\W_r(x_0)$ is $\nu_F(x)=\frac{x-x_0}{r}$. On $\W_r(x_0)\cap\{x_{n+1}=0\}$, we know $\<\nu_F,-E_{n+1}\>=\<\frac{x-x_0}{r},-E_{n+1}\>=\<\frac{x_0}{r},E_{n+1}\>$, which is a constant.

\subsection{Anisotropic curvature}
Let $(\Sigma,g,\nabla)\subset \overline{\mathbb{R}^{n+1}_+}$ be a $C^2$ hypersurface with $\partial\Sigma\subset\partial\overline{\mathbb{R}^{n+1}_+}$. %, which encloses a bounded domain  $\hat{\Sigma}$. Let $\nu$ be the unit normal of $\Sigma$ pointing outward of  $\hat{\Sigma}$.
The anisotropic Gauss map of $\Sigma$  is defined by $$\begin{array}{lll}\nu_F: &&\Sigma\to  \W\\
	&&X\mapsto \Psi(\nu(X))=F(\nu(X))\nu(X)+\nabla^\mathbb{S} F(\nu(X)).\end{array} $$
%The anisotropic Weingarten map $S_F$ is the derivative of the anisotropic normal $\nu_F$.
The anisotropic principal curvatures $\k^F=(\k^F_1,\cdots, \k^F_n)$ of $\Sigma$ with respect to $\W$ at $X\in \Sigma$  are defined as the eigenvalues of
\begin{align*}%\label{equ:S_F}
	S_F=\mathrm{d}\nu_F=\mathrm{d}(\Psi\circ\nu)=A_F\circ \mathrm{d}\nu : T_X \Sigma\to T_{\nu_F(X)} \W=T_X \Sigma.
\end{align*}
The anisotropic principal curvature radii $\tau=(\tau_1,\cdots, \tau_n)$ of $\Sigma$ with respect to $\W$ at $X\in \Sigma$  are defined as $\tau_i=(\kappa^F_i)^{-1}$, if $\kappa^F_i\ne 0$, $i=1,\cdots,n$.

We define the normalized $k$-th elementary symmetric function $H^F_k=\sigma_k(\kappa^F)/\binom{n}{k}$ of the anisotropic principal curvature $\kappa^F$:
\begin{align}\label{equ:Hk}
	H^F_k:=\binom{n}{k}^{-1}\sum_{1\leq {i_1}<\cdots<{i_k}\leq n} \kappa^F_{i_1}\cdots \kappa^F_{i_k},\quad k=1,\cdots,n,
\end{align}
where $\binom{n}{k}=\frac{n!}{k!(n-k)!}$.
Setting $H^F_{0}=1$, $H^F_{n+1}=0$, and $H_F=nH_1^F$
for convenience.%When $\Sigma=\W\cap\overline{\mathbb{R}^{n+1}_+}\subset\W$, we have $\kappa^F(\Sigma)=(1,\cdots ,1)$ (see \cite{Xia2017}), and then $H^F_k=1$.

\subsection{New metric and anisotropic formulas}\label{subsec 2.3}
There are new metric on $\Sigma$ and $\mathbb{R}^{n+1}$, which were introduced by Andrews in \cite{And01} and reformulated by Xia in \cite{Xia13}.  This new Riemannian metric $G$ with respect to $F^0$ on $\mathbb{R}^{n+1}$ is defined as
\begin{align}
	\label{equ:G}
	G(y)(V,W):=\sum_{i,j=1}^{n+1}\frac{\partial^2 (\frac12(F^0)^2)(y)}{\partial y^i\partial y^j} V^\alpha W^\beta, \quad\hbox{ for } y\in \mathbb{R}^{n+1}\setminus \{0\}, V,W\in T_y{\mathbb{R}^{n+1}}.
\end{align}
The third order derivative of $F^0$ gives a $(0,3)$-tensor
\begin{equation}%\nonumber
	\label{equ:Q}
	Q(y)(U,V,W):=\sum_{i,j,k=1}^{n+1} Q_{ijk}(y)U^i V^j W^k:=\sum_{i,j,k=1}^{n+1} \frac{\partial^3(\frac12(F^0)^2)(y)}{\partial y^i \partial y^j \partial y^k}U^i V^j W^k,
\end{equation}
for $y\in \mathbb{R}^{n+1}\setminus \{0\},$ $U,V,W\in T_y{\mathbb{R}^{n+1}}.$

When we restrict the metric $G$ to $\mathcal{W}$,  the $1$-homogeneity of $F^0$ implies that
\begin{eqnarray*}
	&G(z)(z,z)=1,  \quad G(z)(z, V)=0, \quad \hbox{ for } z\in \W,\  V\in T_z \W.
	\\&Q(z)(z, V, W)=0, \quad \hbox{ for } z\in\W,\  V, W\in \mathbb{R}^{n+1}.
\end{eqnarray*}
For a smooth hypersurface $\Sigma$ in $\overline{\mathbb{R}_+^{n+1}}$, since $\nu_F(X)\in \W$ for $X\in \Sigma$, we have
%\begin{eqnarray*}
\begin{align}
	&G(\nu_F)(\nu_F,\nu_F)=1, \quad G(\nu_F)(\nu_F, V)=0, \quad \hbox{ for } V\in T_X \Sigma,\nonumber% \label{equ:G_0}
	\\
	&Q(\nu_F)(\nu_F, V, W)=0, \quad \hbox{ for } V, W\in \mathbb{R}^{n+1}.\label{equ:Q_0}
\end{align}	
%\end{eqnarray*}
%\end{comment}
This means that $\nu_F(X)$ is perpendicular to $T_X \Sigma$ with respect to the metric $G(\nu_F)$.
Then the Riemannian metric $\hat{g}$ on $\Sigma$  can be defined as
\begin{eqnarray*}
	\hat{g}(X):=G(\nu_F(X))|_{T_X \Sigma}, \quad X\in \Sigma.
\end{eqnarray*}

We denote by $\hat{D}$ and $\hat{\nabla}$ the Levi-Civita connections of $G$ on $\mathbb{R}^{n+1}$ and $\hat{g}$ on $\Sigma$ respectively,
denote by $\hat{g}_{ij}$ and $\hat{h}_{ij}$ the first and second fundamental form of $(\Sigma, \hat{g})\subset (\mathbb{R}^{n+1}, G)$, that is
$$\hat{g}_{ij}=G(\nu_F(X))(\p_i X, \p_j X),\quad \hat{h}_{ij}=G(\nu_F(X))(\hat{D}_{\p_i }\nu_F, \p_j X).$$
Denote by $\{\hat{g}^{ij}\}$ the inverse matrix of $\{\hat{g}_{ij}\}$, we can reformulate $\k^F$ as  the  eigenvalues of $\{\hat{h}_j^i\}=\{\hat{g}^{ik}\hat{h}_{kj}\}$.

The anisotropic Gauss-Weingarten type formula and the anisotropic Gauss-Codazzi type equation are as follows.
\begin{lemma}[\cite{Xia13}*{Lemma 2.5}] \label{lem2-1}
	\begin{eqnarray}%\nonumber%\label{Gauss}
		\partial_i\partial_j X=-\hat{h}_{ij}\nu_F+\hat{\nabla}_{\partial_i } \partial_j+\hat{g}^{kl}A_{ijl}\partial_kX; \;\;\;\hbox{ (Gauss formula)}\label{equ:Gauss-formula}
	\end{eqnarray}
	\begin{eqnarray}\label{Weingarten}
		\partial_i \nu_F=\hat{g}^{jk}\hat{h}_{ij}\partial_k X;\;\; \hbox{ (Weingarten formula) }
	\end{eqnarray}
	$$\hat{R}_{i j k \ell}=\hat{h}_{i k} \hat{h}_{j \ell}-\hat{h}_{i \ell} \hat{h}_{j k}+\hat{\nabla}_{\partial_{\ell}} A_{j k i}-\hat{\nabla}_{\partial_k} A_{j \ell i}+A_{j k}^m A_{m \ell i}-A_{j \ell}^m A_{m k i} ;\quad	\text{(Gauss equation)}$$
	\begin{eqnarray}\label{Codazzi}
		\hat{\nabla}_k\hat{h}_{ij}+\hat{h}_j^lA_{lki}=\hat{\nabla}_j\hat{h}_{ik}+\hat{h}_k^lA_{lji}. \;\;\hbox{ (Codazzi equation) }
	\end{eqnarray}
	Here, $\hat{R}$ is the Riemannian curvature tensor of $\hat{g}$ and  $A$ is a $3$-tensor
	\begin{eqnarray}\label{AA}
		A_{ijk}=-\frac12\left(\hat{h}_i^l Q_{jkl}+\hat{h}_j^l Q_{ilk}-\hat{h}_k^l Q_{ijl}\right),
	\end{eqnarray} where $Q_{ijk}=Q(\nu_F)(\partial_i X, \partial_j X, \partial_k X)$. Note that the $3$-tensor $A$ on $(\Sigma,\hat{g})\to (\mathbb{R}^{n+1},G)$ depends on $\hat{h}_i^j$. It is straightforward to see that $Q$ is totally symmetric in all three indices, while $A$ is only symmetric for the first two indices.
\end{lemma}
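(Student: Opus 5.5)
Lemma \ref{lem2-1} is quoted from \cite{Xia13}*{Lemma 2.5}, so my plan is to re-derive it in the present half-space setting, where it is purely local. The boundary $\partial\Sigma$ plays no role. The setup is to view $(\mathbb{R}^{n+1},G)$ as a Riemannian manifold whose metric is evaluated at the point $\nu_F(X)$ rather than at $X$. The key structural facts are the identities following \eqref{equ:Q_0}: $G(\nu_F)(\nu_F,\nu_F)=1$, $G(\nu_F)(\nu_F,\partial_kX)=0$, and $Q(\nu_F)(\nu_F,\cdot,\cdot)=0$. These come from the $1$-homogeneity of $F^0$, which makes $\frac12(F^0)^2$ $2$-homogeneous, so $Q$ is $(-1)$-homogeneous and annihilates the radial direction.

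\textbf{Weingarten formula.} Differentiate $G(\nu_F)(\nu_F,\nu_F)=1$ in $\partial_i$. This gives
\[
2G(\nu_F)(\partial_i\nu_F,\nu_F)+Q(\nu_F)(\partial_i\nu_F,\nu_F,\nu_F)=0,
\]
and the $Q$-term vanishes, so $\partial_i\nu_F$ is $G$-orthogonal to $\nu_F$ and hence tangent. The coefficients are then read off from $\hat h_{ij}=G(\nu_F)(\partial_i\nu_F,\partial_jX)$. I expect $\hat D_{\partial_i}\nu_F=\partial_i\nu_F$ in this pairing: the Christoffel correction is of type $Q(\nu_F)(\nu_F,\cdot,\cdot)$, which vanishes.

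\textbf{Gauss formula.} Decompose $\partial_i\partial_jX$ into its $\nu_F$ component and its tangential component.
\begin{itemize}
\item For the normal part, differentiate $G(\nu_F)(\nu_F,\partial_jX)=0$. The term $Q(\nu_F)(\partial_i\nu_F,\nu_F,\partial_jX)$ vanishes, and the remaining terms give the coefficient $-\hat h_{ij}$; this also shows $\hat h$ is symmetric.
\item For the tangential part, compute $G(\nu_F)(\partial_i\partial_jX,\partial_kX)$ and compare it with the Koszul formula for $\hat g_{ij}=G(\nu_F)(\partial_iX,\partial_jX)$. Differentiating $\hat g_{jk}$ in $\partial_i$ yields the extra term $Q(\nu_F)(\partial_i\nu_F,\partial_jX,\partial_kX)=\hat h_i^lQ_{ljk}$.
\item Running the Koszul combination then gives $\hat g(\hat\nabla_{\partial_i}\partial_j,\partial_k)$ equal to $G(\partial_i\partial_jX,\partial_kX)$ plus $\frac12(\hat h_i^lQ_{ljk}+\hat h_j^lQ_{lik}-\hat h_k^lQ_{lij})$. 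This is exactly $-A_{ijk}$ as defined in \eqref{AA}, which produces the term $\hat g^{kl}A_{ijl}\partial_kX$.
\end{itemize}

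\textbf{Codazzi and Gauss equations.} For Codazzi, commute derivatives in $\partial_k\partial_i\nu_F=\partial_i\partial_k\nu_F$. Expand $\partial_k(\hat h_i^l\partial_lX)$ with the Gauss formula, take the tangential component, and lower the index with $\hat g$. The result is $\hat\nabla_k\hat h_{ij}-\hat\nabla_j\hat h_{ik}$ plus $A$-terms, which rearrange to \eqref{Codazzi}. For the Gauss equation, use $\partial_k\partial_l\partial_iX$ symmetric in $k,l$. Express this through the Gauss formula twice and through the Weingarten formula, then take the tangential $\partial_mX$ component. The curvature tensor of $\hat\nabla$ then equals $\hat h\hat h$ terms plus $\hat\nabla A$ and $AA$ terms. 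The $\hat\nabla A$ terms arise as derivatives of the connection difference $\hat\nabla-\partial$, and the $AA$ terms as its square.

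\textbf{Main obstacle.} The hardest part is the index bookkeeping in the Gauss equation: the signs of $\hat\nabla A$ and the placement of indices in $A^m_{jk}A_{m\ell i}$. This is complicated by $A$ being symmetric only in its first two indices, and by needing $\hat R_{ijk\ell}$ with Xia's sign convention. I would fix conventions by checking the case $F\equiv1$, where $Q=0$, $A=0$, and the classical equations must be recovered. Beyond that check, I would follow \cite{Xia13} closely rather than re-derive every sign.
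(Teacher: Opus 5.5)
Your proposal is correct and follows the standard local computation behind \cite{Xia13}*{Lemma 2.5}, which the paper quotes without proof. As you describe, the $G(\nu_F)$-decomposition gives the Weingarten formula and the $-\hat h_{ij}\nu_F$ term, the Koszul step yields exactly $-A_{ijk}$, and commuting $\partial_k\partial_i\nu_F$ and $\partial_k\partial_\ell\partial_iX$ gives the Codazzi and Gauss equations. The sign bookkeeping you were worried about needs no lookup: write the projected flat connection as $\hat\nabla+A^{\sharp}$ and apply the standard curvature formula for a connection plus a difference tensor, using that $A$ is symmetric in its first two indices. This reproduces the stated Gauss equation verbatim under the convention $\hat R_{ijk\ell}=\hat g(\hat R(\partial_k,\partial_\ell)\partial_j,\partial_i)$.
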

It is convenient to regard $X$ and $\nu_F$ as $\mathbb{R}^{n+1}$-valued functions in a fixed Cartesian frame; accordingly, the derivatives $\partial_i\partial_j X$ and $\partial_i \nu_F$ are taken in the classical sense of partial differentiation of vector functions. We also denote $X_i=\partial_iX$ and $X^i=\hat{g}^{ij}X_j$. For $X \in \Sigma \subset \mathbb{R}^{n+1}$, choose local coordinate
$$\{y^{i}\}_{i=1}^{n+1}\ \ in\ \ \mathbb{R}^{n+1} ,$$ such that $\pa/\pa y^{i}, i=1, \cdots ,n$ are tangent to $\Sigma$ and $\pa/\pa y^{n+1}= \nu_{F}$ is the the unit anisotropic outer normal of $\Sigma$. By identification, $\pa_{\al}=\pa/\pa y^{\al}=\pa_{\al}X$ for $\al=1, \cdots, n$. Given the standard volume from $\Omega$ (Lebesgue measure) in $\mathbb{R}^{n+1}$, the anisotropic measure on $\Sigma$ can be interpreted as
\begin{equation}\label{qqp}
d{\mu}_{F}= \Omega(\nu_{F}, \pa_{1}, \cdots, \pa_{n})dy^{1}\cdots dy^{n}=F(\nu)\Omega (\nu, \pa_{1}, \cdots, \pa_{n})dy^{1}\cdots dy^{n},
\end{equation}
see also in \cite{Xia13}.

It is direct to see that for $\Sigma=\W$,  the anisotropic normal $\nu_F$ is just the position vector and the anisotropic principal curvatures are equal to 1. Then for case that $\Sigma=\mathcal{C}_{\omega_0}\subset\mathcal{T}(\W)$, we have $\nu_F(\Sigma)=\mathcal{T}^{-1}\nu_{{F}}=X(\Sigma)-\omega_0E_{n+1}^F$. %Then we use the notations $\mathring{g}, \mathring{\nabla}$ and $\mathring{R}$ to denote the induced metric in the Wulff shape $\cc$ from $(\mathbb{R}^{n+1}, G)$, and its Levi-Civita connection and curvature tensor, respectively.
Notice that $\mathring{g}:=G(\mathcal{T}^{-1}\xi)|_{T_{\xi}\mathcal{C}_{\omega_0}}$  for $\xi\in\mathcal{C}_{\omega_0}$.
 From \cite{Xia-2017-convex}*{Proposition 2.2}, one can easily see the following lemma.
\begin{lemma}\label{lemma:Gauss-Weingarten}
	Let $X \in \mathcal{C}_{\omega_0}\subset\mathcal{T}(\W)=\W+\omega_0E_{n+1}^F$. %and $\Sigma=\cc$. % $\mathring{g}_{ij}=\hat{h}_{ij}$.
	 We also have$$
	A_{i j k}=-\frac{1}{2} Q_{i j k}=-\frac{1}{2} Q(\mathcal{T}^{-1}X)\left(\partial_i X, \partial_j X, \partial_k X\right),
	$$
	and
	\begin{align}\label{equ:Qjikl-syms}
		\mathring{\nabla}_i Q_{j k l}=\mathring{\nabla}_j Q_{i k l} .
	\end{align}
	The Gauss-Weingarten formula and the Gauss equation are as follows:
	\begin{align*}
		\partial_i \partial_j X & =-\mathring{g}_{i j} \mathcal{T}^{-1}X+\mathring{\nabla}_{\partial_i} \partial_j-\frac{1}{2} \mathring{g}^{k l} Q_{i j l} \partial_k X ; \text { (Gauss formula) } \\
		\partial_i \nu_F & =\partial_i X ; \text { (Weingarten formula) }\\
		\mathring{R}_{i j k l} & =\mathring{g}_{i k} \mathring{g}_{j l}-\mathring{g}_{i l} \mathring{g}_{j k}+\frac{1}{4} \mathring{g}^{p m} Q_{j k p} Q_{m l i}-\frac{1}{4} \mathring{g}^{p m} Q_{j l p} Q_{m k i}. \text { (Gauss equation) }
	\end{align*}
	Here $\mathring{R}$ denotes the  curvature tensor induced by $\mathring{g}$.
\end{lemma}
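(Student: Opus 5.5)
The plan is to reduce everything to the corresponding statement for the Wulff shape $\W$ itself, which is Xia's \cite{Xia-2017-convex}*{Proposition 2.2}, and then transport it by the translation $\mathcal{T}$. Since $\cc=\mathcal{T}(\W\cap\{x_{n+1}\ge-\omega_0\})$ and $\mathcal{T}$ is a Euclidean translation by the constant vector $\omega_0E^F_{n+1}$, any local parametrization $X$ of $\cc$ has the form $X=\hat X+\omega_0E_{n+1}^F$, where $\hat X=\mathcal{T}^{-1}X$ parametrizes a piece of $\W$. Consequently $\partial_iX=\partial_i\hat X$ and $\partial_i\partial_jX=\partial_i\partial_j\hat X$, and all metric quantities are evaluated at $G(\mathcal{T}^{-1}X)$ by the very definition of $\mathring g$. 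So it suffices to verify the formulas for $\hat X\in\W$, with $\nu_F=\hat X$.

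For $\W$, the first step is to observe that the anisotropic normal is the position vector: $\nu_F(\hat X)=\hat X$. Indeed $\Psi$ is the inverse of the Euclidean Gauss map of $\W$ composed appropriately, or equivalently $F^0(\hat X)=1$ gives $\nu=DF^0(\hat X)/|DF^0|$ and $\Psi(\nu)=\hat X$ by duality. This immediately gives the Weingarten formula $\partial_i\nu_F=\partial_i\hat X=\partial_iX$. Hence $\hat h_i^j=\delta_i^j$ and $\hat h_{ij}=\hat g_{ij}=\mathring g_{ij}$. Substituting $\hat h^l_i=\delta^l_i$ into \eqref{AA} yields
\[
A_{ijk}=-\tfrac12\bigl(Q_{jki}+Q_{ikj}-Q_{ijk}\bigr)=-\tfrac12Q_{ijk},
\]
using the total symmetry of $Q$. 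Plugging $\hat h_{ij}=\mathring g_{ij}$ and this $A$ into the Gauss formula \eqref{equ:Gauss-formula} of Lemma \ref{lem2-1} gives the stated Gauss formula, with $\nu_F=\mathcal{T}^{-1}X$.

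For the symmetry \eqref{equ:Qjikl-syms}, I would apply the Codazzi equation \eqref{Codazzi} with $\hat h_{ij}=\mathring g_{ij}$, so that $\hat\nabla\hat h=0$ and $\hat h^l_j=\delta^l_j$. This gives $A_{jki}=A_{kji}$, which is just symmetry and carries no information. So instead I would argue directly. In the coordinates $y$ we have $Q_{jkl}=\partial^3_{abc}(\tfrac12(F^0)^2)(\hat X)\,\partial_j\hat X^a\partial_k\hat X^b\partial_l\hat X^c$. Differentiating in $i$ produces a fourth-derivative term, which is symmetric in $i,j$, together with terms involving $\partial_i\partial_j\hat X$. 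Converting $\partial_i$ to $\mathring\nabla_i$ and inserting the Gauss formula, the $\mathring\nabla_i\partial_j$ terms cancel against the Christoffel corrections. What remains are $-\mathring g_{ij}Q(\hat X,\cdot,\cdot)=0$ by \eqref{equ:Q_0}, and $-\tfrac12 Q_{ijm}\mathring g^{mp}Q_{pkl}$-type terms, all of which are symmetric in $i,j$. This yields \eqref{equ:Qjikl-syms}.

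Finally, the Gauss equation follows by substituting $\hat h=\mathring g$ and $A=-\tfrac12Q$ into the Gauss equation of Lemma \ref{lem2-1}. The derivative terms $\hat\nabla_lA_{jki}-\hat\nabla_kA_{jli}$ equal $-\tfrac12(\mathring\nabla_lQ_{jki}-\mathring\nabla_kQ_{jli})$, and these vanish by \eqref{equ:Qjikl-syms} together with the total symmetry of $Q$. The quadratic terms give $\tfrac14\mathring g^{pm}Q_{jkp}Q_{mli}-\tfrac14\mathring g^{pm}Q_{jlp}Q_{mki}$. The only step requiring real care is the bookkeeping in the proof of \eqref{equ:Qjikl-syms}, namely tracking the Christoffel symbols of $\mathring g$ versus the ambient derivative. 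I expect this to be routine given \eqref{equ:Q_0}.
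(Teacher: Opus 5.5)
Your proposal is correct and follows essentially the paper's route: the paper simply transports Xia's Proposition 2.2 \cite{Xia-2017-convex} from $\W$ to $\mathcal{C}_{\omega_0}$ by the translation $\mathcal{T}$, and you do the same, additionally re-deriving Xia's result from Lemma \ref{lem2-1} and a direct differentiation of $Q_{jkl}$. One small correction in that differentiation: the quadratic terms $\mathring{g}^{mp}Q_{ikp}Q_{mjl}$ and $\mathring{g}^{mp}Q_{ilp}Q_{mjk}$ are not individually symmetric in $i,j$ (swapping $i,j$ exchanges them); only the full sum $\mathring{g}^{mp}(Q_{ijp}Q_{mkl}+Q_{ikp}Q_{mjl}+Q_{ilp}Q_{mjk})$ over the three pairings of $\{i,j,k,l\}$ is symmetric, and this in fact shows that $\mathring{\nabla}Q$ is totally symmetric.
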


\subsection{Anisotropic capillary support function }
The usual support function of $\Sigma$ is defined by
$$u(X)=\< X,\nu(X) \> .\ $$
From  \cite{Xia2017}, we can define the anisotropic support function of $X \in \Sigma$ with respect to $F$ (or $\W$) as follows
$$\hat{s}(X):=G(\nu_F)(\nu_F,X).$$
It is easy to see that
\begin{equation}
	\label{u-hatu}
	\hat{s}(X)=\frac{u(X)}{F(\nu(X))}.
\end{equation}
Indeed,\begin{align}\label{equ:G(vF,Y)=<v,Y>/F}
	G(\nu_F)(\nu_F,Y)=\<DF^0(DF(\nu)),Y\>=\<\frac{\nu}{F(\nu)},Y\>=\frac{\<Y,\nu\>}{F(\nu)},
\end{align}
for any $Y\in \mathbb{R}^{n+1}$, where we used  $DF^0(DF(x))=\frac{x}{F(x)}$ (see e.g., \cite{Xia-phd}).
Instead of using the usual anisotropic Gauss map $\nu_F$, it is more convenient to use the following map
\begin{eqnarray}\label{cap Gauss map}
	\widetilde {\nu_F}:= \mathcal{T} \circ \nu_F: \Sigma \to \mathcal{C}_{\omega_0},\end{eqnarray}
which we call {\it anisotropic capillary Gauss map} of $\Sigma$.
For anisotropic $\omega_0$-capillary hypersurface $\Sigma$, we define the anisotropic capillary support function $\bar{s}$ by
\begin{align}\label{equ:u}
	\bar{s}(X)=\frac{G(\nu_F(X))(X,\nu_{F}(X))}{G(\nu_F(X))(\widetilde{\nu}_{F}(X),\nu_{F}(X))},  \quad X\in\Sigma,
\end{align}
where the quantity $\bar{s}$ is well-defined thanks to \cite[Proposition 3.2]{Jia-Wang-Xia-Zhang2023}.
Similar to \eqref{u-hatu}, the support function \eqref{equ:u} is equivalently defined by
\begin{align*}%\label{equ:u-}
	\bar{s}(X)=\frac{\hat{s}(X)}{1+\omega_0 G(\nu_F(X))(\nu_F(X),E_{n+1}^F)}=\frac{\<X, \nu(X)\>}{F(\nu(X))+\omega_0 \<\nu(X),E_{n+1}^F\>}.\
\end{align*}

%, and $|\cdot |_F$ denotes the anisotropic area defined by \eqref{equ:F-area}.

%There holds the following anisotropic Minkowski integral formula for $\omega_0$-capillary hypersurface.
%\begin{lemma}[Jia-Wang-Xia-Zhang, 2023, \cite{Jia-Wang-Xia-Zhang2023}*{Theorem 1.3}] \label{Thm1.2}
%	Assume  $(\Sigma,g) \subset \overline{\mathbb{R}_{+}^{n+1}}$ is a $C^2$ compact anisotropic $\omega_0$-capillary hypersurface, where $\omega_0 \in\left(-F\left(E_{n+1}\right)\right.,$ $\left.F\left(-E_{n+1}\right)\right)$.
% Then for  $0 \leq k \leq n-1$, it holds
%\begin{align}\label{equ:Minkow}
%	\int_{\Sigma} H_{k}^F\left(1+\omega_0G(\nu_F)( \nu_F, E_{n+1}^F)\right)-H_{k+1}^F\hat{ u} \mathrm{~d}\hat{\mu}=0.
%\end{align}

%\end{lemma}

%The boundary condition \eqref{equ:w0-capillary} implies that $\omega_0 \in(-F(E_{n+1}), F(-E_{n+1}))$ by the Cauchy-Schwarz inequality $\<x,z\>\leq F^0(x)F(z)$ (see \cite{Jia-Wang-Xia-Zhang2023}).
%This Cauchy-Schwarz inequality also implies that $\bar{ u}$ in \eqref{equ:u} is well-defined, i.e.
%\begin{proposition}[\cite{Jia-Wang-Xia-Zhang2023}*{Prop 3.2}]\label{prop2.1}
%	For $\omega_0 \in(-F(E_{n+1}), %%F(-E_{n+1}))$, it holds that
%	\begin{align*}
	%		F(z)+\omega_0\<z,E_{n+1}^F\>>0,\quad\text{for any}\ z\in \mathbb{S}^n.
	%	\end{align*}
%\end{proposition}
%By Proposition \ref{prop2.1}, we know  definition of $\bar{u }$ in \eqref{equ:u} is meaningful.

\subsection{General quermassintegrals for anisotropic capillary hypersurfaces}

For the given  Wulff shape $\W\subset\mathbb{R}^{n+1}$ determined by  $F$, and for the constant $\omega_0\in(-F(E_{n+1}),F(-E_{n+1}))$, we denote
$\overline{\W}=\partial \mathcal{C}_{\omega_0},$
as an $(n-1)$-dimensional Wulff shape in $\mathbb{R}^n$  with support function $\bar{F}$.%, and $-\omega_0E_{n+1}^F$ as the new origin in $\mathbb{R}^n=\{x_{n+1}=-\omega_0\}$.

 Recently, the following geometric functions were introduced by Ding, Gao and Li \cite{Arxiv1}, which can be considered as the suitable quermassintegrals for anisotropic capillary hypersurfaces in the half-space  $\overline{\mathbb{R}_+^{n+1}}$ with contact constant $\omega_{0}$:
 \begin{align*}
\mathcal{V}_{0,\omega_0}(\Sigma) &:= |\widehat{\Sigma}|, \\
\mathcal{V}_{1,\omega_0}(\Sigma) &:= \frac{1}{n+1}\left(|\Sigma|_F+\omega_0|\widehat{\partial\Sigma}|\right),\\
\mathcal{V}_{k+1,\omega_0}(\Sigma)&:= \frac{1}{n+1}\left(\int_{\Sigma}H^F_kF(\nu)d\mu_g+\frac{\omega_0}{n}\int_{\partial\Sigma} H_{k-1}^{\bar{F}}\bar{F}(\bar{\nu})ds\right),\ 1\leq k\leq n.
\end{align*}	
Here $\bar{\nu}(X)$ is also the  unit co-normal of $\overline{\W}\subset \mathbb{R}^n$, and $H_k^{\bar{F}}$ (with $0\leq k\leq n-1$ and $H_0^{\bar{F}}=1$) denotes the normalized anisotropic $k$-th mean curvature of $\partial\Sigma\subset\mathbb{R}^n$ relative to the Wulff shape $\overline{\mathcal{W}}$.
In particular, one has
\begin{align}\label{equ:Vk=Vkkklll}
	\mathcal{V}_{k+1,\omega_0}(\Sigma)=\frac{1}{n+1}\int_{\Sigma}H_k^F\(1+\omega_0G(\nu_{{F}})(\nu_{{F}},E_{n+1}^F)\) {\rm d}\mu_F, \;\;k=0,\cdots, n,
\end{align}	
and $\mathcal{V}_{k,\omega_0}(\mathcal{C}_{\omega_0})=\Vol(\mathcal{C}_{\omega_0}):=|\mathcal{C}_{\omega_0}|$ for $k=0,\cdots,n+1$, see \cite{Arxiv1}*{Lemma 7.6}.

%We remark that, if $\W=\mathbb{S}^n$, then $F(\mathbb{S}^n)=1,H_k^F=H_k,\nu_{{F}}=\Psi(\nu)=\nu,E_{n+1}^F=E_{n+1},\cos\theta=-\omega_0,\<\nu,\bar{\nu}\>=\sin\theta,\bar{F}(\mathbb{S}^{n-1})=\sin\theta, \overline{\W}=\sin\theta\cdot\mathbb{S}^{n-1},S_{\bar{F}}=\sin\theta\bar{h}$,   and $ H^{\bar{F}}_{k-1}=\sin^{k-1}\theta H^{\partial\Sigma}_{k-1}$, where $H_k$ is normalized $k$-mean curvature of $(\Sigma,g)\subset(\mathbb{R}^{n+1}, \<\cdot,\cdot\>)$, $\bar{h}$ and $H^{\partial\Sigma}_{k-1}$ are the Weingarten map and  normalized $k$-mean curvature of $\partial\Sigma\subset\mathbb{R}^n$, and $\theta$ is the contact angle. Thus we have
%\begin{align*}
	%\mathcal{V}_{k+1,\omega_0}(\Sigma)=
	%\frac{1}{n+1}
	%\left(
	%\int_{\Sigma}H_kd\mu_g
	%-\frac{\cos\theta\sin^k\theta}{n}
	%\int_{\partial\Sigma} H_{k-1}^{\partial\Sigma}ds
	%\right), \quad k=1,\cdots,n,
%\end{align*}
%which matches the definition of  isotropic quermassintegrals \cite{Wang-Weng-Xia}*{eq.(1.8)} for  capillary hypersurfaces in the half-space.

The quermassintegrals defined above are well-defined, as they satisfy the following variational formulas:

\begin{proposition}[\cite{Arxiv1}*{Theorem 6.2}]\label{Thm:p_t-Vk}
	Let $\Sigma_t \subset \overline{\mathbb{R}_{+}^{n+1}}$ be a family of smooth anisotropic $\omega_0$-capillary hypersurfaces supported by $\partial \overline{\mathbb{R}_{+}^{n+1}}$ with a constraint condition $\omega_0 \in\left(-F(E_{n+1}),F(-E_{n+1})\right)$, given by the embedding $X(\cdot, t): \Sigma \rightarrow \overline{\mathbb{R}_{+}^{n+1}}$ satisfying
	\begin{align}\label{equ:p_tX=fv}
		\partial_t X=f \nu_F+T,
	\end{align}
	for a smooth function $f$ and vector field $T\in T\Sigma_t$. Then for $-1 \leq k \leq n$,
	\begin{align}
		\frac{d}{d t} \mathcal{V}_{k+1, \omega_0}\left({\Sigma_t}\right)=\frac{n-k}{n+1} \int_{\Sigma_t} f H^F_{k+1} d \mu_F.
		\label{equ:Thm5}
	\end{align}
\end{proposition}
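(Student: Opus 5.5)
The first-variation formula \eqref{equ:Thm5} will be derived by writing $\mathcal{V}_{k+1,\omega_0}$ as in \eqref{equ:Vk=Vkkklll} and differentiating each piece along the flow \eqref{equ:p_tX=fv}. The first step is to discard the tangential part $T$. Since both $\Sigma_t$ and its boundary constraint are preserved under reparametrization, $T$ only changes the parametrization; its contribution is a boundary flux term. I would show that this flux vanishes because $T|_{\partial\Sigma}$ must be tangent to $\partial\overline{\mathbb{R}^{n+1}_+}$ in the correct way. Hence it suffices to treat $\partial_t X=f\nu_F+T$ with $T$ chosen so that $X(\partial\Sigma,t)\subset\partial\overline{\mathbb{R}^{n+1}_+}$. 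Near the boundary $T$ cannot simply be taken to be zero, and the capillary condition \eqref{equ:w0-capillary} is what determines the boundary component of $T$.

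The next step is to compute the evolution equations using Lemma \ref{lem2-1}:
\begin{align*}
\partial_t\nu_F&=-\hat\nabla f+\hat h(T),\\
\partial_t\hat h^i_j&=-\hat\nabla^i\hat\nabla_j f-f\hat h^i_k\hat h^k_j+\text{($Q$-terms)}+\text{(Lie derivative along }T),\\
\partial_t(d\mu_F)&=(fH_F+\hat{\rm div}\,T)\,d\mu_F.
\end{align*}
Following Xia and \cite{Arxiv1}, I will write the anisotropic Hessian in a divergence-compatible form so that the $Q$-terms combine with the Christoffel corrections. The key identity is that $\hat\sigma_{k+1}^{ij}$, the Newton tensor with respect to $\hat h$, is divergence free in the sense appropriate to the anisotropic connection, i.e. the correction involving $A_{ijk}$ cancels by the Codazzi equation \eqref{Codazzi}. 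This gives
\[
\frac{d}{dt}\int_\Sigma H_k^F\,d\mu_F=(k+1)\binom{n}{k}^{-1}\!\!\int_\Sigma\Big(-\hat\nabla_i(\sigma_k^{ij}\hat\nabla_j f)+f\,\sigma_{k+1}\Big)d\mu_F+\text{boundary terms}.
\]
To this I add the derivative of the weight $\omega_0 G(\nu_F)(\nu_F,E^F_{n+1})$. By \eqref{equ:G(vF,Y)=<v,Y>/F} this weight equals $\omega_0\langle\nu,E^F_{n+1}\rangle/F(\nu)$, and its derivative is $-\omega_0\,G(\hat\nabla f,E^F_{n+1})$ plus $T$-terms. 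Integrating by parts converts this into $f\,\sigma_k^{ij}\hat\nabla_{ij}\langle\cdot\rangle$-type terms. I then apply a Minkowski-type identity, namely that $\hat\nabla_i\hat\nabla_j G(\nu_F)(\nu_F,E)$ equals $\hat h_{ij}$ times a term plus $Q$-terms, which trades these for $fH^F_{k+1}$ and $fH^F_k$ pieces. Combining everything should leave $(n-k)\int fH^F_{k+1}d\mu_F$ in the interior.

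The remaining, and most delicate, step is the boundary. The boundary terms consist of $\int_{\partial\Sigma}\sigma_k^{ij}\hat\nabla_j f\,\mu_{F,i}$, flux terms from $T$, and the contributions from the weight. I expect to show that these cancel by using two facts:
\begin{enumerate}
\item the anisotropic co-normal $\mu_F$ of $\partial\Sigma$ in $\Sigma$ is a principal direction of $\hat h$ along $\partial\Sigma$, which follows from differentiating \eqref{equ:w0-capillary} tangentially;
\item $\hat\nabla_{\mu_F}f$ equals a multiple of $f$ that is compensated exactly by the $\omega_0$-weight, which follows from differentiating \eqref{equ:w0-capillary} in $t$.
\end{enumerate}
This boundary cancellation is the main obstacle. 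An alternative route avoids it. One could compute the derivative only for $k=-1$ (volume) and $k=0$ (the energy \eqref{f1}, handled directly in \cite{Jia-Wang-Xia-Zhang2023}), and then obtain the higher $k$ cases from the equivalent boundary form of $\mathcal{V}_{k+1,\omega_0}$ involving $\int_{\partial\Sigma}H^{\bar F}_{k-1}\bar F$. In that approach the first variation of the boundary integral in $\mathbb{R}^n$ is computed by the classical closed anisotropic formula. If the interior computation becomes unwieldy, I would switch to this route.
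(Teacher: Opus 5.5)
The paper does not prove this proposition; it is quoted from \cite{Arxiv1}. The argument modelled on the isotropic case \cite{Wang-Weng-Xia} works from the definition. It varies $\int_\Sigma H^F_kF(\nu)\,d\mu_g$ and $\int_{\partial\Sigma}H^{\bar F}_{k-1}\bar F(\bar\nu)\,ds$ separately. It then matches the boundary leftovers using the relation between $\hat h|_{T\partial\Sigma}$ and the anisotropic curvature of $\partial\Sigma\subset\mathbb{R}^n$ relative to $\overline{\mathcal W}$, and the normal speed of $\partial\Sigma_t$ in $\mathbb{R}^n$. That is your fallback route. Note that there the cases $k=-1,0$ do not produce the higher ones, and you still need the interior variation with its boundary terms, so it does not avoid the boundary analysis.

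Your main route, differentiating the single weighted integral \eqref{equ:Vk=Vkkklll}, is genuinely different, and it does close. It never touches the $(n-1)$-dimensional geometry of $\partial\Sigma$. The price is that it assumes \eqref{equ:Vk=Vkkklll}, a Minkowski-type identity that must be available independently of the variation formula; its proof is where that boundary geometry enters.

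In the interior, write $\ell=1+\omega_0G(\nu_F)(\nu_F,E^F_{n+1})$ and let $E^\top$ be the $G(\nu_F)$-tangential part of $E^F_{n+1}$. The computation then runs as follows.
\begin{itemize}
\item By the divergence structure, $-\int\ell\,\sigma_k^{ij}\hat\nabla_{ij}f$ becomes $-\int f\,\sigma_k^{ij}\hat\nabla_{ij}\ell$, up to $A$-terms and boundary terms.
\item The weight term becomes $\omega_0\int f\,\mathrm{div}_F(\sigma_kE^\top)$, with $\mathrm{div}_FE^\top=-H_FG(\nu_F)(\nu_F,E^F_{n+1})$ by translation invariance of $d\mu_F$.
\item The correct Hessian formula is $\hat\nabla_{ij}G(\nu_F)(\nu_F,E^F_{n+1})=\hat\nabla_{E^\top}\hat h_{ij}-\hat h_{il}\hat h^l_{\ j}\,G(\nu_F)(\nu_F,E^F_{n+1})$ up to $A$- and $Q$-terms. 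It comes from Codazzi and is not ``$\hat h_{ij}$ times a term''.
\item The $\hat\nabla_{E^\top}\sigma_k$ pieces cancel, leaving $(k+1)\int f\sigma_{k+1}\bigl(\ell-\omega_0G(\nu_F)(\nu_F,E^F_{n+1})\bigr)d\mu_F=(k+1)\int f\sigma_{k+1}\,d\mu_F$, which is \eqref{equ:Thm5} after normalizing.
\end{itemize}
In your display, the factor $k+1$ should multiply only $f\sigma_{k+1}$, not the divergence term.

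The real error is in your first paragraph: the flux of $T$ does not vanish. Let $e_n$ be the $\hat g$-unit outward co-normal, which is the $\mu_F$ direction. The constraint $\langle\partial_tX,E_{n+1}\rangle=0$ together with \eqref{equ:w0-capillary} forces $\hat g(T,e_n)=\omega_0f/\langle e_n,E_{n+1}\rangle$. So the flux $\int_{\partial\Sigma}\sigma_k\ell\,\hat g(T,e_n)$ is generally nonzero. It must cancel the boundary term $-\omega_0\int_{\partial\Sigma}f\sigma_k\,G(\nu_F)(e_n,E^F_{n+1})$ that comes from integrating the weight variation by parts. Your later paragraph does treat it this way, which contradicts the first.

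The identity that makes every boundary cancellation work, and which your sketch lacks, comes from a short computation. Expand $E^F_{n+1}$ in the $G(\nu_F)$-orthonormal frame $\{e_\alpha,e_n,\nu_F\}$ and pair with $E_{n+1}$, using $\langle E^F_{n+1},E_{n+1}\rangle=1$, $\langle e_\alpha,E_{n+1}\rangle=0$ and $\langle\nu_F,E_{n+1}\rangle=-\omega_0$. This gives
\[
\ell=\langle e_n,E_{n+1}\rangle\,G(\nu_F)(e_n,E^F_{n+1})\qquad\text{on }\partial\Sigma .
\]
This identity gives the flux cancellation immediately. Combined with your fact (1), it also gives $\hat\nabla_n\ell=\omega_0\hat h_{nn}G(\nu_F)(e_n,E^F_{n+1})=\frac{\omega_0\hat h_{nn}}{\langle e_n,E_{n+1}\rangle}\,\ell$. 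This is exactly the relation $\hat\nabla_nf=\frac{\omega_0\hat h_{nn}}{\langle e_n,E_{n+1}\rangle}f$ that your fact (2) yields for $f$.

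Since $\sigma_k^{\alpha n}=0$ on $\partial\Sigma$ by fact (1), the Green boundary term $\int_{\partial\Sigma}\sigma_k^{nn}\bigl(\ell\,\hat\nabla_nf-f\,\hat\nabla_n\ell\bigr)$ then vanishes, and the argument closes. All boundary integrals here are taken with respect to the measure induced by $d\mu_F$.
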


%There exists a fundamental relationship between quermassintegrals and mixed volumes in convex geometry, see \cite{arxiv2}*{Lemma 3.11}:
For the combination of anisotropic capillary convex bodies $K:=\sum_{i=1}^m \lambda_i K_i$  where $K_1, \cdots, K_m \subset {\mathbb{R}^{n+1}}$, $m\in \mathbb{N}$, $\lambda_{1}, \cdots,\lambda_{m}\geq 0$, the mixed volume $V\left(K_{i_0}, \cdots, K_{i_{n}}\right)$ is defined by
\begin{align*}
	|K|=\sum_{i_0, \cdots, i_{n}=1}^m \lambda_{i_0} \cdots \lambda_{i_{n}} V\left(K_{i_0}, \cdots, K_{i_{n}}\right).
\end{align*}
A fundamental relationship between quermassintegrals $\mathcal{V}_{k,\omega_{0}}$ and mixed volumes in convex geometry is given by the following lemma:
\begin{lemma}[\cite{arxiv2}*{Lemma 3.11}]\label{lemma:V-k=V(KKK,LLL)}
	Let ${\Sigma}$ be an anisotropic capillary hypersurface in $\overline{\mathbb{R}^{n+1}_+}$. For $-1\leq k\leq n$, we have
	\begin{align*}
		\mathcal{V}_{k+1,\omega_0}(\Sigma)	=V\(\underbrace{\widehat{\Sigma}, \cdots, \widehat{\Sigma}}_{(n-k) \text { copies }}, \underbrace{\widehat{\mathcal{C}}_{\omega_0}, \cdots, \widehat{\mathcal{C}}_{\omega_0}}_{(k+1) \text { copies }}\).
	\end{align*}
\end{lemma}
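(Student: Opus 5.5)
The plan is to compare two polynomial expansions: one coming from the definition of mixed volumes, the other from integrating the variational formula of Proposition \ref{Thm:p_t-Vk}. Consider the one-parameter family of anisotropic capillary convex bodies
\[
K_t:=\widehat{\Sigma}+t\,\widehat{\mathcal{C}}_{\omega_0},\qquad t\ge 0,
\]
whose boundary hypersurface in $\overline{\mathbb{R}^{n+1}_+}$ is $\Sigma_t$. In terms of anisotropic capillary support functions on $\cc$, this family is $\bar{s}_t=\bar{s}+t$, since the capillary Wulff shape has capillary support function $\equiv 1$. Equivalently, $\Sigma_t$ is the parallel hypersurface $X_t=X+t\,\widetilde{\nu_F}=X+t(\nu_F+\omega_0E_{n+1}^F)$. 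Its normal speed is $G(\nu_F)(\partial_tX,\nu_F)=1+\omega_0G(\nu_F)(\nu_F,E^F_{n+1})$, and the remaining part is tangential. This translation along $E^F_{n+1}$ keeps the anisotropic normal fixed and preserves the $\omega_0$-capillary condition, so each $\Sigma_t$ is admissible in Proposition \ref{Thm:p_t-Vk}.

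\textbf{Step 1.} Expand $|K_t|$ in two ways. By multilinearity of mixed volumes,
\[
|K_t|=\sum_{j=0}^{n+1}\binom{n+1}{j}t^j\,V(\widehat{\Sigma}[n+1-j],\widehat{\mathcal{C}}_{\omega_0}[j]).
\]
On the other hand, apply Proposition \ref{Thm:p_t-Vk} with $k=-1$ and $f=1+\omega_0G(\nu_F)(\nu_F,E^F_{n+1})$. Using \eqref{equ:Vk=Vkkklll}, this gives
\[
\frac{d}{dt}\mathcal{V}_{0,\omega_0}(\Sigma_t)=(n+1)\mathcal{V}_{1,\omega_0}(\Sigma_t),
\]
and more generally, for $-1\le k\le n-1$,
\[
\frac{d}{dt}\mathcal{V}_{k+1,\omega_0}(\Sigma_t)=(n-k)\mathcal{V}_{k+2,\omega_0}(\Sigma_t).
\]
Also $\frac{d}{dt}\mathcal{V}_{n+1,\omega_0}=0$, so $\mathcal{V}_{n+1,\omega_0}(\Sigma_t)$ is constant. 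This constant equals $|\mathcal{C}_{\omega_0}|$, obtained by letting $t\to\infty$ after rescaling; alternatively it follows from the anisotropic capillary Gauss–Bonnet-type identity $\int H^F_n(\cdots)d\mu_F=\int_{\cc}(\cdots)$, computed through the capillary Gauss map.

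\textbf{Step 2.} Iterating the ODE system shows that $\mathcal{V}_{0,\omega_0}(\Sigma_t)$ is a polynomial in $t$ of degree $n+1$ with Taylor coefficients
\[
\frac{1}{j!}\frac{d^j}{dt^j}\Big|_{t=0}\mathcal{V}_{0,\omega_0}=\frac{(n+1)n\cdots(n+2-j)}{j!}\,\mathcal{V}_{j,\omega_0}(\Sigma)=\binom{n+1}{j}\mathcal{V}_{j,\omega_0}(\Sigma).
\]
Comparing coefficients of $t^j$ with Step 1 gives
\[
\mathcal{V}_{j,\omega_0}(\Sigma)=V(\widehat{\Sigma}[n+1-j],\widehat{\mathcal{C}}_{\omega_0}[j]).
\]
Setting $j=k+1$ yields exactly the claimed identity, including $k=-1$, where both sides equal $|\widehat{\Sigma}|$.

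The main obstacle is justifying that $K_t$, formed as a capillary Minkowski sum, really is the region bounded by the parallel family $\Sigma_t$, so that its volume is what Step 1's multilinearity expansion refers to. I would check this through support functions: $\widehat{\Sigma}$ corresponds to the capillary convex body determined by $\bar{s}$ on $\cc$ together with the flat face on $\partial\mathbb{R}^{n+1}_+$, and capillary Minkowski addition adds these support functions. For non-smooth or non-strictly-convex $\Sigma$, I would approximate by smooth strictly convex capillary hypersurfaces and pass to the limit using continuity of both sides.
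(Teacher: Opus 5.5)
The paper contains no proof of this lemma; it only cites \cite{arxiv2}, so there is no in-paper argument to compare against. Your Steiner-polynomial argument is correct and self-contained given Proposition \ref{Thm:p_t-Vk} and \eqref{equ:Vk=Vkkklll}.

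\textbf{Why it works.} The normal speed $1+\omega_0G(\nu_F)(\nu_F,E_{n+1}^F)$ of $X+t\widetilde{\nu_F}$ is exactly the weight appearing in \eqref{equ:Vk=Vkkklll}. This produces the triangular system $\frac{d}{dt}\mathcal{V}_{k+1,\omega_0}(\Sigma_t)=(n-k)\mathcal{V}_{k+2,\omega_0}(\Sigma_t)$, whose Taylor coefficients $\binom{n+1}{j}\mathcal{V}_{j,\omega_0}(\Sigma)$ match the binomial expansion of $|\widehat{\Sigma}+t\widehat{\mathcal{C}}_{\omega_0}|$ term by term.

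\textbf{The step you flag as the main obstacle.} It closes most simply by working with the ordinary Minkowski sum. For $\nu\in\mathcal{A}$ (the common Gauss image of Lemma \ref{Lemma:gaussmap}), the support point of $\widehat{\mathcal{C}}_{\omega_0}$ in direction $\nu$ is $DF(\nu)+\omega_0E_{n+1}^F$. Hence the support points of $\widehat{\Sigma}+t\widehat{\mathcal{C}}_{\omega_0}$ in directions of $\mathcal{A}$ are exactly $X+t\widetilde{\nu_F}(X)$. Moreover $h(\cdot,-E_{n+1})=0$ for both summands. So the ordinary sum is the body bounded by $\Sigma_t$ and its flat face, and $|K_t|=\mathcal{V}_{0,\omega_0}(\Sigma_t)$. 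Since $G(\nu_F)(\nu_F,Y)=\langle Y,\nu\rangle/F(\nu)$, one has $\hat{s}_K(\xi)=h(K,\nu)/F(\nu)$ whenever $\mathcal{T}^{-1}\xi=DF(\nu)$. Therefore this sum also coincides with the capillary sum of Definition \ref{def:p-sum-fcn}, which is what licenses the standard multilinear expansion in your Step 1.

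\textbf{Two superfluous remarks.}
You need not identify the constant $\mathcal{V}_{n+1,\omega_0}(\Sigma_t)$ beforehand: comparing $t^{n+1}$-coefficients yields $\mathcal{V}_{n+1,\omega_0}(\Sigma)=|\widehat{\mathcal{C}}_{\omega_0}|$ as an output.
No approximation is needed either. The left side is only defined for $C^2$ hypersurfaces and the right side only for convex bodies, so the lemma must be read for smooth convex $\Sigma$ in any case.

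\textbf{Comparison with the other natural route.} Your approach uses only the first-variation formula. The alternative is to write mixed volumes of capillary bodies as integrals over $\mathcal{C}_{\omega_0}$ of mixed discriminants of $\tau[\hat{s}_{K_i}]$, in the spirit of Lemma \ref{lemma:Wpk}. That route requires checking that the relevant mixed area measures put no mass on the edge normals $\mathbb{S}^n\setminus(\mathcal{A}\cup\{-E_{n+1}\})$ and contribute nothing at $-E_{n+1}$. In exchange, it represents every mixed volume $V(K_0,\dots,K_n)$ of capillary bodies, not just those mixing $\widehat{\Sigma}$ with $\widehat{\mathcal{C}}_{\omega_0}$.
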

%For mixed volumes in convex geometry, we have proposition:
A key proposition concerning mixed volumes in convex geometry is given as follows:
\begin{proposition}[\cite{book-convex-body}*{Theorem 7.2.3}]\label{prop:convex bodies V1}
	 Let $K, L \subset \mathbb{R}^{n+1}$ be convex bodies, and let $\mathcal{A} \subset \mathbb{S}^{n}$ be a closed set such that
$$
K=\bigcap_{u \in \mathcal{A}} H^{-}(K, u) .
$$
If $\bar{L}$ is defined by
$$
\bar{L}:=\bigcap_{u \in \mathcal{A}} H^{-}(L, u),
$$
then
\begin{align}\label{equ:convex-bodies-V1}
	V_1(K, L)^{\frac{n+1}{n}}-\Vol(K) \Vol(\bar{L})^{\frac{1}{n}} \geq\left[V_1(K, L)^{\frac{1}{n}}-r(K, L) \Vol(\bar{L})^{\frac{1}{n}}\right]^{n+1},
\end{align}
where $r(K, L)$ denotes the inradius of $K$ relative to $L$, $V_1(K,L)=V\(\underbrace{K, \cdots, K}_{n \text { copies }}, L\)$, $H^-(K,u):=\left\{x\in \mathbb{R}^{n+1}:\<x,u\>\leq h(K,u)\right\}$, and $h(K,u):=\sup \left\{ \<x,u\>:x\in K
\right\}$ for $u\in\mathbb{R}^{n+1}$.
\end{proposition}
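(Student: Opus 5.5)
The plan is to follow Diskant's argument through inner parallel bodies. Throughout we use the normalization $V_1(K,L)=V(K[n],L)$, under which the derivative of volume along Minkowski addition carries the factor $n+1$, and we write $v:=\Vol(\bar L)^{1/n}$ for the quantity appearing in \eqref{equ:convex-bodies-V1}.

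\textbf{Setup.} For $0\le\lambda\le r:=r(K,L)$ set
\[
K_{-\lambda}:=\bigcap_{u\in\mathcal{A}}\{x:\<x,u\>\le h(K,u)-\lambda h(L,u)\}.
\]
This is the inner parallel body $\{x: x+\lambda\bar L\subset K\}$, so $K_{0}=K$ and $K_{-r}$ has empty interior. Two facts will be used repeatedly.
\begin{itemize}
\item The family is concave: $(1-t)K_{-\lambda_0}+tK_{-\lambda_1}\subset K_{-((1-t)\lambda_0+t\lambda_1)}$.
\item Since $K_{-\lambda}$ is cut out by half-spaces with normals in $\mathcal{A}$, its surface area measure is concentrated on $\mathcal{A}$. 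Because $h(\bar L,\cdot)=h(L,\cdot)$ on $\mathcal{A}$, this gives $V_1(K_{-\lambda},L)=V_1(K_{-\lambda},\bar L)$.
\end{itemize}

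\textbf{Step 1 (volume as an integral).} We show
\[
\frac{d}{d\lambda}\Vol(K_{-\lambda})=-(n+1)V_1(K_{-\lambda},L)
\]
for almost every $\lambda$. The proof compares $K_{-\lambda-\epsilon}$ with $K_{-\lambda}$: each supporting half-space with normal $u\in\mathcal{A}$ moves inward by $\epsilon h(L,u)$, so the volume lost is $\epsilon\int h(L,\cdot)\,dS(K_{-\lambda},\cdot)+o(\epsilon)$. Integrating from $0$ to $r$ and using $\Vol(K_{-r})=0$ gives
\[
\Vol(K)=(n+1)\int_0^r\varphi(\lambda)^n\,d\lambda,\qquad \varphi(\lambda):=V_1(K_{-\lambda},L)^{1/n}.
\]
The target inequality is $\Vol(K)\,v\le V_1(K,L)^{(n+1)/n}-(\varphi(0)-rv)^{n+1}$. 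Its right-hand side equals $(n+1)v\int_0^r(\varphi(0)-\lambda v)^n\,d\lambda$, so it suffices to prove the pointwise bound
\[
\varphi(\lambda)\le\varphi(0)-\lambda v .
\]
Note that $\varphi(0)-\lambda v\ge 0$ on $[0,r]$, since $K\supset K_{-r}+r\bar L$ forces $V_1(K,L)\ge r^n\Vol(\bar L)$ by monotonicity.

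\textbf{Step 2 (concavity and slope).} The map $\lambda\mapsto\varphi(\lambda)$ is concave: this follows from the concave family property, monotonicity of mixed volumes, and the general Brunn–Minkowski inequality for $V(\,\cdot\,[n],L)^{1/n}$, a consequence of Aleksandrov–Fenchel. Concavity gives $\varphi(\lambda)\le\varphi(0)+\lambda\varphi'(0^+)$, so it remains to show $\varphi'(0^+)\le -v$.

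\textbf{Step 3 (the slope bound; main obstacle).} This is the step I expect to be hardest. Differentiating $V(K_{-\lambda}[n],\bar L)$ at $\lambda=0$ should give
\[
\frac{d}{d\lambda}\Big|_{0^+}V_1(K_{-\lambda},\bar L)\le -n\,V(K[n-1],\bar L,\bar L).
\]
The argument uses $K_{-\lambda}+\lambda\bar L\subset K$ and the multilinear expansion of $V\big((K_{-\lambda}+\lambda\bar L)[n],\bar L\big)$. Aleksandrov–Fenchel then gives
\[
V(K[n-1],\bar L,\bar L)\ge V_1(K,\bar L)^{(n-1)/n}\,\Vol(\bar L)^{1/n}.
\]
Combining these and using $V_1(K,\bar L)=V_1(K,L)$, we get $\varphi'(0^+)=\frac1n\varphi(0)^{1-n}\frac{d}{d\lambda}V_1\le -v$.

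The delicate point is the one-sided derivative at $\lambda=0$. The expansion only yields an inequality, because $K_{-\lambda}+\lambda\bar L$ may be strictly smaller than $K$. That inequality goes in the right direction here, but only once the boundary terms coming from normals outside $\mathcal{A}$ are controlled by the $\mathcal{A}$-support property. If the derivative is awkward to justify directly, I would first prove everything for polytopes with facet normals in $\mathcal{A}$, where all quantities are piecewise polynomial in $\lambda$, and then pass to the limit by continuity of mixed volumes.

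Inserting $\varphi(\lambda)\le\varphi(0)-\lambda v$ into Step 1 yields \eqref{equ:convex-bodies-V1}.
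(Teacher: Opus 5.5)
Your overall scheme is the standard proof of Diskant's inequality. It consists of the inner parallel bodies $K_{-\lambda}=K\sim\lambda\bar L$, the identity $\Vol(K)=(n+1)\int_0^r\varphi^n\,d\lambda$, and the pointwise bound $\varphi(\lambda)\le\varphi(0)-\lambda v$. The paper gives no proof of its own and only cites \cite{book-convex-body}.

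\textbf{Step 1 is where the real gap is.} You flag Step 3 as the main obstacle, but the substantive step is Step 1, and you only sketch it.
\begin{itemize}
\item The picture of half-spaces moving inward by $\epsilon h(L,u)$ does not control how the boundary of a non-polytopal $K_{-\lambda}$ reorganizes.
\item The inequality you actually need is $\Vol(K)\le(n+1)\int_0^r V_1(K_{-\lambda},L)\,d\lambda$. This is an upper bound on the volume lost between $K_{-\lambda}$ and $K_{-\lambda-\epsilon}$.
\item The natural inclusion $K_{-\lambda-\epsilon}+\epsilon\bar L\subset K_{-\lambda}$ only gives the reverse bound.
\end{itemize}

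A short rigorous argument runs as follows. At a regular boundary point of $K_{-\mu}$, the unique normal is an active constraint; this uses compactness of $\mathcal{A}$. Hence $h(K_{-\mu},u)=h(K,u)-\mu h(L,u)$ for $S(K_{-\mu},\cdot)$-a.e. $u$. On the other hand, $h(K_{-\lambda},u)\le h(K,u)-\lambda h(L,u)$ for all $u\in\mathcal{A}$. With $A=K_{-\lambda}$ and $B=K_{-\lambda-\epsilon}$ this gives
\begin{align*}
V(A,B[n])-\Vol(B)=\frac{1}{n+1}\int_{\mathbb{S}^n}(h_A-h_B)\,dS(B,\cdot)\leq \epsilon\, V_1(B,L).
\end{align*}
Minkowski's first inequality $V(A,B[n])\geq \Vol(A)^{\frac{1}{n+1}}\Vol(B)^{\frac{n}{n+1}}$ then applies. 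Dividing by $\epsilon$ and letting $\epsilon\to 0^+$ yields
\begin{align*}
-\frac{d^+}{d\lambda}\Vol(K_{-\lambda})\leq (n+1)V_1(K_{-\lambda},L).
\end{align*}
To integrate this one-sided bound, note that $\Vol(K_{-\lambda})^{1/(n+1)}$ is concave and nonincreasing on $[0,r]$, by Brunn--Minkowski and your concave-family inclusion. It also tends to $0$ as $\lambda\to r$. So it is Lipschitz on each $[0,r-\delta]$, and the bound integrates to the needed inequality.

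You should also record that $K_{-\lambda}=K\sim\lambda L$, which follows from the hypothesis on $K$. This is what makes the degeneration parameter exactly $r(K,L)$.

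\textbf{Steps 2 and 3 are a detour.} For $0\le\lambda<r$, the inclusion $K_{-\lambda}+\lambda\bar L\subset K$, monotonicity, and the general Brunn--Minkowski inequality you already use give the pointwise bound in one line:
\begin{align*}
\varphi(0)=V(K[n],\bar L)^{\frac1n}\geq V\big((K_{-\lambda}+\lambda\bar L)[n],\bar L\big)^{\frac1n}\geq \varphi(\lambda)+\lambda\,V(\bar L[n],\bar L)^{\frac1n}=\varphi(\lambda)+\lambda v .
\end{align*}
This needs no concavity of $\varphi$, no one-sided derivative at $0$, and no quadratic Aleksandrov--Fenchel inequality. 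There are also no ``boundary terms from normals outside $\mathcal{A}$'' to control. Your expansion inequality holds simply because the discarded mixed volumes are nonnegative.

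Your longer route is still valid. It uses $K_{-\lambda}\to K$ as $\lambda\to 0^+$, which holds because $\bar L$ is bounded, together with monotone difference quotients of the concave $\varphi$.

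Finally, you already use the facts $V_1(K,\bar L)=V_1(K,L)$ and $r(K,\bar L)=r(K,L)$. Together they show that the statement is exactly the classical Diskant inequality applied to the pair $(K,\bar L)$.
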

For the strictly  convex anisotropic $\omega_0$-capillary  hypersurface $\Sigma\subset\overline{\mathbb{R}^{n+1}_+}\subset \mathbb{R}^{n+1}$,  we take $\mathcal{A}=\left\{\nu(X):X\in \Sigma\right\}\bigcup\left\{-E_{n+1}\right\}$, then from Lemma \ref{Lemma:gaussmap}, we can check that
$$
\widehat{\Sigma}=\bigcap_{u \in \mathcal{A}} H^{-}(\widehat{\Sigma}, u),\quad \widehat{\cc}=\bigcap_{u \in \mathcal{A}} H^{-}(\widehat{\cc}, u) .
$$
Putting together the above facts, Lemma \ref{lemma:V-k=V(KKK,LLL)}, and the definition of $r(\Sigma)$ from \eqref{equ:inner radius}, the inequality \eqref{equ:convex-bodies-V1} takes the form:
%Combining with above and  Lemma \ref{lemma:V-k=V(KKK,LLL)} and  the definite of  anisotropic inner radius $r(\Sigma)$ of $\Sigma$ relative to $\mathcal{C}_{\omega_0}$ in \eqref{equ:inner radius}, the inequality \eqref{equ:convex-bodies-V1} can become:
\begin{align}\label{equ:capillary-V1-Vol}
	\mathcal{V}_{1,\omega_{0}}(\Sigma)^{\frac{n+1}{n}}-\Vol(\Sigma) \Vol(\cc)^{\frac{1}{n}} \geq\left[\mathcal{V}_{1,\omega_{0}}(\Sigma)^{\frac{1}{n}}-r(\Sigma) \Vol(\cc)^{\frac{1}{n}}\right]^{n+1}.
\end{align}
%Following a proof similar to that of Proposition 5.1 in \cite{And01} and employing equation \eqref{equ:capillary-V1-Vol}, we obtain following Proposition
Combining a proof analogous to \cite[Proposition 5.1]{And01} with inequality \eqref{equ:capillary-V1-Vol}, we obtain the following proposition.
\begin{proposition}
	\label{prop:Vol<r}
	For a strictly convex  anisotropic  $\omega_{0}$-capillary hypersurface $\Sigma$, we have
	\begin{align}
		\label{equ:Vol<r}
		\frac{\Vol(\Sigma)}{\mathcal{V}_{1,\omega_{0}}(\Sigma)}\leq r(\Sigma).
	\end{align}
	Here $r(\Sigma)$ is  defined by \eqref{equ:inner radius}.
\end{proposition}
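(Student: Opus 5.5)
The plan is to derive \eqref{equ:Vol<r} directly from inequality \eqref{equ:capillary-V1-Vol}, following the argument of \cite[Proposition 5.1]{And01}. We abbreviate
\[
V_1:=\mathcal{V}_{1,\omega_0}(\Sigma),\qquad V:=\Vol(\Sigma),\qquad c:=\Vol(\cc)^{\frac1n},\qquad r:=r(\Sigma).
\]
Two preliminary facts are needed. First, by the anisotropic capillary isoperimetric inequality, i.e.\ the Minkowski-type inequality $V_1^{\frac{n+1}{n}}\ge V c$ obtained from Lemma \ref{lemma:V-k=V(KKK,LLL)} and the classical Minkowski inequality for mixed volumes, the left-hand side of \eqref{equ:capillary-V1-Vol} is nonnegative. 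Second, $rc\le V_1^{\frac1n}$. For this, note that a translate of $r\,\widehat{\cc}$ lies inside $\widehat\Sigma$, so monotonicity of mixed volumes gives $V_1\ge V(r\widehat{\cc},\dots,r\widehat{\cc},\widehat{\cc})=r^n\Vol(\cc)$. (Here $r(\Sigma)$ is the inradius relative to $\cc$, with horizontal translations allowed, so that the capillary structure is respected.)

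Now set $x:=rc/V_1^{\frac1n}\in[0,1]$ and $y:=Vc/V_1^{\frac{n+1}{n}}\in[0,1]$. Dividing \eqref{equ:capillary-V1-Vol} by $V_1^{\frac{n+1}{n}}$ turns it into
\[
1-y\ge (1-x)^{n+1}.
\]
The target inequality $V/V_1\le r$ is equivalent to $y\le x$. By Bernoulli, $(1-x)^{n+1}\ge 1-(n+1)x$, so the inequality above only yields $y\le (n+1)x$, which is weaker by the factor $n+1$. So the naive combination is insufficient, and the real content of Andrews' proof must be used.

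To recover $y\le x$ I would follow Andrews' device and apply \eqref{equ:capillary-V1-Vol} not to $\widehat\Sigma$ alone but to the parallel family of inner bodies. For $0\le t\le r$, define the inner parallel body
\[
\widehat\Sigma_{-t}:=\bigcap_{u\in\mathcal A}H^-\bigl(\widehat\Sigma,u\bigr)\text{ with support values } h(\widehat\Sigma,u)-t\,h(\widehat{\cc},u).
\]
Its inradius is $r-t$, and it shrinks to a set of zero volume at $t=r$. Along this family one has the relation $\frac{d}{dt}\Vol(\widehat\Sigma_{-t})=-(n+1)V_1(\widehat\Sigma_{-t})$, which is the volume variation formula, Proposition \ref{Thm:p_t-Vk} with $k=-1$ and speed $f=-1$ in $\nu_F$ direction. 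Integrating this derivative from $0$ to $r$ and using the monotonicity $V_1(\widehat\Sigma_{-t})\le V_1(\widehat\Sigma)$ together with \eqref{equ:capillary-V1-Vol} along the family gives $V\le (n+1)\int_0^r V_1(\widehat\Sigma_{-t})\,dt$. Closing this into the sharp bound $V\le rV_1$ requires the concavity of $t\mapsto V_1(\widehat\Sigma_{-t})^{1/n}$ (Brunn--Minkowski), which is exactly what Andrews exploits, and this is why $\mathcal A$ is chosen to include $-E_{n+1}$: it keeps the inner bodies capillary-compatible.

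The main obstacle is this last step. One must justify that the inner parallel bodies remain admissible in the capillary class, so that Lemma \ref{lemma:V-k=V(KKK,LLL)} and Proposition \ref{prop:convex bodies V1} still apply to them, and then carry out the one-variable integration that sharpens the constant from $n+1$ down to $1$.
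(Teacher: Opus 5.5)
\paragraph{Verdict: the sharpening step fails because the target inequality is false.}
The paper gives no separate proof of this proposition. It only says that \eqref{equ:capillary-V1-Vol} combined with ``a proof analogous to \cite[Proposition 5.1]{And01}'' yields it. Your Bernoulli step is exactly that argument, and it shows what the argument actually gives:
\begin{equation*}
\Vol(\Sigma)\le (n+1)\,r(\Sigma)\,\mathcal{V}_{1,\omega_0}(\Sigma)=r(\Sigma)\bigl(|\Sigma|_F+\omega_0|\widehat{\partial\Sigma}|\bigr).
\end{equation*}
The gap is your final step, removing the factor $n+1$. It cannot be done, because \eqref{equ:Vol<r} with the normalization $\mathcal{V}_{1,\omega_0}=\frac{1}{n+1}\bigl(|\Sigma|_F+\omega_0|\widehat{\partial\Sigma}|\bigr)$ is false.

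In fact the \emph{reverse} inequality always holds:
\begin{itemize}
\item A horizontal translation changes none of $\Vol$, $\mathcal{V}_{1,\omega_0}$, $r$, so you may assume $r\,\widehat{\cc}\subset\widehat{\Sigma}$ with $r=r(\Sigma)$.
\item Since $\hat s(\xi)=\sup_{Z\in\widehat{\Sigma}}G(\hat\xi)(\hat\xi,Z)$, monotonicity gives $\hat s\ge r\ell$ on $\cc$.
\item You have $(n+1)\Vol(\Sigma)=\int_{\cc}\hat s\,\sigma_n(\tau[\hat s])\,d\mu_F$ (cf. \eqref{l5}) and $(n+1)\mathcal{V}_{1,\omega_0}(\Sigma)=\int_{\cc}\ell\,\sigma_n(\tau[\hat s])\,d\mu_F$ (from \eqref{equ:Vk=Vkkklll}).
\item Hence $\Vol(\Sigma)\ge r(\Sigma)\,\mathcal{V}_{1,\omega_0}(\Sigma)$, with equality only if $\hat s\equiv r\ell$, i.e. $\Sigma$ is a horizontal translate of $r\,\cc$.
\end{itemize}
So the printed inequality would force every strictly convex anisotropic capillary hypersurface to be homothetic to $\cc$. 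Moreover, thin slab-like capillary bodies (already for $F\equiv 1$, $\omega_0=0$) have $\Vol/(r\,\mathcal{V}_{1,\omega_0})$ arbitrarily close to $n+1$, just as for classical convex bodies. So the constant you obtained is sharp, and the statement is off by exactly this normalization factor.

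\paragraph{Why the inner-parallel-body plan cannot help.}
That plan points the wrong way. Grant $\frac{d}{dt}\Vol(\widehat{\Sigma}_{-t})=-(n+1)V_1(\widehat{\Sigma}_{-t})$.
\begin{itemize}
\item Monotonicity $V_1(\widehat{\Sigma}_{-t})\le V_1(\widehat{\Sigma})$ just reproduces $\Vol\le (n+1)\,r\,V_1$.
\item Brunn--Minkowski concavity of $t\mapsto V_1(\widehat{\Sigma}_{-t})^{1/n}$ on $[0,r]$ gives the \emph{lower} bound $V_1(\widehat{\Sigma}_{-t})\ge (1-t/r)^n V_1(\widehat{\Sigma})$. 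After integration this yields $\Vol\ge r\,V_1$, the opposite direction.
\end{itemize}
The obstacle you flag is therefore not an admissibility issue for the inner bodies; the target itself is wrong.

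\paragraph{What to prove instead.}
State the version with the factor $n+1$. For that, your first two paragraphs already constitute a complete proof: the bound $x\le 1$ from monotonicity of mixed volumes, then \eqref{equ:capillary-V1-Vol}, then Bernoulli. This corrected form is also all the paper needs. The proposition is used only in Lemma \ref{C02}, to get a positive lower bound on $r(\Sigma)$, and there the constant $n+1$ is harmless.
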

\section{Anisotropic $\omega_0$-capillary convex bodies in $\overline{\mathbb{R}^{n+1}_+}$}\label{section3}
In this section, we define the anisotropic capillary $p$-sum and the $k$-th anisotropic capillary $p$-surface area measure, and propose two prescribed measure problems including the anisotropic capillary $L_p$-Minkowski problem.% following Li and Xu \cite{Li-Xu}. %Additionally, we derive several inequalities for later use, drawing from \cite{Xia-arxiv,book-convex-body,Shenfeld19,Shenfeld22}.

\subsection{Anisotropic capillary convex bodies and convex functions}
\begin{definition}
	Given a Wulff shape $\W$ (or a Minkowski norm $F$). For $\omega_0\in\left(-F(E_{n+1}),F(-E_{n+1})\right)$, we call $\hat{\Sigma}$ an anisotropic $\omega_0$-capillary convex body if $\hat{\Sigma}$ is a convex body (a compact convex set with non-empty interior) in $\mathbb{R}^{n+1}$ and $\Sigma$ is an anisotropic $\omega_0$-capillary hypersurface in $\overline{\mathbb{R}_+^{n+1}}$.  The class of anisotropic $\omega_0$-capillary convex bodies in $\overline{\mathbb{R}_{+}^{n+1}}$ is denoted by $\mathcal{K}_{\omega_0}$. For $\hat{\Sigma}\in\mathcal{K}_{\omega_0}$, if  $\widehat{\partial\Sigma}\subset \partial\overline{\mathbb{R}_{+}^{n+1}}$ contains the origin $O$, we denote $\hat{\Sigma}\in \mathcal{K}^O_{\omega_0}$.
	
\end{definition}

The simplest capillary convex body is given by $\widehat{\mathcal{C}_{\omega_0}}\in \K_{\omega_0}$.  For any $\widehat{\Sigma}\in \mathcal {K}_{\omega_0}$ we give a parametrization of $\Sigma$   via the anisotropic capillary Gauss map $\widetilde {\nu_F}$.
\begin{lemma}[\cite{arxiv2}, Lemma 3.1]\label{Lemma:gaussmap}
	Given a Wulff shape $\W$ (or a Minkowski norm $F$), and a constant  $\omega_0\in\left(-F(E_{n+1}),F(-E_{n+1})\right)$. Denote  by $\mathcal{A}:=\{x\in\mathbb{S}^n:\<DF(x),E_{n+1}\>\geq -\omega_0\}$.  %is a connecte domain in $			\mathbb{S}^{n}$.
	For any  $\widehat{\Sigma}\in\mathcal{K}_{\omega_0}$, the following statements hold.
	\begin{itemize}
		\item [(i)]  The Gauss map $\nu$ of $\Sigma$ has its spherical image $\mathcal{A}\subset\mathbb{S}^n$, and $\nu:\Sigma \to \mathcal{A}$ is a diffeomorphism.
		
		\item [(ii)] The aniosptrpic capillary Gauss map $\widetilde{\nu_{{F}}}=\nu_{{F}}+\omega_0E_{n+1}^F$ of $\Sigma$ has its image in $\cc$, %$\mathcal{C}_{\omega_0}=\(\W+\omega_0E_{n+1}^F\)\bigcap\overline{\mathbb{R}^{n+1}_+}$,
		and  $\widetilde {\nu_F}: \Sigma\to \mathcal{C}_{\omega_0}$  %has its image in
		%\begin{eqnarray*}
		%	\mathbb{S}^n_\theta :=\{ y\in \mathbb{S}^n \,|\,  y_{n+1} \geq \cos \theta\},
		%\end{eqnarray*}
		%	and $\nu: \Sigma\to \mathbb{S}^n_\theta$
		is a diffeomorphism.
	\end{itemize}
\end{lemma}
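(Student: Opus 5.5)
The plan is to prove (i) first for the Euclidean Gauss map $\nu$, and then obtain (ii) by composing with the Cahn--Hoffman map and the translation $\mathcal{T}$. Throughout I take $\Sigma$ smooth and strictly convex, with nondegenerate second fundamental form, as in the setting where the lemma is used.

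\textbf{Step 1 (local structure).} I would show $\nu:\Sigma\to\mathbb{S}^n$ is an injective immersion.
\begin{itemize}
\item It is an immersion because $\mathrm{d}\nu$ is the Weingarten map, which is nondegenerate.
\item It is injective because two distinct points of a strictly convex hypersurface cannot share an outer normal; otherwise the supporting hyperplane would touch $\Sigma$ along a segment.
\item On $\partial\Sigma$, the capillary condition \eqref{equ:w0-capillary} reads $\<DF(\nu),E_{n+1}\>=-\omega_0$, so $\nu(\partial\Sigma)\subset\partial\mathcal{A}$ once $\partial\mathcal{A}$ is identified with the level set $\{\varphi=-\omega_0\}$, where $\varphi(x):=\<DF(x),E_{n+1}\>$.
\end{itemize}

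\textbf{Step 2 (key monotonicity).} Let $x(t)$ be a unit-speed great-circle arc in $\mathbb{S}^n$ moving towards $-E_{n+1}$, lying in a $2$-plane containing $E_{n+1}$. Then $E_{n+1}=a\,x+b\,x'$ with $b<0$ before the arc reaches $-E_{n+1}$. By the $1$-homogeneity of $F$ we have $D^2F(x)x=0$, hence
$$\tfrac{d}{dt}\varphi(x(t))=D^2F(x)(x',E_{n+1})=b\,D^2F(x)(x',x')<0.$$
The strict inequality uses that $D^2F(x)$ is positive definite on $x^\perp$, which is the positivity of $A_F$ in \eqref{equ:A_F>0}. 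Two consequences follow:
\begin{itemize}
\item $\varphi$ is strictly decreasing along such arcs, so $\nabla^{\mathbb{S}}\varphi\neq0$ on $\partial\mathcal{A}$. Thus $\partial\mathcal{A}$ is a smooth regular level set and $\mathcal{A}$ is a smooth domain, star-shaped away from $-E_{n+1}$ in the arc sense.
\item $\varphi(-E_{n+1})=-F(-E_{n+1})<-\omega_0$, because $\omega_0<F(-E_{n+1})$, so $-E_{n+1}\notin\mathcal{A}$.
\end{itemize}

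\textbf{Step 3 (global covering).} I view $\widehat\Sigma$ as a convex body with boundary $\Sigma\cup\widehat{\partial\Sigma}$. Its normal cones partition $\mathbb{S}^n$ into three pieces:
\begin{itemize}
\item $\nu(\Sigma)$;
\item the single normal $-E_{n+1}$ of the flat face;
\item for each edge point $X\in\partial\Sigma$, the open great-circle arc from $\nu(X)$ to $-E_{n+1}$.
\end{itemize}
These pieces are pairwise disjoint, by strict convexity and smoothness of $\Sigma$ together with the transversality at the edge that follows from $\omega_0<F(-E_{n+1})$.

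By Step 2, each open edge arc starts at $\varphi=-\omega_0$ and is strictly decreasing, so it lies in $\{\varphi<-\omega_0\}$. The point $-E_{n+1}$ also lies there. Every point $y$ with $\varphi(y)<-\omega_0$ must belong to one of the three pieces, and $\nu(\Sigma)$ must exhaust the rest. The monotonicity along the arc from $y$ towards $-E_{n+1}$ shows $\{\varphi<-\omega_0\}$ is swept exactly by these arcs.

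Hence $\nu(\Sigma)=\mathcal{A}$. Since $\nu$ maps $\partial\Sigma$ into $\partial\mathcal{A}$ and interior points into the interior (by disjointness), $\nu$ is a bijective immersion between compact manifolds with boundary, hence a diffeomorphism.

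\textbf{Step 4 (part (ii)).} Since $\nu_F=\Psi\circ\nu$ and $\Psi:\mathbb{S}^n\to\W$ is a diffeomorphism with $\<\Psi(x),E_{n+1}\>=\varphi(x)$, we get $\Psi(\mathcal{A})=\W\cap\{x_{n+1}\ge-\omega_0\}$. Therefore
$$\widetilde{\nu_F}=\mathcal{T}\circ\Psi\circ\nu:\Sigma\to\mathcal{T}\bigl(\W\cap\{x_{n+1}\ge-\omega_0\}\bigr)=\mathcal{C}_{\omega_0}$$
is a composition of diffeomorphisms.

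I expect the main obstacle to be Step 3: making the partition of $\mathbb{S}^n$ by normal cones rigorous at the edge $\partial\Sigma$, in particular that the only normals at edge points are the arcs described. I would first try a degree argument. The map $\nu$ has degree one onto its image, its boundary lands on $\partial\mathcal{A}$, and its image avoids $\{\varphi<-\omega_0\}$, which is covered by the edge normal cones. A connectedness argument then forces $\nu(\Sigma)=\mathcal{A}$.
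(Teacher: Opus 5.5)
The paper does not prove this lemma (it quotes Lemma 3.1 of Gao--Li), so I am judging your argument on its own. Steps 1, 2 and 4 are sound. In particular, your computation $\frac{d}{dt}\varphi(x(t))=b\,A_F(x)(x',x')<0$ along meridians is the right key fact. It shows that each meridian meets $\{\varphi=-\omega_0\}$ exactly once and transversally. Hence $\partial\mathcal{A}$ is a smooth radial graph over the equator, $\mathcal{A}$ is a closed disc around $E_{n+1}$, and $\{\varphi<-\omega_0\}$ is an open disc around $-E_{n+1}$. One small point in Step 1: the segment joining two points with a common normal may run through the flat face. Excluding that needs strict convexity of $\partial\Sigma\subset\mathbb{R}^n$. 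This follows from $h^{\partial\Sigma}(e,e)=h^{\Sigma}(e,e)/\langle\nu,\bar\nu\rangle$ for $e$ tangent to $\partial\Sigma$, where $\bar\nu$ is the outward unit normal of $\partial\Sigma$ in $\mathbb{R}^n$ and $\langle\nu,\bar\nu\rangle>0$ by transversality.

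The gap is in Step 3, in the inclusion $\nu(\Sigma)\subseteq\mathcal{A}$. Your partition does give $\mathcal{A}\subseteq\nu(\Sigma)$, because the flat-face normal and the open edge arcs lie in $\{\varphi<-\omega_0\}$. But the claim that monotonicity shows $\{\varphi<-\omega_0\}$ is swept exactly by the edge arcs does not follow. Monotonicity only says that each $y\neq-E_{n+1}$ with $\varphi(y)<-\omega_0$ lies on the meridian arc from a unique $z\in\partial\mathcal{A}$ to $-E_{n+1}$. That arc is an edge normal cone only if $z\in\nu(\partial\Sigma)$, whereas Step 1 gives only $\nu(\partial\Sigma)\subseteq\partial\mathcal{A}$. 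Your fallback degree argument assumes the same thing (``its image avoids $\{\varphi<-\omega_0\}$, which is covered by the edge normal cones''), so it is circular.

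The missing ingredient is topological, and either of the following supplies it.

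(a) $\nu|_{\partial\Sigma}:\partial\Sigma\to\partial\mathcal{A}$ is an immersion between closed $(n-1)$-manifolds, hence an open map with compact image. Since $\partial\mathcal{A}\cong\mathbb{S}^{n-1}$ is connected for $n\geq2$ (for $n=1$, injectivity on the two endpoints suffices), $\nu(\partial\Sigma)=\partial\mathcal{A}$. After that your sweeping argument is correct.

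(b) More directly, put $V=\nu(\Sigma)$. Since $\nu$ is a local diffeomorphism on $\mathrm{int}\,\Sigma$, the topological boundary of $V$ lies in $\nu(\partial\Sigma)\subseteq\partial\mathcal{A}$. So $V\cap\{\varphi<-\omega_0\}$ is open and closed in this connected set and misses $-E_{n+1}$ (by your disjointness), hence is empty. Likewise $V\cap\{\varphi>-\omega_0\}$ is open and closed and contains $E_{n+1}$, the normal at the highest point of $\Sigma$, which is an interior point; hence it is everything. Route (b) makes the normal-cone description at the edge, which you flagged as the main obstacle, unnecessary except for injectivity.

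With either fix, your final step goes through: a bijective immersion taking $\partial\Sigma$ to $\partial\mathcal{A}$ and interior to interior is a diffeomorphism. Step 4 then gives (ii) as you wrote.
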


Lemma \ref{Lemma:gaussmap} implies that we can parametrize the strictly convex anisotropic capillary hypersurface $\Sigma$ by the inverse anisotropic capillary Gauss map, i.e., $X:\mathcal{C}_{\omega_0}\to \Sigma$,  given by
\begin{eqnarray*}
	X(\xi)= \widetilde {\nu_F} ^{-1} (\xi) =\nu_F^{-1}\circ \mathcal{T}^{-1}(\xi)=\nu_F^{-1}(\xi-\omega_0E_{n+1}^F).
\end{eqnarray*}

Now we view \eqref{equ:u} and \eqref{u-hatu} as function defined in $\cc$ by our parametrization, namely

\begin{align}\label{support}
	\hat{s}(\xi): =G(\mathcal{T}^{-1}\xi)\(X(\xi),\mathcal{T}^{-1}\xi\)=G(\mathcal{T}^{-1}\xi)\(\widetilde{\nu_{{F}}}^{-1}(\xi),\xi-\omega_0E_{n+1}^F\),%\nonumber
	%\\=&\sup_{x\in\widehat{\Sigma}}G(\mathcal{T}^{-1}\xi)\(\mathcal{T}^{-1}\xi,x\).
\end{align}
and $$\bar{s}(\xi):=\frac{\hat{s}(\xi)}{1+\omega_0G(\mathcal{T}^{-1}\xi)\(E_{n+1}^F,\mathcal{T}^{-1}\xi\)},\quad  \text{for}\  \xi\in \mathcal{C}_{\omega_0},$$
 which has the property $\mathring{\nabla}_{\mu_F} \bar{s}=0$ on $\partial\mathcal{C}_{\omega_0}$, see \cite[Lemma 3.4]{Arxiv1}.

{To distinguish with the anisotropic support function of the convex body $\widehat{\Sigma}$, we call $\hat{s}$ in \eqref{support} {\it anisotropic capillary support function} of $\Sigma$ (or of $\widehat{\Sigma}$).  It is clear that  the anisotropic capillary Gauss map for $ \mathcal{C}_{\omega_0}$  is the identity map
	from $\mathcal{C}_{\omega_0} \to \mathcal{C}_{\omega_0}$ and	\begin{align*}
		\hat{s}\vert_{\cc}(\xi)=&G(\xi-\omega_0E_{n+1}^F)\(\xi  , \xi -\omega_0E_{n+1}^F\)
		\\
		=&
		G(\xi-\omega_0E_{n+1}^F)\(\xi-\omega_0E_{n+1}^F+\omega_0E_{n+1}^F  , \xi -\omega_0E_{n+1}^F\)
		\\
		=&1+\omega_0G(\xi-\omega_0E_{n+1}^F)\(E_{n+1}^F  , \xi -\omega_0E_{n+1}^F\).
	\end{align*}
	For simplicity, we denote
	\begin{eqnarray}\label{ell}
		\ell(\xi):=1+\omega_0G(\xi-\omega_0E_{n+1}^F)\(E_{n+1}^F  , \xi -\omega_0E_{n+1}^F\).
	\end{eqnarray} Therefore $\ell$ is the anisotropic capillary support function of $\mathcal{C}_{\omega_0}$ (or of $\widehat{\mathcal{C}_{\omega_0}}$) in the latter context.
	
	By computing the geometric quantities of $X$ in terms of $\hat{s}$, we have the following lemma.
	\begin{lemma}[\cite{arxiv2}*{Lemma 3.2}]\label{lemma:Xia-Lemma2.4}
		For the parametrization $X:\left(\mathcal{C}_{\omega_0},\mathring{g},\mathring{\nabla }\right)\rightarrow\Sigma$ of anisotropic $\omega_0$-capillary convex body $\hat{\Sigma}$, we have
		\begin{itemize}
			\item[(1)] $X(\xi)=\mathring{\nabla} \hat{s}(\xi) +\hat{s}(\xi)\mathcal{T}^{-1}\xi$.
			
			\item[(2)]  $\mathring{\nabla}_{\mu_F}\hat{s}=\frac{\omega_0}{F(\nu)\<\mu,E_{n+1}\>}\hat{s}$ along $\partial\mathcal{C}_{\omega_0}$. %, where $\mu_F$ is anisotropic conormal to $\pa \cc \subset \cc$ . 				
			\item[(3)] The anisotropic principal radii of $\Sigma$ at $X(\xi)$ are given by the eigenvalues of $\tau_{ij}(\xi):=\mathring{\nabla}_{e_i}\mathring{\nabla}_{e_j}\hat{s}(\xi)+\mathring{g}_{i j} \hat{s}(\xi)-\frac{1}{2} Q_{i j k} \mathring{\nabla}_{e_k} \hat{s}(\xi)$  with respect to the metric $\mathring{g}$. In particular, $(\tau_{ij})>0$ on $\mathcal{C}_{\omega_0}$.
		\end{itemize}	
	\end{lemma}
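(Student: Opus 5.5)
The plan is to work on $\Sigma$ with the parametrization $X=\widetilde{\nu_F}^{-1}:\mathcal{C}_{\omega_0}\to\Sigma$ of Lemma \ref{Lemma:gaussmap}. Each of the three items is obtained by differentiating the defining identity
\[
\hat{s}(\xi)=G(\hat{\xi})(X(\xi),\hat{\xi}),\qquad \hat{\xi}=\mathcal{T}^{-1}\xi=\nu_F(X(\xi)).
\]
The ingredients are the Weingarten formula $\partial_i\hat{\xi}=\partial_i\xi$ from Lemma \ref{lemma:Gauss-Weingarten}, the identities $G(z)(z,V)=0$ and $Q(z)(z,\cdot,\cdot)=0$ for $z\in\W$, and the Gauss formula on $\mathcal{C}_{\omega_0}$.

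\emph{Item (1).} Differentiate $\hat{s}$ in the direction $e_i$. The $\hat{\xi}$-derivative of $G$ produces the term $Q(\hat{\xi})(e_i,X,\hat{\xi})$, which vanishes by \eqref{equ:Q_0}. The term $G(\hat{\xi})(\partial_iX,\hat{\xi})$ also vanishes: $\partial_iX$ is tangent to $\Sigma$ at $X(\xi)$, and $\hat{\xi}=\nu_F$ is $G$-orthogonal to $T\Sigma$. What remains is
\[
\mathring{\nabla}_i\hat{s}=G(\hat{\xi})(X,e_i),
\]
so the tangential $\mathring{g}$-component of $X$ is $\mathring{\nabla}\hat{s}$. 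Its normal component is $G(\hat{\xi})(X,\hat{\xi})\hat{\xi}=\hat{s}\,\hat{\xi}$, since $G(\hat{\xi})(\hat{\xi},\hat{\xi})=1$. Adding the two components gives (1).

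\emph{Item (3).} Differentiate $\mathring{\nabla}_j\hat{s}=G(\hat{\xi})(X,\partial_j\xi)$ once more. This gives three contributions:
\begin{itemize}
\item[(a)] $Q(\hat{\xi})(e_i,X,e_j)$; inserting the decomposition from (1) and using $Q(\hat{\xi})(\cdot,\hat{\xi},\cdot)=0$, it becomes $Q_{ijk}\mathring{\nabla}_k\hat{s}$.
\item[(b)] $G(\hat{\xi})(\partial_iX,e_j)$, which by the Weingarten relation for $\Sigma$ is the $(i,j)$ entry of the inverse Weingarten map, i.e. $\tau_{ij}$.
\item[(c)] $G(\hat{\xi})(X,\partial_i\partial_j\xi)$, expanded with the Gauss formula of Lemma \ref{lemma:Gauss-Weingarten}; it gives $-\mathring{g}_{ij}\hat{s}+\Gamma^k_{ij}\mathring{\nabla}_k\hat{s}-\tfrac12 Q_{ijk}\mathring{\nabla}_k\hat{s}$.
\end{itemize}
The Christoffel term combines with $\partial_i\partial_j\hat{s}$ to form $\mathring{\nabla}^2_{ij}\hat{s}$, and the two $Q$ terms net to $+\tfrac12 Q_{ijk}\mathring{\nabla}_k\hat{s}$. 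Solving for (b) yields
\[
\tau_{ij}=\mathring{\nabla}_{ij}\hat{s}+\hat{s}\,\mathring{g}_{ij}-\tfrac12 Q_{ijk}\mathring{\nabla}_k\hat{s}.
\]
Strict convexity of $\Sigma$ then gives $(\tau_{ij})>0$. The main risk is the sign bookkeeping of the $Q$ terms, so I would check the result against $\Sigma=\mathcal{C}_{\omega_0}$, where $\hat{s}=\ell$ and $\tau_{ij}=\mathring{g}_{ij}$.

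\emph{Item (2).} This is where the capillary geometry enters. On $\partial\mathcal{C}_{\omega_0}$ we have $X(\xi)\in\partial\Sigma\subset\{x_{n+1}=0\}$, so by (1)
\[
0=\langle X,E_{n+1}\rangle=\langle\mathring{\nabla}\hat{s},E_{n+1}\rangle+\hat{s}\,\langle\hat{\xi},E_{n+1}\rangle .
\]
Since $\hat{\xi}\in\W\cap\{x_{n+1}=-\omega_0\}$, the second term equals $-\omega_0\hat{s}$, giving $\langle\mathring{\nabla}\hat{s},E_{n+1}\rangle=\omega_0\hat{s}$.

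Next decompose $\mathring{\nabla}\hat{s}$ into its component tangent to $\partial\mathcal{C}_{\omega_0}$ and its component along the conormal. Tangent vectors of $\partial\mathcal{C}_{\omega_0}$ are horizontal, so only the conormal part pairs with $E_{n+1}$. The key fact to use is that $\mu_F=A_F(\nu)\mu$ is the $\mathring{g}$-unit normal of $\partial\mathcal{C}_{\omega_0}$ in $\mathcal{C}_{\omega_0}$, i.e. $\mathring{g}(\mu_F,Y)=\langle\mu,Y\rangle/F(\nu)$ for all tangent $Y$; this is a version of \eqref{equ:G(vF,Y)=<v,Y>/F} adapted to the conormal. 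From it,
\[
\langle\mathring{\nabla}\hat{s},E_{n+1}\rangle=\mathring{\nabla}_{\mu_F}\hat{s}\,\cdot F(\nu)\,\langle\mu,E_{n+1}\rangle .
\]
Combining with the boundary identity gives (2).

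The step I expect to be hardest is verifying this $\mathring{g}$-orthogonality and the normalization of $\mu_F$. I would establish it by differentiating $G(\hat{\xi})(\hat{\xi},\cdot)$ and using $DF^0(DF(\nu))=\nu/F(\nu)$.
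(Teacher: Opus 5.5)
Your proposal is correct and follows the standard argument behind the cited \cite{arxiv2}*{Lemma 3.2}. The paper gives no proof of its own, but its introduction notes that (2) is equivalent to $\langle\mathring{\nabla}\hat{s},E_{n+1}\rangle=\omega_0\hat{s}$, which is exactly your route: differentiate $\hat{s}=G(\hat{\xi})(X,\hat{\xi})$ once for (1), differentiate twice using the Gauss formula on $\mathcal{C}_{\omega_0}$ for (3), and evaluate $\langle X,E_{n+1}\rangle=0$ on $\partial\mathcal{C}_{\omega_0}$ for (2).

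One small correction: $\mu_F=A_F(\nu)\mu$ is $\mathring{g}$-normal to $\partial\mathcal{C}_{\omega_0}$ but not $\mathring{g}$-unit; its length is $\langle A_F(\nu)\mu,\mu\rangle^{1/2}F(\nu)^{-1/2}$, cf. \eqref{pwf1}. This does not affect your argument, because you only use $\mathring{g}(\mu_F,Y)=\langle\mu,Y\rangle/F(\nu)$ together with the Euclidean splitting $\langle V,E_{n+1}\rangle=\langle V,\mu\rangle\langle\mu,E_{n+1}\rangle$ for $V\in T_\xi\mathcal{C}_{\omega_0}$.
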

\begin{remark}
Along the boundary of $(\cc,\mathring{g})$, we can choose an orthonormal frame $\{e_{i}\}_{i=1}^{n}$ with
\begin{equation}\label{pwf1}
e_{n} = \frac{\mu_{F}}{|\mu_{F}|_{\mathring{g}}} = \frac{F(\nu)^{\frac{1}{2}} A_{F}(\nu)\mu}{\langle A_{F}(\nu)\mu, \mu\rangle^{\frac{1}{2}}}.
\end{equation}
For details, we refer to \cite{arxiv2}*{Lemma 3.2} and \cite{Arxiv1}*{Remark 3.1}.
\end{remark}

	\begin{definition}\label{defn-convex capillary fun}
		For $\omega_0\in (-F(E_{n+1}),F(-E_{n+1}))$, a function $f\in  C^{2}(\mathcal{C}_{\omega_0})$ is called an \emph{anisotropic capillary function} if it satisfies the Robin-type boundary condition
		\begin{eqnarray}
			\label{robin}
			\mathring{\nabla}_{\mu_F} f=\frac{\omega_0}{F(\nu)\langle\mu,E_{n+1}\rangle} f\quad \text{on}\quad \partial \mathcal{C}_{\omega_0}.
		\end{eqnarray}	
		If in addition, the tensor
		$$\tau_{ij}[f](\xi):=\mathring{\nabla}_{e_i}\mathring{\nabla}_{e_j}f(\xi)+\delta_{i j} f(\xi)-\frac{1}{2} Q_{i j k} \mathring{\nabla}_{e_k} f(\xi),$$
		is positive definite on $\mathcal{C}_{\omega_0}$, then $f$ is called an \emph{anisotropic capillary convex function} on $\mathcal{C}_{\omega_0}$.
	\end{definition}
	%where $\nu$ is the Gauss map of $\Sigma$.
	The next proposition establishes a one-to-one correspondence between anisotropic capillary convex functions on $\mathcal{C}_{\omega_0}$ and anisotropic $\omega_0$-capillary convex bodies in $\overline{\mathbb{R}^{n+1}_+}$. This extends the classical correspondence between convex bodies in $\mathbb{R}^{n+1}$ and convex functions on $\mathbb{S}^n$ (or a Wulff shape $\mathcal{W}$) as discussed in \cite{Xia13}. 	
	\begin{proposition}[\cite{arxiv2}*{Proposition 3.4}]\label{equ pro}
		Let $\hat{s}\in C^{2}(\mathcal{C}_{\omega_0})$. Then $\hat{s}$ is an anisotropic capillary convex function if and only if $\hat{s}$ is the anisotropic capillary support function of an anisotropic capillary convex body $\widehat{\Sigma}\in \mathcal{K}_{\omega_0}$. %hypersurface $\Sigma\subset \overline{\RR^{n+1}_+}$.
	\end{proposition}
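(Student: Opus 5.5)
One direction is Lemma \ref{lemma:Xia-Lemma2.4}. If $\hat{s}$ is the anisotropic capillary support function of some $\widehat{\Sigma}\in\mathcal{K}_{\omega_0}$, part (2) of that lemma gives the Robin condition \eqref{robin}. Part (3) says the matrix $\tau_{ij}[\hat{s}]$, computed in a $\mathring{g}$-orthonormal frame, has as eigenvalues the anisotropic principal radii of $\Sigma$, and these are positive. So $\hat{s}$ is an anisotropic capillary convex function.

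For the converse, take an anisotropic capillary convex function $\hat{s}$ and build the hypersurface directly, guided by part (1) of Lemma \ref{lemma:Xia-Lemma2.4}. Define
$$X(\xi):=\mathring{\nabla}\hat{s}(\xi)+\hat{s}(\xi)\,\mathcal{T}^{-1}\xi,\qquad \xi\in\mathcal{C}_{\omega_0}.$$

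\textbf{Step 1: the differential of $X$.} Differentiate using the Gauss formula and the Weingarten formula of Lemma \ref{lemma:Gauss-Weingarten}. Since $\partial_i(\mathcal{T}^{-1}\xi)=\partial_i\xi$, the normal components cancel and one gets
$$\partial_i X=\tau_{ij}[\hat{s}]\,\mathring{g}^{jk}\partial_k\xi.$$
Because $(\tau_{ij})>0$, $X$ is an immersion. Its tangent space at $\xi$ is $T_\xi\mathcal{C}_{\omega_0}$, which is $G(\mathcal{T}^{-1}\xi)$-orthogonal to $\mathcal{T}^{-1}\xi$. Hence the anisotropic Gauss map of $\Sigma:=X(\mathcal{C}_{\omega_0})$ at $X(\xi)$ is $\mathcal{T}^{-1}\xi$, with the orientation fixed by positivity of $\tau$. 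It follows that the anisotropic capillary Gauss map is the identity in this parametrization, the anisotropic principal radii are the eigenvalues of $\tau$, and $\Sigma$ is locally strictly convex. Also $G(\mathcal{T}^{-1}\xi)(X,\mathcal{T}^{-1}\xi)=\hat{s}(\xi)$, so $\hat{s}$ is the support function of $\Sigma$ in the sense of \eqref{support}.

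\textbf{Step 2: boundary behaviour.} This is where the Robin condition \eqref{robin} is used. At $\xi\in\partial\mathcal{C}_{\omega_0}$, decompose $\mathring{\nabla}\hat{s}$ into its component along $\partial\mathcal{C}_{\omega_0}$ and its component along $e_n$ as in \eqref{pwf1}. Since $\xi_{n+1}=0$ there, $\langle \mathcal{T}^{-1}\xi,E_{n+1}\rangle=-\omega_0$, so
$$\langle X,E_{n+1}\rangle=\langle\mathring{\nabla}\hat{s},E_{n+1}\rangle-\omega_0\hat{s}.$$
Thus we must show that \eqref{robin} is equivalent to $\langle\mathring{\nabla}\hat{s},E_{n+1}\rangle=\omega_0\hat{s}$, the form stated after \eqref{equ:p=1 pde-inv-Gauss}. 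The tangential part of $\mathring{\nabla}\hat{s}$ lies in $\{x_{n+1}=0\}$, and $\langle\mu,E_{n+1}\rangle$ together with $F(\nu)$ relate the $\mu_F$-component to the $E_{n+1}$-component. With this, $X(\partial\mathcal{C}_{\omega_0})\subset\partial\overline{\mathbb{R}^{n+1}_+}$.

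Since the anisotropic normal of $\Sigma$ along $X(\partial\mathcal{C}_{\omega_0})$ equals $\mathcal{T}^{-1}\xi$, which satisfies the capillary condition \eqref{equ:w0-capillary} because $\mathcal{C}_{\omega_0}$ is itself $\omega_0$-capillary, $\Sigma$ meets the half-space boundary with constant $\omega_0$.

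\textbf{Step 3: global convexity (the main obstacle).} It remains to show that $\Sigma$ is embedded, lies in $\overline{\mathbb{R}^{n+1}_+}$, and bounds together with $\partial\overline{\mathbb{R}^{n+1}_+}$ a convex body. Plan:
\begin{enumerate}
\item Check that the Euclidean normal map $\nu=\Psi^{-1}(\mathcal{T}^{-1}\xi)$ is a diffeomorphism onto the region $\mathcal{A}$ of Lemma \ref{Lemma:gaussmap}.
\item Extend $\Sigma$ by the flat piece $\widehat{\partial\Sigma}$ with normal $-E_{n+1}$. Since $X(\partial\mathcal{C}_{\omega_0})$ is a closed, locally strictly convex $(n-1)$-dimensional hypersurface in $\mathbb{R}^n$, Hadamard's theorem makes it the boundary of a convex domain $\widehat{\partial\Sigma}$.
\item Show that the closed hypersurface $\Sigma\cup\widehat{\partial\Sigma}$ is locally convex everywhere, including along the edge. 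Along the edge the normals jump from $\mathcal{A}\cap\partial\mathcal{A}$ to $-E_{n+1}$ in a convex manner, since $\mathcal{A}$ is a cap containing the boundary directions on one side.
\item A compact, connected, locally convex closed hypersurface is the boundary of a convex body (Hadamard/van Heijenoort). This also places $\Sigma$ on one side of $\{x_{n+1}=0\}$, namely the upper side because of the orientation.
\end{enumerate}
I expect the delicate point to be the local convexity along the edge $\partial\Sigma$; I would handle it by showing $\widehat{\Sigma}=\bigcap_{u\in\mathcal{A}\cup\{-E_{n+1}\}}H^-(\cdot,u)$, using the support function $h(u)=F(u)\hat{s}$ on $\mathcal{A}$ and $0$ at $-E_{n+1}$.
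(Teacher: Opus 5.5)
The paper does not prove this statement; it quotes it from \cite{arxiv2}. Your ($\Rightarrow$) direction is exactly what the paper takes from Lemma~\ref{lemma:Xia-Lemma2.4}(2)--(3). In ($\Leftarrow$), Steps 1 and 2 are correct. The relation you only gesture at in Step 2 is $\langle A_F(\nu)\mu,E_{n+1}\rangle=\langle\mu,E_{n+1}\rangle\langle A_F(\nu)\mu,\mu\rangle$, which holds because $E_{n+1}\in\mathrm{span}\{\mu,\nu\}$ along $\partial\mathcal{C}_{\omega_0}$ and $A_F(\nu)\mu\perp\nu$. Combined with \eqref{pwf1} it gives $\langle\mathring{\nabla}\hat{s},E_{n+1}\rangle=F(\nu)\langle\mu,E_{n+1}\rangle\,\mathring{\nabla}_{\mu_F}\hat{s}$, so \eqref{robin} is equivalent to $\langle X,E_{n+1}\rangle=0$ on $\partial\mathcal{C}_{\omega_0}$.

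\textbf{The gap is in Step 3.} Gluing in the flat piece and then applying Hadamard in $\mathbb{R}^n$ and van Heijenoort in $\mathbb{R}^{n+1}$ is a workable plan. However, the facts that carry it are asserted, not proved, and your reason for convexity along the edge does not prove anything. Knowing that the two normals at an edge point are $\nu(\xi)\in\partial\mathcal{A}$ and $-E_{n+1}\notin\mathcal{A}$ is equally compatible with $\Sigma$ leaving $\{x_{n+1}=0\}$ downward, or with $\widehat{\partial\Sigma}$ lying on the wrong side of $\partial\Sigma$. In either case the glued surface is not locally convex. You need two sign facts, both consequences of $\tau>0$.

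(a) \emph{$\Sigma$ leaves the hyperplane upward.} Differentiating $\langle X,E_{n+1}\rangle=0$ along $e_\alpha\in T\partial\mathcal{C}_{\omega_0}$ gives $\tau_{\alpha n}=0$, so $dX(-e_n)=-\tau_{nn}e_n$. By \eqref{pwf1} and the identity above, $\langle e_n,E_{n+1}\rangle$ has the sign of $\langle\mu,E_{n+1}\rangle<0$. Hence $\Sigma$ leaves the hyperplane transversally into $\{x_{n+1}>0\}$. This gives local injectivity of the glued map at the edge, and at the end it places $\widehat{\Sigma}$ above the supporting hyperplane $\{x_{n+1}=0\}$.

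(b) \emph{The flat piece lies on the correct side.} Let $\bar\nu$ be the normalized horizontal part of $\nu$. As a function of $\xi$ it is the outer normal of $\partial\mathcal{C}_{\omega_0}\subset\mathbb{R}^n$, hence a diffeomorphism onto $\mathbb{S}^{n-1}$. Use $d\nu=A_F(\nu)^{-1}$ and $\mathring{g}(V,W)=F(\nu)^{-1}\langle A_F(\nu)^{-1}V,W\rangle$ on $T\mathcal{C}_{\omega_0}$, with the convention $II(a,b)=\langle D_a\bar\nu,b\rangle$. Then for $V,W\in T\partial\mathcal{C}_{\omega_0}$, the second fundamental form of $X(\partial\mathcal{C}_{\omega_0})\subset\mathbb{R}^n$ satisfies $II(dX(V),dX(W))=F(\nu)\,|\nu-\langle\nu,E_{n+1}\rangle E_{n+1}|^{-1}\,\mathring{g}(V,\tau W)$. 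This is positive definite because the block $(\tau_{\alpha\beta})$ is.
\begin{itemize}
\item It is what justifies ``locally strictly convex'' in your item (2); for $n=2$, also use that $\bar\nu$ has degree one.
\item It shows $\bar\nu$ is the outer normal of $\widehat{\partial\Sigma}$. So the flat piece lies on the side $\{\langle y-X(\xi),\bar\nu\rangle<0\}$, which is exactly where the locally convex side of $\Sigma$ meets the hyperplane.
\end{itemize}

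With (a) and (b), near an edge point the glued surface is the boundary of $U\cap\{x_{n+1}\ge 0\}$, where $U$ is the convex side of a slight $C^2$ strictly convex extension of $\Sigma$ across $\partial\Sigma$. Van Heijenoort's theorem then applies for $n\ge 2$. Alternatively, convex-cap theorems for locally strictly convex hypersurfaces with boundary in a hyperplane (cf.\ \cite{Ghomi}) can replace the gluing once (b) is available.

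Your fallback, $\widehat{\Sigma}=\bigcap_{u\in\mathcal{A}\cup\{-E_{n+1}\}}H^-(\cdot,u)$, is not an independent route. It needs $\langle X(\xi),y\rangle\le h(y)$ for all $\xi$ and all $y\in\mathcal{A}$, which is the global statement itself. Since $X(\xi)=D\tilde h(\nu(\xi))$ for the $1$-homogeneous extension $\tilde h$ of $h=F\hat{s}$, this would follow from the convexity of $\tilde h$ (which $\tau>0$ gives) and the gradient inequality, provided the cone over $\mathcal{A}$ were convex. In general it is not: already in the isotropic case with $\omega_0>0$, $\mathcal{A}$ is larger than a hemisphere. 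The proposition places no restriction on $\omega_0$.
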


\subsection{Anisotropic capillary $p$-sum for capillary convex bodies} Inspired by \cite[Section 9.1]{book-convex-body}, \cite{Li-Xu}, and \cite{Xia-arxiv}, this subsection defines the anisotropic capillary $p$-sum for both convex functions and convex bodies, and establishes the relationship (Proposition \ref{prop:compare}) between Definition \ref{def:p-sum-fcn} and Definition \ref{def:p-sum-cb}.	
	%Before define the anisotropic capillary $p$-sum, we  need some
	
	\begin{lemma}
		\label{lemma:p-sum}
		Let  $a,b\geq 0$ (not both zero), and $p\geq 1$ be real numbers.  Given two anisotropic capillary convex function $f_1,f_2\in C^{\infty}(\mathcal{C}_{\omega_0})$, consider the $p$-linear combination of $f_1$ and $f_2$  defined by
		\begin{align}\label{equ:fp=f1p+f2p}
			f=\left(a\cdot f_1^p+b\cdot f_2^p\right)^{\frac{1}{p}}.
		\end{align}
	Then $f\in C^{\infty}(\mathcal{C}_{\omega_0})$  is also an anisotropic capillary convex function provided that $p>1$ and $f_1, f_2>0$, or
	$p=1$.
	\end{lemma}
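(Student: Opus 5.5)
The plan is to verify the two defining properties of an anisotropic capillary convex function from Definition \ref{defn-convex capillary fun} directly for $f$: the Robin condition \eqref{robin}, and positive definiteness of $\tau_{ij}[f]$. The key observation is that the operator
$$f\mapsto \tau_{ij}[f]=\mathring{\nabla}_{e_i}\mathring{\nabla}_{e_j}f+\delta_{ij}f-\tfrac12 Q_{ijk}\mathring{\nabla}_{e_k}f$$
is linear. The boundary condition is also linear and homogeneous of degree one in $f$. The anisotropic terms (the $Q$-tensor and the weight $\frac{\omega_0}{F(\nu)\langle\mu,E_{n+1}\rangle}$) therefore enter only as fixed coefficients, and the argument reduces to the classical one for $L_p$ combinations of support functions.

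\emph{Case $p=1$.} Here $f=af_1+bf_2$. Smoothness is clear, and \eqref{robin} holds by linearity. We get $\tau[f]=a\tau[f_1]+b\tau[f_2]$, which is positive definite because $a,b\ge0$ are not both zero and each $\tau[f_i]>0$.

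\emph{Case $p>1$ with $f_1,f_2>0$.} Set $\varphi=f^p=af_1^p+bf_2^p>0$. Then $f=\varphi^{1/p}$ is smooth.

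\textbf{Robin condition.} Write $c=\frac{\omega_0}{F(\nu)\langle\mu,E_{n+1}\rangle}$. On $\partial\mathcal{C}_{\omega_0}$,
$$\mathring{\nabla}_{\mu_F}f=f^{1-p}\big(af_1^{p-1}\mathring{\nabla}_{\mu_F}f_1+bf_2^{p-1}\mathring{\nabla}_{\mu_F}f_2\big)=f^{1-p}\,c\,(af_1^p+bf_2^p)=cf.$$

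\textbf{Convexity.} Differentiating $\varphi=f^p$ twice and using the expressions for $\nabla\varphi$ and $\nabla^2\varphi$ in terms of the $f_i$, a direct computation gives
$$\tau_{ij}[f]=f^{1-p}\sum_{k=1}^2 a_k f_k^{p-1}\tau_{ij}[f_k]+(p-1)f^{1-p}\Big(\sum_k a_kf_k^{p-2}\nabla_if_k\nabla_jf_k-f^{-p}\big(\textstyle\sum_k a_kf_k^{p-1}\nabla_if_k\big)\big(\textstyle\sum_k a_kf_k^{p-1}\nabla_jf_k\big)\Big),$$
with $(a_1,a_2)=(a,b)$. The zeroth-order terms $\delta_{ij}f$ and the first-order $Q$-terms recombine exactly into the $\tau[f_k]$, since both are linear in the gradient and in the function.

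The first sum is positive definite, because some $a_k>0$ and $f_k>0$. The bracket is positive semidefinite. To see this, test it against a vector $v$ and put $w_k=\langle\nabla f_k,v\rangle$. Cauchy--Schwarz gives
$$\Big(\sum a_kf_k^{p-1}w_k\Big)^2=\Big(\sum (a_kf_k^p)^{1/2}(a_kf_k^{p-2})^{1/2}w_k\Big)^2\le f^p\sum a_kf_k^{p-2}w_k^2 .$$
Hence $\tau[f]>0$.

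The only step needing care is the bookkeeping that the $Q$-terms and the $\delta_{ij}f$ terms recombine correctly in $\tau[f]$, which follows from linearity. The Cauchy--Schwarz step is where $p>1$ (so that $p-1\ge0$) and the positivity $f_1,f_2>0$ are used; positivity is also what makes $f_k^{p-2}$ and $f^{1-p}$ well defined and $f$ smooth.
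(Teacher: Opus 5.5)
Your proposal is correct and follows the paper's proof essentially step by step. You use the same splitting of $\mathring{\nabla}_{ij}f$ into a $(p-1)$-weighted gradient part and a part that recombines with the $Q$- and $\delta_{ij}f$-terms into $f^{1-p}\big(af_1^{p-1}\tau_{ij}[f_1]+bf_2^{p-1}\tau_{ij}[f_2]\big)$, and the identical Robin computation. The only cosmetic difference is that the paper proves semidefiniteness of the gradient part via the explicit square $ab(p-1)f^{1-2p}f_1^{p-2}f_2^{p-2}\big(f_2\langle\mathring{\nabla}f_1,Y\rangle-f_1\langle\mathring{\nabla}f_2,Y\rangle\big)^2$ (the Lagrange identity behind your Cauchy--Schwarz step), and it treats $p=1$ inside the same formula rather than separately.
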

	\begin{proof}
		Since $f_1,f_2$ are two anisotropic capillary convex function, we have
		\begin{align}
			\mathring{\nabla}_{\mu_F} f_{k}=\frac{\omega_0}{F(\nu)\<\mu,E_{n+1}\>} f_{k}\quad \text{on}\quad \partial \mathcal{C}_{\omega_0},\quad k=1,2, \label{equ:Dnf}
		\end{align}
		and
		\begin{align}
			\(\tau_{ij}[f_{k}]\)>0,\quad \text{in}\quad  \mathcal{C}_{\omega_0},\quad k=1,2.\label{tua[f]}
		\end{align}
		By \eqref{equ:fp=f1p+f2p}, we have
		\begin{align}
			\label{equ:Df}
			\mathring{\nabla}f=&\frac{1}{p}\(a f_1^p+b f_2^p\)^{\frac{1}{p}-1}\cdot
			\(ap\cdot f_1^{p-1}\mathring{\nabla}f_1 +bp\cdot f_2^{p-1}\mathring{\nabla}f_2\)\nonumber
			\\
			=&f^{1-p}\(af_1^{p-1}\mathring{\nabla}f_1 +bf_2^{p-1}\mathring{\nabla}f_2\),
		\end{align}
		and
		\begin{align}
			\mathring{\nabla}_{ij}f=&(1-p)f^{-p}\cdot f^{1-p}\(af_1^{p-1}\mathring{\nabla}_if_1 +bf_2^{p-1}\mathring{\nabla}_if_2\)\cdot \(af_1^{p-1}\mathring{\nabla}_jf_1 +bf_2^{p-1}\mathring{\nabla}_jf_2\)\nonumber
			\\
			&+f^{1-p}\(a(p-1)\mathring{\nabla}_if_1\mathring{\nabla}_jf_1\cdot f_1^{p-2}+b(p-1)\mathring{\nabla}_if_2\mathring{\nabla}_jf_2\cdot f_2^{p-2}\)\nonumber
			\\
			&+f^{1-p}\(af_1^{p-1}\mathring{\nabla}_{ij}f_1+bf_2^{p-1}\mathring{\nabla}_{ij}f_2\)\nonumber
			\\
			:=&(I)_{ij}+(II)_{ij},\label{equ:DDf}
		\end{align}
		where we denote
		\begin{align*}
			(I)_{ij}=&(1-p)f^{-p}\cdot f^{1-p}\(af_1^{p-1}\mathring{\nabla}_if_1 +bf_2^{p-1}\mathring{\nabla}_if_2\)\cdot \(af_1^{p-1}\mathring{\nabla}_jf_1 +bf_2^{p-1}\mathring{\nabla}_jf_2\)\nonumber
			\\
			&+f^{1-p}\(a(p-1)\mathring{\nabla}_if_1\mathring{\nabla}_jf_1\cdot f_1^{p-2}+b(p-1)\mathring{\nabla}_if_2\mathring{\nabla}_jf_2\cdot f_2^{p-2}\)\nonumber
			,
		\end{align*}
		and
		\begin{align*}
			(II)_{ij}=	f^{1-p}\(af_1^{p-1}\mathring{\nabla}_{ij}f_1+bf_2^{p-1}\mathring{\nabla}_{ij}f_2\).
		\end{align*}
		We can check that $(I)_{ij}\geq 0$, i.e., $(I)$ is a semi-positive definite matrix, since for any  non-zero vector $Y=(Y^1,\cdots,Y^{n})\in\mathbb{R}^{n}$, we have
		\begin{align}
			\sum_{i,j=1}^{n}(I)_{ij}Y^iY^j=&(p-1)f^{1-2p}\sum_{i,j=1}^{n}\(af^pf_1^{p-2}\mathring{\nabla}_if_1\mathring{\nabla}_jf_1+bf^pf_2^{p-2}\mathring{\nabla}_if_2\mathring{\nabla}_jf_2\nonumber				\\
			&-(af_1^{p-1}\mathring{\nabla}_if_1+bf_2^{p-1}\mathring{\nabla}_if_2)\cdot (af_1^{p-1}\mathring{\nabla}_jf_1+bf_2^{p-1}\mathring{\nabla}_jf_2)\)Y^iY^j\nonumber
			\\
			=&(p-1)f^{1-2p}\(a(f^pf_1^{p-2}-af_1^{2p-2})\cdot (\sum_{i=1}^{n}\mathring{\nabla}_if_1Y^i)^2\nonumber
			\\
			&+b(f^pf_2^{p-2}-bf_2^{2p-2})\cdot (\sum_{i=1}^{n}\mathring{\nabla}_if_2Y^i)^2\nonumber
			\\
			&-2abf_1^{p-1}f_2^{p-1}(\sum_{i=1}^{n}\mathring{\nabla}_if_1Y^i)(\sum_{i=1}^{n}\mathring{\nabla}_if_2Y^i)\)\nonumber
			\\
			\overset{\eqref{equ:fp=f1p+f2p}}{=}&ab(p-1)f^{1-2p}f_1^{p-2}f_2^{p-2}\(f_2\sum_{i=1}^{n}\mathring{\nabla}_if_1Y^i-f_1\sum_{i=1}^{n}\mathring{\nabla}_if_2Y^i\)^2\geq 0. \label{equ:I>0}
		\end{align}
		On the other hand, by \eqref{equ:Df} and \eqref{equ:fp=f1p+f2p}, we have
		\begin{align}
	&(II)_{ij}-\frac{1}{2}Q_{ijk}\mathring{\nabla}_kf+f\delta_{ij}\nonumber
			\\
=&f^{1-p}\(af_1^{p-1}\mathring{\nabla}_{ij}f_1\!-\!\frac{a}{2}f_1^{p-1}Q_{ijk}\mathring{\nabla}_kf_1\!+\!af_1^p\delta_{ij}
			\!+\!
			bf_2^{p-1}\mathring{\nabla}_{ij}f_2\!-\!\frac{b}{2}f_2^{p-1}Q_{ijk}\mathring{\nabla}_kf_2\!+\!bf_2^p\delta_{ij}
			\)\nonumber
			\\
			=&f^{1-p}\(af_1^{p-1}\tau_{ij}[f_1]+bf_2^{p-1}\tau_{ij}[f_2]\)
			>0,\label{equ:(II)+>0}
		\end{align}
		where we use \eqref{tua[f]},  and  $a, b\ge0$ and are not both zero.
		By \eqref{equ:DDf}$\sim$\eqref{equ:(II)+>0}, we have
		\begin{align*}
			\(\tau_{ij}[f]\)=\(\mathring{\nabla}_{ij}f-\frac{1}{2}Q_{ijk}\mathring{\nabla}_kf+f\delta_{ij}\)>0.
		\end{align*}
		Besides,	from \eqref{equ:Dnf}, \eqref{equ:Df}, and \eqref{equ:fp=f1p+f2p}, we obtain
		\begin{align*}
			\mathring{\nabla}_{\mu_F} f=f^{1-p}\(af_1^{p-1}\mathring{\nabla}_{\mu_F}f_1 +bf_2^{p-1}\mathring{\nabla}_{\mu_F}f_2\)=\frac{\omega_0}{F(\nu)\<\mu,E_{n+1}\>} f\quad \text{on}\quad \partial \mathcal{C}_{\omega_0}.
		\end{align*}
		Then we complete the proof.
	\end{proof}
	We denote by $\hat{s}_K(\xi)$ ($\xi\in\cc$) the anisotropic capillary support function of  $K\in\mathcal{K}_{\omega_{0}}$. By Lemma \ref{lemma:Xia-Lemma2.4}, this is an anisotropic capillary convex function. Moreover, Lemma \ref{lemma:p-sum} implies that the $p$-linear combination of such support functions remains an anisotropic capillary convex function (Definition \ref{defn-convex capillary fun}). Therefore, such a function determines an anisotropic capillary convex body. Consequently, the following definition is well-posed.
	
		\begin{definition}
		\label{def:p-sum-fcn}
		(1) For $p=1$, $K,L\in\mathcal{K}_{\omega_0}$, and $a,b\geq 0$ (not both zero), the {anisotropic Minkowski linear combination} $a\cdot K+b\cdot L\in \mathcal{K}_{\omega_0}$ is defined by
		\begin{align}\label{equ:s=sK+sL}
			\hat{s}_{a\cdot K+ b\cdot L}(\xi)=a\cdot  \hat{s}_K(\xi)+b\cdot  \hat{s}_L(\xi).
		\end{align}
		
		(2) For $p>1$,  $K,L\in\mathcal{K}^O_{\omega_0}$, and $a,b\geq 0$ (not both zero), the {anisotropic $L_p$ Minkowski linear combination} $a\cdot K+_p b\cdot L\in \mathcal{K}_{\omega_0}^O$ is defined by 		
		%Let $p\geq 1$, $a,b>0$ be real numbers.  Given two anisotropic capillary convex bodies $K,L\in\mathcal{K}_{\omega_0}^O$ with anisotropic capillary support function $\hat{s}_K(\xi),\hat{s}_L(\xi)\in C^{\infty}(\mathcal{C}_{\omega_0})$, considering $p$-linear combination of $f_1$ and $f_2$  which is given by
		\begin{align}\label{equ:sp=sKp+sLp}
			\hat{s}_{a\cdot K+_p b\cdot L}(\xi)=\left(a\cdot \hat{s}_K^p(\xi)+b\cdot \hat{s}_L^p(\xi)\right)^{\frac{1}{p}}.
		\end{align}
	\end{definition}

	In line with the approach in \cite{Li-Xu} and \cite{Lutwark2012}, we define the following pointwise addition for nonconvex set in $\overline{\mathbb{R}^{n+1}_+}$.
	\begin{definition}
		\label{def:p-sum-cb}
		For $p\geq 1, a,b\geq 0$ and $K,L\subset\overline{\mathbb{R}^{n+1}_+}$, the sum $a\cdot K\widetilde{+}_p b\cdot L\subset\overline{\mathbb{R}_{+}^{n+1}}$ is  defined by
		\begin{align*}
			a\cdot K\widetilde{+}_p b\cdot L=\left\{
			a^{\frac{1}{p}}(1-t)^{\frac{1}{q}}X_{1}+
			b^{\frac{1}{p}}t^{\frac{1}{q}}X_{2}:X_{1}\in K, X_{2}\in L, t\in[0,1]
			\right\},
		\end{align*}
		where $q$ is the H\"{o}lder conjugate of $p$, i.e., $\frac{1}{p}+\frac{1}{q}=1$.
	\end{definition}

	We can check that Definition \ref{def:p-sum-cb} is compatible with Definition \ref{def:p-sum-fcn}:
	\begin{proposition}
		\label{prop:compare}
		Let $a,b\ge 0$, and $K,L\in\mathcal{K}_{\omega_0}$ be smooth. If  $p=1$, then
		\begin{align*}
			a \cdot K\widetilde{+} b\cdot  L=a\cdot  K+ b\cdot L.
		\end{align*}
		If $p>1$ and $K,L\in \mathcal{K}^O_{\omega_0}$, then
		\begin{align*}
			a\cdot K \widetilde{+}_p b\cdot L=a\cdot K+_p b\cdot L.
		\end{align*}
	\end{proposition}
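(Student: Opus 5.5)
The plan is to compare support-type functions. Both sides of each identity will be reduced to statements about the functional
$$h_M(\zeta):=\sup_{x\in M}G(\zeta)(\zeta,x)=\sup_{x\in M}\frac{\<x,\nu\>}{F(\nu)},\qquad \zeta=\Psi(\nu)\in\W,$$
where the second expression uses \eqref{equ:G(vF,Y)=<v,Y>/F}. For $K\in\mathcal{K}_{\omega_0}$ and $\xi\in\cc$, the strict convexity of $\Sigma$ and the parametrization of Lemma \ref{Lemma:gaussmap} give $\hat{s}_K(\xi)=h_{\widehat{\Sigma}}(\mathcal{T}^{-1}\xi)$. By Lemma \ref{Lemma:gaussmap}(i) and the representation $\widehat\Sigma=\bigcap_{u\in\mathcal{A}}H^-(\widehat\Sigma,u)$ with $\mathcal A=\nu(\Sigma)\cup\{-E_{n+1}\}$, a body in $\mathcal{K}_{\omega_0}$ is determined by $\hat{s}_K$ on $\cc$ together with the constraint $x_{n+1}\ge0$. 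The sum $a\cdot K+_pb\cdot L$ lies in $\mathcal K_{\omega_0}$ by Lemma \ref{lemma:p-sum} and Proposition \ref{equ pro}. It therefore suffices to prove two things: that $a\cdot K\widetilde{+}_p b\cdot L$ is a convex set in $\overline{\mathbb{R}^{n+1}_+}$, and that its support function in the directions $\nu\in\mathcal A$ equals the one prescribed by \eqref{equ:sp=sKp+sLp}, namely $(a h_K^p+b h_L^p)^{1/p}$ (respectively $ah_K+bh_L$ when $p=1$).

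For $p=1$ the set $a\cdot K\widetilde{+}b\cdot L$ is the usual Minkowski sum $aK+bL$. Here $q=\infty$, so $(1-t)^{1/q}=t^{1/q}=1$. This set is convex and lies in the half-space, and since $h$ is additive under Minkowski sums, $h_{aK+bL}=ah_K+bh_L$ in every direction. Its facet on $\{x_{n+1}=0\}$ is $a\widehat{\partial K}+b\widehat{\partial L}$. To finish, I need its boundary to be the capillary hypersurface whose capillary support function is $a\hat s_K+b\hat s_L$. I will argue this with Lemma \ref{lemma:Xia-Lemma2.4}(1): the point $X(\xi)=\mathring\nabla\hat s+\hat s\,\mathcal T^{-1}\xi$ built from $a\hat s_K+b\hat s_L$ equals $aX_K(\xi)+bX_L(\xi)$, and this is exactly the maximizer of $\<\cdot,\nu\>$ over $aK+bL$. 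Together with the identical half-space constraints, this identifies the two bodies.

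For $p>1$ with $O\in\widehat{\partial K}\cap\widehat{\partial L}$, the argument follows Lutwak--Yang--Zhang. First I compute the support function of the set. For fixed $t$, the supremum of $\<x,\nu\>/F(\nu)$ equals $a^{1/p}(1-t)^{1/q}h_K+b^{1/p}t^{1/q}h_L$. Here $h_K,h_L\ge0$ because $O\in K\cap L$. Maximizing over $t$ with Hölder's inequality gives $(ah_K^p+bh_L^p)^{1/p}$. The inclusion $a\cdot K\widetilde{+}_pb\cdot L\subseteq a\cdot K+_pb\cdot L$ follows directly from this computation in every direction of $\mathcal A$ and from $x_{n+1}\ge0$. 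For the reverse inclusion I use the boundary point $X(\xi)$ of $a\cdot K+_pb\cdot L$. By Lemma \ref{lemma:Xia-Lemma2.4}(1) and \eqref{equ:Df}, writing $s=\hat s_{a\cdot K+_pb\cdot L}$,
$$X(\xi)=s^{1-p}\big(a\hat s_K^{p-1}X_K(\xi)+b\hat s_L^{p-1}X_L(\xi)\big).$$
Setting $1-t=a\hat s_K^p/s^p$ and $t=b\hat s_L^p/s^p$, this becomes $a^{1/p}(1-t)^{1/q}X_K+b^{1/p}t^{1/q}X_L$, which lies in the set. The whole boundary hypersurface is therefore contained in $a\cdot K\widetilde{+}_pb\cdot L$.

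Every point of the body is then reached by taking $t$-segments together with scaling toward $O$. Scaling toward $O$ is available because $O\in K,L$ gives $\lambda K\subset K$ for $\lambda\in[0,1]$, so the pointwise set contains the convex hull of the boundary hypersurface and the bottom facet. The main obstacle is the convexity of the pointwise set itself, which is what allows this conclusion. I will handle it as follows. The sets $M_t=a^{1/p}(1-t)^{1/q}K+b^{1/p}t^{1/q}L$ are convex and contain $O$. Each $M_t$ is contained in the convex body $a\cdot K+_pb\cdot L$. Conversely, every boundary point of that body lies in some $M_t$, and so does every point of the bottom facet, by the same formula with $\xi\in\partial\cc$. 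Star-shapedness about $O$ then fills in the interior. Hence the union equals the body, and both identities follow.
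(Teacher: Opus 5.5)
Your proposal is correct, but it is organized differently from the paper's proof. Write $S:=a\cdot K\widetilde{+}_p b\cdot L$ and $B:=a\cdot K+_p b\cdot L$.

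\emph{The paper's route.} It cites Li--Xu (hence the Lutwak--Yang--Zhang argument) for the convexity of $S$ and for its support function. The only thing it verifies by hand is the capillary boundary condition of $S$. For $\xi\in\partial\cc$ it takes the rim points $X_1\in\partial\Sigma_1$, $X_2\in\partial\Sigma_2$ with capillary normal $\xi$, and lets $t_*$ be the maximizer in the optimization over $t$. It then observes that $X_3=(1-t_*)^{1/q}X_1+t_*^{1/q}X_2$ lies on $\{x_{n+1}=0\}$ and realizes the support value. By convexity, $X_3$ is a rim point with capillary normal $\xi$.

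\emph{Your route.} You prove the set identity $S=B$ directly, by double inclusion. Here $B$ is already in $\mathcal{K}_{\omega_0}$ by Lemma \ref{lemma:p-sum} and Proposition \ref{equ pro}.
\begin{itemize}
\item $S\subseteq B$ follows from the support values on $\nu(\Sigma_B)\cup\{-E_{n+1}\}$.
\item $B\subseteq S$ follows from the representation $X_B(\xi)=a^{1/p}(1-t)^{1/q}X_K(\xi)+b^{1/p}t^{1/q}X_L(\xi)$ with $t=b\hat{s}_L^p/s^p$, together with star-shapedness of $S$ about $O$. This $t$ is precisely the paper's $t_*$, the equality case in H\"older's inequality.
\end{itemize}

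\emph{Comparison.} In your route, capillarity is inherited from the Robin condition already established in Lemma \ref{lemma:p-sum} instead of being re-checked. Convexity of the pointwise set also becomes a consequence of $S=B$ rather than an input. The paper's route is shorter but rests on the cited convexity.

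A few small points in your write-up:
\begin{itemize}
\item The formula with $\xi\in\partial\cc$ produces only the rim $\partial\Sigma_B$, not the whole bottom facet. This is harmless. Since $O$ lies in the facet, the ray from $O$ through any $z\in B\setminus\{O\}$ leaves $B$ at a point of $\Sigma_B$ (on $\partial\Sigma_B$ when $z_{n+1}=0$). So $\Sigma_B\subseteq S$ plus star-shapedness already gives $B\subseteq S$. In particular, you never need the convexity of $S$ that you call the main obstacle.
\item The representation requires $s>0$, i.e.\ positivity of $\hat{s}_K$ and $\hat{s}_L$. This is the same hypothesis Lemma \ref{lemma:p-sum} needs when $p>1$.
\item For $p=1$, the closing step should say explicitly that $aK+bL$ is convex, contains $\Sigma_B$, and that $B=\mathrm{conv}(\Sigma_B)$.
\end{itemize}
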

	\begin{proof}
		As the proof closely follows \cite{Li-Xu}, it suffices to check that the boundary condition holds for $a \cdot K \widetilde{+}_p b \cdot L$.
		
		By Lemma \ref{Lemma:gaussmap}, the anisotropic capillary  Gauss maps $\widetilde{\nu_F}_i$ of $\omega_0$-capillary hypersurfaces $\Sigma_i\ (i=1,2)$ are diffeomorphisms and $\widetilde{\nu_F}_i(\partial\Sigma_i)=\partial\mathcal{C}_{\omega_0}$ for $i=1,2$, which implies that, for any $\xi\in\partial\mathcal{C}_{\omega_0}$, there exists $X_{1}\in \partial\Sigma_1$ and $X_{2}\in\partial\Sigma_2$, such that $\widetilde{\nu_F}_1(X_{1})=\xi$ and $\widetilde{\nu_F}_2(X_{2})=\xi$, i.e., ${\nu_F}_1(X_{1})=\mathcal{T}^{-1}\xi, {\nu_F}_2(X_{2})=\mathcal{T}^{-1}\xi$. Then by \eqref{support}
		\begin{align*}
			\hat{s}_K(\xi):&=G(\hat{\xi})\(\hat{\xi},X_{1}\)=\sup_{Z_{1}\in\Sigma_1}G(\mathcal{T}^{-1}\xi)\(\mathcal{T}^{-1}\xi,Z_{1}\)=\sup_{Z_{1}\in K}G(\mathcal{T}^{-1}\xi)\(\mathcal{T}^{-1}\xi,Z_{1}\),\\
			\hat{s}_L(\xi):&=G(\hat{\xi})\(\hat{\xi},X_{2}\)=\sup_{Z_{2}\in\Sigma_2}G(\mathcal{T}^{-1}\xi)\(\mathcal{T}^{-1}\xi,Z_{2}\)=\sup_{Z_{2}\in L}G(\mathcal{T}^{-1}\xi)\(\mathcal{T}^{-1}\xi,Z_{2}\).
		\end{align*}
		For $p\geq 1$, we have
		\begin{align}
			\hat{s}_{\Sigma_3}(\xi):=	&\sup_{Z_{3}\in\Sigma_3}G(\mathcal{T}^{-1}\xi)\(\mathcal{T}^{-1}\xi,Z_{3}\)=\sup_{Z_{3}\in K\widetilde{+}_pL}G(\mathcal{T}^{-1}\xi)\(\mathcal{T}^{-1}\xi,Z_{3}\)
			\nonumber
			\\
			=&\sup_{t\in[0,1],Z_{1}\in K,Z_{2}\in L}G(\mathcal{T}^{-1}\xi)\(\mathcal{T}^{-1}\xi, 	(1-t)^{\frac{1}{q}}Z_{1}^p+
			t^{\frac{1}{q}}Z_{2}^p \)
			\nonumber
			\\
			=&\sup_{t\in[0,1]}	(1-t)^{\frac{1}{q}}\hat{s}_K^p+
			t^{\frac{1}{q}}\hat{s}_L^p =(1-t_*)^{\frac{1}{q}}\hat{s}_K^p+
			{t_*}^{\frac{1}{q}}\hat{s}_L^p
			\nonumber
			%=(\alpha^p+\beta^p)^{\frac{1}{p}}
			\\
			=&G(\mathcal{T}^{-1}\xi)\(\mathcal{T}^{-1}\xi,(1-t_*)^{\frac{1}{q}}{X_{1}}^p+
			{t_*}^{\frac{1}{q}}{X_{2}}^p\),\label{equ:v(w)=xi}
		\end{align}
		where $t_*\in[0,1]$. We denote $X_{3}:=(1-t_*)^{\frac{1}{q}}{X_{1}}^p+
		{t_*}^{\frac{1}{q}}{X_{2}}^p\in \(K\widetilde{+}_p L\)\cap \partial\overline{\mathbb{R}^{n+1}_+}$. Since  $K\widetilde{+}_p L$ is convex, we have $
		X_{3}\in \partial\(K\widetilde{+}_p L\)\cap\partial\overline{\mathbb{R}^{n+1}_+}=\partial\Sigma_3$, and by \eqref{equ:v(w)=xi} we have $\widetilde{\nu_F}_3(X_{3})=\xi$, where $\widetilde{\nu_F}_3$ is the anisotropic  capillary Gauss map of $\Sigma_3$. That means $\Sigma_3$ is also an $\omega_0$-capillary hypersurface.  		
	\end{proof}
	Following the classical approach, the Alexandrov-Fenchel inequalities for anisotropic $\omega_0$-capillary convex bodies (established in \cite{arxiv2}) yield the corresponding Minkowski inequality. Using the H\"older inequality, we then obtain the corresponding $L^p$-Minkowski inequality for $p>1$, and consequently the corresponding $L^p$-Brunn-Minkowski inequality (cf. proof of \cite{book-convex-body}*{Theorem 9.1.3}):
%	Analogously to the classical case, using the Alexandrov–Fenchel inequalities for anisotropic $\omega_0$-capillary convex bodies established in \cite{arxiv2}, we can derive the corresponding Minkowski inequality (cf. proof of \cite{book-convex-body}*{Theorem 7.2.1}), then by holder inequality, we can derive the corresponding $L^p$-Minkowski inequality for $p>1$, then we can derive the corresponding $L^p$-Brunn-Minkowski inequality (cf. proof of \cite{book-convex-body}*{Theorem 9.1.3}):
	\begin{proposition}
		Let $\omega_0\in\left(-F(E_{n+1}),F(-E_{n+1})\right)$,  $p\geq 1$,  $K,L\in\mathcal{K}_{\omega_0}^{O}$ (or $K, L \in \mathcal{K}_{\omega_0}$ for $p=1$), then we have %$L^p$-Minkowski inequality:
		\begin{align*}
			\frac{1}{n+1}\int_{ \mathcal{C}_{\omega_0}} \hat{s}_L^p\hat{s}_K^{1-p}\sigma_n(\tau_{ij}[\hat{s}_K])~\mathrm{d} {\mu}_F(\cc)\geq \Vol(K)^{\frac{n+1-p}{n+1}} \Vol(L)^{\frac{p}{n+1}} .
		\end{align*}		
		For $t \in(0,1)$, the following  holds:
		\begin{align*}
				\Vol\left((1-t) \cdot K+_p t \cdot L\right) &\geq\left((1-t) \Vol(K)^{p / (n+1)}+t \Vol(L)^{p / (n+1)}\right)^{(n+1) / p}.
				%\\
				%&\geq \Vol(K)^{1-t}\Vol(L)^t.
		\end{align*}
		Equality holds if and only if $K$ and $L$ are dilates.
	\end{proposition}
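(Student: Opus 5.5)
The plan is to follow the classical route of Schneider (Theorems 7.2.1 and 9.1.3) and transplant each step to the anisotropic capillary setting.

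\textbf{Step 1: mixed volume and volume as integrals over $\cc$.} First I would express the mixed volume $V_1(K,L)$ and the volume of a body as integrals over $\cc$. For smooth $K,L\in\mathcal{K}_{\omega_0}$, combine Lemma \ref{lemma:V-k=V(KKK,LLL)}, the variational formula of Proposition \ref{Thm:p_t-Vk} with $k=-1$, and the parametrization of Lemma \ref{lemma:Xia-Lemma2.4}. Differentiating $\Vol(K+tL)$ at $t=0$ through Definition \ref{def:p-sum-fcn}(1) then gives
\[
V_1(K,L)=\frac{1}{n+1}\int_{\cc}\hat{s}_L\,\sigma_n(\tau_{ij}[\hat{s}_K])\,\mathrm{d}\mu_F ,
\]
with $\Vol(K)=V_1(K,K)$. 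Two facts are used here: the normal speed of the variation is $\hat{s}_L$, and the Jacobian of $X\circ\widetilde{\nu_F}^{-1}$ with respect to $\mathrm{d}\mu_F$ on $\cc$ is $\sigma_n(\tau)$, the product of the anisotropic principal radii.

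\textbf{Step 2: the case $p=1$.} The Alexandrov--Fenchel inequalities of \cite{arxiv2} for anisotropic capillary convex bodies give $V_1(K,L)^{n+1}\ge \Vol(K)^n\Vol(L)$. This is the $p=1$ statement, and I would take its equality characterization (dilates, up to horizontal translation in the $p=1$ case) from the same source.

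\textbf{Step 3: the case $p>1$.} For $K,L\in\mathcal{K}^O_{\omega_0}$ the support functions $\hat{s}_K,\hat{s}_L$ are positive in the interior, since the origin lies in the base, so the integrand makes sense. Define the probability measure $\mathrm{d}m=\hat{s}_K\sigma_n(\tau[\hat{s}_K])\,\mathrm{d}\mu_F/((n+1)\Vol(K))$. Applying Jensen/Hölder to $\int(\hat{s}_L/\hat{s}_K)^p\,\mathrm{d}m\ge(\int \hat{s}_L/\hat{s}_K\,\mathrm{d}m)^p$ gives
\[
\frac{1}{n+1}\int \hat{s}_L^p\hat{s}_K^{1-p}\sigma_n\ \ge\ \Vol(K)^{1-p}V_1(K,L)^p .
\]
Inserting the $p=1$ inequality then yields the $L_p$-Minkowski inequality. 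Equality forces $\hat{s}_L/\hat{s}_K$ to be constant, which means dilates.

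\textbf{Step 4: the $L_p$-Brunn--Minkowski inequality.} Set $M=(1-t)K+_p tL$, which lies in $\mathcal{K}^O_{\omega_0}$ by Lemma \ref{lemma:p-sum} and Definition \ref{def:p-sum-fcn}. Since $\hat{s}_M^p=(1-t)\hat{s}_K^p+t\hat{s}_L^p$, I would write
\[
\Vol(M)=\tfrac{1}{n+1}\int \hat{s}_M^p\hat{s}_M^{1-p}\sigma_n(\tau[\hat{s}_M])
=(1-t)\,V_p(M,K)+t\,V_p(M,L),
\]
where $V_p(M,\cdot)$ denotes the left-hand side of the $L_p$-Minkowski inequality with $M$ in the role of $K$. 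Applying the $L_p$-Minkowski inequality to each term and dividing by $\Vol(M)^{(n+1-p)/(n+1)}$ gives the claimed inequality. Equality in it forces equality in the Minkowski inequality, so $K$ and $L$ are dilates of $M$ and hence of each other.

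\textbf{Main obstacle.} The delicate point is Step 1 together with the equality case of Step 2. The boundary terms of the capillary problem must cancel when converting the first variation into an integral over $\cc$; this should follow from the Robin condition in Lemma \ref{lemma:Xia-Lemma2.4}(2) via Proposition \ref{Thm:p_t-Vk}. Positivity of $\hat{s}_K$ up to $\partial\cc$ also needs care when $p>1$, and I would use $O\in\widehat{\partial K}$ for this. Non-smooth bodies would be handled by approximation.
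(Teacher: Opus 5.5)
Your proposal is correct and follows the route the paper indicates; the paper only sketches it in one sentence. That route is: Minkowski's inequality from the capillary Alexandrov--Fenchel inequalities of \cite{arxiv2}, then H\"older/Jensen with respect to $\hat{s}_K\sigma_n(\tau[\hat{s}_K])\,\mathrm{d}\mu_F$ to pass to $p>1$, then Lutwak's splitting $\Vol(M)=(1-t)V_p(M,K)+tV_p(M,L)$ as in \cite{book-convex-body}*{Theorem 9.1.3}. The integral representation you flag as the main obstacle is already in the paper as Lemma \ref{lemma:Wpk} with $k=0$, $p=1$, together with the Minkowski formula \eqref{l5}; your remark that for $p=1$ equality only gives dilates up to a horizontal translation correctly sharpens the stated equality case.
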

	
	\subsection{Anisotropic capillary $p$-surface area measure and prescribed measure problems}
	%$L_p$ Minkowski Problem}

We introduce the anisotropic capillary mixed  quermassintegrals which are defined by the variation of anisotropic capillary quermassintegrals along the anisotropic capillary $p$-sum in Definition \ref{def:p-sum-fcn}.

\begin{definition}
	\label{def:Wp,k}
	Let $0 \leq k \leq n$ be an integer and $ p \geq 1$ be a real number. We define the anisotropic $\omega_0$-capillary  $p$-mixed $k$-th  quermassintegral of two smooth  anisotropic $\omega_0$-capillary convex bodies  $K, L \in \mathcal{K}_{\omega_0}^O$ (or $K, L \in \mathcal{K}_{\omega_0}$ for $p=1$) by		
	\begin{align*}
		W_{p, k,\omega_0}(K, L)=\lim _{t \rightarrow 0^{+}} \frac{\mathcal{V}_{k,\omega_0}\left(1\cdot K+{ }_p t \cdot L\right)-\mathcal{V}_{k,\omega_0}(K)}{t}.\
	\end{align*}	
\end{definition}

To calculate the integral representation of $W_{p, k,\omega_0}(K, L)$, we should study the scalar evolution equation of the anisotropic capillary support function $\hat{s}(\xi)$ with $
\xi\in\mathcal{C}_{\omega_0}$ along a general flow	\eqref{equ:p_tX=fv}, i.e., flow
\begin{align}	
	\partial_tX(\xi,t)=f(\xi,t)\cdot \mathcal{T}^{-1}\xi+T(\xi,t),\label{equ:ptX}
\end{align}
such that $G(\mathcal{T}^{-1}\xi)\(T(\xi,t),\mathcal{T}^{-1}\xi\)=0$. Now we assume that, on a small time interval, the solution to the above flow is given by a vector-valued function $X(\xi, t)$ defined on $\mathcal{C}_{\omega_0} \times[0, \varepsilon)$. %By  DeTurck's trick, we have
We can obtain the scalar evolution equation of $\hat{s}(\xi,t)$ as follows
\begin{align}
	\partial_t\hat{s}(\xi,t)=&\partial_t\left(G(\mathcal{T}^{-1}\xi)\(X,\mathcal{T}^{-1}\xi\)\right)\nonumber
	\\
	=&G(\mathcal{T}^{-1}\xi)\(\partial_tX,\mathcal{T}^{-1}\xi\)+G(\mathcal{T}^{-1}\xi)\(X,\partial_t(\xi-\omega_0E_{n+1}^F)\)\nonumber\\
	&+Q(\mathcal{T}^{-1}\xi)\(X,\mathcal{T}^{-1}\xi,\partial_t(\mathcal{T}^{-1}\xi)\)\nonumber\\
	=&f(\xi,t)\label{equ:pts}.
\end{align}
Conversely, for a family of anisotropic capillary convex functions $\hat{s}(\cdot, t)$ on $\mathcal{C}_{\omega_0}$ satisfying \eqref{equ:pts}  for $t\in [0,\varepsilon)$, they can determine a family of  anisotropic $\omega_0$-capillary convex hypersurfaces $\Sigma_t$. Let  $X(\xi,t)$ be the parametrization of $\Sigma_t$ given by its inverse anisotropic capillary Gauss map for $\xi\in\mathcal{C}_{\omega_0}, t\in [0,\varepsilon)$.  By DeTurck's trick, there exist diffeomorphisms $\varphi(\cdot,t):\mathcal{C}_{\omega_0}\to \mathcal{C}_{\omega_0}$ such that $\bar{X}(\xi,t):=X(\varphi(\xi,t),t)$ satisfies \eqref{equ:ptX}.

\begin{lemma}\label{lemma:Wpk}
	Let $k, p, K$ and $L$ satisfy the assumptions in Definition \ref{def:Wp,k}. Then the integral representation of the anisotropic capillary p-mixed $k$-th  quermassintegral $W_{p, k,\omega_0}(K, L)$ is given by	
	$$
	W_{p, k,\omega_0}(K, L)=\frac{n-k+1}{p(n+1)\binom{n}{k}}\int_{\mathcal{C}_{\omega_0}} \hat{s}_L^{p}(\xi)\hat{s}_K^{1-p}(\xi) \sigma_{n-k}(\tau_{ij}[\hat{s}_K](\xi))\mathrm{d} {\mu}_F(\cc).\ %\mathrm{d}\hat{\mu}
	$$
\end{lemma}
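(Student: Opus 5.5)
The plan is to differentiate $\mathcal{V}_{k,\omega_0}$ along the one-parameter family $K_t:=1\cdot K+_p t\cdot L$, using the variational formula of Proposition \ref{Thm:p_t-Vk}, and then to rewrite everything on $\cc$ through the inverse anisotropic capillary Gauss map.

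\textbf{Step 1: the family and its normal speed.} By Definition \ref{def:p-sum-fcn} and Lemma \ref{lemma:p-sum}, each $K_t$ lies in $\mathcal{K}_{\omega_0}$ (in $\mathcal{K}^O_{\omega_0}$ when $p>1$). Its capillary support function is
\[
\hat s_t=(\hat s_K^p+t\,\hat s_L^p)^{1/p},
\]
which is an anisotropic capillary convex function for all small $t\ge0$. Differentiating at $t=0$ gives
\[
\partial_t\hat s_t\big|_{t=0}=\tfrac1p\,\hat s_L^p\hat s_K^{1-p}=:f .
\]
For $p=1$ this is simply $\hat s_L$. By the discussion around \eqref{equ:pts} and DeTurck's trick, the hypersurfaces $\Sigma_t=\partial K_t\cap\overline{\mathbb{R}^{n+1}_+}$ can be parametrized so that $\partial_tX=f\,\mathcal{T}^{-1}\xi+T$, with $T$ tangential. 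Since $\mathcal{T}^{-1}\xi=\nu_F(X(\xi))$, this is exactly a flow of the form \eqref{equ:p_tX=fv}. It keeps the $\omega_0$-capillary condition, because every $K_t$ is anisotropic $\omega_0$-capillary.

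\textbf{Step 2: apply the variational formula.} Write the index as $k=j+1$ with $-1\le j\le n-1$. Proposition \ref{Thm:p_t-Vk} then gives
\[
W_{p,k,\omega_0}(K,L)=\frac{d}{dt}\Big|_{t=0}\mathcal{V}_{k,\omega_0}(K_t)=\frac{n-k+1}{n+1}\int_{\Sigma_K} f\,H^F_{k}\,d\mu_F .
\]
Differentiability at $t=0^+$ holds because $\hat s_t$ is smooth in $t$; this needs $\hat s_K,\hat s_L>0$ when $p>1$, which is guaranteed in $\mathcal{K}^O_{\omega_0}$ away from degenerate points. The case $k=n+1$ is not covered by Definition \ref{def:Wp,k}, which only allows $0\le k\le n$.

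\textbf{Step 3: pass to $\cc$.} Pull the integral back by $X=\widetilde{\nu_F}^{-1}:\cc\to\Sigma_K$. By Lemma \ref{lemma:Xia-Lemma2.4}(3), the anisotropic principal radii are the eigenvalues of $\tau_{ij}[\hat s_K]$. Hence $H^F_k=\sigma_k(\kappa^F)/\binom nk$, and since $\sigma_k(\kappa^F)=\sigma_{n-k}(\tau)/\sigma_n(\tau)$ we get
\[
H^F_k=\binom nk^{-1}\frac{\sigma_{n-k}(\tau)}{\sigma_n(\tau)} .
\]
The Jacobian of the change of variables is governed by the Weingarten formula $\partial_i\nu_F=\hat h_i^k\partial_kX$ together with \eqref{qqp}. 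It should give
\[
d\mu_F(\Sigma_K)=\sigma_n(\tau)\,d\mu_F(\cc).
\]
The point to check is that both measures are defined through the same volume form $\Omega(\nu_F,\cdot)$, where $\nu_F(X(\xi))$ equals the anisotropic normal of $\cc$ at $\xi$. The two normals coincide because $\cc$ is a translate of a piece of $\W$. The factor $\sigma_n(\tau)$ cancels, and substituting $f$ yields the claimed formula, including the constant $\frac{n-k+1}{p(n+1)\binom nk}$.

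\textbf{Main obstacle.} I expect the hardest part to be justifying Step 1 rigorously: that the tangential reparametrization produced by DeTurck's trick preserves the capillary boundary structure, so that Proposition \ref{Thm:p_t-Vk} applies with normal speed exactly $f$. The identity $\partial_t\hat s=G(\mathcal{T}^{-1}\xi)(\partial_tX,\mathcal{T}^{-1}\xi)$ from \eqref{equ:pts} handles the interior. On the boundary, the fact that every $\hat s_t$ satisfies the Robin condition \eqref{robin} (Lemma \ref{lemma:p-sum}) ensures that $X(\partial\cc,t)\subset\partial\overline{\mathbb{R}^{n+1}_+}$, so the tangential field $T$ is admissible.
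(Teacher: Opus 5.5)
Your proposal is correct and follows essentially the same route as the paper. You vary along $1\cdot K+_p t\cdot L$, take the initial normal speed $\frac1p\hat{s}_K^{1-p}\hat{s}_L^p$, apply Proposition \ref{Thm:p_t-Vk} with the index shifted by one, and pull back to $\cc$ via $\binom{n}{k}H^F_k=\sigma_{n-k}(\tau)/\sigma_n(\tau)$ and $\mathrm{d}\mu_F(\Sigma_K)=\det(\tau_{ij}[\hat{s}_K])\,\mathrm{d}\mu_F(\cc)$. The boundary/DeTurck issue you flag is lighter than you expect: in the inverse capillary Gauss map parametrization $\xi$ is fixed in time, so \eqref{equ:pts} already gives normal speed $\partial_t\hat{s}$ and keeps $X(\partial\cc,t)\subset\partial\overline{\mathbb{R}^{n+1}_+}$, which the paper takes for granted.
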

\begin{proof}
	Let $\widehat{\Sigma}_t=1\cdot K+{ }_p t \cdot L\in\mathcal{K}_{\omega_0}$. Then the anisotropic capillary support function of $\Sigma_t$ is given by		
	$$
	\hat{s}(\xi, t)= \(\hat{s}_K^p(
	\xi)+t \hat{s}_L^p(\xi)\)^{\frac{1}{p} }.
	$$		
	The initial speed of the above variation is given by			
	\begin{align}
		\frac{\partial}{\partial t} \hat{s}(\xi, 0)=\frac{1}{p} \hat{s}_K^{1-p}\hat{s}_L^{p}.\label{equ:pts0}
	\end{align}		
	By Definition \ref{def:Wp,k} and  Proposition \ref{Thm:p_t-Vk}, we have
	\begin{align*}
		W_{p, k,\omega_0}(K, L)=&\left.\frac{d}{d t}\right|_{t=0} \mathcal{V}_{ k,\omega_0}\left(\Sigma_t\right)
		\\
		=&\frac{n-k+1}{n+1}\int_{\Sigma_0}\left(\left.\frac{\partial}{\partial t}\right|_{t=0}\hat{s}(\widetilde{\nu_{{F}}}(\Sigma_t),t)\right)\cdot H^F_{k}(\Sigma_0)\mathrm{d}\mu_{{F}}
		\\
		=&\frac{n-k+1}{(n+1)\binom{n}{k}}\int_{\mathcal{C}_{\omega_0}}\frac{1}{p} \hat{s}_K^{1-p}\hat{s}_L^{p} \sigma_{n-k}(\tau_{ij}[\hat{s}_K])\mathrm{d} {\mu}_F(\cc),\ %\mathrm{d}\hat{\mu}
	\end{align*}
	where we use \eqref{equ:pts0},  $\widehat{\Sigma}_0=K$, $\binom{n}{k}H^F_{k}(\Sigma_0)=\frac{\sigma_{n-k}(\tau_{ij}[\hat{s}_K])}{\sigma_{n}(\tau_{ij}[\hat{s}_K])}$, and $F(\nu(\Sigma_0))\mathrm{d}\mu(\Sigma_0)=\mathrm{d}{\mu}_F(\Sigma_0)=\det(\tau_{ij}[\hat{s}_K])\mathrm{d}{\mu}_F(\mathcal{C}_{\omega_0})$.
\end{proof}

\begin{definition}\label{def:p-surface-area-measure}
	Let $0 \leq k \leq n$ be an integer and $p$ be a real number. The $k$-th anisotropic capillary p-surface area measure of a smooth anisotropic $\omega_0$-capillary convex body $K$ is defined by	
	$$
	\mathrm{d }S_{p, k}(K, \xi)=\hat{s}_K^{1-p} \sigma_{n-k}\left(\tau_{ij}\left[\hat{s}_K(\xi)\right]\right)\mathrm{d} {\mu}_F(\cc).\  % \mathrm{d}\hat{\mu} \ .
	$$
Particularly, we call $d S(K, \xi):=d S_{1,0}(K, \xi)$ the anisotropic capillary surface area measure of $K$.
\end{definition}
By Lemma \ref{lemma:Wpk} and Definition \ref{def:p-surface-area-measure}, we have
$$
W_{p, k,\omega_0}(K, L)=\frac{n-k+1}{p(n+1)\binom{n}{k}}\int_{\mathcal{C}_{\omega_0}} \hat{s}_L^{p}(\xi)\mathrm{d }S_{p, k}(K, \xi).\
$$	
This motivates us to propose the following two prescribed measure problems by using Definition \ref{def:p-surface-area-measure}, which are nonlinear elliptic PDEs on $\mathcal{C}_{\omega_0}$.\

\begin{problem}[Anisotropic capillary $p$-Minkowski problem]
	\label{problem:Anisotropic capillary $p$-Minkowski problem}
	Let $n \geq 1$ be an integer and $p$ be a real number. Given a smooth positive function $f(\xi)$ defined on $\mathcal{C}_{\omega_0}$, what are necessary and sufficient conditions for $f(\xi)$, such that there exists a smooth anisotropic capillary convex body $K\in \mathcal{K}_{\omega_0}^O$ satisfying	
	$$
	\mathrm{d} S_{p, 0}(K, \xi)=f(\xi) \mathrm{d} {\mu}_F(\cc). %\mathrm{d} \hat{\mu}\ .
	$$
	That is, finding a smooth positive  anisotropic capillary convex function  $\hat{s}(\xi)$ satisfies 	
	$$
	\hat{s}^{1-p}(\xi) \sigma_n(\tau_{ij}[\hat{s}(\xi)])=f(\xi), \quad \forall\xi\in\mathcal{C}_{\omega_0}.
	$$
\end{problem}	
\begin{proposition}\label{rk:int f=0}
If \eqref{equ:p=1 pde-inv-Gauss} is solvable for a given function $f : \cc \rightarrow \R$, then $f$ must satisfy:

	\begin{align*}
	\int_{\mathcal{C}_{\omega_0}}G(\hat{\xi})(\hat{\xi},E_{\al})f{~\rm d}{\mu}_{F}=0,\quad \forall \al=1,\cdots,n.
\end{align*}	
\end{proposition}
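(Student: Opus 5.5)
The plan is to show that, for a solution $\hat{s}$ of \eqref{equ:p=1 pde-inv-Gauss}, the integrals $\int_{\cc} G(\hat{\xi})(\hat{\xi},E_\al)\sigma_n(\tau_{ij}[\hat{s}])\,{\rm d}\mu_F$ vanish. Geometrically this is the anisotropic capillary analogue of the closedness condition $\int x\,dS=0$ for the classical Minkowski problem. By Proposition \ref{equ pro} (equivalently Lemma \ref{lemma:Xia-Lemma2.4}), the solution $\hat{s}$ is the anisotropic capillary support function of a strictly convex $\omega_0$-capillary hypersurface $\Sigma$ with $\det(\tau_{ij})=f$. By \eqref{equ:G(vF,Y)=<v,Y>/F} we have $G(\hat{\xi})(\hat{\xi},E_\al)=\<\nu,E_\al\>/F(\nu)$, and by the change of variables in the proof of Lemma \ref{lemma:Wpk}, $f\,{\rm d}\mu_F(\cc)=\det(\tau_{ij})\,{\rm d}\mu_F(\cc)={\rm d}\mu_F(\Sigma)=F(\nu)\,{\rm d}\mu_g$. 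Hence the integral in question equals
\begin{align*}
\int_\Sigma \<\nu,E_\al\>\,{\rm d}\mu_g .
\end{align*}

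Next I apply the divergence theorem to the constant vector field $E_\al$ on $\hat{\Sigma}$. Its boundary consists of $\Sigma$ and the flat piece $\widehat{\partial\Sigma}\subset\{x_{n+1}=0\}$, whose outward normal is $-E_{n+1}$. Since $\div E_\al=0$,
\begin{align*}
0=\int_\Sigma\<\nu,E_\al\>\,{\rm d}\mu_g+\int_{\widehat{\partial\Sigma}}\<-E_{n+1},E_\al\>\,{\rm d}\mathcal{H}^n .
\end{align*}
The second term vanishes because $\al\le n$. Therefore $\int_\Sigma\<\nu,E_\al\>{\rm d}\mu_g=0$, which gives the claim.

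As an alternative, one can use a variational argument. Horizontal translation $X\mapsto X+tE_\al$ preserves the capillary condition. By \eqref{equ:pts} it changes $\hat{s}$ by $tG(\hat{\xi})(\hat{\xi},E_\al)$, while the volume $\mathcal{V}_{0,\omega_0}$ is unchanged. Proposition \ref{Thm:p_t-Vk} with $k=-1$ then yields the same identity. The only delicate step is justifying the change of variables, namely that $\widetilde{\nu_F}:\Sigma\to\cc$ is a diffeomorphism (Lemma \ref{Lemma:gaussmap}) with Jacobian $\det(\tau_{ij})$ relating the two anisotropic area measures. This is exactly the fact already used in Lemma \ref{lemma:Wpk}, so I expect no real obstacle.
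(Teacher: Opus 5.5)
Your proposal is correct and is essentially the paper's proof: the paper also applies the divergence theorem to $E_\alpha$ on $\widehat{\Sigma}$, drops the flat face $\widehat{\partial\Sigma}$ because $\langle E_\alpha,-E_{n+1}\rangle=0$, and pulls $\int_\Sigma\langle\nu,E_\alpha\rangle\,d\mu_g$ back to $\mathcal{C}_{\omega_0}$ through the anisotropic capillary Gauss map. The paper writes the integrand as $G(\hat{\xi})(E_\alpha,\hat{\xi})\,\Omega(\hat{\xi},\partial_1,\dots,\partial_n)$, which is exactly your identity $G(\hat{\xi})(\hat{\xi},E_\alpha)\,F(\nu)\,d\mu_g=\langle\nu,E_\alpha\rangle\,d\mu_g$ combined with $d\mu_F(\Sigma)=\det(\tau_{ij})\,d\mu_F(\mathcal{C}_{\omega_0})$; your variational alternative via horizontal translations and Proposition~\ref{Thm:p_t-Vk} with $k=-1$ is also valid but not needed.
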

\begin{proof}
For any $1\leq \al \leq n$, Lemma \ref{Lemma:gaussmap} and \eqref{qqp} implies
	
		\begin{align*}
		0=&\int_{\widehat{\Sigma}}\div (E_{\al}) {\,\rm d}\mathcal{H}^{n+1}=\int_{\Sigma}\<E_{\al},\nu\> ~d\mu_g+\int_{\widehat{\partial\Sigma}}\<E_{\al},-E_{n+1}\> ~d\mu_{\widehat{\partial\Sigma}}\\
		=&\int_{\Sigma}\Omega(E_{\al}, \pa_{1}, \cdots, \pa_{n})dy^{1}\cdots dy^{n}\\
		=&\int_{\widetilde{\nu}^{-1}(\cc)}\Omega(E_{\al}, \pa_{1}, \cdots, \pa_{n})dy^{1}\cdots dy^{n}\\
		=&\int_{\widetilde{\nu}^{-1}(\cc)}G(\xii)(E_{\al},\xii)\Omega(\xii,\pa_{1}\cdots,\pa_{n})dy^{1}\cdots dy^{n}\\
		=&\int_{\mathcal{C}_{\omega_0}}{G(\hat{\xi})(E_{\al},\hat{\xi})}f~d\mu_F,\quad \al=1,\cdots,n.
	\end{align*}
	Here, $d{\mu}_{g} =\Omega(\nu,\pa_{1},\cdots,\pa_{n})dy^{1}\cdots dy^{n}$ is the induced volume from of the Euclidean metric in $\mathbb{R}^{n+1}$.
	\end{proof}

\begin{problem}[Anisotropic capillary $p$-Christoffel-Minkowski problem]
	Let $n \geq 2$ and $1\leq k\leq n-1$ be integers, and let $p$ be a real number. Given a smooth positive function $f(\xi)$ defined on $\mathcal{C}_{\omega_0}$, what are necessary and sufficient conditions for $f(\xi)$, such that there exists a smooth anisotropic capillary convex body $K\in \mathcal{K}_{\omega_0}^O$ satisfying	
	$$
	\mathrm{d} S_{p, k}(K, \xi)=f(\xi) \mathrm{d} {\mu}_F(\cc).
	$$
	That is, finding a smooth positive  anisotropic capillary convex function  $\hat{s}(\xi)$ satisfies 	
	$$
	\hat{s}^{1-p}(\xi) \sigma_{n-k}(\tau_{ij}[\hat{s}(\xi)])=f(\xi), \quad \xi\in\mathcal{C}_{\omega_0}.
	$$
\end{problem}
To maintain our focus on the $L_{p}$-Minkowski problem and to avoid diverging from the main theme of this paper, we postpone the study of this problem to a forthcoming work.

\section{The regularity of solutions to anisotropic capillary $L_p$-Minkowski problem}\label{section4}
\subsection{$C^{0}$ estimate}
We first establish a uniform positive lower bound and upper bound for $\hat{s}$ with $p\geq n+1$.
 \begin{definition}
We call a solution $\hat{s}$ to Eq. \eqref{equ:p pde-inv-Gauss} an admissible solution if 
\begin{enumerate}
\item $p=1$ or $p\geq n+1$, the $n\times n$ matrix $\hat{s}_{ij}-\frac{1}{2}Q_{ijk}\hat{s}_{k}+\delta_{ij}\hat{s}$ is positive definite;
\item $1<p<n+1$, the $n\times n$ matrix $\hat{s}_{ij}-\frac{1}{2}Q_{ijk}\hat{s}_{k}+\delta_{ij}\hat{s}$ is positive definite and $\hat{s}$ is an anisotropic capillary even function.
\end{enumerate}
The positive definiteness of this matrix is equivalent to that of
$$\tilde{s}_{ij}-\frac{1}{2}\tilde{Q}_{ijk}\tilde{s}_{k}+\tilde{s}\tilde{g}_{ij},$$
as shown in \eqref{equ:tau_ij}.
\end{definition}

\begin{lemma}\label{C01}
	Let $p> n+1$ and $\omega_0 \in\left(-F\left(E_{n+1}\right), F\left(-E_{n+1}\right)\right)$. Suppose $\hat{s}$ is an admissible solution to Eq. \eqref{equ:p pde-inv-Gauss}. Then there holds
	\begin{align*}
		\frac{(\underset{{\mathcal{C}_{\omega_0}}}{\min}\ \ell)^{p-n-1}}{\underset{{\mathcal{C}_{\omega_0}}}{\max}\ f\cdot(\underset{{\mathcal{C}_{\omega_0}}}{\max}\ell)^{p-1}}\leq \hat{s}^{p-n-1} \leq \frac{(\underset{{\mathcal{C}_{\omega_0}}}{\max}\ \ell)^{p-n-1}}{\underset{{\mathcal{C}_{\omega_0}}}{\min}\ f\cdot(\underset{{\mathcal{C}_{\omega_0}}}{\min}\ell)^{p-1}}.
	\end{align*}
\end{lemma}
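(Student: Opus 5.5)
The plan is to compare $\hat{s}$ with $\ell$, the anisotropic capillary support function of $\cc$ from \eqref{ell}, through the quotient
$$w:=\frac{\hat{s}}{\ell}\quad \text{on } \cc .$$
Four ingredients are needed.
\begin{enumerate}
\item $\ell>0$ on $\cc$, by \cite[Proposition 3.2]{Jia-Wang-Xia-Zhang2023}.
\item $\ell$ satisfies the same Robin condition \eqref{robin} as $\hat{s}$, by Lemma \ref{lemma:Xia-Lemma2.4}(2) applied to $\Sigma=\cc$. Subtracting the two Robin conditions gives $\mathring{\nabla}_{\mu_F} w=0$ on $\partial\cc$, i.e. a homogeneous Neumann condition.
\item The anisotropic principal radii of $\cc$ all equal $1$, since the capillary Gauss map of $\cc$ is the identity. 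Hence, by Lemma \ref{lemma:Xia-Lemma2.4}(3), $\tau_{ij}[\ell]=\mathring{g}_{ij}$.
\item Because $\ell$ is smooth and positive, we have $\min\ell\le\ell\le\max\ell$ on $\cc$.
\end{enumerate}

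\textbf{Step 1: the identity at a critical point.} At any point where $\mathring{\nabla}w=0$ we have
$$\mathring{\nabla}\hat{s}=w\mathring{\nabla}\ell,\qquad \mathring{\nabla}^2\hat{s}=w\mathring{\nabla}^2\ell+\ell\mathring{\nabla}^2 w .$$
Since $\tau[\cdot]$ is linear in its argument, this gives
$$\tau_{ij}[\hat{s}]=w\,\tau_{ij}[\ell]+\ell\,\mathring{\nabla}_{ij}w=w\,\mathring{g}_{ij}+\ell\,\mathring{\nabla}_{ij}w .$$

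\textbf{Step 2: the upper bound.} Take a maximum point $\xi_0$ of $w$ and show that $\mathring{\nabla}w(\xi_0)=0$ and $\mathring{\nabla}^2w(\xi_0)\le0$.
\begin{itemize}
\item At an interior point both facts are immediate.
\item At a boundary point, work in the frame \eqref{pwf1} with $e_n=\mu_F/|\mu_F|$.
 \begin{itemize}
 \item The tangential derivatives vanish because $\xi_0$ is a maximum along $\partial\cc$, and the normal derivative vanishes by the Neumann condition. So $\mathring{\nabla}w(\xi_0)=0$.
 \item The tangential Hessian block is $\le0$, because $\xi_0$ maximizes $w$ on $\partial\cc$ and the first derivatives vanish.
 \item Moving into the interior along a curve starting in direction $-e_n$, the first-order term vanishes, so $w_{nn}\le0$.
 \item Differentiating $\mathring{\nabla}_{e_n}w=0$ tangentially gives $w_{\alpha n}=-(\text{connection terms})\cdot\mathring{\nabla}w=0$.
 \end{itemize}
 Hence the Hessian is block diagonal with both blocks nonpositive.
\end{itemize}
By Step 1, at $\xi_0$ we get $0<\tau[\hat{s}]\le w\,\mathring{g}$, so $\det\tau[\hat{s}]\le w^n$. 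Inserting this into the equation $\det\tau[\hat{s}]=f\hat{s}^{p-1}$ with $\hat{s}=w\ell$ yields
$$f\,w^{p-1}\ell^{p-1}\le w^n .$$
Since $p>n+1$, this becomes
$$w_{\max}^{p-n-1}\le \frac{1}{\min f\,(\min\ell)^{p-1}} .$$
Everywhere on $\cc$ we have $\hat{s}\le w_{\max}\ell\le w_{\max}\max\ell$. Raising to the positive power $p-n-1$ gives the stated upper bound for $\hat{s}^{p-n-1}$.

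\textbf{Step 3: the lower bound.} Repeat Step 2 at a minimum point of $w$. The same reasoning, with the tangential block and $w_{nn}$ now nonnegative, gives $\mathring{\nabla}^2w\ge0$ there. Hence $\det\tau[\hat{s}]\ge w^n$, and
$$w_{\min}^{p-n-1}\ge\frac{1}{\max f\,(\max\ell)^{p-1}} .$$
Combining this with $\hat{s}\ge w_{\min}\min\ell$ gives the left inequality of the lemma.

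\textbf{Main obstacle.} The only delicate point is the boundary case. There one must check that the mixed second derivatives $w_{\alpha n}$ vanish, so that the full Hessian has a sign and not just its diagonal. This should follow from differentiating the Neumann condition along $\partial\cc$ and using $\mathring{\nabla}w=0$ at the extremal point, since the terms coming from the connection and from the variation of $e_n$ are all multiplied by components of $\mathring{\nabla}w$.
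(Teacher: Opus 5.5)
Your proposal is correct and is essentially the paper's proof. The paper also works with $\bar s=\ell^{-1}\hat s$ (your $w$), uses $\tau[\ell]=\mathring g$ and the Neumann condition $\mathring\nabla_{e_n}\bar s=0$, and evaluates the transformed equation at the extremal points of $\bar s$, where $\mathring\nabla\bar s=0$ and $\mathring\nabla^2\bar s$ has a sign; your boundary analysis, including $w_{\alpha n}=0$, supplies the detail the paper leaves implicit when it simply asserts these facts at a boundary extremum.
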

\begin{proof}
	Consider the anisotropic capillary support function $\bar{s}=\ell^{-1}\hat{s}$. Then, in view of \eqref{equ:p pde-inv-Gauss}, we have that
	\begin{equation}\label{b1}
		\renewcommand{\arraystretch}{1.5}
		\left\{
		\begin{array}{rll}
			\det \left(\bar{s}_{ij}\ell-\frac{1}{2}Q_{ijk}\bar{s}_{k}\ell+\bar{s}_{i}\ell_{j}+\bar{s}_{j}\ell_{i}+\bar{s}\mathring{g}_{ij}\right)&=f(\bar{s}\ell)^{p-1}, &\text{in\ } \mathcal{C}_{\omega_0},\\
		{\mathring{\nabla}}_{e_{n}}\bar{s}&=0,&\text{on}\ \partial\mathcal{C}_{\omega_0},
		\end{array}	
		\right.
	\end{equation}
	where we used \eqref{pwf1} and \cite{Arxiv1}*{Lemma 3.4}.
	Suppose $\bar{s}$ attains the maximum at some point $\xi_{0}\in\mathcal{C}_{\omega_0}$. The boundary condition in \eqref{b1} implies that
	\begin{align}\label{l1}
		{\mathring{\nabla}}\bar{s}(\xi_{0})=0,\ {\mathring{\nabla}}^{2}\bar{s}(\xi_{0})\leq 0.
	\end{align}
	Substituting \eqref{l1} into \eqref{b1}, we obtain that
	\begin{align*}
		f({\bar{s}\ell})^{p-1}\leq \bar{s}^{n}\ \Leftrightarrow \bar{s}^{p-n-1}(\xi_{0})\leq f^{-1}\ell^{1-p}\leq\frac{1}{\underset{\mathcal{C}_{\omega_0}}{\min}f\cdot(\underset{\mathcal{C}_{\omega_0}}{\min}\ell)^{p-1}}.\
	\end{align*}
	Then for all $\xi \in \mathcal{C}_{\omega_0}$,
	\begin{align}\label{l2}
		\hat{s}^{p-n-1}(\xi)=(\bar{s}\ell)^{p-n-1}(\xi)\leq (\underset{{\mathcal{C}_{\omega_0}}}{\max}\ \ell)^{p-n-1}\cdot \bar{s}^{p-n-1}(\xi_{0})\leq \frac{(\underset{{\mathcal{C}_{\omega_0}}}{\max}\ \ell)^{p-n-1}}{\underset{{\mathcal{C}_{\omega_0}}}{\min}\ f\cdot(\underset{{\mathcal{C}_{\omega_0}}}{\min}\ell)^{p-1}}.\
	\end{align}
	Similarly, we also have that
	\begin{align*}
		\hat{s}^{p-n-1}\geq \frac{(\underset{{\mathcal{C}_{\omega_0}}}{\min}\ \ell)^{p-n-1}}{\underset{{\mathcal{C}_{\omega_0}}}{\max}\ f\cdot(\underset{{\mathcal{C}_{\omega_0}}}{\max}\ell)^{p-1}}.
	\end{align*}
\end{proof}
Next, we deal with the case $1\leq p < n+1$. In the case $p=1$, it follows from Remark \ref{rk:O-inter} that we can restrict to the situation where $\widehat{\partial\Sigma}$ contains the origin in its interior.
For $1<p<n+1$, from Definition \ref{gd}, we have
$$
\int_{\mathcal{C}_{\omega_{0}}} G(\mathcal{T}^{-1}\xi)\left(\mathcal{T}^{-1}\xi, E_i\right) \cdot \hat{s}(\xi) ~d \mu_F=0, \quad \forall i=1,2, \ldots, n .
$$
Since $\omega_0\leq 0$, and Wulff shape $\W$ is a symmetric convex hypersurface,  the Gauss map image of
$\mathcal{C}_{\omega_0}$
lies entirely in the upper hemisphere $\mathbb{S}^n_+=\{x\in\mathbb{S}^n:\<x,E_{n+1}\>\geq 0\}$. Consequently, the Gauss map image of $\Sigma$ also lies in $\mathbb{S}^n_+$. This implies that the union $$\Omega:=\widehat{\Sigma}\cup(-\widehat{\Sigma})$$
 is strictly convex and possesses an even anisotropic support function on  $\W$ denoted by $\hat{s}_{\Omega}(z)$ for $z\in \W$. If $\xi \in\cc$, we have $\hat{s}_{\Omega}(\mathcal{T}^{-1}\xi)=\hat{s}(\xi)$. Such an observation allows us to restrict $ \hat{s}_{\Omega}(z)$ to satisfy the following orthogonal condition:
$$
\int_{\W} G(z)\left(z, E_i\right) \cdot \hat{s}_{\Omega}(z) ~d \mu_F=0, \quad \forall i=1,2, \ldots, n+1.
$$
This orthogonal condition means that the origin lies in the interior of the convex body $\Omega$, which implies  the origin lies in the interior of $\widehat{\partial\Sigma}$. For closed hypersurfaces and isotropic capillary hypersurfaces, one can refer to  \cite{Xia13}*{Eq. (4.2)} and \cite{M-W-W}*{Lemma 3.1}, respectively. %enclosed by $\partial\Sigma\subset \mathbb{R}^n=\partial\overline{\mathbb{R}^{n+1}_+}$.

%Besides, we will use the following Proposition \cite{book-convex-body}, which will be used to derive a lower bound for the $C^0$ estimate in the case $p=1$.
%\begin{proposition}\label{prop:John ellipsoid}
	% Let convex body $K \in \mathbb{R}^{n+1}$, let $E$ be the John ellipsoid of $K$ and let $c$ be its centre. Then $E \subset K \subset n(E-c)+c$.

%If $K$ is o-symmetric, then $E \subset K \subset \sqrt{n} E$. $\underset{\mathcal{C}_{\omega_0}}{\min} f$ and $\underset{\mathcal{C}_{\omega_0}}{\max} f$.
%\end{proposition} $\underset{\mathcal{C}_{\omega_0}}{\min} f$ and $\underset{\mathcal{C}_{\omega_0}}{\max} f$
\begin{lemma}\label{C02}
	Consider an admissible solution $\hat{s}$ to Eq. \eqref{equ:p pde-inv-Gauss}. Let the exponent $p$ and the contact constant $\omega_0$ satisfy one of the following conditions:
	\begin{enumerate}
		\item$p=1$ and $\omega_0\in\bigl(-F(E_{n+1}),\,F(-E_{n+1})\bigr)$;
		\item$1<p<n+1$ and $\omega_0\in\bigl(-F(E_{n+1}),\,0\bigr]$.
	\end{enumerate}
	Then $\hat{s}$ admits uniform bounds
	\begin{align*}
		0<m_{1}\leq\hat{s}\leq m_{2},
	\end{align*}
	where the constants $m_{1},m_{2}>0$ depending only on $n$, $p$, $\underset{\mathcal{C}_{\omega_0}}{\min} f$ and $\underset{\mathcal{C}_{\omega_0}}{\max} f$.
	\end{lemma}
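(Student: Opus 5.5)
The plan is to follow the isotropic strategy of \cite{M-W-W,M-W-W1}, which goes back to \cite{Chou-Wang,Xia13} for closed hypersurfaces. First I bound the volume and the mixed volume $\mathcal{V}_{1,\omega_0}$ of $\widehat{\Sigma}$ from the equation. Then I use Proposition \ref{prop:Vol<r} together with the fact that the origin lies in the interior of $\widehat{\partial\Sigma}$ to turn these into two-sided bounds on $\hat{s}$.

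\textbf{Upper bound.} Let $R=\max_{\cc}\hat{s}$, attained at $\xi_0$, and set $Y=X(\xi_0)$. Since the origin lies in $\widehat{\partial\Sigma}$ (Remark \ref{rk:O-inter} when $p=1$; the orthogonality and evenness discussion above when $1<p<n+1$), convexity gives $\widehat{\Sigma}\supset\mathrm{conv}\{O,Y\}$. Hence
\begin{align*}
\hat{s}(\xi)\geq R\,\max\{G(\mathcal{T}^{-1}\xi)(\mathcal{T}^{-1}\xi,\theta),0\},
\end{align*}
where $\theta=Y/|Y|$ is suitably normalized by $F^0$.
\begin{itemize}
\item For $p=1$: Lemma \ref{lemma:Wpk} with $k=0$ identifies $\mathcal{V}_{1,\omega_0}$ (up to constants) with $\int_{\cc}\hat{s}\, dS$, and $\int_{\cc}\ell f\,d\mu_F$ is fixed by $f$. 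So $\mathcal{V}_{1,\omega_0}(\Sigma)\le C$ comes from the boundary/divergence identity (the same computation as in Proposition \ref{rk:int f=0}, applied with the position vector instead of $E_\alpha$). I then test the mixed-volume representation $(n+1)\mathcal{V}_{1,\omega_0}(\Sigma)\cdot(\text{const})=\int_{\cc}\hat{s}\,f\,d\mu_F$ against the lower bound on $\hat{s}$ above. This yields $R\int_{\cc}\max\{G(\hat\xi)(\hat\xi,\theta),0\}f\,d\mu_F\le C\,\mathcal{V}_{1,\omega_0}$. The integral is bounded below uniformly in $\theta$ by $\min f$, since $\cc$ has nonempty interior and for every $\theta\in\overline{\mathbb{R}^{n+1}_+}$ a definite portion of $\cc$ has $G(\hat\xi)(\hat\xi,\theta)\geq c>0$. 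This needs care: it uses that $\theta$ points into the closed upper half-space and that $\mathcal{A}$ contains a neighborhood of $E_{n+1}$.

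Closing the $p=1$ case is the step I expect to be the main obstacle, because linear homogeneity makes the equation scale-free. I need the isoperimetric-type inequality from the Alexandrov–Fenchel/Minkowski inequality stated in Section \ref{section3}: $\mathcal{V}_{1,\omega_0}^{n+1}\geq c\,\Vol(\Sigma)^n$. The left side equals $\int_{\cc} f\ell\,d\mu_F$ up to constants. Combining this with the estimate $R\cdot c\min f\le C\,\mathcal{V}_{1,\omega_0}$ gives $R\le C$.

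\item For $1<p<n+1$: the equation reads $\hat{s}^{1-p}\det\tau=f$. Testing with $\hat{s}$ gives
\begin{align*}
(n+1)\Vol(\Sigma)=\int_{\cc}\hat{s}\det\tau\,d\mu_F=\int_{\cc}f\hat{s}^{p}\,d\mu_F\ge R^p c.
\end{align*}
The Minkowski-type inequality gives $\int\ell\det\tau\,d\mu_F\geq c\Vol^{n/(n+1)}$, while Hölder gives $\int\ell\hat{s}^{p-1}f\le C\Vol^{(p-1)/p}\cdot(\dots)$. Since $p<n+1$, the exponents close and produce an upper bound on $\Vol$, hence on $R$. Evenness is what guarantees the origin is interior, so these comparisons are legitimate.
\end{itemize}

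\textbf{Lower bound.} With $\hat{s}\le m_2$ and $f\ge\min f$, I get $\Vol(\Sigma)\ge c>0$: for $p=1$ directly from $\int\hat{s}f$, and for $p>1$ from $\Vol\gtrsim\int f\hat{s}^p$ combined with the reverse inequality. Proposition \ref{prop:Vol<r} then gives $r(\Sigma)\geq \Vol(\Sigma)/\mathcal{V}_{1,\omega_0}(\Sigma)\geq c$, since $\mathcal{V}_{1,\omega_0}\le C m_2$. So $\widehat{\Sigma}$ contains a capillary Wulff shape $\W_{r,\omega_0}$ translated horizontally by some $x_0\in\partial\mathbb{R}^{n+1}_+$, with $r\ge c$.

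It remains to show that $O$ stays a uniform distance from $\partial\widehat{\partial\Sigma}$. For $p=1$ I use the normalization in Remark \ref{rk:O-inter} (for example, placing $O$ at the center of the inscribed body). For even solutions, the symmetry $\xi\mapsto\bar\xi$ forces $x_0=0$. Then $\hat{s}\ge r\,\ell\ge c\min\ell=:m_1>0$, because $\ell$ is the capillary support function of $\cc$ and support functions are monotone under inclusion.
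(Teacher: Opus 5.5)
Your overall route is the paper's. You test $\hat s\ge c\,R\max\{G(\hat\xi)(\hat\xi,\theta),0\}$ against $\int_{\mathcal C_{\omega_0}}f\hat s^p\,d\mu_F=(n+1)\Vol(\hat\Sigma)$, close with the capillary isoperimetric inequality and H\"older, and then use $r(\Sigma)\ge\Vol(\hat\Sigma)/\mathcal V_{1,\omega_0}(\Sigma)$. However, two steps fail as written.

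\textbf{The $p=1$ upper bound.} You assert $\int_{\mathcal C_{\omega_0}}\hat s f\,d\mu_F=\mathrm{const}\cdot\mathcal V_{1,\omega_0}(\Sigma)$, and this is false. The divergence identity with the position vector, which you cite yourself, gives $\int_{\mathcal C_{\omega_0}}\hat s f\,d\mu_F=\int_\Sigma\langle X,\nu\rangle\,d\mu_g=(n+1)\Vol(\hat\Sigma)$. By contrast, $(n+1)\mathcal V_{1,\omega_0}(\Sigma)=\int_{\mathcal C_{\omega_0}}\ell f\,d\mu_F$. If your identity held, the isoperimetric step would be superfluous. The correct chain is $c\,R\min f\le(n+1)\Vol(\hat\Sigma)\le C\,\mathcal V_{1,\omega_0}(\Sigma)^{(n+1)/n}=C\bigl(\int_{\mathcal C_{\omega_0}}\ell f\,d\mu_F\bigr)^{(n+1)/n}$. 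This is just your $1<p<n+1$ argument at $p=1$, where H\"older is trivial, and it is how the paper treats both cases together. Also, $p=1$ is not scale-free, since $\det\tau[\lambda\hat s]=\lambda^n\det\tau[\hat s]$; the only non-compactness is horizontal translation.

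\textbf{The volume lower bound.} The bounds $\hat s\le m_2$ and $f\ge\min f$ control $\int_{\mathcal C_{\omega_0}}f\hat s^p\,d\mu_F$ from above, not below, and the ``reverse inequality'' you invoke is never identified. What you need is a lower bound on $R$. Since $\hat s\le R\le (R/\min\ell)\,\ell$, monotonicity of support functions gives $\hat\Sigma\subset(R/\min\ell)\,\widehat{\mathcal C_{\omega_0}}$, hence $\Vol(\hat\Sigma)\le CR^{n+1}$. Combined with $(n+1)\Vol(\hat\Sigma)\ge c\,R^p\min f$ and $p<n+1$, this yields $R\ge c$ and then $\Vol(\hat\Sigma)\ge c$, as in the paper's Step II.

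With these two repairs your proof goes through. Your treatment of centering is in fact more careful than the paper's. The paper's step $r(\Sigma)\le\inf_\xi t(\xi)$ tacitly assumes the inscribed cap $r\widehat{\mathcal C_{\omega_0}}+y$ has $y=0$. For $p=1$ some normalization is unavoidable, because adding $\sum_i a_iG(\hat\xi)(\hat\xi,E_i)$ preserves the problem. Your two choices settle this point:
\begin{itemize}
\item for $p=1$, you place $O$ at the center of an inscribed cap;
\item for even solutions, you use $\tfrac12(y+r\widehat{\mathcal C_{\omega_0}})+\tfrac12(-y+r\widehat{\mathcal C_{\omega_0}})=r\widehat{\mathcal C_{\omega_0}}$ together with convexity.
\end{itemize}
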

	\begin{proof}
	The anisotropic inner radius and outer radius of closed hupersurface relative to Wulff shape are defined in \cite{Xia-2017-convex},
	Since $\hat{\Sigma}\in \mathcal{K}^O_{\omega_0}$, in order to obtain the bounds for $\hat{s}$, it is sufficient to establish bounds on the anisotropic inner and outer radii of $\Sigma$ relative to $\mathcal{C}_{\omega_0}$.
	The anisotropic inner radius of $\Sigma$ relative to $\mathcal{C}_{\omega_0}$ is defined as
	\begin{align}\label{equ:inner radius}
		r(\Sigma):= \sup\{t > 0 | t\widehat{\mathcal{C}_{\omega_0}} + y \subset \hat{\Sigma}\  \text{for some}\ y\  \in \partial\overline{\mathbb{R}_+^{n+1}}\},
	\end{align}
	and the anisotropic outer radius of $\hat{\Sigma}$ relative to $\mathcal{C}_{\omega_0}$ is defined as
	\begin{align*}
		R(\Sigma):= \inf\{t > 0 | \hat{\Sigma} \subset t \widehat{\mathcal{C}_{\omega_0}} + y\ \text{for  some} \ y\  \in \partial\overline{\mathbb{R}_+^{n+1}}\}.
	\end{align*}
	
	\noindent
	{\bf Step \uppercase\expandafter{\romannumeral1}$: \bf\hat{s}\leq m_{2}$}.
	
	There exists some $R>0$ such that
	\begin{equation}\label{l3}
		\widehat{\Sigma} \subset  \widehat{\mathcal{W}}_{R,\omega_0}.
	\end{equation}
%	where
%	$$ \hat{\mathcal{C}}_{R, \omega_0}:=\{x\in\overline{\mathbb{R}_+^{n+1}}:F^0(x-R\omega_0E_{n+1}^F)\leq R\}\ .$$
	Let $R_{0}$ denote the smallest positive constant $R$ satisfying \eqref{l3}. It is clear that there exists $X_{0} \in \Sigma\bigcap R_{0}\mathcal{C}_{\omega_0}$. Set $\hat{X_{0}}=\frac{X_{0}}{R_{0}} \in \mathcal{C}_{\omega_0}$, for any $\xi \in \mathcal{C}_{\omega_0}$ such that $\hat{\xi}=\mathcal{T}^{-1}\xi$, we get that
	$$\hat{s}(\xi)=\underset{y \in \Sigma}{\sup}G({\xii})(\xii, y)\geq R_{0}\max\{0, G(\xii)(\xii, \hat{X_{0}})\}:=R_{0}A_{1}.$$
	Integrating over $\xi$ in $\cc$ yields
	\begin{align}\label{l4}
		\int_{\cc}\hat{s}^{p}d\mu_{F}\geq R_{0}^{p}\int_{\cc}A_{1}^{p} d\mu_{F},
	\end{align}
	and
	\begin{align}\label{l7}
		\int_{\cc}\hat{s}^{p} fd\mu_{F}\geq R_{0}^{p}\underset{\cc}{\min} f\int_{\cc}A_{1}^{p} d\mu_{F}.\
	\end{align}
	On the other hand, we have a Minkowski formula
	\begin{align}\nonumber
		\int_{\cc}f \hat{s}^{p}\du&=\int_{\Sigma}\hat{s}F(\nu)d\mu_{\Sigma}=\int_{\Sigma}G(\xii)(\xii, X)\Omega(\xii,\pa_{1}, \cdots, \pa_{n})dy^{1}\cdots dy^{n}\\ \label{l5}
		&=\int_{\Sigma}\Omega(X,\pa_{1},\cdots,\pa_{n})dy^{1}\cdots dy^{n}=\int_{\Sigma}\<X , \nu\>d\mu_{\Sigma}=(n+1)|\hat{\Sigma}|.
	\end{align}
	From the anisotropic capillary isoperimetric inequality in \cite{Arxiv}*{Theorem 1.3}, we have that
	\begin{align}\label{l6}
		|\hat{\Sigma}|^{\frac{n}{n+1}}\cdot A_{2}\leq |\Sigma|_{F}+\omega_{0}{|\widehat{\pa{\Sigma}}|}=\int_{\cc}\ell\hat{s}^{p-1}f \du,
	\end{align}
	where $A_{2}:=\frac{|\cc|_{F}+\omega_{0}|\widehat{\pa\cc}|}{|\widehat{\cc}|^{\frac{n}{n+1}}}$.  \eqref{l5}, \eqref{l6} and H\"older's inequality together yield
	\begin{align*}
		\int_{\cc}f\hat{s}^{p}\du &\leq \Big(\int_{\cc}\hat{s}^{p-1}f\du\Big)^{\frac{n+1}{n}}\left(\frac{\underset{\cc}{\max}\ \ell}{A_{2}}\right)^{\frac{n+1}{n}}(n+1)\\
		&\leq \left(\frac{\underset{\cc}{\max}\ \ell}{A_{2}}\right)^{\frac{n+1}{n}}(n+1)\Big(\int_{\cc}f\hat{s}^{p}\du\Big)^{\frac{(p-1)(n+1)}{pn}}\Big(\int_{\cc}f\du\Big)^{\frac{n+1}{np}}\\
		&:=A_{3}\Big(\int_{\cc}f\hat{s}^{p}\du\Big)^{\frac{(p-1)(n+1)}{pn}}\Big(\int_{\cc}f\du\Big)^{\frac{n+1}{np}}.
	\end{align*}
% 	where $A_3=\Big(\frac{\underset{\cc}{\max}\ \ell}{A_{2}}\Big)^{\frac{n+1}{n}}(n+1)$.
	Since $p<n+1$, we have that
	\begin{align*}
		\int_{\cc}\hat{s}^{p}\du \leq \frac{A_{3}^{\frac{np}{n+1-p}}}{\underset{\cc}{\min}\ f}\Big(\int_{\cc}f\du\Big)^{\frac{n+1}{n+1-p}}.
	\end{align*}
	Combining with \eqref{l4}, we get the positive upper bound of $R_{0}$, which implies the positive upper bound of $\hat{s}$, see also in \cite{Xia13}*{Lemma 4.2}.
	
	\noindent
	{\bf Step \uppercase\expandafter{\romannumeral2}$: \bf\hat{s}\geq m_{1}$}.
	
	Since $|\hat{\Sigma}|$ is enclosed by a rescaled Wulff shape $\cc$ with radius $R(\Sigma)$, we have that
	\begin{align}\label{l8}
		|\hat{\Sigma}|\leq %\frac
		\frac{1}{n+1}{\widehat{|\cc|}R^{n+1}(\Sigma)}.\ %{n+1}.
	\end{align}
	It follows from \eqref{l7}, \eqref{l5} and \eqref{l8} that
	\begin{align}\label{l9}
		R_{0}\geq\Big(\int_{\cc}A_{1}^{p}\du\Big)^{\frac{1}{n+1-p}}\Big(\underset{\cc}{\min}\ f\Big)^{\frac{1}{n+1-p}}\left(\widehat{|\cc|}\right)^{\frac{-1}{n+p-1}},
	\end{align}
	and
	\begin{align}\label{l10}
		(n+1)|\hat{\Sigma}|\geq R_{0}^{p}\ \underset{\cc}{\min}f \cdot\int_{\cc}A_{1}^{p}\du.\
	\end{align}
%	From \eqref{support}, we have
%	$$\hat{s}=\sup_{x\in\widehat{\Sigma}}G(\mathcal{T}^{-1}\xi)\(\mathcal{T}^{-1}\xi,x\)=\frac{\<X(\xi), \nu(X({\xi}))\>}{F(\nu(X({\xi}))}=\frac{u(\xi)}{F(\nu(X({\xi}))}.\ $$
	Combining \eqref{l9} with \eqref{l10}, we conclude that the volume $|\hat{\Sigma}|$ has a uniform positive lower bound. % and $\hat{s}$ has a uniform positive upper bound $R_{0}$, which implies that $u$ also has a uniform positive lower bound, see also \cite{M-W-W1}*{Lemma 3.3}, since $F$ is even on $\mathbb{S}^n$ and $f$ is even on $\cc$ for case $p>1$. Therefore, the lower bound of the anisotropic support function $\hat{s}$ is determined by the lower bound of the isotropic support function $u$ and the upper bound of $F$ in case $p>1$.
	 Besides, we can calculate that
	 \begin{align*}
		\mathcal{V}_{1,\omega_0}(\Sigma)\overset{\eqref{equ:Vk=Vkkklll}}{=}&\frac{1}{n+1}\int_{\Sigma}1+\omega_0G(\nu_{{F}})(\nu_{{F}},E_{n+1}^F) ~d\mu_F
	\\
	= &\frac{1}{n+1}\int_{\cc}
	\frac{F(\nu(\xi))+\omega_{0}\<\nu(\xi), E_{n+1}^{F}\>}{K(\widetilde{\nu_{{F}}}^{-1}(\xi))} ~d\mathcal{H}^n(\xi)
	\\
	\overset{\eqref{equ:K^-1=f}}{=}&\frac{1}{n+1}\int_{\cc}
	\left(F(\nu(\xi))+\omega_{0}\<\nu(\xi), E_{n+1}^{F}\>\right) f ~d\mathcal{H}^n(\xi).
	\end{align*}
	Since $|\nu(\xi)|\leq 1$ for any $\xi\in\cc$, we know $F(\nu(\xi))+\omega_{0}\<\nu(\xi), E_{n+1}^{F}\>$ has a uniform upper bound, then $\mathcal{V}_{1,\omega_0}(\Sigma)$  also has a uniform upper bound.
	From \eqref{equ:Vol<r} and \eqref{l10}, we know $r(\Sigma)$ has a uniform positive lower bound.
	
	For any $\xi \in \cc$, let $t(\xi)>0$ be the number such that $t(\xi) \xi \in \Sigma$. It follows from the definition of $r(\Sigma)$ that $r(\Sigma) \leq \underset{\xi \in \cc}{\inf}t(\xi)$. Consequently,	for any $\xi \in \cc$ we have
	\begin{align*}
		\hat{s}(\xi)=&\sup _{X \in \Sigma} G(\mathcal{T}^{-1}\xi)(\mathcal{T}^{-1}\xi, X)
		\\
		\geq& G(\mathcal{T}^{-1}\xi)\left(\mathcal{T}^{-1}\xi, t(\xi) \left(\mathcal{T}^{-1}\xi+\omega_0 E_{n+1}^F\right)\right)
		\\
		=&t(\xi)(1+\omega_0G(\mathcal{T}^{-1}\xi)(\mathcal{T}^{-1}\xi,E_{n+1}^F))
		\\
		 \geq& r(\Sigma)c.
	\end{align*}
	%where we use the fact $1+\omega_0G(\mathcal{T}^{-1}\xi)(\mathcal{T}^{-1}\xi,E_{n+1}^F)=\frac{F(\nu(\xi))+\omega_{0}\<E_{n+1}^F,\nu(\xi)\>}{F(\nu(\xi))}>0$, which yields the uniform lower bound  $1+\omega_0G(\mathcal{T}^{-1}\xi)(\mathcal{T}^{-1}\xi,E_{n+1}^F)\geq c>0$.	
	Therefore, the lower bound of the anisotropic support function $\hat{s}$ is determined by the lower bound of $r(\Sigma)$.
	 %In case  $p=1$,  combining with Proposition \ref{prop:John ellipsoid} and , we have the lower bound of $r$, then similar to \cite{Xia13}, we obtain he lower bound of $\hat{s}$.
	 %\begin{align*}
	 %	\hat{s}=\sup_{x\in{\Sigma}}G(\mathcal{T}^{-1}\xi)\(\mathcal{T}^{-1}\xi,x\)\leq
	 %\end{align*}
\end{proof}

\subsection{$C^{1}$ estimate}
Inspired by the role of spherical caps in the classical isotropic capillary Minkowski problem in the Euclidean upper half-space, we naturally consider that for the anisotropic capillary Minkowski problem, the target model is no longer the Wulff shape $\mathcal{W}$, but rather the translated Wulff shape $\w$. Therefore, we establish the anisotropic geometric quantities of the translated Wulff shape $\w$ and reparameterize Eq. \eqref{equ:p pde-inv-Gauss} using these new anisotropic geometric quantities.

\begin{flushleft}
\textbf{Reparameterize the problem by the translated Wulff shape $\widetilde{\W}$}.
\end{flushleft}

For a given Wulff shape $\W$ and a constant  $\omega_0 \in(-F(E_{n+1}), F(-E_{n+1}))$, we can define a new Wulff shape $\widetilde{\W}:=\rm \mathcal{T}(\W)\subset\mathbb{R}^{n+1}$, where $\mathcal{T}$ is a translation  transformation on $\mathbb{R}^{n+1}$ defined by
$$\mathcal{T}(y)=y+\omega_0E^F_{n+1},\quad \forall y\in\mathbb{R}^{n+1}.$$
Since $\omega_0 \in(-F(E_{n+1}), F(-E_{n+1}))$, we have $F^0(O-\omega_0E^{F}_{n+1})=F^0(-\omega_0E^{F}_{n+1})<1$, then the translated Wulff shape $\widetilde{\W}=\{y\in\mathbb{R}^{n+1}:F^0(y-\omega_0E^{F}_{n+1})=1\}$ still encloses the origin $O$ in its interior.
\begin{figure}[h]
	\centering
	\label{fig:2}
	\begin{tikzpicture}[x=0.75pt,y=0.75pt,yscale=-1,xscale=1]
		%uncomment if require: \path (0,433); %set diagram left start at 0, and has height of 433
		
		%Shape: Polygon Curved [id:ds225334415266911]
		\draw   (96.2,78.6) .. controls (125.2,67.6) and (229,37) .. (260.2,61.6) .. controls (291.4,86.2) and (326.4,138.2) .. (284.2,158.6) .. controls (242,179) and (178,158) .. (135.2,141.6) .. controls (92.4,125.2) and (67.2,89.6) .. (96.2,78.6) -- cycle ;
		%Straight Lines [id:da252061447758283]
		\draw  [dash pattern={on 4.5pt off 4.5pt}]  (220,21) -- (355.2,152.6) ;
		%Straight Lines [id:da033731654504067565]
		\draw  [dash pattern={on 4.5pt off 4.5pt}]  (174,45) -- (310.2,180.6) ;
		%Straight Lines [id:da23208520360134832]
		\draw    (224,95) -- (241.59,77.41) ;
		\draw [shift={(243,76)}, rotate = 135] [fill={rgb, 255:red, 0; green, 0; blue, 0 }  ][line width=0.08]  [draw opacity=0] (12,-3) -- (0,0) -- (12,3) -- cycle    ;
		%Straight Lines [id:da8984375749856972]
		\draw    (270,71) -- (287.59,53.41) ;
		\draw [shift={(289,52)}, rotate = 135] [fill={rgb, 255:red, 0; green, 0; blue, 0 }  ][line width=0.08]  [draw opacity=0] (12,-3) -- (0,0) -- (12,3) -- cycle    ;
		%Straight Lines [id:da024612131866353915]
		\draw [color={rgb, 255:red, 208; green, 2; blue, 27 }  ,draw opacity=1 ]   (224,95) -- (268.23,71.93) ;
		\draw [shift={(270,71)}, rotate = 152.45] [fill={rgb, 255:red, 208; green, 2; blue, 27 }  ,fill opacity=1 ][line width=0.08]  [draw opacity=0] (12,-3) -- (0,0) -- (12,3) -- cycle    ;
		%Straight Lines [id:da6850033206784607]
		\draw    (232.2,145.6) -- (224.32,96.97) ;
		\draw [shift={(224,95)}, rotate = 80.79] [fill={rgb, 255:red, 0; green, 0; blue, 0 }  ][line width=0.08]  [draw opacity=0] (12,-3) -- (0,0) -- (12,3) -- cycle    ;
		%Straight Lines [id:da0695681044714207]
		\draw    (232.2,145.6) -- (269.1,72.78) ;
		\draw [shift={(270,71)}, rotate = 116.87] [fill={rgb, 255:red, 0; green, 0; blue, 0 }  ][line width=0.08]  [draw opacity=0] (12,-3) -- (0,0) -- (12,3) -- cycle    ;
		%Shape: Polygon Curved [id:ds013622522926809655]
		\draw   (52.6,105.3) .. controls (81.6,94.3) and (185.4,63.7) .. (216.6,88.3) .. controls (247.8,112.9) and (282.8,164.9) .. (240.6,185.3) .. controls (198.4,205.7) and (134.4,184.7) .. (91.6,168.3) .. controls (48.8,151.9) and (23.6,116.3) .. (52.6,105.3) -- cycle ;
		%Straight Lines [id:da13895548840802063]
		\draw  [dash pattern={on 4.5pt off 4.5pt}]  (232.2,145.6) -- (290.2,91.6) ;
		%Shape: Brace [id:dp0034199174630360485]
		\draw   (232.2,145.6) .. controls (235.47,149.93) and (238.77,149.96) .. (242.1,146.69) -- (269.99,119.31) .. controls (274.74,114.64) and (278.75,113.96) .. (282.02,117.29) .. controls (278.75,113.96) and (279.5,109.96) .. (284.26,105.29)(282.11,107.39) -- (288.11,101.5) .. controls (293.44,98.23) and (293.47,94.93) .. (291.2,91.6) ;
		
		% Text Node
		%\draw (721,21) node    {$0$};
		% Text Node
		%	\draw (701,71) node    {$0$};
		% Text Node
		\draw (215,110) node [anchor=north west][inner sep=0.75pt]   [align=left] {$z$};
		% Text Node
		\draw (244,93) node [anchor=north west][inner sep=0.75pt]   [align=left] {$\tilde{z}$};%=z+\omega_0E_{n+1}^F$};
	% Text Node
	\draw (32,91) node [anchor=north west][inner sep=0.75pt]   [align=left] {$\W$};
	% Text Node
	\draw (60,35) node [anchor=north west][inner sep=0.75pt]   [align=left] {$\widetilde{\W}=\W+\omega_0E_{n+1}^F$};
	% Text Node
	\draw (226,70) node [anchor=north west][inner sep=0.75pt]   [align=left] {$x$};
	% Text Node
	\draw (282,40) node [anchor=north west][inner sep=0.75pt]   [align=left] {$x$};
	% Text Node
	\draw (280,65) node [anchor=north west][inner sep=0.75pt]   [align=left] {\color{red}$\omega_0E_{n+1}^F$};
	% Text Node
	\draw (221,146) node [anchor=north west][inner sep=0.75pt]   [align=left] {$O$};
	% Text Node
	\draw (270,120) node [anchor=north west][inner sep=0.75pt]   [align=left] {$\widetilde{F}(x)$};
	% Text Node
	\draw (295,178) node [anchor=north west][inner sep=0.75pt]   [align=left] {$T_z\W=T_x\mathbb{S}^n$};
	% Text Node
	\draw (343,154) node [anchor=north west][inner sep=0.75pt]   [align=left] {$T_{\tilde{z}}\widetilde{\W}=T_x\mathbb{S}^n$};

\end{tikzpicture}
\caption{$\W$ and $\widetilde{\W}$} \label{fig:0}
\end{figure} \\

For any $z\in\W$, we have $\tilde{z}:=z+\omega_0E_{n+1}^F\in \widetilde{\W}$. If $x$ is the unit outward normal vector of $\W\subset\mathbb{R}^{n+1}$ at point $z$, then $x$ is also a unit outward normal vector of $\widetilde{\W}\subset\mathbb{R}^{n+1}$ at point $\tilde{z}$,  since $T_z\W=T_{\tilde{z}}\widetilde{\W}$ (see Fig. \ref{fig:0}). Then
the support function $\widetilde{F}$ of $\widetilde{\W}$ satisfies
\begin{align*}
\widetilde{F}(x)&=\<\tilde{z},x\>=\<z,x\>+\<x,\omega_0E_{n+1}^F\>
\\
&=F(x)+\omega_0\<E_{n+1}^F,x\>, \quad\forall x\in \mathbb{S}^n.
\end{align*}
We extend it to a $1$-homogeneous function on $\mathbb{R}^{n+1}$ as
$$\widetilde{F}(y)=F(y)+\omega_0\<E_{n+1}^F,y\>, \quad\forall y\in \mathbb{R}^{n+1}.$$
%\tikzset{every picture/.style={line width=0.75pt}} %set default line width to 0.75pt
Then $D\widetilde{F}=DF+\omega_0E_{n+1}^F, D^2\widetilde{F}=D^2F$.
Obviously the translation map does not change the principal curvature radii of $\W$ and $\widetilde{\W}$, i.e., ${\widetilde{A_F}}(x):=D^2\widetilde{F}|_{T_x\mathbb{S}^n}(x)=D^2F|_{T_x\mathbb{S}^n}(x)=A_F(x)$, for $x\in\mathbb{S}^n$.
We denote $\widetilde{F}^{0}$ as the  dual Minkowski norm of $\widetilde{F}$. The new metric $\widetilde{G}$ and $(0,3)$-tensor $\widetilde{Q}$ with respect to $\widetilde{F}^0$ are respectively constructed by \eqref{equ:G} and \eqref{equ:Q} in Section \ref{subsec 2.3}. The relationships between $G$ and $\widetilde{G}$, $Q$ and $\widetilde{Q}$  can be found in Appendix of \cite{Arxiv1}.

We use a tilde $(\ \widetilde{}\ )$ to indicate the related anisotropic geometric quantities with respect to the new Wulff shape $\widetilde{\W}$ in this subsection.  For  $X\in\Sigma$, denote  anisotropic Gauss map with respect to $\widetilde{\W}$ as: $$\widetilde{\nu_{{F}}}(X)=D\widetilde{F}(\nu(X))=DF(\nu)+\omega_0E_{n+1}^F=\mathcal{T}(\nu_{{F}}(X)).$$
Anisotropic support function with respect to $\widetilde{\W}$ is:
\begin{align}\label{equ:SF-Translation invariant}
\widetilde{s}(X)=\frac{\<X,\nu\>}{\widetilde{F}(\nu)}=\frac{\<X,\nu\>}{F(\nu)+\omega_0\<E_{n+1}^F,\nu\>}=\bar{s}(X).
\end{align}
Anisotropic Weingarten matrix with respect to $\widetilde{\W}$ is:
$$\widetilde{S_F}(X)=\widetilde{A_F}\operatorname{d}\nu={A_F}\operatorname{d}\nu=S_F(X),$$
that means the anisotropic principal curvatures (or principal curvature radii) of $\Sigma$ with respect to $\W$  are the same as anisotropic principal curvatures (or principal curvature radii) of $\Sigma$ with respect to $\widetilde{\W}$, i.e.,  $\widetilde{\kappa^F}(X)=\kappa^F(X), \tilde{\tau}(X)=\tau(X)$.

On the boundary of  $\omega_0$-capillary hypersurface, since $$\<\widetilde{\nu_F},-E_{n+1}\>=\<{\nu_F}+\omega_0E_{n+1}^F,-E_{n+1}\>=\<{\nu_F},-E_{n+1}\>-\omega_0=0,$$ we have the following proposition.
\begin{proposition}
%If we solve the anisotropic free boundary problem, then we solve the  anisotropic  $\omega_0$-capillary boundary problem for general $\omega_0$.
The anisotropic $\omega_0$-capillary boundary problem for Wulff shape $\W$ can be naturally solved based on the results of the corresponding free boundary problem for the translated Wulff shape $\widetilde{\W}$.
\end{proposition}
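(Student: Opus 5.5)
The plan is to convert the $\omega_0$-capillary condition for $\W$ into the free boundary condition ($\tilde\omega_0=0$) for $\widetilde{\W}=\mathcal{T}(\W)$. Then every object in the $\omega_0$-capillary problem for $\W$ corresponds to an object in the free boundary problem for $\widetilde{\W}$. The argument is essentially a dictionary built from the identities computed just above the statement.

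First, fix a hypersurface $\Sigma\subset\overline{\mathbb{R}^{n+1}_+}$ with $\partial\Sigma\subset\partial\overline{\mathbb{R}^{n+1}_+}$. By $D\widetilde F=DF+\omega_0E^F_{n+1}$, its anisotropic normal relative to $\widetilde{\W}$ is $\widetilde{\nu_F}=\nu_F+\omega_0E^F_{n+1}$. Since $\<E^F_{n+1},E_{n+1}\>=1$, we get on $\partial\Sigma$
\[
\<\widetilde{\nu_F},-E_{n+1}\>=\<\nu_F,-E_{n+1}\>-\omega_0 .
\]
Hence $\Sigma$ is anisotropic $\omega_0$-capillary for $\W$ if and only if it is anisotropic free boundary ($0$-capillary) for $\widetilde{\W}$. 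We must also check that $\widetilde{\W}$ is an admissible Wulff shape. It is smooth and strictly convex, since $\widetilde A_F=A_F>0$. It encloses $O$, since $F^0(-\omega_0E^F_{n+1})<1$ by the range of $\omega_0$. The value $0$ lies in $(-\widetilde F(E_{n+1}),\widetilde F(-E_{n+1}))$ because $\widetilde F>0$ on $\mathbb{S}^n$. This last fact is exactly \cite{Jia-Wang-Xia-Zhang2023}*{Proposition 3.2}.

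Second, transfer the geometric quantities. The principal radii agree, $\tilde\tau=\tau$, since $\widetilde S_F=S_F$. So the anisotropic Gauss–Kronecker curvatures and all $H_k^F$ coincide. The capillary Gauss map for $\W$ equals the ordinary anisotropic Gauss map for $\widetilde{\W}$, with image $\mathcal{C}_{\omega_0}=\widetilde{\W}\cap\overline{\mathbb{R}^{n+1}_+}$, which is the free-boundary Wulff cap of $\widetilde{\W}$. The support functions match by \eqref{equ:SF-Translation invariant}: $\tilde s=\bar s=\hat s/\ell$. For the volume-type functionals, note that $\widetilde F(\nu)\,d\mu_g$ equals $F(\nu)d\mu_g+\omega_0\<E^F_{n+1},\nu\>d\mu_g$. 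By the divergence theorem on $\hat\Sigma$ with the constant field $E^F_{n+1}$, the second term integrates to $\omega_0|\widehat{\partial\Sigma}|$. Thus $|\Sigma|_{\widetilde F}=\mathcal{E}(\Sigma)$, and similarly for \eqref{equ:Vk=Vkkklll}.

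Third, transfer the equations. Prescribing $K$ (or the $p$-measure) on $\mathcal C_{\omega_0}$ for $\W$ becomes the free-boundary problem for $\widetilde{\W}$, whose Neumann condition $\tilde\nabla_{\tilde\mu_F}\tilde s=0$ corresponds to the Robin condition \eqref{equ:boundary equ s p=1} via $\hat s=\ell\tilde s$ and \cite{Arxiv1}*{Lemma 3.4}. Consequently, a solution of the free boundary problem for $\widetilde{\W}$ yields, through the inverse dictionary, a solution of the $\omega_0$-capillary problem for $\W$.

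The main obstacle is this last step. The operator $\tilde s_{ij}-\tfrac12\tilde Q_{ijk}\tilde s_k+\tilde s\tilde g_{ij}$ must be rewritten in terms of $\mathring g,Q,\hat s$. This requires the relations between $(G,Q)$ and $(\widetilde G,\widetilde Q)$ from the Appendix of \cite{Arxiv1}, and in particular the conformal-type factor $\ell$ relating $\tilde g$ and $\mathring g$. I would verify that both tensors represent $\tau_{ij}$ up to the frame change. This holds because both are the radii of the same hypersurface, which lets me avoid direct computation.
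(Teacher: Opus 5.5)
Your proposal is correct and follows the paper's route. The paper's justification is exactly your first step: the identity $\<\widetilde{\nu_F},-E_{n+1}\>=\<\nu_F,-E_{n+1}\>-\omega_0$, obtained from $D\widetilde F=DF+\omega_0E^F_{n+1}$ and $\<E^F_{n+1},E_{n+1}\>=1$, together with the surrounding observations that $\widetilde{\W}$ still encloses $O$, that $\tilde s=\bar s$, and that $\tilde\tau=\tau$.

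Your extra checks go beyond what the paper writes but are all correct: that $|\Sigma|_{\widetilde F}=\mathcal{E}(\Sigma)$ by the divergence theorem, that $\tilde g=\ell^{-1}\mathring g$, and that both $\tau_{ij}$ and $\tilde\tau_{ij}$ represent the same map $d(\widetilde{\nu_F}^{-1})$, so their determinants agree and $\tilde f=f\ell^{p-1}$. They make explicit what the paper uses implicitly when it passes to \eqref{rewrite}.
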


Let $\Sigma$ be a strictly convex anisotropic  $\omega_0$-capillary hypersurface, we know  $\widetilde{\W}$ is also a strictly convex hypersurface, then $\widetilde{\nu_{{F}}}$ is an everywhere nondegenerate diffeomorphism. We use it to reparameterize $\Sigma$:
$$X:\widetilde{\W}\rightarrow\Sigma,\quad X(\tilde{z})=X(\widetilde{\nu_F}^{-1}(\tilde{z})),\  \tilde{z}\in \widetilde{\W}.$$
Then from \eqref{equ:SF-Translation invariant} we have
\begin{align}\label{equ:s-trandlation}
%	\tilde{s}\circ \mathcal{T}(z)=\tilde{u}\circ{\nu_F}^{-1}=\bar{u}\circ{\nu_F}^{-1}=\frac{s(z)}{1+G(z)(z,E_{n+1}^F)}
\tilde{s}\circ \mathcal{T}(z)=\frac{\hat{s}(z)}{1+\omega_0G(z)(z,E_{n+1}^F)}
, \quad z\in\W.
\end{align}
If we still write $\kappa^F=\kappa^F\circ \nu_{{F}}^{-1},\widetilde{\kappa^F}=\widetilde{\kappa^F}\circ \widetilde{\nu_{{F}}}^{-1},\tau=\tau\circ\nu_{{F}}^{-1}$, and $\tilde{\tau}=\tilde{\tau}\circ\widetilde{\nu_{{F}}}^{-1}$, then
$$\widetilde{\kappa^F}\circ \mathcal{T}=\kappa^F, \quad\tilde{\tau}\circ \mathcal{T}=\tau.$$
We denote $\tilde{s}_i=\tilde{\nabla}_{\tilde{e}_i}\tilde{s},\, \tilde{s}_{ij}=\tilde{\nabla}_{\tilde{e}_i}\tilde{\nabla}_{\tilde{e}_j}\tilde{s},\, \tilde{Q}_{ijk}=\tilde{Q}(\tilde{z})(\tilde{e}_i,\tilde{e}_j,\tilde{e}_k)$, where $\tilde{\nabla}$ is the covariant derivative on $\widetilde{\W}$ with respect to metric $\tilde{g}=\widetilde{G}(\tilde{z})|_{T_{\tilde{z}}\widetilde{\W}}$, and $\{\tilde{e}_i\}_{i=1}^n$ are standard orthogonal frame of $(\widetilde{\W},\tilde{g},\tilde{\nabla})$.
From \cite{Xia-2017-convex,Xia13}, the anisotropic principal curvatures at $X(\tilde{z})\in\Sigma$ are the reciprocals of  the eigenvalues of
\begin{align}\label{equ:tau_ij}
\tilde{\tau}_{ij}(\tilde{z}):=\tilde{s}_{ij}-\frac{1}{2}\tilde{Q}_{ijk}\tilde{s}_k+\tilde{g}_{ij}\tilde{s}>0, \quad \tilde{z}\in\widetilde{\W}.
\end{align}
Then Eq. \eqref{equ:p pde-inv-Gauss} can be rewritten as
\begin{equation}\label{rewrite}
\renewcommand{\arraystretch}{1.5}
\left\{
\begin{array}{rll}
	\det \left(\tilde{s}_{ij}-\frac{1}{2}\tilde{Q}_{ijk}\tilde{s}_{k}+\tilde{s}\tilde{g}_{ij}\right)&=\tilde{f}\tilde{s}^{p-1},  &\text{in\ } \cc,
	\\
	 \<{\tilde{\nabla}} \tilde{s},E_{n+1}\>&=0,&  \text{on}\ \partial\cc,\
\end{array}	
\right.
\end{equation}
where $\tilde{f}=f\ell^{p-1}$.
It is convenient to use $\tilde{s}$ to establish $C^{1}$ and $C^{2}$ estimates.

\begin{lemma}[\cite{Arxiv1}, Lemma 5.10]\label{pwf2}
For $\tilde{z} \in \pa \cc  $, we take standard orthogonal frame $\{\tilde{e}_{i}\}_{i=1}^{n} \in T_{\tilde{z}}\cc $ with respect to $\tilde{g}$, which satisfies $\tilde{e}_{\al} \in T_{\tilde{z}}\pa\cc  $ for $\al =1, \cdots, n-1$, and $\tilde{g}(\tilde{e}_{\al}, \tilde{e}_{n})=0$ for any $\tilde{e}_{\al}$. Then we have
$$\tilde{\nabla}_{\tilde{e}_{n}}\s=0,\ and\ \  \tt_{\al n}=0,\ \ \ \ \ on\ \  \pa(\cc  \cap  \overline{\mathbb{R}_+^{n+1}}),$$
and
$$\tt_{\al \al, n}=\tilde{Q}_{\al \ga n}(\tt_{\al \ga}-\delta_{\al \ga}\tt_{nn}) \ \ on\ \  \pa(\cc  \cap  \overline{\mathbb{R}_+^{n+1}}).$$
\end{lemma}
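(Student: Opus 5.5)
The plan is to work entirely on the translated Wulff shape $\widetilde{\W}$, regarded as a Wulff shape in its own right with support function $\widetilde F$. With respect to $\widetilde{\W}$, $\cc$ is a \emph{free boundary} piece: every $\tilde z\in\partial\cc$ lies in $\{x_{n+1}=0\}$, so $\<\tilde z,E_{n+1}\>=0$ there. The three identities are then proved in order: $\tilde s_n=0$, then $\tt_{\al n}=0$, then the formula for $\tt_{\al\al,n}$.

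\emph{Step 1: $\tilde s_n=0$.} Since $\tilde s\circ\mathcal T=\bar s$ by \eqref{equ:s-trandlation}, and $\mathring{\nabla}_{\mu_F}\bar s=0$ on $\partial\cc$ (\cite{Arxiv1}*{Lemma 3.4}), the claim follows once the frames are matched. The derivative is translation invariant, and by \eqref{pwf1} the $\tilde g$-unit conormal $\tilde e_n$ is parallel to $\mu_F$. Equivalently, $\tilde s_n=0$ is the boundary condition in \eqref{rewrite}: the $\tilde e_\al$ are tangent to $\partial\cc\subset\{x_{n+1}=0\}$, so $\<\tilde\nabla\tilde s,E_{n+1}\>=\tilde s_n\<\tilde e_n,E_{n+1}\>$, and $\<\tilde e_n,E_{n+1}\>\neq0$.

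\emph{Step 2: second fundamental form of $\partial\cc$ and $\tt_{\al n}=0$.} Apply the Gauss formula of Lemma \ref{lemma:Gauss-Weingarten}, written for $\widetilde{\W}$ with position vector equal to the anisotropic normal $\tilde z$:
\[
\partial_\al\partial_\be X=-\tilde g_{\al\be}\tilde z+\tilde\nabla_{\partial_\al}\partial_\be-\tfrac12\tilde Q_{\al\be k}\tilde e_k .
\]
Pair this with $E_{n+1}$ at a boundary point. The left-hand side vanishes because $\partial\cc$ lies in the hyperplane, and $\<\tilde z,E_{n+1}\>=0$. This gives
\[
\tilde g(\tilde\nabla_{\tilde e_\al}\tilde e_\be,\tilde e_n)=\tfrac12\tilde Q_{\al\be n},
\qquad\text{i.e.}\qquad
\hat h^{\partial\cc}_{\al\be}=-\tfrac12\tilde Q_{\al\be n}
\]
up to the sign convention fixed in the definition of anisotropic convexity. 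Differentiating $\tilde s_n=0$ along $\tilde e_\al$ yields $\tilde s_{\al n}=-\hat h_{\al\be}\tilde s_\be=\tfrac12\tilde Q_{\al\be n}\tilde s_\be$. Since also $\tilde g_{\al n}=0$ and $\tilde s_n=0$,
\[
\tt_{\al n}=\tilde s_{\al n}-\tfrac12\tilde Q_{\al n\be}\tilde s_\be=0 .
\]

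\emph{Step 3: normal derivative of $\tt_{\al\al}$.} First derive a Codazzi-type identity for $\tt_{ij}$ by commuting covariant derivatives of $\tilde s$. This uses the Gauss equation of Lemma \ref{lemma:Gauss-Weingarten} for $\tilde g$ and the symmetry $\tilde\nabla_i\tilde Q_{jkl}=\tilde\nabla_j\tilde Q_{ikl}$ from \eqref{equ:Qjikl-syms}. The expected form is
\[
\tt_{ij,k}-\tt_{ik,j}=-\tfrac12\tilde Q_{ikl}\tt_{lj}+\tfrac12\tilde Q_{ijl}\tt_{lk},
\]
which matches the isotropic identity when $\tilde Q=0$. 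Hence $\tt_{\al\al,n}=\tt_{\al n,\al}+(\tilde Q\ast\tt)$. Next, $\tt_{\al n}\equiv0$ along $\partial\cc$ by Step 2, so
\[
\tt_{\al n,\al}=-\tt(\tilde\nabla_{\tilde e_\al}\tilde e_\al,\tilde e_n)-\tt(\tilde e_\al,\tilde\nabla_{\tilde e_\al}\tilde e_n).
\]
Both connection terms are expressed through $\hat h^{\partial\cc}=-\tfrac12\tilde Q_{\cdot\cdot n}$ from Step 2. This produces $\tilde Q_{\al\ga n}\tt_{\ga\al}$ and $\tilde Q_{\al\al n}\tt_{nn}$-type terms, again using $\tt_{\ga n}=0$.

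Collecting terms, the Codazzi corrections should combine with the connection terms into $\tilde Q_{\al\ga n}(\tt_{\al\ga}-\delta_{\al\ga}\tt_{nn})$. The main obstacle is this last bookkeeping: the Codazzi identity for $\tt$ with its $\tilde Q$ corrections, the signs in $\hat h^{\partial\cc}$, and the terms involving $\tilde Q_{\al n n}$ or $\tilde Q_{nnn}$ must be tracked carefully. Those last terms should cancel after using $\tt_{\al n}=0$ and the total symmetry of $\tilde Q$. As a sanity check, I would first verify the computation in the isotropic case, where every term vanishes.
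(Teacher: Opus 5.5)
The paper does not prove this lemma; it quotes it from \cite{Arxiv1}. So I checked your computation directly.

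\textbf{Steps 1 and 2 are correct.} Step 2 supplies exactly the geometric input that Proposition \ref{g2} only records as an equivalence. Pairing the Gauss formula of $\widetilde{\mathcal{W}}$ with $E_{n+1}$ gives $\tilde{g}(\tilde{\nabla}_{\tilde{e}_\alpha}\tilde{e}_\beta,\tilde{e}_n)=\frac12\tilde{Q}_{\alpha\beta n}$, and $\tilde{\tau}_{\alpha n}=0$ then follows as you say.

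One small point in Step 1: \eqref{pwf1} describes the $\mathring{g}$-unit conormal, not the $\tilde{g}$-unit conormal. The transfer is still legitimate, because both metrics restrict to multiples of $A_F(x)^{-1}$. Hence $\tilde{g}=\ell^{-1}\mathring{g}$ on $T\mathcal{C}_{\omega_0}$, and the two conormal directions agree. A more direct route is to use $X=\tilde{\nabla}\tilde{s}+\tilde{s}\,\tilde{z}$ together with $\langle X,E_{n+1}\rangle=\langle\tilde{z},E_{n+1}\rangle=0$ on the boundary, which gives $\tilde{s}_n\langle\tilde{e}_n,E_{n+1}\rangle=0$.

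\textbf{The gap is in Step 3.} Your Codazzi-type identity has the wrong sign, and here the sign is exactly what decides the answer. The correct identity, which is \eqref{niu} in the paper, is
\begin{align*}
\tilde{\tau}_{ij,k}-\tilde{\tau}_{ik,j}=\tfrac12\tilde{Q}_{ikl}\tilde{\tau}_{lj}-\tfrac12\tilde{Q}_{ijl}\tilde{\tau}_{lk}.
\end{align*}
The quickest derivation uses the Gauss formula and $X=\tilde{\nabla}\tilde{s}+\tilde{s}\,\tilde{z}$, which together give $D_{\tilde{e}_i}X=\tilde{\tau}_{il}\tilde{e}_l$. Then take the tangential part of $D_{\tilde{e}_k}D_{\tilde{e}_j}X-D_{\tilde{e}_j}D_{\tilde{e}_k}X-D_{[\tilde{e}_k,\tilde{e}_j]}X=0$, using the Gauss formula once more.

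Your connection computation gives
\begin{align*}
\tilde{\tau}_{\alpha n,\alpha}=-\tilde{\tau}(\tilde{\nabla}_{\tilde{e}_\alpha}\tilde{e}_\alpha,\tilde{e}_n)-\tilde{\tau}(\tilde{e}_\alpha,\tilde{\nabla}_{\tilde{e}_\alpha}\tilde{e}_n)=\tfrac12\tilde{Q}_{\alpha\gamma n}\bigl(\tilde{\tau}_{\alpha\gamma}-\delta_{\alpha\gamma}\tilde{\tau}_{nn}\bigr).
\end{align*}
The correct identity with $i=j=\alpha$, $k=n$, together with $\tilde{\tau}_{\gamma n}=0$, contributes exactly the same amount again:
\begin{align*}
\tilde{\tau}_{\alpha\alpha,n}-\tilde{\tau}_{\alpha n,\alpha}=\tfrac12\tilde{Q}_{\alpha\gamma n}\tilde{\tau}_{\gamma\alpha}-\tfrac12\tilde{Q}_{\alpha\alpha n}\tilde{\tau}_{nn}.
\end{align*}
Adding the two displays gives the lemma. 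With your sign the two halves cancel, and you would conclude $\tilde{\tau}_{\alpha\alpha,n}=0$. No $\tilde{Q}_{\alpha nn}$ or $\tilde{Q}_{nnn}$ terms ever appear, since they always multiply some $\tilde{\tau}_{\gamma n}=0$.

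Your proposed isotropic sanity check would not catch this error. In the isotropic capillary case with $\omega_0\neq0$, $\tilde{Q}\neq0$, because $\widetilde{F}^0$ is then the gauge of an off-centre ball. Everything vanishes only when $\omega_0=0$ and $F$ is Euclidean, and there both signs agree.
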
\label{g1}
\begin{lemma}[\cite{Arxiv1}, Lemma 5.11]\label{g3}
For any $\tilde{z} \in  \pa\cc$ and $\tilde{e}_{\al} \in T_{\tilde{z}}(\pa\cc)$, we have
\begin{align}\label{equ:tildeQ-Q}
	\tilde{Q}_{\al \al n}=\frac{1}{1+\omega_{0}G(z)(z, E_{n+1}^{F})}\Big(Q_{\al \al n}-\frac{\omega_{0}G(z)(\tilde{e}_{\al}, \tilde{e}_{\al})}{F(\nu)\<\mu, E_{n+1}\>}\Big).\
\end{align}
\end{lemma}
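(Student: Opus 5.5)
The plan is to compare $\tilde{Q}$ and $Q$ through the metrics they differentiate, rather than through the third derivatives of $(\widetilde{F}^0)^2$ and $(F^0)^2$ directly.

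\textbf{Step 1: compare the two metrics.} On a Wulff shape with outer unit normal $x$ at $z=\Psi(x)$, the induced metric $G(z)|_{T_z\W}$ equals $F(x)^{-1}A_F(x)^{-1}$ as a bilinear form on $T_x\mathbb{S}^n=T_z\W$. This follows from $DF^0(DF(x))=x/F(x)$ and the $0$-homogeneity of $DF^0$, by differentiating along $\W$.

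The same holds for $\widetilde{\W}$ with $\widetilde{F}$ and $\widetilde{A_F}=A_F$, so
$$\tilde g=\frac{F(x)}{\widetilde F(x)}\,g=\frac{g}{\ell},\qquad \ell=1+\omega_0G(z)(z,E_{n+1}^F),$$
where the identifications are made via $\tilde z=\mathcal{T}z$. This uses $\widetilde F=F+\omega_0\<E^F_{n+1},x\>$ and \eqref{equ:G(vF,Y)=<v,Y>/F}.

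\textbf{Step 2: read $Q$ as a derivative of $g$.} Since $Q=DG$ and $Q(z)(z,\cdot,\cdot)=0$, for tangent fields on $\W$ I expect the tangential part of $Q$ to be the derivative of the induced metric along $\W$, after subtracting the Levi-Civita terms. Concretely, $Q_{ijk}$ is the symmetric part of $D_k g_{ij}$ in a coordinate frame, which is consistent with $A_{ijk}=-\tfrac12 Q_{ijk}$ in Lemma \ref{lemma:Gauss-Weingarten}. The same statement holds for $\tilde Q$ and $\tilde g$.

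Differentiating $\tilde g=g/\ell$ in direction $e_n$ and evaluating on $(e_\al,e_\al)$ should then give
$$\tilde Q(e_\al,e_\al,e_n)=\frac{1}{\ell}Q(e_\al,e_\al,e_n)-\frac{g(e_\al,e_\al)}{\ell^2}\,e_n(\ell).$$
The Christoffel contributions should cancel in the totally symmetric combination. This cancellation is the first point I would check carefully.

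\textbf{Step 3: compute $e_n(\ell)$ on the boundary.} Here I use that $\ell$ is the anisotropic capillary support function of $\cc$. By Lemma \ref{lemma:Xia-Lemma2.4}(2),
$$\mathring{\nabla}_{\mu_F}\ell=\frac{\omega_0}{F(\nu)\<\mu,E_{n+1}\>}\,\ell,$$
and with the normalization \eqref{pwf1} this converts to $e_n(\ell)$.

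\textbf{Step 4: normalize the frame.} The vectors $\tilde e_i$ are $\tilde g$-orthonormal, so $\tilde e_i=\sqrt{\ell}\,e_i$, where $e_i$ is $g$-orthonormal. Rescaling $e_\al,e_\al,e_n$ by trilinearity and substituting the result of Step 3 should produce the factor $\frac{1}{\ell}=\frac{1}{1+\omega_0G(z)(z,E^F_{n+1})}$. It should also turn the second term into $\frac{\omega_0G(z)(\tilde e_\al,\tilde e_\al)}{F(\nu)\<\mu,E_{n+1}\>}$, which is the claimed formula \eqref{equ:tildeQ-Q}.

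\textbf{Main obstacle.} I expect the hardest part to be this bookkeeping. It involves the powers of $\ell$ coming from the rescaled frame, the distinction between $\mu_F$ and $e_n$, and making sure that $G(z)(\tilde e_\al,\tilde e_\al)$ rather than $g(e_\al,e_\al)$ appears. If the intrinsic argument of Step 2 becomes messy, the fallback is to use the explicit relations between $\widetilde G,\tilde Q$ and $G,Q$ from the Appendix of \cite{Arxiv1}. I would restrict those relations to $T_{\tilde z}\partial\cc$ and combine them with the boundary identity $\<\widetilde{\nu_F},E_{n+1}\>=0$.
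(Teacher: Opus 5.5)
The paper gives no proof of this lemma; it is quoted from \cite{Arxiv1}, whose appendix records the relations between $\widetilde G,\tilde Q$ and $G,Q$. So I compare your plan with a direct verification.

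Your Step 1 is correct: $G(z)|_{T_z\W}=F(x)^{-1}A_F(x)^{-1}$, hence $\tilde g=g/\ell$.

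\textbf{Gap in Step 2.} The mechanism you describe does not work. Subtracting the Levi-Civita terms from the derivative of the induced metric gives zero identically; that is just metric compatibility. Moreover, $Q|_{T\W}$ is not determined by the intrinsic metric $g$ alone. It measures the difference between $\hat\nabla$ and the Euclidean connection projected along the anisotropic normal $z$, which is exactly what $A_{ijk}=-\frac12Q_{ijk}$ in the Gauss formula encodes. So ``$Q_{ijk}$ is the symmetric part of $D_kg_{ij}$'' is not usable; read literally, symmetrizing would even give the wrong coefficient $\frac13$.

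A correct replacement goes as follows. Since $Q=DG$, $G(z)(z,V)=\langle x,V\rangle/F(x)$ and $Q(z)(z,\cdot,\cdot)=0$, one has $Q(U,V,W)=U\bigl(g(\mathcal V,\mathcal W)\bigr)$. Here $\mathcal V=V-G(z)(z,V)\,z$ is the projection of the constant vector $V$ along $z$. For $\tilde Q$ the projection is along $\tilde z=z+\omega_0E^F_{n+1}$ instead. The two extensions differ to first order by $\langle dx(U),V\rangle\bigl(\frac{z}{F}-\frac{\tilde z}{\widetilde F}\bigr)$, which is a tangent vector. Using $\langle dx(U),V\rangle=F\,g(U,V)$ and $W(\ell)=\omega_0G(z)(W,E^F_{n+1})$, one obtains
\begin{align*}
\tilde Q(U,V,W)=\frac1\ell\,Q(U,V,W)-\frac{1}{\ell^2}\Big(U(\ell)\,g(V,W)+V(\ell)\,g(U,W)+W(\ell)\,g(U,V)\Big).
\end{align*}
The extra contributions therefore do not cancel. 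Your formula for $(e_\al,e_\al,e_n)$ is right only because the two additional terms carry the factor $g(e_\al,e_n)=0$. This is the step your argument is missing.

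\textbf{Gap in Step 4.} The normalization does not close as you claim. The $\tilde g$-unit conormal is $\tilde e_n=\mu_F/|\mu_F|_{\tilde g}$, with $|\mu_F|_{\tilde g}^2=\langle A_F(\nu)\mu,\mu\rangle/\widetilde F(\nu)$. On the other hand, Lemma \ref{lemma:Xia-Lemma2.4}(2) applied to $\ell$ gives $\mu_F(\log\ell)=\frac{\omega_0}{F(\nu)\langle\mu,E_{n+1}\rangle}$ for the unnormalized $\mu_F=A_F(\nu)\mu$. Rescaling $e_i\mapsto\tilde e_i=\sqrt{\ell}\,e_i$ only moves powers of $\ell$. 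So with a unit $\tilde e_n$ your bookkeeping ends with the second term multiplied by $|\mu_F|_{\tilde g}^{-1}$, and this factor does not cancel.

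The displayed identity is exact when the $n$-slot on both sides is $\mu_F$ itself. It then follows in one line from the formula above, since $g(\tilde e_\al,\mu_F)=0$, and this is also how \eqref{condition} is written. For a unit conormal it holds only up to that positive factor, which is harmless for the sign condition $\tilde Q_{\al\al n}<0$ used later.

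The isotropic case makes this concrete. For $F\equiv 1$ and $\omega_0=-\cos\theta$, the unit conormal gives
\begin{align*}
\tilde Q(\tilde e_\al,\tilde e_\al,\tilde e_n)=-\tilde e_n(\log\ell)=-\cos\theta.
\end{align*}
This agrees with an independent check: conformally changing the geodesic curvature $\cot\theta$ of the round cap's boundary gives $\hat h^{\pa\cc}_{\al\al}=\frac12\cos\theta=-\frac12\tilde Q_{\al\al n}$, as Proposition \ref{g2} requires. By contrast, the right side of \eqref{equ:tildeQ-Q} equals $-\cot\theta$. You need to fix the normalization of the $n$-slot and carry $|\mu_F|_{\tilde g}$ explicitly, rather than expecting it to cancel.
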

\begin{remark}\label{r4.1}
Clearly, the result of Lemma \ref{g3} describes an important fact: Condition \eqref{condition} is equivalent to $\tilde{Q}_{\al\al n}<0$, which is crucial for our proof of the a priori estimates. Meanwhile, the inequality $\tilde{Q}_{\alpha\alpha n} < 0$ means that the matrix $\tilde{Q}_{\al \be n}$ is negative definite. Consequently, from the following proposition, we see that this is equivalent that the boundary $\pa\cc \subset \cc$ is anisotropically strictly convex.\end{remark}

\begin{proposition}\label{g2}
The following boundary conditions on $\pa\cc $:
$$\tt_{\al n}=0,\ \ \tilde{Q}_{\al \be n}<0 ,$$
for $\forall \al, \be =1,\cdots, n-1$, is equivalent to  the following matrix is positive definite:
$$\left(\hat{h}_{\al \be}^{\pa\cc }\right)>0,$$
where $\hat{h}_{\al \be}^{\pa\cc }:=\tilde{g}(\tilde{e}_{\al}, \tilde{\nabla}_{\tilde{e}_{\be}}\tilde{e}_{n})$. This shows that $\pa\cc  \subset \cc $ is anisotropically strictly convex.
\end{proposition}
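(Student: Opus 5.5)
We compute the second fundamental form of $\pa\cc\subset\cc$ with respect to $\tilde g$ directly in the adapted frame of Lemma \ref{pwf2}. Along $\pa\cc$, $\tilde e_n$ is the $\tilde g$-unit anisotropic co-normal, and $\{\tilde e_\al\}_{\al=1}^{n-1}$ is a $\tilde g$-orthonormal frame of $T(\pa\cc)$. The quantity $\hat h^{\pa\cc}_{\al\be}=\tilde g(\tilde e_\al,\tilde\nabla_{\tilde e_\be}\tilde e_n)$ is then the second fundamental form of $\pa\cc$ in $(\cc,\tilde g)$. It is symmetric because $[\tilde e_\al,\tilde e_\be]$ is tangent to $\pa\cc$ and $\tilde g(\tilde e_\al,\tilde e_n)=0$.

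\textbf{Relating $\hat h^{\pa\cc}$ to the tensors in the statement.} The tool is the boundary condition $\tilde\nabla_{\tilde e_n}\s=0$, which holds for every admissible $\s$ and in particular for the model function $\s\equiv 1$ of $\cc$ (the support function of $\widetilde\W$ relative to itself, reparametrized). We differentiate $\tilde e_n\s=0$ tangentially along $\tilde e_\be$:
\begin{align*}
0=\tilde e_\be(\tilde e_n \s)=\s_{\be n}+(\tilde\nabla_{\tilde e_\be}\tilde e_n)\s=\s_{\be n}+\sum_{\ga<n}\hat h^{\pa\cc}_{\be\ga}\s_\ga .
\end{align*}
Here we use that $\tilde\nabla_{\tilde e_\be}\tilde e_n$ is tangent to $\pa\cc$, because $\tilde g(\tilde e_n,\tilde e_n)=1$. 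Substituting into $\tt_{\al n}=\s_{\al n}-\frac12\tilde Q_{\al nk}\s_k$ and using $\s_n=0$ gives
\begin{align*}
\tt_{\al n}=-\sum_{\ga<n}\Big(\hat h^{\pa\cc}_{\al\ga}+\tfrac12\tilde Q_{\al\ga n}\Big)\s_\ga \quad\text{on }\pa\cc .
\end{align*}
This identity holds for every function satisfying the Neumann condition, and such functions have arbitrary tangential gradient $(\s_\ga)$ at a given boundary point. Hence the condition $\tt_{\al n}=0$ for all such $\s$ (Lemma \ref{pwf2}) forces the pointwise identity
\begin{align*}
\hat h^{\pa\cc}_{\al\be}=-\tfrac12\tilde Q_{\al\be n}.
\end{align*}
Consequently $(\tilde Q_{\al\be n})<0$ if and only if $(\hat h^{\pa\cc}_{\al\be})>0$. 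The equivalence then reads: the pair of conditions ($\tt_{\al n}=0$, which always holds, and $\tilde Q_{\al\be n}<0$) holds exactly when $\hat h^{\pa\cc}>0$, i.e.\ when $\pa\cc\subset\cc$ is anisotropically strictly convex.

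\textbf{Alternative route and main obstacle.} One could instead compute intrinsically: write $\tilde e_n$ along the boundary via \eqref{pwf1}, use the Gauss formula of Lemma \ref{lemma:Gauss-Weingarten} for $\widetilde\W$, namely $\pa_i\pa_jX=-\tilde g_{ij}\mathcal T^{-1}X+\tilde\nabla_{\pa_i}\pa_j-\frac12\tilde Q_{ijl}\pa_lX$, and the fact that $\pa\cc$ lies in the hyperplane $\{x_{n+1}=0\}$. Differentiating $\<\tilde e_n,E_{n+1}\>$-type identities then yields the same relation between $\hat h$ and $-\frac12\tilde Q_{\al\be n}$. The main obstacle in either route is justifying that the tangential gradient of admissible $\s$ can be prescribed freely at a boundary point. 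I would handle this by a local perturbation $\s=1+\epsilon\phi$, with $\phi$ supported near the point, satisfying $\tilde\nabla_{\tilde e_n}\phi=0$ and having prescribed tangential derivative. For small $\epsilon$ this keeps $\tt>0$, so Lemma \ref{pwf2} applies. Alternatively, one can avoid the perturbation by appealing directly to the anisotropic Weingarten formula, $\tilde\nabla_{\tilde e_\be}\tilde e_n$ expressed through the Levi-Civita connection of $\tilde g$ and $\tilde Q$. Combined with Remark \ref{r4.1} via Lemma \ref{g3}, this identifies the condition with \eqref{condition}.
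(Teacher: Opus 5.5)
Your main argument is the paper's own: differentiate $\tilde s_n=0$ tangentially, insert the result into $\tilde\tau_{\alpha n}=0$ from Lemma~\ref{pwf2}, and read off $\hat h^{\partial\cc}_{\alpha\beta}=-\frac12\tilde Q_{\alpha\beta n}$. Your perturbation $\tilde s=1+\epsilon\phi$ supplies the justification that the paper's last ``$\Leftrightarrow$'' leaves implicit, namely that the tangential gradient can be prescribed freely. One slip in your alternative route: for $\widetilde{\W}$ with $\tilde g,\tilde Q$ the anisotropic normal is the position vector, so the Gauss formula reads $\partial_i\partial_jX=-\tilde g_{ij}X+\tilde\nabla_{\partial_i}\partial_j-\frac12\tilde Q_{ijl}\partial_lX$; with $\mathcal T^{-1}X$ in place of $X$ you would pick up a spurious $\omega_0\tilde g_{\alpha\beta}$ term. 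With the correct formula, pairing with $E_{n+1}$ on $\partial\cc\subset\{x_{n+1}=0\}$ gives the identity directly, without Lemma~\ref{pwf2} or any perturbation.
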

\begin{proof}
\begin{align*}
	\tt_{\al n}|_{\pa\cc }=0&\Leftrightarrow \s_{\al n}-\frac{1}{2}\tilde{Q}_{\al \be n}\s_{\be}=0\\
	&\Leftrightarrow \tilde{\nabla}_{\tilde{e}_{\al}}\tilde{g}(\tilde{\nabla}\s, \tilde{e}_{n})-\tilde{g}(\tilde{\nabla}\s, \tilde{\nabla}_{\tilde{e}_{\al}}\tilde{e}_{n})=\frac{1}{2}\tilde{Q}_{\al \be n}\s_{\be}\\
	&\Leftrightarrow -\tilde{g}(\tilde{\nabla}\s, \tilde{\nabla}_{\tilde{e}_{\al}}\tilde{e}_{n})=\frac{1}{2}\tilde{Q}_{\al \be n}\s_{\be}\\
	&\Leftrightarrow \hat{h}_{\al \be}^{\pa\cc }=-\frac{1}{2}\tilde{Q}_{\al \be n}>0,\
\end{align*}
where we use the fact that $\s_{n}=0$\ on\  $\pa\cc $.\
\end{proof}

The $C^{1}$ estimate can be derived without using Eq. \eqref{rewrite}.
\begin{lemma}\label{C1}
Consider an admissible solution $\hat{s}$ to Eq. \eqref{equ:p pde-inv-Gauss}. Let the exponent $p$ and the contact constant $\omega_0$ satisfy one of the following conditions:
	\begin{enumerate}
		\item$p=1$ or $p\geq n+1$,and $\omega_0\in\bigl(-F(E_{n+1}),\,F(-E_{n+1})\bigr)$ together with Condition \eqref{condition};
		\item$1<p<n+1$ and $\omega_0\in\bigl(-F(E_{n+1}),\,0\bigr]$ together with Condition \eqref{condition}.
	\end{enumerate}
Then there exists a constant C depending on $n,\omega_{0},\Vert F\Vert_{C^{3}(\W)}$, such that
$$|\tilde{\nabla}\s|\leq C.\ $$
\end{lemma}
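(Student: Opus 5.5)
The gradient bound will come from the $C^0$ estimate through a maximum principle argument for $\tilde{s}^2+|\tilde{\nabla}\tilde{s}|^2_{\tilde g}$ on $\widetilde{\W}$. This is the anisotropic analogue of the classical support function identity $|X|^2=u^2+|\nabla u|^2$.

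By Lemma \ref{lemma:Xia-Lemma2.4}(1), applied to the translated Wulff shape $\widetilde{\W}$ (the free boundary reformulation), the position vector satisfies $X=\tilde{\nabla}\tilde{s}+\tilde{s}\,\tilde z$. The two summands are $\widetilde G(\tilde z)$-orthogonal, so
\[
\widetilde G(\tilde z)(X,X)=\tilde{s}^2+|\tilde{\nabla}\tilde{s}|^2_{\tilde g}.
\]
The $C^0$ estimates of Lemmas \ref{C01} and \ref{C02} give $m_1\le\hat s\le m_2$. Since $\ell$ is bounded above and below by positive constants, \eqref{equ:s-trandlation} gives the same kind of bounds for $\tilde s$. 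So it suffices to bound $\widetilde G(\tilde z)(X,X)$, or simply $|X|$, because $\widetilde G$ is uniformly equivalent to the Euclidean metric with constants depending on $\|F\|_{C^3}$ and $\omega_0$.

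The direct route is to control $|X|$ by the outer radius. The upper bound $\hat s\le m_2$ says that $\widehat\Sigma$ lies in the intersection of the half-spaces $\{G(\hat\xi)(\hat\xi,y)\le m_2\}$ over $\xi\in\mathcal C_{\omega_0}$, together with $\{x_{n+1}\ge0\}$. That is, $\widehat\Sigma\subset m_2\widehat{\mathcal C_{\omega_0}}$. This set is bounded in terms of $m_2$ and $\W$, which gives $|X|\le Cm_2$ and hence $|\tilde\nabla\tilde s|\le C$.

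As a cross-check, and to use the structure the statement emphasizes, I would also run the maximum principle. Let $\Phi=\tilde s^2+|\tilde\nabla\tilde s|^2$ attain its maximum at $\tilde z_0$.
\begin{itemize}
\item If $\tilde z_0$ is interior, then $\Phi_i=2\tilde s_k(\tilde s_{ki}+\tilde s\,\delta_{ki})=0$. Replacing $\tilde s_{ki}$ by $\tilde\tau_{ki}+\frac12\tilde Q_{kil}\tilde s_l-\tilde s\delta_{ki}$, the identity $\tilde Q_{kil}\tilde s_k\tilde s_l$-type terms are handled via \eqref{equ:Q_0}, and positivity of $\tilde\tau$ forces $\tilde\nabla\tilde s=0$. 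Hence $\Phi\le \max\tilde s^2$.
\item If $\tilde z_0\in\partial\mathcal C_{\omega_0}$, Lemma \ref{pwf2} gives $\tilde s_n=0$. The Hopf-type inequality $\nabla_n\Phi\ge0$ becomes $2\tilde s_\alpha\tilde s_{\alpha n}\ge0$. Differentiating $\tilde s_n=0$ tangentially gives $\tilde s_{\alpha n}=-\tilde s_\beta\hat h^{\partial\mathcal C}_{\alpha\beta}$, which is essentially the identity in the proof of Proposition \ref{g2}. Therefore
\[
\nabla_n\Phi=-2\hat h^{\partial\mathcal C}_{\alpha\beta}\tilde s_\alpha\tilde s_\beta=\tilde Q_{\alpha\beta n}\tilde s_\alpha\tilde s_\beta.
\]
Condition \eqref{condition}, through Remark \ref{r4.1}, makes this negative unless $\tilde\nabla\tilde s=0$. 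This is a contradiction.
\end{itemize}
In either case $\Phi\le\max\tilde s^2$.

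The main obstacle is the interior computation. The $\tilde Q$ term does not obviously vanish against $\tilde s_k\tilde s_l$ because $\tilde\nabla\tilde s$ is tangential. I would first try the geometric route through $|X|$, which avoids this issue. For the maximum principle version, I would use the function $\widetilde G(\tilde z)(X,X)$ itself, whose derivative is $2\widetilde G(X,\partial_iX)+\widetilde Q(\partial_i\tilde z,X,X)$. The $\widetilde Q$ part is bounded by $C|X|^2$ and can be absorbed, or the derivative can be avoided altogether by using the Euclidean $|X|^2$, whose critical point gives $\langle X,\partial_iX\rangle=0$ with $\partial_iX=\tilde\tau_{ij}\tilde e_j$ invertible.
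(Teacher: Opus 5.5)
Your primary argument is correct, and it takes a genuinely different route from the paper's.

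\textbf{The paper's route.} The paper runs a maximum principle for $\Psi=\log|\tilde\nabla\tilde s|^2-\log(2m_2-\tilde s)$. At a boundary maximum it uses $\tilde s_n=0$, $\tilde\tau_{\alpha n}=0$ and $\tilde Q_{\alpha\gamma n}<0$. This is where Condition \eqref{condition} enters, and your boundary computation in the cross-check is exactly this step. At an interior maximum it uses $\tilde\tau>0$ in a diagonalizing frame together with $m_1\le\tilde s\le m_2$. This gives $0\ge -C|\tilde\nabla\tilde s|+|\tilde\nabla\tilde s|^2/(n(2m_2-m_1))$.

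\textbf{Your route.} You use the identity $\widetilde G(\tilde z)(X,X)=\tilde s^2+|\tilde\nabla\tilde s|^2_{\tilde g}$. It is valid because $\widetilde{\mathcal W}=\{\widetilde F^0=1\}$ is itself a Wulff shape, so $X=\tilde\nabla\tilde s+\tilde s\,\tilde z$ holds exactly as in Lemma \ref{lemma:Xia-Lemma2.4}(1). You then bound $|X|$ by placing $\widehat\Sigma$ inside a fixed dilate of $\widehat{\mathcal C_{\omega_0}}$. This uses only convexity and the $C^0$ upper bound. Neither the equation nor Condition \eqref{condition} is needed, so your argument shows the condition is really only required for the $C^2$ estimate.

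\textbf{What the paper's approach buys.} Its maximum-principle template adapts to the scale-invariant bound on $|\tilde\nabla\log\tilde s|$ in Lemma \ref{logC1}, which is needed for $p=n+1$. Your bound $|\tilde\nabla\tilde s|\le C\max\tilde s$ controls $\tilde\nabla\log\tilde s$ only through $\max\tilde s/\min\tilde s$, and geometry alone does not bound that ratio.

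\textbf{Two points to fix.}

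First, the step ``$\hat s\le m_2$, that is, $\widehat\Sigma\subset m_2\widehat{\mathcal C_{\omega_0}}$'' is not accurate. The anisotropic capillary support function of $\mathcal C_{\omega_0}$ is $\ell$, not $1$. Equivalently, the Euclidean support function of $\widehat{\mathcal C_{\omega_0}}$ in a cap normal direction $\nu$ is $\widetilde F(\nu)=\ell F(\nu)$. Work with $\tilde s=\langle X,\nu\rangle/\widetilde F(\nu)$ instead. An upper bound $\tilde s\le\tilde m_2$ follows from $\hat s\le m_2$ and $\ell\ge\min\ell>0$. Together with $\widehat\Sigma\subset\{x_{n+1}\ge0\}$ it gives exactly $\widehat\Sigma\subset\tilde m_2\widehat{\mathcal C_{\omega_0}}$. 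The reason is that $\widehat{\mathcal C_{\omega_0}}$ is the intersection of $\{x_{n+1}\ge0\}$ with the half-spaces $\{\langle y,\nu\rangle\le\widetilde F(\nu)\}$ over its cap normals. The rest of your argument then goes through unchanged.

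Second, discard the interior step of the cross-check. The identity \eqref{equ:Q_0} only annihilates $\tilde Q$ when one slot is $\tilde z$. In $\Phi_i=2\tilde\tau_{ik}\tilde s_k+\tilde Q_{ikl}\tilde s_k\tilde s_l$ all slots are tangential, so $\Phi_i=0$ does not force $\tilde\nabla\tilde s=0$. Moreover, the anisotropic analogue of $\max|X|=\max u$, namely $\Phi\le\max\tilde s^2$, need not hold, because $\widetilde G(\tilde z)$ varies with $\tilde z$. Since your geometric route is self-contained, nothing is lost by dropping it.
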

\begin{proof}

	Suppose that $|\tilde{\nabla}\s|\gg1$, otherwise, we are done. From Lemma \ref{C02} and
	$\tilde{s}\circ \mathcal{T}(z)=\frac{\hat{s}(z)}{F(\nu)+\omega_0\<\nu,E_{n+1}^F\>}$ for $z\in\W,$  we know that $\tilde{s}$ is also bounded above and below, and we still denote these bounds by $m_1$ and $m_2$.
Suppose $\Psi =\log |\tilde{\nabla}\s|^{2}-\log(2m_{2}-\s)$ attains its maximum at $\xi_{0} \in \pa\cc $, the maximum principle on the boundary implies
\begin{align*}
	0\leq \Psi_{n}=\frac{2\s_{kn}\s_{k}}{ |\tilde{\nabla}\s|^{2}}+\frac{\s_{n}}{2m_{2}-\s}=\frac{2\s_{\al n}\s_{\al}}{ |\tilde{\nabla}\s|^{2}}=\frac{2(\tilde{\tau}_{\al n}+\frac{1}{2}\tilde{Q}_{\al \ga n}\s_{\ga})\s_{\al}}{ |\tilde{\nabla}\s|^{2}}<0,
\end{align*}
which yields a contradiction. Then the maximum point $\xi_{0} \in \cc \setminus\pa \cc $. Assume at this point $\tilde{\tau}$ is diagonal and w.l.o.g. $|\s_{1}|\geq \frac{1}{\sqrt{n}}|\tilde{\nabla}\s|>0$. At $\xi_{0}$, we have
\begin{align*}
	0=\s_{1}\Psi_{1}&=\frac{2\s_{1}\s_{k}\s_{k1}}{ |\tilde{\nabla}\s|^{2}}+\frac{\s_{1}^{2}}{2m_{2}-\s}\\
	&=\frac{2\s_{1}\s_{k}(\tilde{\tau}_{1k}+\frac{1}{2}\tilde{Q}_{1kl}\s_{l}-\delta_{1k}\s)}{|\tilde{\nabla}\s|^{2}}+\frac{\s_{1}^{2}}{2m_{2}-\s}\\
	&\geq -C|\tilde{\nabla}\s|+\frac{1}{n}\frac{|\tilde{\nabla}\s|^{2}}{2m_{2}-m_{1}},
\end{align*}
which implies that
$$|\tilde{\nabla}\s|\leq C.\ $$

\end{proof}

In order to deal with the case $p=n+1$, we proceed to establish the logarithmic gradient estimate for solutions to Eq. \eqref{rewrite}. % Our choice of testfunctions is inspired by the ideas developed in \cite{M-W-W1,Ma-Qiu,Ma-Xu}. %, in the context of boundary value problems.
\begin{lemma}\label{logC1}
Let $p\geq n+1$. Let $\omega_0$ satisfy $\omega_0 \in (-F(E_{n+1}), F(-E_{n+1}))$ and Condition \eqref{condition}. Suppose $\tilde{s}$ is admissible  solution to Eq. \eqref{rewrite}, then there exists a constant C depending on $n,\omega_{0},\Vert F\Vert _{C^{3}(\W)}$, such that
\begin{align}\label{equ:c1-log}
	\underset{\cc}{\max}|\tilde{\nabla} \log \tilde{s}|\leq C.\
\end{align}
Furthermore, if $n+1\leq p \leq n+2$, the constant $C$ in \eqref{equ:c1-log} is independent of $p$.
\end{lemma}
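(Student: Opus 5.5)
We work with $v:=\log\tilde{s}$ on $\cc$, which is well defined since an admissible solution is positive; this is the standard approach for the isotropic capillary case in \cite{M-W-W1}, and the translated Wulff shape $\widetilde{\W}$ turns the Robin condition into a Neumann one. Writing $\tilde{s}=e^{v}$, we have $\tilde{s}_i=\tilde{s}v_i$ and $\tilde{s}_{ij}=\tilde{s}(v_{ij}+v_iv_j)$. The first equation in \eqref{rewrite} therefore becomes
\begin{align*}
	\det\Big(v_{ij}+v_iv_j-\tfrac12\tilde{Q}_{ijk}v_k+\tilde{g}_{ij}\Big)=\tilde{f}\,e^{(p-1-n)v},
\end{align*}
with boundary condition $v_n=\tilde{\nabla}_{\tilde{e}_n}v=0$ on $\pa\cc$ by Lemma \ref{pwf2}. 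Set $b_{ij}:=v_{ij}+v_iv_j-\frac12\tilde{Q}_{ijk}v_k+\tilde{g}_{ij}=\tilde{\tau}_{ij}/\tilde{s}$, which is positive definite. We take the test function $\Phi=\frac12|\tilde{\nabla}v|^2$ and look at a maximum point $\xi_0$, assuming $|\tilde{\nabla}v|(\xi_0)\gg1$.

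First we rule out $\xi_0\in\pa\cc$. In the frame of Lemma \ref{pwf2}, $\Phi_n=\sum_\al v_\al v_{\al n}$ because $v_n=0$. From $\tilde{\tau}_{\al n}=0$ on the boundary, we get $\tilde{s}_{\al n}=\frac12\tilde{Q}_{\al\ga n}\tilde{s}_\ga$, and since $\tilde{s}_n=0$ this gives $v_{\al n}=\frac12\tilde{Q}_{\al\ga n}v_\ga$. Hence
\begin{align*}
	\Phi_n=\tfrac12\tilde{Q}_{\al\ga n}v_\al v_\ga<0,
\end{align*}
by Condition \eqref{condition} in the form $\tilde{Q}_{\al\be n}<0$ (Remark \ref{r4.1}, Proposition \ref{g2}). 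This contradicts $\Phi_n\ge0$ at a boundary maximum. So $\xi_0$ is interior, and this is exactly where Condition \eqref{condition} enters, just as in Lemma \ref{C1}.

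At the interior point $\xi_0$ we rotate the frame so that $v_1=|\tilde{\nabla}v|$ and $v_\al=0$ for $\al\ge2$. The condition $\Phi_i=0$ gives $v_{1i}=0$ for all $i$. Then $b_{11}=v_1^2-\frac12\tilde{Q}_{11k}v_k+1\ge v_1^2-C|v_1|$, which is large, and $b_{1j}=-\frac12\tilde{Q}_{1jk}v_k=O(|\tilde{\nabla}v|)$ for $j\ne1$. We then differentiate the equation in $\log\det$ form: with $b^{ij}$ the inverse matrix,
\begin{align*}
	b^{ij}\nabla_k b_{ij}=\nabla_k\log\tilde{f}+(p-1-n)v_k,
\end{align*}
and contract with $v_k$. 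Using $0\ge b^{ij}\Phi_{ij}$, and commuting third derivatives with the Gauss equation of Lemma \ref{lemma:Gauss-Weingarten} (the curvature and $\nabla\tilde{Q}$ terms are bounded by $\Vert F\Vert_{C^3}$ and $\omega_0$), we arrive schematically at
\begin{align*}
	0\ge b^{ij}v_{ki}v_{kj}+\big(v_kv_k+\text{lower order}\big)\sum_i b^{ii}-C|\tilde{\nabla}v|^2\sum_i b^{ii}-C|\tilde{\nabla}v|+(p-1-n)|\tilde{\nabla}v|^2.
\end{align*}
Because $p\ge n+1$, the final term is nonnegative and can be dropped; this is the only place $p$ is used, which is why $C$ is independent of $p$. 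The right side of the equation is $\tilde{f}e^{(p-1-n)v}$, and its size is controlled only through $\log\tilde{f}$, whose derivatives are bounded in terms of $f$ and $\ell$.

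The main obstacle is this interior inequality. In the isotropic case the good term $v_1^2\sum_i b^{ii}$ dominates cleanly, but here the terms $\tilde{Q}_{ijk}v_k$ and their derivatives produce contributions of order $|\tilde{\nabla}v|^2\sum_i b^{ii}$ with the same homogeneity. My first attempt would be the modified test function $\Phi=\frac12|\tilde{\nabla}v|^2+Av$, or $\log|\tilde{\nabla}v|^2+Av$, choosing $A$ large so that $Ab^{ij}v_{ij}=A\big(n-b^{ij}(v_iv_j-\tfrac12\tilde{Q}_{ijk}v_k+\tilde{g}_{ij})\big)$ supplies an extra multiple of $|\tilde{\nabla}v|^2\sum_i b^{ii}$ with a favorable sign after using $b_{11}\sim v_1^2$. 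The boundary step must then be rechecked, since $Av_n=0$ leaves it intact. If that fails, I would bound the cross terms using $b^{11}\lesssim|\tilde{\nabla}v|^{-2}$ from the large $b_{11}$ together with Cauchy–Schwarz against $b^{ij}v_{ki}v_{kj}$. Either way the argument yields $|\tilde{\nabla}v|\le C(n,\omega_0,\Vert F\Vert_{C^3(\W)})$, with $C$ uniform in $p$ for $n+1\le p\le n+2$, since there the $p$-dependence of $\tilde{f}=f\ell^{p-1}$ is bounded.
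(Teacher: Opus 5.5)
Your boundary step is the same as the paper's. In both, $v_n=0$ and $\tilde\tau_{\alpha n}=0$ give $v_{\alpha n}=\frac12\tilde Q_{\alpha\gamma n}v_\gamma$, and $\tilde Q_{\alpha\beta n}<0$ (Condition \eqref{condition}) rules out a boundary maximum. The interior step, however, has a genuine gap, as you suspect. With $\Phi=\frac12|\tilde\nabla v|^2$ the critical-point condition only yields $v_{1i}=0$, so $b_{11}\approx v_1^2>0$ and no contradiction arises; everything rests on the second-order inequality. There, two kinds of terms are of size $C(\|F\|_{C^3},\omega_0)\,|\tilde\nabla v|^2\sum_i b^{ii}$, with no sign and no smallness. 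One comes from the Ricci identity on $(\cc,\tilde g)$, whose curvature contains $\frac14\tilde Q\ast\tilde Q$ (the analogue of Lemma \ref{lemma:Gauss-Weingarten}). The other is $\frac12 b^{ij}(\tilde\nabla_k\tilde Q_{ijl})v_kv_l$, from differentiating the $-\frac12\tilde Q_{ijk}v_k$ part of $b_{ij}$. Against these, the only good term $|\tilde\nabla v|^2\sum_ib^{ii}$ has coefficient one.

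Your repairs do not change this. The pieces of $A\,b^{ij}v_{ij}$ are $An$, $-Ab^{ij}v_iv_j$, $\frac A2 b^{ij}\tilde Q_{ijk}v_k=O(|\tilde\nabla v|)\sum_ib^{ii}$ and $-A\sum_ib^{ii}$; none is a positive multiple of $|\tilde\nabla v|^2\sum_ib^{ii}$. Cauchy--Schwarz with $b^{11}$ cannot absorb $\frac12|\tilde\nabla v|^2 b^{ij}\tilde\nabla_1\tilde Q_{ij1}$ summed over $i,j\ge2$, where neither $b^{11}$ nor any $v_{ki}$ appears. So the concluding ``either way'' is not supported.

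The missing idea is to make the \emph{first-order} condition at an interior maximum incompatible with $b>0$. Then no third derivatives, Ricci identity or curvature of $(\cc,\tilde g)$ enter. Your own candidate $\Phi=\log|\tilde\nabla v|^2+Av$ does this once $A>2$. In your frame $v_1=|\tilde\nabla v|$, $\Phi_1=0$ gives $v_{11}=-\frac A2v_1^2$, hence $b_{11}=(1-\frac A2)v_1^2-\frac12\tilde Q_{111}v_1+1<0$ for $v_1$ large, a contradiction. The logarithm matters: with $\frac12|\tilde\nabla v|^2+Av$ you only get $v_{11}=-A$, which is useless. The boundary step survives because $Av_n=0$.

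The paper does exactly this with $\Phi=\log|\tilde\nabla v|^2+e^{4(\lambda+v)}|\tilde\nabla v|^2$. It diagonalizes $W=b$ at $\xi_0$, chooses $|v_1|\ge|\tilde\nabla v|/\sqrt n$, substitutes $v_{1k}=W_{1k}-v_1v_k+\frac12\tilde Q_{1kl}v_l-\delta_{1k}$ into $0=v_1\Phi_1$, and discards $W_{11}>0$. The derivative of the weight then beats the $-v_1v_k$ term, giving $0\ge2e^{4(\lambda+v)}v_1^2|\tilde\nabla v|^2-C|\tilde\nabla v|^3$.

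Note the price of this route. Since such a $\Phi$ contains $v$, the bound at $\xi_0$ transfers to all of $\cc$ only together with control of $\max v-\min v$. Some such input is unavoidable, since $b>0$ alone does not bound $|\tilde\nabla\log\tilde s|$. So that step, not the dropped $(p-1-n)|\tilde\nabla v|^2$ term, is where uniformity in $p$ for $n+1\le p\le n+2$ must be checked.
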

\begin{proof}
Define $v=: \log \tilde{s}$, from \eqref{rewrite}, then $v$ satisfies
\begin{equation}
	\renewcommand{\arraystretch}{1.5}
	\left\{
	\begin{array}{rll}
		\det \left(v_{ij}+v_{i}v_{j}-\frac{1}{2}\tilde{Q}_{ijk}v_{k}+\delta_{ij}\right)&=e^{p_{0}v}\tilde{f}, & \text{in\ } \cc,
		\\
		\tilde{\nabla}_{e_{n}}v&=0, & \text{on}\ \partial\cc,
	\end{array}	
	\right.
\end{equation}
where $p_{0}:=p-n-1$. Consider the function
$$\Phi:=\log|\tilde{\nabla}v|^{2}+e^{4(\lambda+v)}|\tilde{\nabla}v|^{2},$$
for some positive constant $\lambda \in \R$. Suppose that $|\tilde{\nabla} \log \tilde{s}|\gg 1$; otherwise, we are done.
Assume that $\Phi$ attains its maximum at some point, say $\xi_{0} \in \cc $. We may assume that $|\tilde{\nabla}v(\xi_0)|$ is sufficiently large,  otherwise, the proof is complete. We divide the proof into cases: either $\xi_{0} \in \pa\cc $ or $\xi_{0}\in \cc \setminus\pa \cc $.

Case \uppercase\expandafter{\romannumeral1}. $\xi_{0} \in \pa\cc $. The maximal condition implies
\begin{align*}
	0\leq \Phi_{n}&=\frac{2v_{k}v_{kn}}{|\tilde{\nabla}v|^{2}}+4e^{4(\lambda+v)}|\tilde{\nabla}v|^{2}v_{n}+e^{4(\lambda+v)}2v_{k}v_{kn}\\
	&=\Big(\frac{1}{|\tilde{\nabla}v|^{2}}+e^{4(\lambda+v)}\Big)2v_{\be}v_{\be n}\\
	&=\Big(\frac{1}{|\tilde{\nabla}v|^{2}}+e^{4(\lambda+v)}\Big)\frac{\tilde{Q}_{\ga \be n}\s_{\ga}\s_{\be}}{\s^{2}}<0.\
\end{align*}
This yields a contradiction. Hence $\xi_{0} \in  \cc \setminus\pa \cc $.

Case \uppercase\expandafter{\romannumeral2}. $\xi_{0} \in \cc \setminus\pa \cc $.  Assume at this point $W$ is diagonal where $W_{ij}=v_{ij}+v_{i}v_{j}-\frac{1}{2}\tilde{Q}_{ijk}v_{k}+\delta_{ij}$ and w.l.o.g. $|v_{1}|\geq\frac{1}{\sqrt{n}}|\tilde{\nabla}v|>0$. At $\xi_{0}$, we have
\begin{align*}
	0=v_{1}\Phi_{1}&=\frac{2v_{1}v_{k}v_{k1}}{|\tilde{\nabla}v|^{2}}+4e^{4(\lambda+v)}|\tilde{\nabla}v|^{2}v_{1}^{2}+e^{4(\lambda+v)}2v_{1}v_{k}v_{k1}\\
	&=\frac{2v_{1}v_{k}(W_{1k}-v_{1}v_{k}+\frac{1}{2}\tilde{Q}_{1kl}v_{l}-\delta_{1k})}{|\tilde{\nabla}v|^{2}}+4e^{4(\lambda+v)}|\tilde{\nabla}v|^{2}v_{1}^{2}\\
	&+e^{4(\lambda+v)}2v_{1}v_{k}(W_{1k}-v_{1}v_{k}+\frac{1}{2}\tilde{Q}_{1kl}v_{l}-\delta_{1k})\\
	&\geq 2e^{4(\lambda+v)}v_{1}^{2}|\tilde{\nabla}v|^{2}-C|\tilde{\nabla}v|^{3},
\end{align*}
which implies that
$$|\tilde{\nabla}v|\leq C.\ $$
\end{proof}

\subsection{$C^{2}$ estimate}
In this subsection, we establish the a priori $C^{2}$ estimate for the admissible solution of Eq. \eqref{rewrite}.
\begin{lemma}\label{C2}
Consider an admissible solution $\hat{s}$ to Eq. \eqref{equ:p pde-inv-Gauss}. Let the exponent $p$ and the contact constant $\omega_0$ satisfy one of the following conditions:
	\begin{enumerate}
		\item$p=1$ or $p\geq n+1$,and $\omega_0\in\bigl(-F(E_{n+1}),\,F(-E_{n+1})\bigr)$ together with Condition \eqref{condition};
		\item$1<p<n+1$ and $\omega_0\in\bigl(-F(E_{n+1}),\,0\bigr]$ together with Condition \eqref{condition}.
	\end{enumerate}

Then there exists a constant $C$ depending on $n,p,\omega_{0},\Vert F\Vert_{C^{4}(\W)},\Vert\tilde{f}\Vert_{C^{2}(\cc)}$, such that
\begin{align}\label{equ:c2}
	\underset{\cc }{\max}|\tilde{\nabla}^{2} \tilde{s}|\leq C.
\end{align}
Furthermore, if $n+1\leq p \leq n+2$, the constant $C$ in \eqref{equ:c2} is independent of $p$.
\end{lemma}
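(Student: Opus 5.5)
Since $\tilde\tau_{ij}>0$, bounding $|\tilde\nabla^2\tilde s|$ reduces to an upper bound for $\sigma_1(\tilde\tau)=\operatorname{tr}_{\tilde g}\tilde\tau$. Indeed, $\tilde s_{ij}=\tilde\tau_{ij}+\frac12\tilde Q_{ijk}\tilde s_k-\tilde s\tilde g_{ij}$, and the last two terms are controlled by Lemma \ref{C02} and Lemma \ref{C1} (or Lemma \ref{logC1} when $p\ge n+1$, which keeps the constants $p$-independent for $n+1\le p\le n+2$). Following the introduction, I will work with \eqref{rewrite} on the translated Wulff shape $\widetilde{\W}$ and apply the maximum principle to the auxiliary function
\[
P(\xi)=\log\sigma_1(\tilde\tau)+e^{\beta(m_2-\tilde s)},
\]
where $\beta>0$ is a large constant to be chosen. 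The right-hand side $\tilde f\tilde s^{p-1}$ is bounded above and below with bounded $C^2$ norm, again by the $C^0$ and $C^1$ estimates. I will write the equation as $\log\det\tilde\tau=\log(\tilde f\tilde s^{p-1})$ and use the linearized operator $\tilde\tau^{ij}$, the inverse matrix of $\tilde\tau$.

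\textbf{Interior maximum.} At an interior maximum I diagonalize $\tilde\tau$ and compute $0\ge\tilde\tau^{ii}P_{ii}$. Differentiating the equation twice gives $\tilde\tau^{ii}\tilde\tau_{ii,kk}$ in terms of the concavity term $\tilde\tau^{ii}\tilde\tau^{jj}\tilde\tau_{ij,k}^2$ and bounded quantities. Commuting $\tilde\tau_{kk,ii}$ with $\tilde\tau_{ii,kk}$ uses the Codazzi-type symmetry of $\tilde\nabla\tilde\tau$ (up to $\tilde Q$ terms, via \eqref{equ:Qjikl-syms}) and the Gauss equation of Lemma \ref{lemma:Gauss-Weingarten}. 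This produces
\[
\tilde\tau^{ii}\tilde\tau_{kk,ii}\ge \tilde\tau^{ii}\tilde\tau^{jj}\tilde\tau_{ij,k}^2-C\sigma_1\sum\tilde\tau^{ii}-C\sigma_1-C\sum_i\tilde\tau^{ii}\,\Big|\sum_k\tilde\tau_{kk,i}\Big|.
\]
The gradient terms in $\tilde Q_{ijk}\tilde s_k$ produce first-order terms in $\tilde\tau$. These are absorbed by the concavity term and by $-|\nabla\log\sigma_1|^2$, as in \cite{Xia13}. From the exponential term I use $-\tilde\tau^{ii}\tilde s_{ii}=-n+\tilde s\sum\tilde\tau^{ii}+O(\sum\tilde\tau^{ii})$, which gives the good term $\beta e^{\beta(m_2-\tilde s)}\bigl(m_1\sum\tilde\tau^{ii}+\beta\tilde\tau^{ii}\tilde s_i^2\bigr)$. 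Choosing $\beta$ large beats the bad $C\sum\tilde\tau^{ii}$ terms, and I conclude $\sigma_1\le C$ at the maximum point.

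\textbf{Boundary maximum.} This is the main obstacle. Here I use the frame of Lemma \ref{pwf2}: there $\tilde s_n=0$, $\tilde\tau_{\alpha n}=0$, and $\tilde\tau_{\alpha\alpha,n}=\tilde Q_{\alpha\gamma n}(\tilde\tau_{\alpha\gamma}-\delta_{\alpha\gamma}\tilde\tau_{nn})$. The maximum condition requires $P_n\ge0$. Since $\tilde s_n=0$, the exponential term contributes nothing, so $\sum_i\tilde\tau_{ii,n}\ge0$.

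I split $\sum_i\tilde\tau_{ii,n}=\sum_\alpha\tilde\tau_{\alpha\alpha,n}+\tilde\tau_{nn,n}$. By Lemma \ref{pwf2},
\[
\sum_\alpha\tilde\tau_{\alpha\alpha,n}=\sum_\alpha\tilde Q_{\alpha\alpha n}(\tilde\tau_{\alpha\alpha}-\tilde\tau_{nn})+(\text{off-diagonal terms}).
\]
Condition \eqref{condition}, equivalently $\tilde Q_{\alpha\alpha n}<0$ (Remark \ref{r4.1} and Proposition \ref{g2}), makes this $\le -c\sum_\alpha\tilde\tau_{\alpha\alpha}+C\tilde\tau_{nn}$.

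For $\tilde\tau_{nn,n}$ I differentiate the equation in the $e_n$ direction. Because $\tilde\tau_{\alpha n}=0$, this gives $\tilde\tau_{nn,n}/\tilde\tau_{nn}=-\sum_\alpha\tilde\tau_{\alpha\alpha,n}/\tilde\tau_{\alpha\alpha}+O(1)$.

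If the maximum of $\sigma_1$ is essentially attained by the tangential eigenvalues, the negative term yields $\sigma_1\le C$. If instead $\tilde\tau_{nn}$ dominates, I use the equation $\det\tilde\tau=\tilde f\tilde s^{p-1}$ to bound $\tilde\tau_{nn}$ by tangential quantities, or perturb $P$ by a term such as $\tilde g(\tilde\nabla\tilde s,\cdot)$ adapted to the Robin structure, to force $P_n<0$ and reach a contradiction. I expect this second case, controlling $\tilde\tau_{nn,n}$ and the mixed directions, to be the delicate point, and I would first try the determinant relation.

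Combining both cases bounds $\sigma_1$, and hence $|\tilde\nabla^2\tilde s|$. All constants depend only on the stated data and are uniform in $p\in[n+1,n+2]$.
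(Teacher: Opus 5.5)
\textbf{Gap 1: the boundary case is not closed.} Your overall plan matches the paper's: the same auxiliary function $P$, the interior/boundary split, and on the boundary Lemma \ref{pwf2} together with $\tilde Q_{\alpha\alpha n}<0$. But you leave the boundary case open, and one claim you make there is false.

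Once $\tilde\tau_{nn,n}$ is eliminated using the differentiated equation, the inequality $\sigma_{1,n}\ge 0$ is homogeneous of degree one in $\tilde\tau$; the datum $f^*_{,n}=\tilde f_n/\tilde f$ (recall $\tilde s_n=0$) enters only as a coefficient. So it cannot give $\sigma_1\le C$ in your ``tangential-dominant'' case, or in any case. It only controls ratios of eigenvalues, and the absolute bound must come from $\det\tilde\tau=\tilde f\tilde s^{p-1}$. Conversely, the determinant alone cannot bound $\tilde\tau_{nn}$ by tangential quantities unless those are bounded below relative to $\tilde\tau_{nn}$.

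The missing step is to combine your two computations exactly instead of estimating them separately. Since $\tilde\tau_{\alpha n}=0$, you may diagonalize $\tilde\tau$ at $\xi_0$. Then Lemma \ref{pwf2} gives exactly $\tilde\tau_{\alpha\alpha,n}=\tilde Q_{\alpha\alpha n}(\tilde\tau_{\alpha\alpha}-\tilde\tau_{nn})$, with no off-diagonal terms. Inserting $\tilde\tau_{nn,n}=\tilde\tau_{nn}\bigl(f^*_{,n}-\sum_\alpha\tilde\tau_{\alpha\alpha,n}/\tilde\tau_{\alpha\alpha}\bigr)$ into $\sigma_{1,n}\ge0$ gives
\begin{align*}
0\le\sigma_{1,n}=\tilde\tau_{nn}\,f^*_{,n}-\sum_{\alpha=1}^{n-1}\bigl(-\tilde Q_{\alpha\alpha n}\bigr)\frac{(\tilde\tau_{\alpha\alpha}-\tilde\tau_{nn})^2}{\tilde\tau_{\alpha\alpha}}.
\end{align*}
Every summand has a sign. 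Let $c_0>0$ be a lower bound for $-\tilde Q_{\alpha\alpha n}$ on $\partial\mathcal{C}_{\omega_0}$ (Condition \eqref{condition}). Then $\tilde\tau_{\alpha\alpha}/\tilde\tau_{nn}+\tilde\tau_{nn}/\tilde\tau_{\alpha\alpha}\le 2+C/c_0$ for every $\alpha$, so all eigenvalues are comparable. Hence $\tilde\tau_{nn}^n\le C\det\tilde\tau=C\tilde f\tilde s^{p-1}\le C$, and $\sigma_1\le C$. No case split and no modification of $P$ is needed.

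This is the paper's argument. The paper writes it as $0\le f^*_{,n}+\sum_\alpha(-\tilde Q_{\alpha\alpha n})(\tilde\tau_{\alpha\alpha}-\tilde\tau_{nn})(\tilde\tau^{\alpha\alpha}-\tilde\tau^{nn})$, where each product is $\le 0$. It then closes with $\tilde\tau_{nn}\sum_i\tilde\tau^{ii}\le C$ and $\sum_i\tilde\tau^{ii}\ge n(\det\tilde\tau)^{-1/n}$.

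\textbf{Gap 2: a term in the interior case is not absorbed by large $\beta$.} The exponential term $-\beta e^{\beta(m_2-\tilde s)}\tilde\tau^{ii}\tilde s_{ii}$ contains $-\frac12\beta e^{\beta(m_2-\tilde s)}\sum_i\tilde\tau^{ii}\tilde Q_{iik}\tilde s_k$, which you hide in $O(\sum\tilde\tau^{ii})$. It is of the same order $\beta e^{\beta(m_2-\tilde s)}\sum_i\tilde\tau^{ii}$ as your good term $\beta e^{\beta(m_2-\tilde s)}m_1\sum_i\tilde\tau^{ii}$. So enlarging $\beta$ does not absorb it, unless $|\tilde Q|\,|\tilde\nabla\tilde s|$ happens to be small compared with $m_1$. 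Young's inequality against $\beta^2e^{\beta(m_2-\tilde s)}\tilde\tau^{ii}\tilde s_i^2$ does not help either, because the weight $\tilde\tau^{ii}$ and the gradient component $\tilde s_k$ carry different indices.

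The same applies to the first-order commutator term from the Ricci identity. It is $\frac{1}{2\sigma_1}\sum_i\tilde\tau^{ii}\tilde Q_{iik}\sigma_{1,k}$, not your $\sum_i\tilde\tau^{ii}|\sigma_{1,i}|$. By the critical-point equation $\sigma_{1,k}=\sigma_1\beta e^{\beta(m_2-\tilde s)}\tilde s_k$, it equals $+\frac12\beta e^{\beta(m_2-\tilde s)}\sum_i\tilde\tau^{ii}\tilde Q_{iik}\tilde s_k$. The two terms therefore cancel exactly, and you must keep them signed rather than bounding each in absolute value.

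Finally, the interior maximum principle yields only $\sum_i\tilde\tau^{ii}\le C$, not $\sigma_1\le C$. You still need $\sigma_1\le\bigl(\sum_i\tilde\tau^{ii}\bigr)^{n-1}\det\tilde\tau$ together with $\det\tilde\tau=\tilde f\tilde s^{p-1}\le C$.
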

\begin{proof}
We consider the following auxiliary function
$$P(\xi)=\log \sigma_{1}+e^{\be(m_{2}-\s)},$$
for $\xi\in \cc $. Suppose that $P(\xi)$ attains its maximum at some point $\xi_{0} \in \cc $. We divide the proof into two cases according to whether $\xi_{0}$ is an interior point or not.

Case \uppercase\expandafter{\romannumeral1}.\ $\xi_{0} \in \cc \setminus\pa \cc $.  In this case, we choose an orthonormal frame  $\{e_i\}_{i=1}^n$ around $\xi_{0}$, such that $\tilde{\tau}$ is diagonal. Denote
\begin{align}\label{c2-1}
	\mathcal{F}(\tilde{\tau}_{ij}):= \log \det({\tilde{\tau}_{ij}})= \log (\tilde{f}\s^{p-1})= f^{*},
\end{align}
and $\tilde{\tau}^{ij}$ be the inverse matrix of $\tilde{\tau}_{ij}$. Then,
\begin{align}
	\f^{ij}=\tt^{ij},\ \ \ \f^{ij,kl}=-\tt^{ik}\tt^{jl}.\
\end{align}
Clearly, $\f$ is also diagonal at $\xi_{0}$. By taking the first and second covariant derivatives of \eqref{c2-1}, we obtain
\begin{align}\label{c2-2}
	\f^{ij}\tt_{ij,k}=f^{*}_{,k},\ \ \ \f^{ij}\tt_{ij,kk}+ \f^{ij,rs}\tt_{ij,k}\tt_{rs,k}=f^{*}_{,kk}.
\end{align}

Then maximal condition implies that
\begin{equation}\label{c2-3}
	P_{i}=\frac{\sigma_{1,i}}{\sigma_{1}}-\be e^{\be(m_{2}-\s)}\s_{i}=0,
\end{equation}
and
\begin{align}
	0\geq\f^{ij}P_{ij}=\frac{\sum\limits_{i,k}^{n}\f^{ii}\tt_{kk,ii}}{\sigma_{1}}-\frac{\sum\limits_{i,k}^{n}\f^{ii}\tt_{kk,i}^{2}}{\sigma_{1}^{2}}+\sum\limits_{i}^{n}\f^{ii}e^{\be(m_{2}-\s)}(\be^{2}\s_{i}^{2}-\be\s_{ii}).\
\end{align}
Nest we estimate the terms $\sum\limits_{i,k}^{n}\f^{ii}\tt_{kk,ii}$, $\sum\limits_{i,k}^{n}\f^{ii}\tt_{kk,i}^{2}$ and $-\sum\limits_{i}^{n}\f^{ii}e^{\be(m_{2}-\s)}\be\s_{ii}$.\
The Ricci identity on $(\cc , \tilde{g})$ gives
\begin{align*}
\tt_{kk,ii}=\tt_{ii,kk}+\frac{1}{2}(\tilde{Q}_{iip}\tt_{kk,p}-\tilde{Q}_{kkp}\tt_{ii,p})+\tilde{g}_{ii}\tt_{kk}-\tilde{g}_{kk}\tt_{ii}+\tilde{Q}\ast \tilde{Q}\ast \tt+\tilde{\nabla}\tilde{Q}\ast \tt,
\end{align*}
where the notation $\ast$ denotes scalar contraction of two tensors by $\tilde{g}$, see also in \cite{W-X}*{Lemma A.1}. We see from the definition of $\tilde{Q}$ only depend on $F$ and $\omega_{0}$.
Using the above Ricci identity, \eqref{c2-2} and \eqref{c2-3}, we deduce
\begin{align*}
\f^{ii}\tt_{kk,ii}&=\frac{\Delta \tilde{f}}{\tilde{f}}-\frac{|\tilde{\nabla}f|^{2}}{\tilde{f}^{2}}+(p-1)\frac{\Delta \s}{s}-(p-1)\frac{|\tilde{\nabla}\s|^{2}}{\s^{2}}\\
&+\tt^{ii}\tt^{jj}\tt_{ij,k}^{2}+\frac{1}{2}\f^{ii}\tilde{Q}_{iik}\sigma_{1}\be e^{\be(m_{2}-\s)}\s_{k}-\frac{1}{2}\tilde{Q}_{kk\ell}f^{*}_{,\ell}\\
&+\f^{ii}\sigma_{1}+\f^{ii}(\tilde{Q}\ast \tilde{Q}\ast \tt+\tilde{\nabla}\tilde{Q}\ast \tt)-n^2\\
&\geq \tt^{ii}\tt^{jj}\tt_{ij,k}^{2}-C_{1}\f^{ii}(\sigma_{1}+1)+\frac{1}{2}\f^{ii}\tilde{Q}_{iik}\sigma_{1}\be e^{\be(m_{2}-\s)}\s_{k}.\
\end{align*}
It follows that
\begin{align*}
\frac{\f^{ii}\tt_{kk,ii}}{\sigma_{1}}\geq \frac{ \tt^{ii}\tt^{jj}\tt_{ij,k}^{2}}{\sigma_{1}}+\frac{1}{2}\f^{ii}\tilde{Q}_{ii\ell}\be e^{\be(m_{2}-\s)}\s_{\ell}-C_{2}(\f^{ii}+1).\
\end{align*}
Observe that
\begin{equation}\label{niu}
\tt_{kk,i}+\frac{1}{2}\q_{kk\ell}\tt_{i\ell}=\tt_{ik,k}+\frac{1}{2}\q_{ik\ell}\tt_{k\ell,}
\end{equation}
we have that
\begin{align*}
\f^{ii}\tt_{kk,i}^{2}&\leq \f^{ii}\Big(\tt_{ik,k}^{2}+|\q \ast \q| \sigma_{1}^{2}+2|\q|\sigma_{1}|\tt_{ik,k}|\Big)\\
&\leq \sum\limits_{i,j,k}^{n}\tt^{ii}(\tt_{jj}\tt^{jj}\tt_{ij,k}^{2})+C_{3}\f^{ii}\sigma_{1}^{2}+C_{4}\f^{ii}\sigma_{1}^{2}\be e^{\be(m_{2}-\s)}|\s_{i}|\\
&\leq \sum\limits_{l}^{n}\tt_{ll}\sum\limits_{i,j,k}^{n}\tt^{ii}\tt^{jj}\tt_{ij,k}^{2}\!+\!C_{3}\f^{ii}\sigma_{1}^{2}+\frac{1}{2}C_{5}\be^{\frac{3}{2}}e^{\be(m_{2}\!-\!\s)}\f^{ii}\sigma_{1}^{2}\s_{i}^{2}\!+\!\frac{1}{2}C_{6}\be^{\frac{1}{2}}e^{\be(m_{2}\!-\!\s)}\f^{ii}\sigma_{1}^{2},
\end{align*}
where the second inequality follows from \eqref{niu} and \eqref{c2-3}. Therefore, we get that
\begin{align*}
\frac{\f^{ii}\tt_{kk,i}^{2}}{\sigma_{1}^{2}}\leq  \frac{ \tt^{ii}\tt^{jj}\tt_{ij,k}^{2}}{\sigma_{1}}+C_{3}\f^{ii}+\frac{1}{2}C_{5}\be^{\frac{3}{2}}e^{\be(m_{2}-\s)}\f^{ii}\s_{i}^{2}+\frac{1}{2}C_{6}\be^{\frac{1}{2}}e^{\be(m_{2}-\s)}\f^{ii}.
\end{align*}
We also have
\begin{align*}
-\sum\limits_{i}^{n}\f^{ii}e^{\be(m_{2}-\s)}\be\s_{ii}&=-e^{\be(m_{2}-\s)}\be \f^{ii}(\tt_{ii}+\frac{1}{2}\q_{ii\ell}\s_{\ell}-\s)\\
&\geq m_{1}e^{\be(m_{2}-\s)}\be\f^{ii}-\frac{1}{2}\be e^{\be(m_{2}-\s)}\f^{ii } \q_{ii\ell}\s_{\ell}-C_{6}.
\end{align*}
Finally, we get that
\begin{align*}
0\geq\f^{ij}P_{ij}\geq e^{\be(m_{2}-\s)}\f^{ii}\s_{i}^{2}\Big(\be^{2}\!-\!\frac{C_{5}\be^{\frac{3}{2}}}{2}\Big)\!+\!e^{\be(m_{2}-\s)}\f^{ii}\Big(m_{1}\be\!-\!\frac{C_{6}\be^{\frac{1}{2}}}{2}\!-\!C_{3}\Big)\!-\!C_{6},
\end{align*}
and, choosing $\be$ large enough, we obtain
\begin{align}\label{c2-4}
\sum\limits_{i}^{n}\f^{ii}\leq C_{7},
\end{align}
where $C_{7}$ depends on $p, n, m_{1}, m_{2}, \Vert\tilde{f}\Vert_{C^{2}}, \Vert\s\Vert_{C^{1}}, \Vert F\Vert_{C^{5}}$.\

Recall the following elementary inequality (see \cite{Y-S-T}):
\begin{align*}
\frac{\sum\limits_{i}^{n}\lambda_{i}}{\prod\limits_{i}^{n}\lambda_{i}}\leq \Big(\sum\limits_{i}^{n}\lambda_{i}^{-1}\Big)^{n-1}\ \ \ for\  \lambda_{i}>0,\ \ \forall i=1,\cdots, n .
\end{align*}
In view of \eqref{c2-1} and \eqref{c2-4}, we have
$$\sigma_{1} \leq C_{7}^{n-1}\prod\limits_{i}^{n}\tt_{ii}\leq C.\ $$

Case \uppercase\expandafter{\romannumeral2}.\ $\xi_{0} \in \pa \cc $. We choose an orthonormal frame $\{e_{i}\}_{i=1}^{n}$ around $\xi_{0}$ satisfying $e_{n}= \frac{\mu_{F}}{|\mu_{F}|_{\tilde{g}}}$ and denote $\tt_{nn}=\max \{\tt_{ii}| i=1,\cdots, n\}>0$. From \eqref{c2-2}, we have
\begin{align*}
f^{*}_{,n}=\f^{ii}\tt_{ii,n}&=\f^{nn}\tt_{nn,n}+\f^{\al \al}\tt_{\al \al,n}\\
&=\f^{nn}\tt_{nn,n}+\f^{\al \al}\q_{\al \al n}(\tt_{\al \al}-\tt_{nn}),\
\end{align*}
which implies that
\begin{align*}
\tt_{nn,n}=\frac{f^{*}_{,n}}{\f^{nn}}-\frac{\f^{\al \al}}{\f^{nn}}\q_{\al \al n}(\tt_{\al \al}-\tt_{nn}).
\end{align*}
%If the maximum of $\Omega$ is attained at a boundary point, then at $\xi_{0}$ we have
Since $0\leq \sigma_1\cdot\tilde{\nabla}_nP(\xi_0)=\sigma_{1,n}$, at $\xi_0$ we have
\begin{align}
0\leq \f^{nn}\sigma_{1,n}&\leq f^{*}_{,n}-\f^{\al \al}\q_{\al \al n}(\tt_{\al \al}-\tt_{nn})+\f^{nn}\q_{\al \al n}(\tt_{\al \al}-\tt_{nn})
\nonumber\\
&=f^{*}_{,n}+(-\q_{\al \al n})(\tt_{\al \al}-\tt_{nn})(\f^{\al \al}-\f^{nn})
\nonumber
\\
&=f^{*}_{,n}+(-\q_{\al \al n})\sum\limits_{i}(\tt_{ii}-\tt_{nn})(\f^{ii}-\f^{nn})\nonumber\\
&\leq f^{*}_{,n}+(-\q_{\al \al n})(n+1-\f^{nn}\sigma_{1}-\tt_{nn}\sum\limits_{i}\f^{ii})\nonumber\\
&\leq f^{*}_{,n}+(-\q_{\al \al n})(n+1-\f^{nn}\sigma_{1})  .\label{equ:pf=c2-1}
\end{align}
% From \eqref{condition} and \eqref{equ:tildeQ-Q}, we have $(n-1)$ matrix $(-\tilde{Q}_{\alpha\beta n})>0$ on $T_{\partial\cc}\partial\cc\times T_{\partial\cc}\partial\cc$.
 Therefore,
\begin{align}
\sigma_{1}\leq \frac{C_{8}}{\f^{nn}}+\frac{n+1}{\f^{nn}}.\
\end{align}
Next, we show that $\f^{nn}$ cannot be very small, we can see that
$$0\leq \f^{nn}\sigma_{1,n}\leq  f^{*}_{,n}+(-\q_{\al \al n})\Big(n+1-\f^{nn}\sigma_{1}-\frac{\sum\limits_{i}\f^{ii}}{\f^{nn}}\Big).\ $$
The arithmetic-geometric mean inequality implies
\begin{align}
	\label{equ:pf=c2-2}
	\sum\limits_{i=1}^{n}\f^{ii}\geq n(\det\tt_{ij})^{-\frac{1}{n}}\geq n\big(\underset{\cc}{\min} \tilde{f}^{-1}(\tilde{s}^{1-p})\big)^{\frac{1}{n}}\geq C_{9}.\
\end{align}
We can clearly see that if $\f^{nn}$ is too small, this contradicts the fact that the maximum is attained at the boundary point.
No matter where the maximum point is attained, for the admissible solution $\tilde{s}$, we have $$0 < 2\sigma_{2} = \left( \sum\limits_{i=1}^{n} \tilde{\tau}_{ii} \right)^{2} - \sum\limits_{i,j=1}^{n} |\tilde{\tau}_{ij}|^{2} ,$$
then we obtain  \eqref{equ:c2}.

If $p\in(n+1,n+2)$, from \eqref{equ:pf=c2-1} and \eqref{equ:pf=c2-2}, we can see  constant $C$ in \eqref{equ:c2} is independent of $p$.

Thus we have
completed the proof of Lemma \ref{C2}.
\end{proof}
\begin{proposition}\label{Prop:Cka-estamate}
Consider an admissible solution $\hat{s}$ to Eq. \eqref{equ:p pde-inv-Gauss}. Let the exponent $p$ and the contact constant $\omega_0$ satisfy one of the following conditions:
	\begin{enumerate}
		\item$p=1$ or $p\geq n+1$,and $\omega_0\in\bigl(-F(E_{n+1}),\,F(-E_{n+1})\bigr)$ together with Condition \eqref{condition};
		\item$1<p<n+1$ and $\omega_0\in\bigl(-F(E_{n+1}),\,0\bigr]$ together with Condition \eqref{condition}.
	\end{enumerate}
 For each integral $k\geq 1$ and $\ga \in (0, 1)$, there exists a constant $C$, depending on $p, n, k, \ga, \Vert f\Vert_{C^{k+1}(\cc)}$, and $\Vert F\Vert_{C^{k+3}(\cc)}$, such that
\begin{equation}\label{liu}
	\Vert\hat{s}\Vert_{C^{k+1, \ga}(\cc)}\leq C.\
\end{equation}
Furthermore, if $n+1< p< n+2$, the constant $C$ in \eqref{liu} is independent of $p$.
\end{proposition}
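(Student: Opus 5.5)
The plan is to combine the a priori estimates already obtained with standard regularity theory for fully nonlinear, uniformly elliptic equations carrying an oblique boundary condition. We work with the reparametrized problem \eqref{rewrite} for $\tilde{s}$ on $(\cc,\tilde g)$ and transfer back at the end. This is harmless because $\hat{s}=\ell\,\tilde{s}$ (up to the smooth identification via $\mathcal{T}$), and $\ell$ and the metrics depend only on $F$ and $\omega_0$. Hence $C^{k+1,\ga}$ bounds for $\tilde s$ and $\hat s$ are equivalent, with constants depending on $\Vert F\Vert_{C^{k+3}}$.

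\textbf{Step 1: uniform ellipticity and concavity.} By Lemma \ref{C02} (and Lemma \ref{C01} for $p>n+1$) we have $0<m_1\le\tilde{s}\le m_2$. For $p=n+1$ we instead normalize by the dilation and use the log-gradient bound of Lemma \ref{logC1} to control the ratio $\max\tilde s/\min\tilde s$. Lemma \ref{C1} gives $|\tilde\nabla\tilde s|\le C$, and Lemma \ref{C2} gives $|\tilde\nabla^2\tilde s|\le C$. Hence the eigenvalues of $\tt_{ij}$ are bounded above. Since $\det\tt_{ij}=\tilde f\tilde s^{p-1}$ is bounded below by a positive constant, the eigenvalues are also bounded below by a positive constant. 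The operator $\log\det$ is therefore uniformly elliptic and concave on the set of admissible matrices reached by the solution. The lower-order terms $-\tfrac12\tilde Q_{ijk}\tilde s_k+\tilde s\tilde g_{ij}$ and the right-hand side $\log(\tilde f\tilde s^{p-1})$ are smooth in $(\xi,\tilde s,\tilde\nabla\tilde s)$, with bounds depending on $\Vert F\Vert_{C^3}$ and $\Vert f\Vert_{C^1}$.

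\textbf{Step 2: $C^{2,\ga}$.} The boundary condition $\tilde\nabla_{e_n}\tilde s=0$ is a linear oblique (indeed conormal) condition, and it is uniformly oblique because $e_n=\mu_F/|\mu_F|_{\tilde g}$ is transversal to $\pa\cc$. The boundary $\pa\cc$ is smooth with bounds depending on $F$. We may therefore invoke the Evans--Krylov type estimate for concave fully nonlinear equations with oblique boundary conditions, in the form due to Lieberman--Trudinger (Nonlinear oblique boundary value problems for nonlinear elliptic equations). This yields $\Vert\tilde s\Vert_{C^{2,\ga}}\le C$ once $C^{1,1}$ bounds are known. The main obstacle I expect is verifying the structural hypotheses of that theorem in the curved, non-Euclidean setting. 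The cleanest route is to work in local coordinate charts near the boundary that flatten $\pa\cc$. In these charts, $\tilde g$, the Christoffel symbols and $\tilde Q$ become smooth coefficients. The equation then reads $\log\det(a_{ij}(x,\tilde s,D\tilde s)+D_{ij}\tilde s)=\psi$, with $D_{ij}$ adjusted by lower-order terms, together with a linear oblique condition. This form fits the Lieberman--Trudinger framework, and the interior part is the classical Evans--Krylov estimate.

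\textbf{Step 3: bootstrap.} Differentiate the equation tangentially, and use difference quotients in the normal direction combined with the equation. The derivatives of $\tilde s$ then satisfy linear uniformly elliptic equations with $C^{\ga}$ coefficients and oblique boundary conditions. Schauder estimates for the oblique derivative problem give $C^{k+1,\ga}$ inductively, with dependence on $\Vert f\Vert_{C^{k+1}}$ and $\Vert F\Vert_{C^{k+3}}$, since one derivative of $F$ is lost in $\tilde Q$ and a second in the Ricci/commutator terms. For $n+1<p<n+2$, the constants in Lemmas \ref{C01}, \ref{logC1} and \ref{C2} are $p$-independent. The only remaining $p$-dependence enters through $\tilde s^{p-1}$ with $\tilde s\in[m_1,m_2]$, which is uniformly controlled for $p$ in a bounded range. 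Hence $C$ is independent of $p$.
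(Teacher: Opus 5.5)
For fixed $p$ your argument is the paper's. You collect the $C^0$, $C^1$, $C^2$ bounds from Lemmas \ref{C01}, \ref{C02}, \ref{C1}, \ref{C2}, apply the Lieberman--Trudinger $C^{2,\gamma}$ theory for concave, uniformly elliptic equations with oblique boundary conditions, and bootstrap with Schauder estimates. Your Steps 1--3 spell out what the paper compresses into one sentence. Your remark that $p=n+1$ requires dividing out the dilation and using Lemma \ref{logC1} is a real correction, since Lemmas \ref{C01} and \ref{C02} do not cover that case and the paper's proof leaves this implicit.

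The gap is the final claim that $C$ is independent of $p$ for $n+1<p<n+2$. You assert that the constants of Lemma \ref{C01} are $p$-independent, but that lemma bounds $\hat{s}^{p-n-1}$, not $\hat{s}$. Its two-sided bounds on $\hat{s}^{p-n-1}$ are uniform in $p$, yet taking the $(p-n-1)$-th root makes $m_1,m_2$ degenerate as $p\downarrow n+1$. This cannot be repaired for $\hat{s}$ itself. Take $f=c\,\ell^{1-p}$ with a constant $c\neq 1$: then $\tilde f\equiv c$, and $\Vert f\Vert_{C^{k+1}}$ is bounded uniformly for $p\in(n+1,n+2)$. The constant $\tilde{s}\equiv c^{-1/(p-n-1)}$ is then an admissible solution of \eqref{rewrite}; equivalently $\hat{s}=c^{-1/(p-n-1)}\ell$, a dilate of $\cc$. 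It tends to $0$ or $\infty$ as $p\downarrow n+1$, so your sentence about $\tilde{s}\in[m_1,m_2]$ with $p$-independent $m_1,m_2$ is false.

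The uniform statement holds only after normalization, exactly as the paper uses it in the proof of Theorem \ref{thm:p>1}(2):
\begin{enumerate}
\item Set $\bar u=\tilde{s}/\min\tilde{s}$. It solves the same boundary problem with right-hand side $(\min\tilde{s})^{p-n-1}\tilde f\,\bar u^{p-1}$.
\item Lemma \ref{C01} gives $p$-uniform two-sided bounds for $\tilde{s}^{p-n-1}$, hence for the coefficient $(\min\tilde{s})^{p-n-1}$.
\item Lemma \ref{logC1} gives $1\le\bar u\le C$ with $C$ independent of $p$.
\item Your Steps 1--3 applied to $\bar u$ then give a $p$-independent $C^{k+1,\gamma}$ bound.
\end{enumerate}
You already use this normalization at $p=n+1$; you need it on the whole range $n+1<p<n+2$. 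The ``furthermore'' clause must be read for $\bar u$, not for $\hat{s}$.
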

\begin{proof}
Combining Lemma \ref{C01}, Lemma \ref{C02}, Lemma \ref{C1} and Lemma \ref{C2}, we obtain
$$\Vert\hat{s}\Vert_{C^{2}(\cc)}\leq C.\ $$
Using the fully nonlinear second order uniformly elliptic equations with oblique derivative boundary condition theory (see, e.g., \cite{M-S}*{Theorem 1.1} ), one can get the $C^{2, \ga}$ Schauder estimate and higher  order estimate as \eqref{liu}. Hence we complete the proof.
\end{proof}

\section{Proof of Theorem \ref{thm:p=1} ($p=1$)}\label{section5}
In this section, we use the method of continuity as in
\cite{Cheng-Yau,Xia13,Lutwak1995,Ni} to complete the proof of
Theorem \ref{thm:p=1}.

%Set $\tau_{i j}[S]=S_{i j}-\frac{1}{2} Q_{i j k} S_k+S \delta_{i j}$ and $\sigma_n\left(u_{i j}\right)=\operatorname{det}\left(u_{i j}\right)$.
Let $L_{\hat{s}}$ be the linearized operator of $\hat{s}\mapsto \operatorname{det}\left(\tau_{i j}[\hat{s}]\right)$, namely,
\begin{align}\label{equ:Ls-p=1}
	L_{\hat{s}}(v):=\mathcal{G}^{i j}\left(v_{i j}+v \mathring{g}_{i j}-\frac{1}{2}{Q}_{ijk}v_k\right)=\mathcal{G}^{i j}\cdot \tau_{ij}[v],
\end{align}
for any $v \in C^{\infty}(\mathcal{C}_{\omega_{0}})$. Here   $\mathcal{G}^{i j}:=\frac{\partial \det({\tau[\hat{s}]})}{\partial  {\tau}_{i j}}$, $v_k=\mathring{\nabla}_kv,v_{ij}=\mathring{\nabla}_i\mathring{\nabla}_jv$,  $\tau_{ij}[v]=v_{ij}-\frac{1}{2}Q_{ijk}v_k+v\mathring{g}_{ij}$, and $\mathring{\nabla}$ is the Levi-Civita connection of $( \mathcal{C}_{\omega_0},\mathring{g})$ with respect to anisotropic metric $\mathring{g}$.
Hence, for constants $\left\{a_i\right\}_{i=1}^{n}$, the function $L(\xi)=\sum_{i=1}^n a_i G(\mathcal{T}^{-1}\xi)\left(\mathcal{T}^{-1}\xi, E_i\right)$ satisfies
$$
L_{i j}-\frac{1}{2} Q_{i j k} L_k+L \delta_{i j}=0 , \text{\ in\ } \mathcal{C}_{\omega_0},
$$
and
$$
\mathring{\nabla}_{\mu_{F}}L=\frac{\omega_0}{F(\nu)\<E_{n+1},\mu\>}L, \text{\ on\ } \partial\mathcal{C}_{\omega_0}.
$$
For the proof details, one can refer to \cite{Xia13}*{Eq. (4.1)} and \cite{Arxiv1}*{Lemma 5.1}.
Thus, if $\hat{s}$ is an anisotropic capillary convex function solving \eqref{equ:p=1 pde-inv-Gauss}, then for any constant $L$, $\hat{s}+L$ is also a solution. By Proposition 3.5 of \cite{arxiv2}, $\hat{s}$ and $\hat{s}+L$ correspond uniquely to anisotropic capillary convex bodies, which we denote by $\hat\Sigma_1$ and $\hat\Sigma_2$, respectively. These bodies are related by a translation: $\Sigma_2 = \Sigma_1 + \sum_{i=1}^{n} a_i E_i$ for some coefficients $a_i$. If the origin is not an interior point of $\widehat{\partial\Sigma_1} \subset \mathbb{R}^n$, we may translate $\Sigma_1$ by a suitable vector $\sum_{i=1}^{n} a_i E_i \in \mathbb{R}^n$, so that the translated domain $\widehat{\partial\Sigma_2}$ contains the origin in its interior.
\begin{remark}\label{rk:O-inter}
	Such an observation allows us to assume, without loss of generality, that the anisotropic capillary hypersurface ${\Sigma
}$ determined by the solution of Problem \eqref{equ:p=1 pde-inv-Gauss} satisfies that $\widehat{\partial\Sigma}$  contains the origin as an interior point.
\end{remark} %, after which its anisotropic support function $\hat{s}$ becomes positive.
 %  to satisfy the following
%orthogonal condition:
%$$
%\int_{\mathcal{C}_{\omega_{0}}} G(\mathcal{T}^{-1}\xi)\left(\mathcal{T}^{-1}\xi, E_i\right) \cdot \hat{s}(\xi) ~d \mu_F=0 \quad \forall i=1,2, \ldots, n .
%$$

%This orthogonal condition means that the origin lies in the interior of the convex body enclosed by $\partial\Sigma\in \mathbb{R}^n=\partial\overline{\mathbb{R}^{n+1}_+}$.

%Define
%\begin{align}
%	\mathcal{S}=\left\{
%	t\in[0,1]:\det()
%	\right\}
%\end{align}

Let
$$
f_t:=(1-t) +t {f}, \quad \text { for } \quad 0 \leq t \leq 1.\
$$
Consider the problem
\begin{equation}\label{rewrite-0}
	\renewcommand{\arraystretch}{1.5}
	\left\{
	\begin{array}{rll}
		\det %\left(\hat{s}_{ij}-\frac{1}{2}Q_{ijk}\hat{s}_{k}+\hat{s}\hat{g}_{ij}\right)&=f_t\hat{s}^{p-1},  &\text{in\ } \mathcal{C}_{\omega_0},
		\left(\tau[\hat{s}]\right)&=f_t,  &\text{in\ }  \mathcal{C}_{\omega_0},
		\\
%	\label{equ:boundary equ s p=}
		\mathring{\nabla}_{\mu_F}\hat{s}&=\frac{\omega_0}{F(\nu)\<\mu,E_{n+1}\>}\hat{s}, & \text{on}\  \partial\mathcal{C}_{\omega_0}.
	\end{array}	
	\right.
\end{equation}
%$$
%\begin{aligned}
%\operatorname{det}\left(\nabla^2 h+h \sigma\right) & =h^{p-1} f_t, & & \text { in } \mathcal{C}_\theta \\
%\nabla_\mu h & =\cot \theta h, & & \text { on } \partial \mathcal{C}_\theta
%\end{aligned}
%$$
%Where on the boundary, $e_n$ is  orthogonal to the tangent plane $T(\partial\mathcal{C}_{\omega_{0}})$  with respect to the metric $\mathring{g}$.
Define the set
\begin{align*}
	\mathcal{H}:=\left\{\hat{s} \in C^{4}\left( \mathcal{C}_{\omega_{0}}\right): \mathring{\nabla}_{\mu_F}\hat{s}=\frac{\omega_0}{F(\nu)\<\mu,E_{n+1}\>}\hat{s} \text { on } \partial \mathcal{C}_{\omega_0}, \text{\ and\ }\tau[\hat{s}]>0\text { in }  \mathcal{C}_{\omega_0}\right\}.
\end{align*}
We denote
$$
\mathcal{I}:=\{t \in[0,1]: \text { Eq. \eqref{rewrite-0} has a  solution in } \mathcal{H}\},
$$
We rewrite Eq. \eqref{rewrite-0} as
\begin{align}\label{equ:Gtau=sp-1ft}
	\mathcal{G}({\tau[\hat{s}]}):=\operatorname{det}({\tau[\hat{s}]})=
 f_t,
\end{align}
and the linearized operator $L_{\hat{s}}$ of Eq. \eqref{equ:Gtau=sp-1ft} is defined by \eqref{equ:Ls-p=1}.
 %Since $\hat{s}\in\mathcal{H}$ %is a anisotropic  capillary  convex function (see Definition \ref{defn-convex capillary fun}),
%Suppose $\hat{s}$ satisfies $\tau[\hat{s}]>0$,
%then $\mathcal{G}^{i j}$ is a positively definite matrix.
Next, the following lemma implies that the kernel of operator $L_{\hat{s}}$ contains only the functions in span $\left\{
G( (\xi))(\mathcal{T}^{-1}\xi,E_{i})
\right\}_{i=1}^n$ with $\xi\in\mathcal{C}_{\omega_0}$, see \cite{arxiv2}*{Lemma 4.1}.
\begin{lemma}
	\label{lemma:ker-A}
	Let $f(\xi)\in C^2(\mathcal{C}_{\omega_0})$ be a function such that $\tau[f]=0$. Then
	\begin{align}\label{equ:f=1-n+1}
		f=\sum_{\alpha=1}^{n+1}a_{\alpha}G(\mathcal{T}^{-1}\xi)(\mathcal{T}^{-1}\xi,E_{\alpha}),
	\end{align}
	for some constants $a_1,\cdots,a_{n+1}$.
	Furthermore, if $f$ also satisfies the boundary condition \eqref{robin}, then $a_{n+1}=0$, that is
	\begin{align*}%\label{equ:f=1-n+n}
		f=\sum_{i=1}^{n}a_{i}G(\mathcal{T}^{-1}\xi)(\mathcal{T}^{-1}\xi,E_{i}),
	\end{align*}
	for some constants $a_1,\cdots,a_{n}$.
	Here $\left\{E_i\right\}_{i=1}^n$ are the horizontal coordinate unit vectors of $\overline{\mathbb{R}_{+}^{n+1}}$.\
\end{lemma}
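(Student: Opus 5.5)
The plan is to mimic the classical argument that the kernel of $\nabla^2 f+f g^{\mathbb{S}^n}$ on the sphere consists of linear functions. The tool is the vector field that appears in Lemma \ref{lemma:Xia-Lemma2.4}(1). Given $f\in C^2(\cc)$, I define the $\mathbb{R}^{n+1}$-valued map
\begin{align*}
Y(\xi):=\mathring{\nabla} f(\xi)+f(\xi)\,\mathcal{T}^{-1}\xi=f^k e_k+f\,\hat{\xi},
\end{align*}
where $\{e_k\}$ is a local $\mathring{g}$-orthonormal frame on $\cc$ and $\hat\xi=\mathcal{T}^{-1}\xi$. The first step is to show that $\partial_i Y=\tau_{ik}[f]\,e_k$.

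To compute $\partial_i Y$, I use the Gauss and Weingarten formulas of Lemma \ref{lemma:Gauss-Weingarten}. The Weingarten formula gives $\partial_i\hat\xi=e_i$. The Gauss formula gives $\partial_i e_k=-\mathring{g}_{ik}\hat\xi+\mathring{\nabla}_{e_i}e_k-\tfrac12 Q_{ikl}e_l$. Differentiating $f^k e_k$ therefore produces $f_{ik}e_k-f_i\hat\xi-\tfrac12 Q_{ikl}f_l e_k$. Differentiating $f\hat\xi$ produces $f_i\hat\xi+f e_i$. The two $\hat\xi$-terms cancel, and what remains is
\begin{align*}
\partial_i Y=\Big(f_{ik}-\tfrac12 Q_{ikl}f_l+f\,\mathring{g}_{ik}\Big)e_k=\tau_{ik}[f]\,e_k .
\end{align*}
This is the same computation that gives $X_i=\tau_{ij}e_j$ for support functions. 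Hence, if $\tau[f]=0$, the map $Y$ has vanishing differential. Since $\cc$ is connected, $Y\equiv a=\sum_{\alpha=1}^{n+1}a_\alpha E_\alpha$ is a constant vector.

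To recover $f$ from $Y$, I pair with $\hat\xi$ in the metric $G(\hat\xi)$. The relations $G(\hat\xi)(\hat\xi,\hat\xi)=1$ and $G(\hat\xi)(\hat\xi,V)=0$ for $V\in T_{\hat\xi}\W$ imply
\begin{align*}
f=G(\hat\xi)(\hat\xi,Y)=\sum_{\alpha}a_\alpha G(\hat\xi)(\hat\xi,E_\alpha),
\end{align*}
which is \eqref{equ:f=1-n+1}.

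For the boundary statement, the paper has already observed that every horizontal combination $\sum_{i\le n}a_iG(\hat\xi)(\hat\xi,E_i)$ satisfies \eqref{robin}. Since \eqref{robin} is linear, it is enough to show that $g:=G(\hat\xi)(\hat\xi,E_{n+1})$ does not satisfy it; then $a_{n+1}=0$ is forced. The route I would take is the following. The derivation of Lemma \ref{lemma:Xia-Lemma2.4}(2) (see \cite{arxiv2}) shows that, along $\partial\cc$, the Robin condition for a function $s$ is equivalent to $\langle \mathring{\nabla}s+s\hat\xi,E_{n+1}\rangle=0$. Indeed, $\mathring{\nabla}s+s\hat\xi$ splits into tangential components along $\partial\cc$, which are automatic, and the $\mu_F$-component, which is exactly the Robin relation. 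For $g$ the vector $\mathring{\nabla}g+g\hat\xi$ is the constant $Y=E_{n+1}$, and $\langle E_{n+1},E_{n+1}\rangle=1\neq0$, so $g$ violates \eqref{robin}. For the horizontal part the same criterion gives $\langle E_i,E_{n+1}\rangle=0$, which is consistent with the known fact.

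The main obstacle I expect is making this boundary equivalence rigorous without relying on the geometry of a convex body, because $f$ need not be convex. I would first try to verify directly that $\langle \mathring{\nabla}s+s\hat\xi,E_{n+1}\rangle=0$ on $\partial\cc$ is equivalent to \eqref{robin}, using the frame \eqref{pwf1} and the facts $\langle \hat\xi,E_{n+1}\rangle=-\omega_0$ and $\langle e_\alpha,E_{n+1}\rangle=0$ on $\partial\cc$. If that becomes messy, the fallback is to compute $\mathring{\nabla}_{\mu_F}g$ explicitly from $\mathring{\nabla}g=E_{n+1}-g\hat\xi$ projected tangentially. Comparing it with $\frac{\omega_0}{F(\nu)\langle\mu,E_{n+1}\rangle}g$ at one boundary point should then exhibit a nonzero discrepancy, using $\omega_0\in(-F(E_{n+1}),F(-E_{n+1}))$ and $\langle\mu,E_{n+1}\rangle\neq0$.
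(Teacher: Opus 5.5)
Your proof is correct. The paper gives no proof of this lemma (it only points to \cite{arxiv2}*{Lemma 4.1}). Your argument is the natural one: Lemma \ref{lemma:Gauss-Weingarten} gives $\partial_i(\mathring{\nabla}f+f\hat\xi)=\tau_{ik}[f]e_k$, so $\tau[f]=0$ forces $\mathring{\nabla}f+f\hat\xi\equiv a$ on the connected cap, and pairing with $\hat\xi$ in $G(\hat\xi)$ recovers $f$. This is the mechanism behind Lemma \ref{lemma:Xia-Lemma2.4}(1),(3), applied to a function that need not be convex.

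The boundary step you flag as the main obstacle is a pointwise identity and needs no convexity.

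At $\xi\in\partial\cc$ the tangent space $T_\xi\partial\cc$ is horizontal, so $E_{n+1}=\langle E_{n+1},\nu\rangle\nu+\langle E_{n+1},\mu\rangle\mu$. Since $\mu_F=A_F(\nu)\mu$ is Euclidean-orthogonal to $\nu$, this gives $\langle\mu_F,E_{n+1}\rangle=\langle\mu,E_{n+1}\rangle\langle A_F(\nu)\mu,\mu\rangle$.

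On $T_{\hat\xi}\W$ one has $\mathring g(V,W)=F(\nu)^{-1}\langle A_F(\nu)^{-1}V,W\rangle$. This follows from $DF^0(\hat\xi)=\nu/F(\nu)$ and the fact that $A_F(\nu)^{-1}$ is the Weingarten map of $\W$ at $\hat\xi$. Consequently $\mu_F$ is $\mathring g$-orthogonal to $T\partial\cc$, and $|\mu_F|^2_{\mathring g}=F(\nu)^{-1}\langle A_F(\nu)\mu,\mu\rangle$, as in \eqref{pwf1}.

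Now expand $\mathring\nabla s$ in the frame $\{e_\alpha,e_n\}$, using $\langle e_\alpha,E_{n+1}\rangle=0$ and $\langle\hat\xi,E_{n+1}\rangle=-\omega_0$. For every $C^1$ function $s$ this gives
\begin{equation*}
\langle\mathring\nabla s+s\hat\xi,E_{n+1}\rangle=\frac{\langle\mu_F,E_{n+1}\rangle}{|\mu_F|^2_{\mathring g}}\,\mathring\nabla_{\mu_F}s-\omega_0 s=F(\nu)\langle\mu,E_{n+1}\rangle\Bigl(\mathring\nabla_{\mu_F}s-\frac{\omega_0\, s}{F(\nu)\langle\mu,E_{n+1}\rangle}\Bigr).
\end{equation*}
This is exactly the equivalence between the boundary conditions in \eqref{equ:p=1 pde-inv-Gauss} and \eqref{equ:boundary equ s p=1} that the paper records.

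With this identity the ending gets shorter. Since $\mathring\nabla f+f\hat\xi\equiv a$ for the given $f$, condition \eqref{robin} at any single boundary point reads $\langle a,E_{n+1}\rangle=a_{n+1}=0$. You therefore need neither the quoted fact that horizontal combinations satisfy \eqref{robin}, nor your claim that $\mathring\nabla g+g\hat\xi=E_{n+1}$ for $g=G(\hat\xi)(\hat\xi,E_{n+1})$. That claim is true, but you did not justify it.
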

The above lemma shows that: %the kernels of $L_{\hat{s}}$ are only those given by \eqref{equ:f=1-n+1}.
 %Denote $z=\mathcal{T}^{-1}(\xi)\in\W$ then we have
\begin{lemma}\label{lemma:KerL-p=1}
%	Let $p =1$. % and $[-\tilde{Q}_{n\alpha\beta}]_{(n-1)(n-1)}$ is strictly positive definite on $T_{\xi}(\partial \mathcal{C}_{\omega_0})$.
	Suppose $v \in \mathcal{H}$ and $L_{\hat{s}}(v)=0$, where $\hat{s}_{ij}-\frac{1}{2}Q_{ijk}\hat{s}_{k}+\mathring{g}_{ij}\hat{s} > 0$. Then $$v(\xi) =\sum_{i=1}^{n} a_iG(\mathcal{T}^{-1}\xi)(\mathcal{T}^{-1}\xi,E^i),$$
	for some constants $a_1,\cdots,a_n$.
\end{lemma}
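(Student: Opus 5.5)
The plan is to reduce the kernel statement to Lemma \ref{lemma:ker-A}: show that $L_{\hat{s}}(v)=0$ together with the Robin boundary condition forces $\tau[v]\equiv 0$. This mirrors the classical argument for closed convex bodies (Cheng--Yau, \cite{Xia13}) via mixed discriminants and the Alexandrov--Fenchel (Minkowski-type) inequality. Here the required inequality is the one for anisotropic capillary convex bodies from \cite{arxiv2}.

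\textbf{Step 1 (integration by parts).} First I would establish the divergence structure of the cofactor matrix. Write $\mathcal{G}^{ij}=\sigma_n^{ij}(\tau[\hat{s}])$. The Codazzi-type symmetry \eqref{niu} for $\tau[\hat{s}]$ and the symmetry \eqref{equ:Qjikl-syms} of $\mathring{\nabla}Q$ together give $\mathring{\nabla}_j\mathcal{G}^{ij}+\tfrac12\mathcal{G}^{jk}Q_{jki}=0$ (up to the sign convention), in analogy with \cite{Xia13}. This yields the symmetric bilinear identity
\[
\int_{\mathcal{C}_{\omega_0}} w\,\mathcal{G}^{ij}\tau_{ij}[v]\,d\mu_F=\int_{\mathcal{C}_{\omega_0}} v\,\mathcal{G}^{ij}\tau_{ij}[w]\,d\mu_F+\text{(boundary terms)}
\]
for mixed discriminants. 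In the boundary terms I would use the Robin conditions \eqref{robin} for $v$ and $w$, together with $\tau_{\alpha n}=0$ on $\partial\mathcal{C}_{\omega_0}$ (Lemma \ref{pwf2}, or its untranslated version). These make the boundary contribution $\int_{\partial}\mathcal{G}^{nn}(w v_n-v w_n)$ vanish, since $v_n/v=w_n/w$ equals the same Robin coefficient. In effect this reproduces the symmetry of the mixed volume $V(v,w,\hat{s},\dots,\hat{s})$ established via Lemma \ref{lemma:V-k=V(KKK,LLL)} and \cite{arxiv2}.

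\textbf{Step 2 (AF/Minkowski equality case).} Set $D(v,w):=\int v\,\sigma_n(\tau[w],\tau[\hat s],\dots,\tau[\hat s])\,d\mu_F$. By Step 1, $L_{\hat s}(v)=0$ gives $D(v,v)=0$ and $D(\hat s,v)=0$. The anisotropic capillary Alexandrov--Fenchel inequality of \cite{arxiv2} gives $D(\hat s,v)^2\ge D(\hat s,\hat s)D(v,v)$ for capillary functions $v$; if needed one applies it to $v+c\hat s$, which is capillary convex for large $c$ by Lemma \ref{lemma:p-sum}. Here both sides are zero, so we are in the equality case, which forces $v$ to be a dilation of $\hat s$ plus a translation term: $v=\lambda\hat s+\sum a_iG(\hat\xi)(\hat\xi,E_i)$. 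Then $0=L_{\hat s}(v)=\lambda\,n\det\tau[\hat s]$ forces $\lambda=0$. Hence $\tau[v]=0$, and Lemma \ref{lemma:ker-A} together with the Robin condition removes the $E_{n+1}$ component, giving the claim.

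\textbf{Main obstacle.} The hardest step is the equality characterization in the capillary anisotropic AF inequality for non-convex $v$. If it is not available in \cite{arxiv2}, I would instead run the maximum-principle route of Cheng--Yau. Write $v/\hat s$ and analyze the Hessian of $\varphi=v/\hat s$, which satisfies a homogeneous elliptic equation with the pure Neumann condition $\mathring{\nabla}_{\mu_F}\varphi=0$ (as for $\bar s$ in Lemma \ref{C01}). The strong maximum principle and Hopf lemma then give that $\varphi$ attains neither an interior nor a boundary extremum unless it is constant, after subtracting a suitable element of the known kernel. The anisotropic convexity of $\partial\mathcal{C}_{\omega_0}$ ($\tilde Q_{\alpha\beta n}<0$, Proposition \ref{g2}) would be used to control the boundary case.
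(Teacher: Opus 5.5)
Your reduction to Lemma \ref{lemma:ker-A} is the same as the paper's. The paper obtains the statement directly from that lemma, with a pointer to \cite{arxiv2}, and leaves the implication $L_{\hat s}(v)=0\Rightarrow\tau[v]\equiv0$ implicit. Everything therefore hinges on that implication, and neither of your arguments for it works.

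\textbf{Step 2 is circular.} Because $L_{\hat s}(v)=0$ holds pointwise, $D(u,v)=\frac1n\int_{\mathcal{C}_{\omega_0}}u\,L_{\hat s}(v)\,d\mu_F=0$ for every $u$, so the Alexandrov--Fenchel inequality just reads $0\ge 0$. More to the point, consider a symmetric form with $D(\hat s,\hat s)>0$ satisfying $D(\hat s,v)^2\ge D(\hat s,\hat s)D(v,v)$. Such a form is negative semidefinite on $\{u:D(\hat s,u)=0\}$. Equality holds for $v$ precisely when $v-\lambda\hat s$ lies in the radical of $D$, that is, when $L_{\hat s}(v-\lambda\hat s)=0$. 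So the equality characterization you invoke for non-convex $v$ is literally equivalent to the lemma being proved; in the standard development it is deduced from the kernel lemma, not conversely. Passing to $v+c\hat s$ changes nothing.

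\textbf{The fallback fails too.} Writing $v=\varphi\hat s$ gives
\begin{equation*}
0=L_{\hat s}(v)=\hat s\,\mathcal{G}^{ij}\varphi_{ij}+\big(2\mathcal{G}^{ij}\hat s_j-\tfrac12\hat s\,\mathcal{G}^{jk}Q_{jki}\big)\varphi_i+n\det(\tau[\hat s])\,\varphi .
\end{equation*}
The zeroth-order coefficient is positive, which is the wrong sign for the maximum principle. Indeed, $\varphi=G(\hat\xi)(\hat\xi,E_i)/\hat s$ are non-constant solutions with $\mathring{\nabla}_{\mu_F}\varphi=0$, so no maximum-principle or Hopf argument can force constancy.

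\textbf{The missing idea is to use the equation pointwise, not only after integration.} Normalize the mixed discriminant by $\sigma_n(A,\dots,A)=\det A$. Then $\mathcal{G}^{ij}\tau_{ij}[v]=n\,\sigma_n(\tau[v],\tau[\hat s],\dots,\tau[\hat s])=0$ at every point. After normalizing $\tau[\hat s]=I$ by a congruence, $n(n-1)\sigma_n(B,B,I,\dots,I)=(\operatorname{tr}B)^2-|B|^2$. Hence $\sigma_n(\tau[v],\tau[v],\tau[\hat s],\dots,\tau[\hat s])\le 0$ pointwise, with equality only where $\tau[v]=0$.

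On the other hand, use the symmetry of your Step 1. As in the proof of Lemma \ref{lemma:Lwv=Lvw}, this is a divergence identity for $\varphi\,\mathcal{G}^{ij}$, where $d\mu_F=\varphi\,d\mu_{\mathring g}$ (not for $\mathcal{G}^{ij}$ alone as you wrote). Extend it by polarization to the mixed cofactor $\sigma_n^{ij}(\tau[v],\tau[\hat s],\dots,\tau[\hat s])$. Its boundary terms vanish for the same reason as before: the Robin condition, and $\tau_{\alpha n}=0$ on $\partial\mathcal{C}_{\omega_0}$ for both $v$ and $\hat s$. This gives
\begin{equation*}
\int_{\mathcal{C}_{\omega_0}}\hat s\,\sigma_n(\tau[v],\tau[v],\tau[\hat s],\dots,\tau[\hat s])\,d\mu_F=\frac1n\int_{\mathcal{C}_{\omega_0}}v\,L_{\hat s}(v)\,d\mu_F=0 .
\end{equation*}
A horizontal translation (Remark \ref{rk:O-inter}) changes neither $\tau[\hat s]$ nor $L_{\hat s}$, so you may take $\hat s>0$. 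Then the integrand vanishes identically, so $\tau[v]\equiv 0$. Lemma \ref{lemma:ker-A} together with the Robin condition then gives the claim. No equality case of any Alexandrov--Fenchel inequality is needed.
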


We now prove that $L_{\hat{s}}$ is a self-adjoint operator, a key property for the subsequent analysis.

%By \cite{Arxiv1}*{Lemma 3.9}, we obtain:
\begin{lemma}
	\label{lemma:Lwv=Lvw}
	For any $\hat{s},v,w\in \mathcal{H}$, there holds
	\begin{align*}
		\int_{ \mathcal{C}_{\omega_0}}wL_{\hat{s}}v~d\mu_F=	\int_{ \mathcal{C}_{\omega_0}}vL_{\hat{s}}w~d\mu_F.
	\end{align*}
\end{lemma}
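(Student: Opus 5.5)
The plan is to reduce the symmetry to a divergence-structure identity for the linearized operator, integrate by parts on $(\mathcal{C}_{\omega_0},\mathring{g})$, and then show that all interior and boundary terms are symmetric in $v,w$.

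\textbf{Interior structure.} Write $\mathcal{G}^{ij}=\partial\det(\tau[\hat s])/\partial\tau_{ij}$, the cofactor matrix of $\tau[\hat s]$. The key interior fact is the anisotropic divergence-free property
\[
\mathring{\nabla}_i\mathcal{G}^{ij}=-\tfrac12\mathcal{G}^{ik}Q_{ikj}\quad\text{(or the analogous identity)}.
\]
This should follow from the Codazzi-type relation for $\tau_{ij}[\hat s]$ on $\mathcal{C}_{\omega_0}$. That relation is obtained by commuting third derivatives of $\hat s$ with the Gauss equation in Lemma \ref{lemma:Gauss-Weingarten}, and by using the symmetry $\mathring{\nabla}_iQ_{jkl}=\mathring{\nabla}_jQ_{ikl}$ from \eqref{equ:Qjikl-syms}; this is the same mechanism as \eqref{niu}. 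With this identity one gets
\[
\mathcal{G}^{ij}\Big(v_{ij}-\tfrac12Q_{ijk}v_k\Big)\,d\mu_F=\mathring{\nabla}_i\big(\mathcal{G}^{ij}v_j\big)\,d\mu_F
\]
up to a correction absorbed by the weight. Here I will check that $d\mu_F$ on $\mathcal{C}_{\omega_0}$ is comparable to the $\mathring{g}$ volume form, and in fact equal to it since $\mathcal{T}^{-1}\xi\in\W$, so that $G$-orthonormality gives $d\mu_F=d\mu_{\mathring g}$. Hence
\[
wL_{\hat s}v=\mathring{\nabla}_i\big(w\mathcal{G}^{ij}v_j\big)-\mathcal{G}^{ij}w_iv_j+\mathcal{G}^{ij}\mathring{g}_{ij}vw.
\]
The last two terms are symmetric in $v,w$, so by the divergence theorem everything reduces to showing that the boundary term $\int_{\partial\mathcal{C}_{\omega_0}}w\,\mathcal{G}^{nj}v_j$ is symmetric.

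\textbf{Boundary term.} On $\partial\mathcal{C}_{\omega_0}$ take the frame \eqref{pwf1} with $e_n$ along $\mu_F$. The task is to show $\mathcal{G}^{n\alpha}=0$ for $\alpha<n$. This is the analogue of $\tilde\tau_{\alpha n}=0$ in Lemma \ref{pwf2}, i.e.\ $\tau_{\alpha n}[\hat s]=0$, which is obtained by differentiating the Robin condition tangentially. Since $\tau$ is then block diagonal, its cofactor matrix is as well. Consequently
\[
w\,\mathcal{G}^{nj}v_j=\mathcal{G}^{nn}\,w\,\nabla_nv=\mathcal{G}^{nn}\,c(\xi)\,vw,
\]
using the Robin condition for $v$ with $c=\frac{\omega_0}{F(\nu)\langle\mu,E_{n+1}\rangle}|\mu_F|^{-1}$. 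This expression is symmetric in $v,w$, which would conclude the proof.

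\textbf{Main obstacle.} The hardest step is verifying $\tau_{\alpha n}[\hat s]=0$ on the boundary for every function satisfying \eqref{robin}, not only for $\bar s$-type functions. I would try first to pass to $\bar v=v/\ell$, which satisfies the Neumann condition $\nabla_{e_n}\bar v=0$ by \cite{Arxiv1}*{Lemma 3.4}, and to use the transformation \eqref{b1} together with the translated-Wulff-shape picture \eqref{rewrite}. In that picture the boundary condition is a pure Neumann condition, and Lemma \ref{pwf2} applies to give $\tilde\tau_{\alpha n}=0$. The Robin condition itself is only needed at the final step: because both $v$ and $w$ satisfy it, $w\nabla_nv=v\nabla_nw$ on the boundary. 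That fact alone already secures symmetry of the main boundary term, provided the off-diagonal entries $\mathcal{G}^{n\alpha}$ vanish.
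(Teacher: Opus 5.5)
Your overall route is the paper's: put $\mathcal{G}^{ij}\bigl(v_{ij}-\tfrac12 Q_{ijk}v_k\bigr)$ in divergence form, integrate by parts, and remove the boundary term using $\tau_{\alpha n}[\hat s]=0$ (hence $\mathcal{G}^{\alpha n}=0$) and the Robin condition for $v$ and $w$. But the interior step, as written, rests on a false claim. On $\mathcal{C}_{\omega_0}$ one does \emph{not} have $d\mu_F=d\mu_{\mathring g}$. $G$-orthonormality of $\{\mathcal{T}^{-1}\xi,e_1,\dots,e_n\}$ computes $G$-volume, whereas $d\mu_F=\Omega(\mathcal{T}^{-1}\xi,\partial_1,\dots,\partial_n)\,dy$ is Euclidean; in fact $d\mu_{\mathring g}=\sqrt{\det G}\,d\mu_F$. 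Equivalently, $d\mu_F=\varphi\,d\mu_{\mathring g}$ with $\varphi_i=-\tfrac12\varphi\sum_k Q_{ikk}$ (Xia's Lemma 2.8, used in the paper), and $\varphi$ is not constant for a general Wulff shape.

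Correspondingly, your guessed cofactor identity is also false. The Codazzi computation you describe actually gives
\begin{equation*}
\mathring{\nabla}_j\mathcal{G}^{ij}=\tfrac12\bigl(\mathcal{G}^{ij}Q_{jkk}-\mathcal{G}^{kl}Q_{kli}\bigr),
\end{equation*}
which has an extra trace term. If you then integrated against $d\mu_{\mathring g}$, you would be left with $-\tfrac12\int w\,\mathcal{G}^{ij}Q_{jkk}v_i$, which is not symmetric in $v,w$. Your formula is precisely the correct one with the weight erased: $\mathring{\nabla}_j(\varphi\,\mathcal{G}^{ij})=-\tfrac12\varphi\,\mathcal{G}^{kl}Q_{kli}$, hence
\begin{equation*}
\varphi\,\mathcal{G}^{ij}\bigl(v_{ij}-\tfrac12 Q_{ijk}v_k\bigr)=\mathring{\nabla}_i\bigl(\varphi\,\mathcal{G}^{ij}v_j\bigr).
\end{equation*}
This is the identity the paper imports (Xia's Eq.~(5.10)). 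With this correction your computation becomes the paper's. The boundary term is then $\int_{\partial\mathcal{C}_{\omega_0}}\mathcal{G}^{nn}\,\varphi\,c\,vw\,ds$, which is symmetric in $v,w$.

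On the step you flagged as the main obstacle: only $\hat s$ enters $\mathcal{G}^{ij}$, so you need $\tau_{\alpha n}=0$ only for $\tau[\hat s]$. The paper quotes this from earlier work, and it holds for every function satisfying the Robin condition by a direct argument in the $\mathring g$-frame. On $\partial\mathcal{C}_{\omega_0}$ one has $\langle\mathcal{T}^{-1}\xi,E_{n+1}\rangle=-\omega_0$ and $\langle\mathring{\nabla}\hat s,E_{n+1}\rangle=\omega_0\hat s$. So $X=\mathring{\nabla}\hat s+\hat s\,\mathcal{T}^{-1}\xi$ maps $\partial\mathcal{C}_{\omega_0}$ into $\{x_{n+1}=0\}$. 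Therefore $dX$, whose matrix in the frame $\{e_i\}$ is $\tau_{ij}[\hat s]$, maps $T\partial\mathcal{C}_{\omega_0}=T\mathcal{C}_{\omega_0}\cap E_{n+1}^{\perp}$ into itself. Being $\mathring g$-self-adjoint, it also preserves the $\mathring g$-orthogonal complement, which is spanned by $\mu_F$; this is exactly $\tau_{\alpha n}=0$. Your detour through $v/\ell$ and $\tilde\tau$ is misdirected, since $v$ plays no role here. It would also require identifying $\tau$ and $\tilde\tau$ as the same endomorphism, and checking that $\mu_F$ is orthogonal to $T\partial\mathcal{C}_{\omega_0}$ for both $\mathring g$ and $\tilde g$.
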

\begin{proof}
Suppose $d\mu_{{F}}=\varphi ~d\mu_{\mathring{g}}$, then $$\varphi_i=-\frac{1}{2}\varphi \sum_{k=1}^{n}{Q}_{ikk},\ $$
see \cite{Xia13}*{Lemma 2.8}. The \cite{Xia13}*{Eq. (5.10)} implies that
	\begin{align}
		\label{equ:Xia-13-5.10-p=1}
		\mathring{\nabla}_j\mathcal{G}^{ij}\cdot v_i\cdot\varphi+\mathcal{G}^{ij}\cdot\left(
		v_i\varphi_j+\frac{1}{2}Q_{ijk}  v_k \varphi
		\right)
		=0.\
	\end{align}
	By applying integration by parts yields	
	\begin{align}
		& \int_{\mathcal{C}_{\omega_0}} w L_{\hat{s}}(v) ~d \mu_F=\int_{\mathcal{C}_{\omega_0}} w \mathcal{G}^{ij}\left(v_{i j}-\frac{1}{2} Q_{i j k} v_k+v \mathring{g}_{i j}\right) \varphi ~d\mu_{\mathring{g}} \nonumber
		\\
		=&- \int_{\mathcal{C}_{\omega_0}} \mathcal{G}^{ij} w_i v_j \varphi ~d\mu_{\mathring{g}}+\int_{\mathcal{C}_{\omega_0}} \mathcal{G}^{ij} \mathring{g}_{i j} w v \varphi ~d\mu_{\mathring{g}}+\int_{\partial\mathcal{C}_{\omega_0}}\mathcal{G}^{ij}\cdot w\cdot v_j \varphi\frac{(\mu_{F})^{i}}{|\mu_{F}|_{\mathring{g}}}~ds
		\nonumber
		\\
		& -\int_{\mathcal{C}_{\omega_0}} w\left\{\left(\mathcal{G}^{ij}\right)_j v_i \varphi+\mathcal{G}^{ij}\left(v_i \varphi_j+\frac{1}{2} Q_{i j k} v_k \varphi\right)\right\} ~d\mu_{\mathring{g}}
		\nonumber
		\\
		=&\int_{\mathcal{C}_{\omega_0}} \mathcal{G}^{ij}(\mathring{g}_{i j} w v- w_i v_j )~d \mu_F
		+\int_{\partial\mathcal{C}_{\omega_0}}\mathcal{G}^{nn}  w\varphi
		v\frac{\omega_{0}}{|\mu_{{F}}|_{\mathring{g}} F(\nu)\<\mu,E_{n+1}\>}~ds,\  %\mathring{\nabla}_{\mu_{F}}v ~ds\ ,
	\label{equ:pf-Lemma-vLw=wLv}
	\end{align}
where we use the fact $\tau_{\al n}=0$, see \cite{Arxiv1}*{Remark 3.1}. Then we complete the proof of Lemma \ref{lemma:Lwv=Lvw}.
	\end{proof}
The following is an immediate consequence of Lemma \ref{lemma:Lwv=Lvw}. It also generalizes the necessary condition  \eqref{equ:int-fE1--En}.
\begin{corollary}
	\label{cor:s-in-(kerL)*}
	For any $\hat{s}\in\mathcal{H}$, %we have
	 there holds
	 \begin{align*}
	 	\int_{\mathcal{C}_{\omega_0}}G(z)(z,E_i)\det\left(\tau[\hat{s}]\right)~d\mu_F=0,\quad 1\leq i\leq n.
	 \end{align*}
\end{corollary}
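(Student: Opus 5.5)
The plan is to apply Lemma \ref{lemma:Lwv=Lvw} with $w=\hat{s}$ itself and $v=L$, where $L(\xi)=G(\mathcal{T}^{-1}\xi)(\mathcal{T}^{-1}\xi,E_i)$ for a fixed $1\leq i\leq n$. Strictly, $L$ is not in $\mathcal{H}$ because $\tau[L]=0$ is not positive definite. However, the proof of Lemma \ref{lemma:Lwv=Lvw} uses only the Robin boundary condition and the divergence identity \eqref{equ:Xia-13-5.10-p=1}, never the positivity of $\tau[v]$ or $\tau[w]$. We therefore first observe that the identity holds for any $v,w\in C^2(\mathcal{C}_{\omega_0})$ satisfying \eqref{robin}, with $\hat{s}\in\mathcal{H}$ fixed. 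As recalled before Lemma \ref{lemma:ker-A}, the function $L$ satisfies both $\tau[L]=0$ in $\mathcal{C}_{\omega_0}$ and the Robin condition \eqref{robin} on $\partial\mathcal{C}_{\omega_0}$, so it is admissible.

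Since $\tau[L]=0$, we get $L_{\hat{s}}(L)=\mathcal{G}^{ij}\tau_{ij}[L]=0$, and hence $\int_{\mathcal{C}_{\omega_0}}\hat{s}\,L_{\hat{s}}(L)\,d\mu_F=0$. For the other side we use that $\det$ is homogeneous of degree $n$ and that $\tau[\cdot]$ is linear. Euler's identity gives $\mathcal{G}^{ij}\tau_{ij}[\hat{s}]=n\det(\tau[\hat{s}])$, that is, $L_{\hat{s}}(\hat{s})=n\det(\tau[\hat{s}])$. The symmetry identity then yields
\begin{align*}
0=\int_{\mathcal{C}_{\omega_0}}\hat{s}\,L_{\hat{s}}(L)\,d\mu_F=\int_{\mathcal{C}_{\omega_0}}L\,L_{\hat{s}}(\hat{s})\,d\mu_F=n\int_{\mathcal{C}_{\omega_0}}G(z)(z,E_i)\det(\tau[\hat{s}])\,d\mu_F,
\end{align*}
with $z=\mathcal{T}^{-1}\xi$. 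Dividing by $n$ gives the claim.

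The only point needing care is the boundary term in \eqref{equ:pf-Lemma-vLw=wLv}. After integration by parts it has the form $\int_{\partial\mathcal{C}_{\omega_0}}\mathcal{G}^{nn}wv\varphi\,\omega_0/(|\mu_F|_{\mathring{g}}F(\nu)\langle\mu,E_{n+1}\rangle)\,ds$. This expression is symmetric in $v$ and $w$ provided that both satisfy the Robin condition and that $\mathcal{G}^{\alpha n}=0$ on the boundary. The latter follows from $\tau_{\alpha n}[\hat{s}]=0$, which concerns only $\hat{s}\in\mathcal{H}$. So the argument goes through for $v=L$, and this verification is the main step to check.
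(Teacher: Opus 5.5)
Your proposal is correct and is exactly the argument the paper has in mind when it calls the corollary an immediate consequence of Lemma~\ref{lemma:Lwv=Lvw}: pair $\hat{s}$ with the kernel function $G(z)(z,E_i)$, and use $L_{\hat{s}}(\hat{s})=n\det(\tau[\hat{s}])$ together with $\tau[G(z)(z,E_i)]=0$. Your extra care about $G(z)(z,E_i)\notin\mathcal{H}$ is justified, since the lemma's proof only uses the Robin condition on $v,w$ and the fact $\mathcal{G}^{\alpha n}=0$, which comes from $\hat{s}$; you could also sidestep the issue by applying the lemma as stated with $w=\hat{s}$ and $v=\hat{s}+G(z)(z,E_i)$, which lies in $\mathcal{H}$ because $\tau[\cdot]$ and the Robin condition are linear.
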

\begin{proof}[\textbf{Proof of Theorem \ref{thm:p=1}}]
	%We use the degree theory for nonlinear elliptic equation with nonlinear oblique boundary conditions, developed by	Y. Y. Li \cite{LiYanYan-degree-oblique} to prove the existance of solutions.
	First we show the existence of Eq. \eqref{equ:p=1 pde-inv-Gauss}. For $t=0, \hat{s}=\ell \in \mathcal{H}$ is a solution, i.e., $0 \in \mathcal{I}$, so the set $\mathcal{I}$ is nonempty. Next we will apply the implicit function theorem to show the openness of $\mathcal{I}$. %while the closedess follows from Proposition \ref{Prop:Cka-estamate}.	
	Let $W^{m,2}$ be the Sobolev space of $(\mathcal{C}_{\omega_{0}},\mathring{g})$. Choose $m$ sufficiently large such that $W^{m,2}(\mathcal{C}_{\omega_{0}})\subset C^{4}(\mathcal{C}_{\omega_{0}})$. Consider $L_{\hat{s}}$ as a bounded linear map from $W^{m+2,2}(\mathcal{C}_{\omega_{0}})$ to $W^{m,2}(\mathcal{C}_{\omega_{0}})$. Lemma \ref{lemma:Lwv=Lvw} and Lemma \ref{lemma:KerL-p=1} tell that
	\begin{align*}
	\mathrm{Image}(L_{\hat{s}})=\mathrm{Ker}(L_{\hat{s}}^*)^{\perp}=\left(\mathrm{Span}\{G(z)(z,E_1),\cdots,G(z)(z,E_n)\}\right)^{\perp},
	\end{align*}
	which implies for any $f\in W^{m,2}(\mathcal{C}_{\omega_0})$  satisfying \eqref{equ:int-fE1--En}, we have $f\in\mathrm{ Image}(L_{\hat{s}}) $, that is, $L_{\hat{s}}$ is surjective. The standard implicit function theorem yields that $\mathcal{G}$ is locally
	invertible around $\hat{s}$, which in turn gives the openness of $\mathcal{I}$.	
	
	While the closedess follows from Proposition \ref{Prop:Cka-estamate}, we conclude that $\mathcal{I}=[0,1]$. Hence we prove the existence of an admissible solution to the anisotropic capillary Minkowski problem.
	
	We now turn to the uniqueness part.
	%For the uniqueness part,
	 Applying Lemma \ref{lemma:KerL-p=1} and Lemma \ref{lemma:Lwv=Lvw} and following the argument in \cite{Xia13}, we find that for two anisotropic capillary hypersurfaces $\Sigma$ and $\Sigma'$ with the same Gauss-Kronecker curvature, $$\Sigma(\xi)-\Sigma'(\xi)=\sum_{i=1}^na_iE^i\in\partial\overline{\mathbb{R}^{n+1}_+},$$ which means $\Sigma$ is unique up to horizontal translations.
\end{proof}

\section{Proof of Theorem \ref{thm:p>1} ($p>1$)}\label{section6}
We are now ready to prove Theorem \ref{thm:p>1}. Let
$$
f_t:=(1-t) +t \tilde{f}, \quad \text { for } \quad 0 \leq t \leq 1 ,
$$
where $\tilde{f}=f\ell^{p-1}$.
Consider the problem
\begin{equation}\label{rewrite-1}
	\renewcommand{\arraystretch}{1.5}
	\left\{
	\begin{array}{lll}
		\det \left(\tilde{s}_{ij}-\frac{1}{2}\tilde{Q}_{ijk}\tilde{s}_{k}+\tilde{s}\tilde{g}_{ij}\right)&=f_t\tilde{s}^{p-1},  &\text{in\ } \mathcal{C}_{\omega_0},
		\\
		\ \ \ \ \ \ \ \ \ \ \ \ \ \ \ \ \ \ \ \ \ \  \tilde{\nabla}_{e_n} \s&=0, & \text{on}\ \partial\mathcal{C}_{\omega_0}.
	\end{array}	
	\right.
\end{equation}
%$$
%\begin{aligned}
%\operatorname{det}\left(\nabla^2 h+h \sigma\right) & =h^{p-1} f_t, & & \text { in } \mathcal{C}_\theta \\
%\nabla_\mu h & =\cot \theta h, & & \text { on } \partial \mathcal{C}_\theta
%\end{aligned}
%$$

Define the set
$$
\mathcal{H}:=\left\{h \in C^{4, \alpha}\left(\mathcal{C}_{\omega_{0}}\right): \tilde{\nabla}_{e_n} h=0 \text { on } \partial \mathcal{C}_{\omega_0} \right\},
$$
and when $p>n+1$, we denote
$$
\mathcal{I}:=\{t \in[0,1]: \text { Eq. \eqref{rewrite-1} has a positive solution in } \mathcal{H}\},
$$
when $1<p<n+1$, we denote
$$
\mathcal{I}:=\{t \in[0,1]: \text { Eq. \eqref{rewrite-1} has a positive, anisotropic capillary even solution in } \mathcal{H}\} .
$$
We rewrite Eq. \eqref{rewrite-1} as
\begin{align}\label{equ:Gtau=sp-1ft-1}
	\mathcal{G}(\tilde{\tau}):=\operatorname{det}(\tilde{\tau})=\tilde{s}^{p-1} f_t,
\end{align}
and the linearized operator $L_{\tilde{s}}$ of Eq. \eqref{equ:Gtau=sp-1ft-1} is
$$
L_{\tilde{s}}(v):=\mathcal{G}^{i j}\left(v_{i j}+v \tilde{g}_{i j}-\frac{1}{2}\tilde{Q}_{ijk}v_k\right)-(p-1) f_t h^{p-2} v,
$$
where $v_k=\tilde{\nabla}_kv,v_{ij}=\tilde{\nabla}_i\tilde{\nabla}_jv$, and  $\mathcal{G}^{i j}:=\frac{\partial \mathcal{G}(\tilde{\tau})}{\partial \tilde{\tau}_{i j}}$. Since $\tilde{s}$ is a anisotropic  capillary  convex function (see Definition \ref{defn-convex capillary fun}), we know that $\mathcal{G}^{i j}$ is a positively definite matrix. Next, we adopt the technique in \cite{Guan-Xia-2018-Lp-Christoffel}*{Section 4.3} and show that the kernel of operator $L_{\tilde{s}}$ is trivial when $p \neq n+1$.
\begin{lemma}\label{lemma:kerL=0}
	Let $p \neq n+1>1$. % and $[-\tilde{Q}_{n\alpha\beta}]_{(n-1)(n-1)}$ is strictly positive definite on $T_{\xi}(\partial \mathcal{C}_{\omega_0})$.
	Suppose $v \in \mathcal{H}$,  $v \in \operatorname{Ker}\left(L_{\tilde{s}}\right)$, $\tilde{\tau}[v]>0$, $\tilde{\tau}[\tilde{s}]>0$, and $\tilde{s}>0$, then $v \equiv 0$.
\end{lemma}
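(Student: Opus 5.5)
The plan follows the argument of \cite{Guan-Xia-2018-Lp-Christoffel}*{Section 4.3}, adapted to the Robin (here Neumann, after the translation to $\widetilde{\W}$) boundary condition. The key observation is that $\tilde{s}$ itself is almost in the kernel. Since $\det$ is homogeneous of degree $n$, we have $\mathcal{G}^{ij}\tilde{\tau}_{ij}[\tilde{s}]=n\det(\tilde{\tau}[\tilde{s}])=n f_t\tilde{s}^{p-1}$. Hence
$$L_{\tilde{s}}(\tilde{s})=(n+1-p)f_t\tilde{s}^{p-1}.$$
Since $\tilde{s}>0$, we may write $v=\eta\tilde{s}$ and derive an equation for $\eta$. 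Note that $\tilde{\nabla}_{e_n}\eta=0$ on $\partial\cc$, because both $v$ and $\tilde{s}$ satisfy the Neumann condition in $\mathcal{H}$.

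First I would expand $\tilde{\tau}_{ij}[\eta\tilde{s}]=\eta\tilde{\tau}_{ij}[\tilde{s}]+\tilde{s}\eta_{ij}+\eta_i\tilde{s}_j+\eta_j\tilde{s}_i-\frac{1}{2}\tilde{Q}_{ijk}\eta_k\tilde{s}$. Substituting this into $L_{\tilde{s}}(v)=0$ gives
$$\tilde{s}\,\mathcal{G}^{ij}\eta_{ij}+2\mathcal{G}^{ij}\tilde{s}_i\eta_j-\tfrac{1}{2}\tilde{s}\,\mathcal{G}^{ij}\tilde{Q}_{ijk}\eta_k+\eta\,(n+1-p)f_t\tilde{s}^{p-1}=0.$$
This is a linear elliptic equation for $\eta$ with no zeroth-order term apart from $c\,\eta$, where $c=(n+1-p)f_t\tilde{s}^{p-2}$ has a fixed sign because $p\neq n+1$.

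Next comes the maximum principle argument. Consider first the case $p>n+1$, so $c<0$. At an interior positive maximum of $\eta$ we have $\mathcal{G}^{ij}\eta_{ij}\le0$ and $\nabla\eta=0$, so $c\,\eta\ge0$ forces $\eta\le0$, a contradiction. The minimum is handled symmetrically. If the extremum lies on $\partial\cc$, then $\eta_n=0$ there, and I would invoke the Hopf lemma: a positive maximum of a solution of $\mathcal{L}\eta=-c\eta\ge0$ with nonconstant $\eta$ has $\eta_n>0$, which is a contradiction. This gives $\eta\equiv0$.

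The case $1<p<n+1$ has $c>0$, and the maximum principle fails, so this is the main obstacle. Here I would use an integral argument built on the self-adjointness of $\mathcal{G}^{ij}\tilde{\tau}_{ij}[\cdot]$ with respect to $d\mu_F$, as in Lemma \ref{lemma:Lwv=Lvw}, whose boundary term vanishes by the Neumann condition. Combined with the Alexandrov--Fenchel-type (Aleksandrov mixed discriminant) inequality for $\mathcal{G}^{ij}$, this yields
$$\Big(\int v\,\mathcal{G}^{ij}\tilde{\tau}_{ij}[\tilde{s}]\Big)^2\ge \int \tilde{s}\,\mathcal{G}^{ij}\tilde{\tau}_{ij}[\tilde{s}]\int v\,\mathcal{G}^{ij}\tilde{\tau}_{ij}[v],$$
with equality only if $v=a\tilde{s}+$ an element of the kernel from Lemma \ref{lemma:ker-A}. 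Using $\mathcal{G}^{ij}\tilde{\tau}_{ij}[v]=(p-1)f_t\tilde{s}^{p-2}v$, this becomes $n^2\left(\int f_t\tilde{s}^{p-1}v\right)^2\ge n(p-1)\int f_t\tilde{s}^p\int f_t\tilde{s}^{p-2}v^2$. Cauchy--Schwarz gives $\left(\int f_t\tilde{s}^{p-1}v\right)^2\le\int f_t\tilde{s}^{p}\int f_t\tilde{s}^{p-2}v^2$, and since $n(p-1)$ compared with $n^2$ is not the right pairing, I would check the constants carefully. The intended conclusion is that the strict inequality $p<n+1$ forces both inequalities to be equalities, so $v=a\tilde{s}+\sum a_iG(\cdot,E_i)$. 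The evenness of $v$ then kills the horizontal kernel terms, and substituting $v=a\tilde{s}$ gives $a(n+1-p)=0$, hence $a=0$. The hypothesis $\tilde{\tau}[v]>0$ is what makes the mixed discriminant inequality applicable. Checking that this inequality holds on $\cc$ with the $\tilde{Q}$-terms and the boundary, presumably via the capillary Alexandrov--Fenchel inequality of \cite{arxiv2}, is the step I expect to be hardest.
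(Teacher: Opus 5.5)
Your argument for $p>n+1$ is correct and differs from the paper's. Writing $v=\eta\tilde s$ gives a linear equation for $\eta$ with zeroth-order coefficient $c=(n+1-p)f_t\tilde s^{p-2}<0$ and Neumann data $\eta_n=0$ (since $v_n=\tilde s_n=0$). The strong maximum principle and the Hopf lemma then give $\eta\equiv0$ without any Alexandrov--Fenchel input.

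The case $1<p<n+1$, however, has a genuine gap. Put $A:=\int f_t\tilde s^{p}\,d\mu_F\int f_t\tilde s^{p-2}v^2\,d\mu_F$ and $B:=\big(\int f_t\tilde s^{p-1}v\,d\mu_F\big)^2$.
\begin{itemize}
\item Your Alexandrov--Fenchel step gives $(p-1)A\le nB$. Your normalization is the right one, with equality at $v=\tilde s$; the inequality displayed in the paper drops this factor $n$, which is harmless there.
\item Cauchy--Schwarz gives $B\le A$.
\end{itemize}
Combined, these say only $(p-1)A\le nA$, which holds automatically when $p<n+1$. Nothing is forced to be an equality, so the equality characterization never enters; this chain forces $A=0$ only when $p-1>n$. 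Your fallback to evenness would also use a hypothesis the lemma does not contain.

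The missing step is the orthogonality $B=0$, and you already have every ingredient for it. Pair the kernel equation with $\tilde s$. Use the symmetry of $L_{\tilde s}$ with respect to $d\mu_F$ (the zeroth-order part is a multiplication operator, hence symmetric) and your own computation $L_{\tilde s}(\tilde s)=(n+1-p)f_t\tilde s^{p-1}$:
\begin{equation*}
0=\int_{\mathcal{C}_{\omega_0}}\tilde s\,L_{\tilde s}(v)\,d\mu_F=\int_{\mathcal{C}_{\omega_0}}v\,L_{\tilde s}(\tilde s)\,d\mu_F=(n+1-p)\int_{\mathcal{C}_{\omega_0}}f_t\tilde s^{p-1}v\,d\mu_F .
\end{equation*}
Since $p\neq n+1$, this gives $B=0$. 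The Alexandrov--Fenchel inequality then gives $(p-1)A\le 0$, so $v\equiv0$ because $p>1$. This is the paper's proof. It treats $1<p<n+1$ and $p>n+1$ at once and needs neither evenness nor an equality case.

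In the integration by parts, the boundary term $\mathcal{G}^{in}(\tilde s v_i-v\tilde s_i)$ vanishes because of the Neumann condition \emph{together with} $\tilde\tau_{\alpha n}=0$ on $\partial\mathcal{C}_{\omega_0}$ (Lemma \ref{pwf2}), which gives $\mathcal{G}^{\alpha n}=0$ there.

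Finally, $\tilde\tau[v]>0$ is not what makes Alexandrov--Fenchel applicable. Let $Q(w_1,w_2)=\int w_1\mathcal{G}^{ij}\tilde\tau_{ij}[w_2]\,d\mu_F$. The defect $Q(v,\tilde s)^2-Q(\tilde s,\tilde s)Q(v,v)$ is invariant under $v\mapsto v+C\tilde s$, so the inequality holds for every $v$ with the Neumann condition. Under the literal hypothesis $\tilde\tau[v]>0$, the kernel equation even forces $v>0$, which contradicts $B=0$ immediately.
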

\begin{proof}
	Suppose $v \in \operatorname{Ker}\left(L_{\tilde{s}}\right)$, then
\begin{align}
	\label{equ:MW-4.3}
	\mathcal{G}^{i j}\cdot \left(v_{i j}+v \tilde{g}_{i j}-\frac{1}{2}\tilde{Q}_{ijk}v_k\right)-(p-1) \tilde{s}^{-1} \operatorname{det}\left(\tilde{\tau}\right) v=0 .
\end{align}
Multiplying \eqref{equ:MW-4.3} with $\tilde{s}$ and integrating over $\mathcal{C}_{\omega_0}$  we get
\begin{align}
 (p-1) \int_{\mathcal{C}_{\omega_{0}}}
v\cdot \operatorname{det}\left(\tilde{\tau}\right) d \mu_F=\int_{\mathcal{C}_{\omega_{0}}} \tilde{s}\cdot \mathcal{G}^{i j}\cdot \left(v_{i j}+v \tilde{g}_{i j}-\frac{1}{2}\tilde{Q}_{ijk}v_k\right)  d \mu_{{F}}. \label{equ:p-=1}
\end{align}
Similarity, denote $d\mu_F=\varphi d\mu_{\tilde{g}}$ in $(\cc,\tilde{g})$,  \eqref{equ:Xia-13-5.10-p=1} implies
\begin{align}
	\label{equ:Xia-13-5.10}
	\tilde{\nabla}_j\mathcal{G}^{ij}\cdot v_i\cdot\varphi=-\mathcal{G}^{ij}\cdot\left(
	v_i\varphi_j+\frac{1}{2}\tilde{Q}_{ij}^k  v_k \varphi
	\right)
	=\frac{1}{2}\mathcal{G}^{ij}\cdot\left(\tilde{Q}_{jk}^kv_i-\tilde{Q}_{ij}^k  v_k
	\right)\cdot  \varphi.\
\end{align}
By applying Lemma \ref{pwf2}, we have $0=\tilde{\tau}_{\alpha n}=\left(
\tilde{s}_{\alpha n}+\tilde{s} \tilde{g}_{\alpha n}-\frac{1}{2}\tilde{Q}_{\alpha nk}\tilde{s}_k
\right)$ for $e_{\alpha}\in T(\partial\mathcal{C}_{\omega_{0}})$. Together with boundary condition, we have
\begin{align}
	\label{equ:pf-boundary-int=0}
	\int_{\partial\mathcal{C}_{\omega_0}}\mathcal{G}^{ij}\cdot \tilde{s}\cdot v_i\cdot \tilde{g}_{jn}\varphi	-\mathcal{G}^{ij}\cdot \tilde{s}_j\cdot v\cdot \tilde{g}_{in}\varphi~ds=0.
\end{align}
Integrating by parts twice leads to
\begin{align*}
	&\int_{\mathcal{C}_{\omega_0}}\tilde{s}\cdot \mathcal{G}^{ij}\cdot v_{ij} ~d\mu_F=\int_{\mathcal{C}_{\omega_0}}\tilde{s}\cdot \mathcal{G}^{ij}\cdot v_{ij}\cdot  \varphi ~d\mu_{\tilde{g}}
	\\
	=&\int_{\mathcal{C}_{\omega_0}}-\tilde{s}_j\cdot \mathcal{G}^{ij}\cdot v_{i}\cdot  \varphi
	-\tilde{s}\cdot \tilde{\nabla}_j\mathcal{G}^{ij}\cdot v_{i}\cdot  \varphi
	-\tilde{s}\cdot \mathcal{G}^{ij}\cdot v_{i}\cdot  \varphi_j
	~d\mu_{\tilde{g}}
	\\
	&+\int_{\partial\mathcal{C}_{\omega_0}}\mathcal{G}^{ij}\cdot \tilde{s}\cdot v_i\cdot \tilde{g}_{jn}\varphi~ds
	\\
	=&\int_{\mathcal{C}_{\omega_0}}
	\tilde{s}_{ij}\cdot \mathcal{G}^{ij}\cdot v\cdot  \varphi
	+\tilde{s}_j\cdot \tilde{\nabla}_i\mathcal{G}^{ij}\cdot v\cdot  \varphi
	+\tilde{s}_j\cdot \mathcal{G}^{ij}\cdot v\cdot  \varphi_i
	\\
	&-\tilde{s}\cdot \tilde{\nabla}_j\mathcal{G}^{ij}\cdot v_{i}\cdot  \varphi
	-\tilde{s}\cdot \mathcal{G}^{ij}\cdot v_{i}\cdot  \varphi_j
	~d\mu_{\tilde{g}}
	\\
	&+\int_{\partial\mathcal{C}_{\omega_0}}\mathcal{G}^{ij}\cdot \tilde{s}\cdot v_i\cdot \tilde{g}_{jn}\varphi
	-\mathcal{G}^{ij}\cdot \tilde{s}_j\cdot v\cdot \tilde{g}_{in}\varphi~ds
	\\
	\overset{\eqref{equ:Xia-13-5.10},\eqref{equ:pf-boundary-int=0}}{=}&\int_{\mathcal{C}_{\omega_0}}
	\tilde{s}_{ij}\cdot \mathcal{G}^{ij}\cdot v
	+\frac{1}{2}\cdot \mathcal{G}^{ij}\cdot\left(\tilde{Q}_{ik}^k\tilde{s}_j-\tilde{Q}_{ij}^k  \tilde{s}_k
	\right)\cdot v
	-\frac{1}{2}\tilde{s}_j\cdot \mathcal{G}^{ij}\cdot v\cdot  \tilde{Q}_{ik}^k
\\
&	-\tilde{s}\cdot \frac{1}{2}\mathcal{G}^{ij}\cdot\left(\tilde{Q}_{jk}^kv_i-\tilde{Q}_{ij}^k  v_k
	\right)
	+\frac{1}{2}\tilde{s}\cdot \mathcal{G}^{ij}\cdot v_{i}\cdot  \tilde{Q}_{jk}^k
	~d\mu_F
	\\
	=&\int_{\mathcal{C}_{\omega_0}}
	\tilde{s}_{ij}\cdot \mathcal{G}^{ij}\cdot v
	-\frac{1}{2}\cdot \mathcal{G}^{ij}\cdot\tilde{Q}_{ij}^k  \tilde{s}_k
	\cdot v
	+\tilde{s}\cdot \frac{1}{2}\mathcal{G}^{ij}\cdot\tilde{Q}_{ij}^k  v_k
	~d\mu_F.\
\end{align*}
Putting it into
\eqref{equ:p-=1} yields
\begin{align*}
	&(p-1) \int_{\mathcal{C}_{\omega_{0}}}
	v\cdot \operatorname{det}\left(\tilde{s}_{i j}+\tilde{s} \tilde{g}_{i j}-\frac{1}{2}\tilde{Q}_{ijk}\tilde{s}_k\right) d \mu_F
	\\
	=&\int_{\mathcal{C}_{\omega_{0}}} v\cdot \mathcal{G}^{i j}\cdot \left(\tilde{s}_{i j}+\tilde{s} \tilde{g}_{i j}-\frac{1}{2}\tilde{Q}_{ijk}\tilde{s}_k\right)  d \mu_{{F}}
	\\
	=&n\int_{\mathcal{C}_{\omega_{0}}}
	v\cdot \operatorname{det}\left(\tilde{s}_{i j}+\tilde{s} \tilde{g}_{i j}-\frac{1}{2}\tilde{Q}_{ijk}\tilde{s}_k\right) d \mu_F.\
\end{align*}
Since $p\neq n+1$, we have
\begin{align}
	\label{equ:pf-p-neq-n+1-exist-1}
	\int_{\mathcal{C}_{\omega_{0}}}
	v\cdot \operatorname{det}\left(\tilde{s}_{i j}+\tilde{s} \tilde{g}_{i j}-\frac{1}{2}\tilde{Q}_{ijk}\tilde{s}_k\right) d \mu_F=0.
\end{align}
As a special form of \cite{Arxiv1}*{Theorem 1.1}, we have
\begin{align*}
	&\left(\int_{\cc} v \operatorname{det}\left(\tilde{s}_{i j}+\tilde{s} \tilde{g}_{i j}-\frac{1}{2}\tilde{Q}_{ijk}\tilde{s}_k\right) d \mu_F\right)^2
	\\
	\geq& \int_{\cc} \tilde{s} \operatorname{det}\left(\tilde{s}_{i j}+\tilde{s} \tilde{g}_{i j}-\frac{1}{2}\tilde{Q}_{ijk}\tilde{s}_k\right) d \mu_F\cdot  \int_{\cc} v \mathcal{G}^{i j}\left(v_{i j}+v \tilde{g}_{i j}-\frac{1}{2}\tilde{Q}_{ijk}v_k\right) d \mu_F.
\end{align*}
Together with \eqref{equ:pf-p-neq-n+1-exist-1}, implies
\begin{align}
		\label{equ:pf-p-neq-n+1-exist-2}
	\int_{\cc} v \mathcal{G}^{i j}\left(v_{i j}+v \tilde{g}_{i j}-\frac{1}{2}\tilde{Q}_{ijk}v_k\right) d \mu_F \leq 0.
\end{align}
Similar to \eqref{equ:pf-p-neq-n+1-exist-1}, by multiplying \eqref{equ:p-=1} with $v$ and integrating over $\mathcal{C}_{\omega_0}$, we obtain
\begin{align}
		\label{equ:pf-p-neq-n+1-exist-3}
	&(p-1) \int_{\cc} \tilde{s}^{-1} v^2 \operatorname{det}\left(\tilde{s}_{i j}+\tilde{s} \tilde{g}_{i j}-\frac{1}{2}\tilde{Q}_{ijk}\tilde{s}_k\right) d \mu_F\nonumber
	\\
	=&\int_{\cc} v \mathcal{G}^{i j}\left(v_{i j}+v \tilde{g}_{i j}-\frac{1}{2}\tilde{Q}_{ijk}v_k\right) d \mu_F.
\end{align}

Due to $p>1$ and combining \eqref{equ:pf-p-neq-n+1-exist-2} and \eqref{equ:pf-p-neq-n+1-exist-3}, we derive
$$
\int_{\cc} \tilde{s}^{-1} v^2 \operatorname{det}\left(\tilde{s}_{i j}+\tilde{s} \tilde{g}_{i j}-\frac{1}{2}\tilde{Q}_{ijk}\tilde{s}_k\right) d \mu_F=0,
$$
which implies $v \equiv 0$.
\end{proof}
%We are now prove Theorem \ref{thm:p>1}.
\begin{proof}[\textbf{Proof of Theorem  \ref{thm:p>1}}]
 First, we show the existence parts of (1) and (3). Let $p>1$ and $p \neq n+1$. Consider Eq. \eqref{rewrite-1}. For $t=0, \tilde{s}\equiv 1 \in \mathcal{H}$ is a solution, i.e., $0 \in \mathcal{I}$, so the set $\mathcal{I}$ is nonempty. The closeness of $\mathcal{I}$ follows from Proposition \ref{Prop:Cka-estamate}, while the openness of $\mathcal{I}$ follows from the implicit function theorem, Lemma \ref{lemma:kerL=0}, and Fredholm's alternative theorem. Then it follows that $1 \in \mathcal{I}$, namely, we obtain a positive solution to Eq. \eqref{rewrite}.

We move on to prove (2), namely $p=n+1$. If $n+1< p< n+2$, the function $\bar{u} :=\frac{\tilde{s}}{\underset{\cc}{\min}\tilde{s}}$ satisfies
\begin{align*}
\det(\bar{u}_{ij}-\frac{1}{2}\tilde{Q}_{ijk}\bar{u}_{k}+\tilde{g}_{ij}\bar{u})&=(\underset{\cc}{\min}\tilde{s})^{p-(n+1)}\tilde{f}\bar{u}^{p-1},\ \ in\  \cc,\\
\tilde{\nabla}_{e_{n}}\bar{u}&= 0, \ \ \ \ \ \ \ \ \ \ \ \ \ \ \ \ \ \ \ \ \ \ \ \ \ \ \ on\  \pa \cc.
\end{align*}
Lemma \ref{C01} implies that $(\underset{\cc}{\min}\tilde{s})^{p-(n+1)}$ has a uniformly positive upper and low bounds. From Lemma \ref{logC1} and Lemma \ref{C2}, we have
$$1\leq \bar{u}\leq C,\ \ \ |\tilde{\nabla}\bar{u}|\leq C,\ \ \ \ \Vert\bar{u}\Vert_{C^{2}(\cc)}\leq C.$$
Therefore, applying the Evans-Krylov theorem and Schauder theory, we get that
\begin{equation}\label{pwf4}
\Vert\bar{u}\Vert_{C^{3,\ga}(\cc)}\leq C_{1},
\end{equation}
where $C_{1}$ independent of $p$.

For a fixed positive constant $\varepsilon \in(0,1)$, consider the approximating equation:
\begin{equation}\label{equ:rewrite=p=n+1}
	\renewcommand{\arraystretch}{1.5}
	\left\{
	\begin{array}{ll}
		\det \left(\tilde{s}_{ij}-\frac{1}{2}\tilde{Q}_{ijk}\tilde{s}_{k}+\tilde{s}\tilde{g}_{ij}\right)&=\tilde{f}\tilde{s}^{n+\varepsilon},  \text{\ in\ } \mathcal{C}_{\omega_0},
		\\
		\ \ \ \ \ \ \ \ \ \ \ \ \ \ \ \ \ \ \ \ \ \ {\tilde{\nabla}}_{e_n} \tilde{s}&=0,  \ \ \ \ \ \ \text{\ on}\ \partial\mathcal{C}_{\omega_0}.\
	\end{array}	
	\right.
\end{equation}
From the preceding discussion, we conclude that there exists a positive function $\tilde{s}_{\varepsilon} \in C^{3, \ga}\left(\mathcal{C}_\theta\right)$ that solves Eq. \eqref{equ:rewrite=p=n+1}. Denote
$$
\bar{u}_{\varepsilon}:=\frac{\tilde{s}_{\varepsilon}}{\underset{\cc}{\min}\tilde{s}_{\varepsilon}},
$$
we know ${\bar{u}_{\varepsilon}}$ satisfies the equation:
\begin{equation}
	\renewcommand{\arraystretch}{1.5}
	\left\{
	\begin{array}{ll}
		\det \left(\tilde{s}_{ij}-\frac{1}{2}\tilde{Q}_{ijk}\tilde{s}_{k}+\tilde{s}\tilde{g}_{ij}\right)&=(\underset{\cc}{\min}\tilde{s}_{\varepsilon})^{\varepsilon}\tilde{f}\tilde{s}^{n+\varepsilon},  \text{\ in\ } \mathcal{C}_{\omega_0},
		\\
		\ \ \ \ \ \ \ \ \ \ \ \ \ \ \ \ \ \ \ \ \ \ {\tilde{\nabla}}_{e_n} \tilde{s}&=0,  \ \ \ \ \ \ \ \ \ \ \ \ \ \ \ \ \ \text{\ on}\ \partial\mathcal{C}_{\omega_0}.\
	\end{array}	
	\right.
\end{equation}
\eqref{pwf4} implies
$$
\left\|\bar{u}_{\varepsilon}\right\|_{C^{3, \ga}(\cc)} \leq C,
$$
where the constant $C$ is independent of $\varepsilon$, and it only depends on $ n, k, \al, \Vert f\Vert_{C^{k+1}(\cc)}$, and $\Vert F\Vert_{C^{k+3}(\cc)}$. Hence, there exists a subsequence $\varepsilon_j \rightarrow 0$, such that $\bar{u}_{\varepsilon_j} \rightarrow \tilde{s}$ in $C^{2, \ga_0}\left(\mathcal{C}_{\omega_0}\right)$ for any $0<\ga_0< \ga$, and Lemma \ref{C01} implies that $\left(\underset{\cc}{\min}\tilde{s}_{\varepsilon_j}\right) \rightarrow \tilde{\eta}$ for some positive constant $\tilde{\eta}$ as $j \rightarrow+\infty$. Thus we conclude that the pair $(\tilde{s}, \tilde{\eta})$ solve

\begin{equation}\label{equ:rewrite=p=n+1-r-s}
	\renewcommand{\arraystretch}{1.5}
	\left\{
	\begin{array}{rll}
\det \left(\tilde{s}_{ij}-\frac{1}{2}\tilde{Q}_{ijk}\tilde{s}_{k}+\tilde{s}\tilde{g}_{ij}\right) & =\tilde{\eta} \tilde{f} \tilde{s}^n, &  \text { in } \mathcal{C}_{\omega_0}, \\
	\tilde{\nabla}_{e_n} \tilde{s} & =0, &  \text { on } \partial \mathcal{C}_{\omega_0} .
\end{array}
\right.
\end{equation}Therefore, we prove the existence part for the cases $p=n+1$.
The uniqueness proof of Theorem \ref{thm:p>1} is divided into three cases:
\begin{itemize}
    \item Case $p = n+1$: We establish the result following the ideas in \cite[Lemma 8]{Guan-Lin-preprint} and \cite[Section 3]{Chou-Wang}.
    \item Cases $1 < p < n+1$ and $p > n+1$: These can be handled analogously to the arguments in \cite[Section 4.3]{Guan-Xia-2018-Lp-Christoffel} and \cite[Section 3.2]{Mei-Wang-Weng-arXiv2504.14392} (see also \cite[Theorem 5.2]{Ding-Li-Tran}). The details are similar and thus omitted.
\end{itemize}

We now verify the uniqueness for the case $p = n+1$.
 %When $f$ is positive and continuous, an analytic proof can be given as follows.
 First, suppose that $ (\eta_{1}, \hat{s}_1 )$ and  $(\eta_{2}, \hat{s}_2)$  are two solutions of \eqref{equ:rewrite=p=n+1-r-s}. Denote $\Sigma_1$ and $\Sigma_2$ are the anisotropic capillary hypersurfaces determined by capillary convex functions $\hat{s}_1$ and $\hat{s}_2$, respectively. %for $p=n+1$ and $f_t=f$, .
 By multiplying $\hat{s}_2$ with a suitable constant, we may assume $\Sigma_2$ is contained inside $\Sigma_1$, with some point touching $\Sigma_1$. At this point, say, $\xi_0$, we have $\hat{s}_1(\xi_0)=\hat{s}_2(\xi_0)$, and
$$
\begin{aligned}
	\eta_1 f\left(\xi_0\right) \hat{s}_1^n\left(\xi_0\right) & =\det(\tau[\hat{s}_1]) \\
	& \geqslant \det(\tau[\hat{s}_2]) \\
	& =\eta_2 f\left(\xi_0\right) \hat{s}_2^n\left(\xi_0\right).
\end{aligned}
$$
Hence $\eta_1 \geqslant \eta_2$. By symmetry we get $\eta_1=\eta_2$.
Next, assume $\hat{s}_1$ and $\hat{s}_2$ are two different solutions of \eqref{equ:rewrite=p=n+1-r-s} with the same $\eta_0$. By multiplying $\hat{s}_2$ with a suitable constant we may assume the set $E=\left\{\xi \in \mathcal{C}_{\omega_0}: \hat{s}_1(\xi)>\right. \left.\hat{s}_2(\xi)\right\}$ is an open set  in the interior of $\mathcal{C}_{\omega_{0}}$. We can always do this when $\hat{s}_2$ is not a constant multiple of $\hat{s}_1$.
Let $\Omega$ be a connected component of $E$.
Then both $\hat{s}_1$ and $\hat{s}_2$ satisfy

\begin{equation*}
	\left\{\begin{array}{lll}
		\det(\tau[\hat{s}_i])=\gamma f \hat{s}_i^n, & \text { in } \Omega, \\ \hat{s}_1=\hat{s}_2, & \text { on } \partial \Omega, \\ \hat{s}_1>\hat{s}_2, & \text { in } \Omega .
	\end{array}
	\right.
\end{equation*}
But this is impossible by the comparison principle in \cite{Gilgarg-Trudinger-PDE-Book}*{Theorem 17.4}.
 %This completes the proof.
\end{proof}
%Inspired by \cite{Chou-Wang} (Existence)

%\cite{} (Unique)

\section*{Reference}
%\begin{bibdiv}

%\end{bibdiv}


\begin{biblist}
\bib{Al1}{article}{
	author={Alexandroff, A. D.},
	title={Theorie der gemisteten Volumina von konvexen K\"{o}rdern, II: Neue Ungleichungen zwischen den gemischten Volumina und ihre Anwendungen},
	journal={Mat. Sbornik N.S.},
	volume={2},
	date={1937},
	pages={1205–1238 (Russian)},
}

\bib{Al2}{article}{
	author={Alexandroff, A. D.},
	title={On the theory of mixed volumes. III. Extension of two theorems of Minkowski on convex polyhedra to arbitrary convex bodies},
	journal={Mat. Sbornik N.S.},
	volume={3},
	date={1938},
	pages={27–46 (Russian; German summary)},
}

\bib{Al3}{article}{
	author={Alexandroff, A. D.},
	title={On the surface area measure of convex bodies},
	journal={Mat. Sbornik N.S.},
	volume={6},
	date={1939},
	pages={167–174 (Russian; German summary)},
}

\bib{Al4}{article}{
	author={Alexandroff, A. D.},
	title={Smoothness of the convex surface of bounded Gaussian curvature},
	journal={C. R. (Doklady) Acad. Sci. URSS (N.S.)},
	volume={36},
	date={1942},
	pages={195--199},
	review={\MR{7626}},
}
\bib{And01}{article}{%MR1871390,
	title={Volume-Preserving Anisotropic Mean Curvature Flow},
	author={Andrews, B.},
	%journal={Indiana University Mathematics Journal},
	JOURNAL = {Indiana Univ. Math. J.},
	FJOURNAL = {Indiana University Mathematics Journal},
	volume={50},
	number={2},
	pages={783-827},
	year={2001},
}%12
\bib{Lucwak2013}{article}{
	author={B\"{o}r\"{o}czky, K. J.},
	author={Lutwak, E.},
	author={Yang, D.},
	author={Zhang, G.},
	title={The logarithmic Minkowski problem},
	journal={J. Amer. Math. Soc.},
	volume={26},
	date={2013},
	number={3},
	pages={831--852},
	issn={0894-0347},
	review={\MR{3037788}},
	doi={10.1090/S0894-0347-2012-00741-3},
}



\bib{Cal}{article}{
	author={Calabi, E.},
	title={Improper affine hyperspheres of convex type and a generalization
		of a theorem by K. J\"{o}rgens},
	journal={Michigan Math. J.},
	volume={5},
	date={1958},
	pages={105--126},
	issn={0026-2285},
	review={\MR{106487}},
}

\bib{C-N-S}{article}{
	author={Caffarelli, L.},
	author={Nirenberg, L.},
	author={Spruck, J.},
	title={The Dirichlet problem for nonlinear second-order elliptic
		equations. I. Monge-Amp\`ere equation},
	journal={Comm. Pure Appl. Math.},
	volume={37},
	date={1984},
	number={3},
	pages={369--402},
	issn={0010-3640},
	review={\MR{739925}},
	doi={10.1002/cpa.3160370306},
}
\bib{Chen-Li-Zhu-2017}{article}{
	author={Chen, S.},
	author={Li, Q.},
	author={Zhu, G.},
	title={On the $L_p$ Monge-Amp\`ere equation},
	journal={J. Differential Equations},
	volume={263},
	date={2017},
	number={8},
	pages={4997--5011},
	issn={0022-0396},
	review={\MR{3680945}},
	doi={10.1016/j.jde.2017.06.007},
}
\bib{Chen-anisotropic-Lp}{article}{
	author={Chen, Z.},
	title={{\it A priori} bounds, existence, and uniqueness of smooth
		solutions to an anisotropic $L_p$ Minkowski problem for log-concave
		measure},
	journal={Adv. Nonlinear Stud.},
	volume={23},
	date={2023},
	number={1},
	pages={Paper No. 20220068, 21},
	issn={1536-1365},
	review={\MR{4599957}},
	doi={10.1515/ans-2022-0068},
}


\bib{Cheng-Yau}{article}{
	author={Cheng, S. Y.},
	author={Yau, S. T.},
	title={On the regularity of the solution of the $n$-dimensional Minkowski
		problem},
	journal={Comm. Pure Appl. Math.},
	volume={29},
	date={1976},
	number={5},
	pages={495--516},
	issn={0010-3640},
	review={\MR{423267}},
	doi={10.1002/cpa.3160290504},
}


\bib{Chou-Wang}{article}{
	author={Chou, K. S.},
	author={Wang, X.},
	title={The $L_p$-Minkowski problem and the Minkowski problem in
		centroaffine geometry},
	journal={Adv. Math.},
	volume={205},
	date={2006},
	number={1},
	pages={33--83},
	issn={0001-8708},
	review={\MR{2254308}},
	doi={10.1016/j.aim.2005.07.004},
}

\bib{Ding-Li-arxiv-2024}{article}{
    AUTHOR = {Ding, S.},
    author={Li, G.},
     TITLE = {A type of anisotropic flows and dual {O}rlicz
              {C}hristoffel-{M}inkowski type equations},
   JOURNAL = {J. Differential Equations},
  FJOURNAL = {Journal of Differential Equations},
    VOLUME = {445},
      YEAR = {2025},
     PAGES = {Paper No. 113586, 34},
      ISSN = {0022-0396,1090-2732},
   MRCLASS = {53E99 (35K55)},
  MRNUMBER = {4925888},
       DOI = {10.1016/j.jde.2025.113586},
       URL = {https://doi.org/10.1016/j.jde.2025.113586},
}


\bib{Ding-Li-Tran}{article}{
	author={Ding, S.},
	author={Li, G.},
	title={A class of inverse curvature flows and $L^p$ dual
		Christoffel-Minkowski problem},
	journal={Trans. Amer. Math. Soc.},
	volume={376},
	date={2023},
	number={1},
	pages={697--752},
	issn={0002-9947},
	review={\MR{4510121}},
	doi={10.1090/tran/8793},
}


\bib{Arxiv1}{article}{
	title={Anisotropic mean curvature type flow and capillary Alexandrov-Fenchel inequalities},
	author={Ding, S.},
	author={Gao, J.},
	author={Li, G.},
	year={2025},
	eprint={arXiv:2408.10740v2 },
	
}
\bib{F-J}{article}{
	author={Fenchel, W.},
	author={Jessen, B.},
	title={Mengenfunktionen und konvexe K\"{o}rper},
	language={German},
	journal={Danske Vid. Selskab. Mat.-fys. Medd.},
	date={1938},
	number={16},
	pages={1–31},
}

%\bib{F-R}{book}{
	%AUTHOR = {Finn, R.},
	%TITLE = {Equilibrium capillary surfaces},
	%SERIES = {Grundlehren der mathematischen Wissenschaften [Fundamental
		%Principles of Mathematical Sciences]},
	%VOLUME = { 284 },
	%PUBLISHER = {Springer-Verlag, New York},
	%YEAR = {1986},
	%PAGES = {xvi+245},
	%ISBN = {0-387-96174-7},
	%MRCLASS = {49-02 (49F10 53-02 53A10 58E12)},
	%MRNUMBER = {816345},
	%MRREVIEWER = {Helmut\ Kaul},
	%DOI = {10.1007/978-1-4613-8584-4},
	%URL = {https://doi.org/10.1007/978-1-4613-8584-4},
%}

\bib{Arxiv}{article}{
	title={Generalized Minkowski formulas and rigidity results for anisotropic capillary hypersurfaces},
	author={Gao, J.},
	author={Li, G.},
	year={2024},
	eprint={2401.12137},
	%month=feb
}
\bib{arxiv2}{article}{
	title={Alexandrov-Fenchel inequalities for convex anisotropic capillary hypersurfaces in the half-space},
	author={Gao, J.}
	author= {Li, G.},
	year={2025},
	eprint={2507.04796},
	archivePrefix={arXiv},
	primaryClass={math.DG},
	url={https://arxiv.org/abs/2507.04796},
}
\bib{Ghomi}{article}{
	author={Ghomi, M.},
	title={Gauss map, topology, and convexity of hypersurfaces with
		nonvanishing curvature},
	journal={Topology},
	volume={41},
	date={2002},
	number={1},
	pages={107--117},
	issn={0040-9383},
	review={\MR{1871243}},
	doi={10.1016/S0040-9383(00)00028-8},
}
\bib{Gilgarg-Trudinger-PDE-Book}{book}{
	author={Gilbarg, D.},
	author={Trudinger, N.},
	title={Elliptic partial differential equations of second order},
	series={Classics in Mathematics},
	note={Reprint of the 1998 edition},
	publisher={Springer-Verlag, Berlin},
	date={2001},
	pages={xiv+517},
	isbn={3-540-41160-7},
	review={\MR{1814364}},
}









\bib{Guan-Lin-preprint}{article}{
	author={Guan, P.},
	author={Lin, C.},
	title={On equation  $\det(u_{ij}+\delta_{ij}u)=u^pf$ on $\mathbb{S}^{n}$},
	pages={preprint},
}

\bib{Guan-Xia-2018-Lp-Christoffel}{article}{
	author={Guan, P.},
	author={Xia, C.},
	title={$L^p$ Christoffel-Minkowski problem: the case $1<p<k+1$},
	journal={Calc. Var. Partial Differential Equations},
	volume={57},
	date={2018},
	number={2},
	pages={Paper No. 69, 23},
	issn={0944-2669},
	review={\MR{3776359}},
	doi={10.1007/s00526-018-1341-y},
}
\bib{Guo-Xia}{article}{
	author={Guo, J.},
	author={Xia, C.},
	title={Stable anisotropic capillary hypersurfaces in a half-space},
	year={2024},
	eprint={2301.03020},
	archivePrefix={arXiv},
	primaryClass={math.DG}
}
\bib{Lutwak2010}{article}{
	author={Haberl, C.},
	author={Lutwak, E.},
	author={Yang, D.},
	author={Zhang, G.},
	title={The even Orlicz Minkowski problem},
	journal={Adv. Math.},
	volume={224},
	date={2010},
	number={6},
	pages={2485--2510},
	issn={0001-8708},
	review={\MR{2652213}},
	doi={10.1016/j.aim.2010.02.006},
}

\bib{H-H-I}{article}{
	title={Capillary $L_p$ Minkowski Flows},
	author={Hu, J.},
	author={Hu, Y.},
	author={Ivaki, M.},
	%author={Jinrong Hu and Yingxiang Hu and Mohammad N. Ivaki},
	year={2025},
	eprint={2509.06110},
	archivePrefix={arXiv},
	primaryClass={math.AP},
	url={https://arxiv.org/abs/2509.06110},
}
\bib{H-I-S}{article}{
	title={Capillary Christoffel-Minkowski problem},
	author={Hu, Y.},
	author={Ivaki, M.},
	author={Scheuer, J.},
	year={2025},
	eprint={2504.09320},
	archivePrefix={arXiv},
	primaryClass={math.DG},
	url={https://arxiv.org/abs/2504.09320},
}







\bib{Lutwak2005}{article}{
	author={Hug, D.},
	author={Lutwak, E.},
	author={Yang, D.},
	author={Zhang, G.},
	title={On the $L_p$ Minkowski problem for polytopes},
	journal={Discrete Comput. Geom.},
	volume={33},
	date={2005},
	number={4},
	pages={699--715},
	issn={0179-5376},
	review={\MR{2132298}},
	doi={10.1007/s00454-004-1149-8},
}
\bib{Jia-Wang-Xia-Zhang2023}{article}{
	AUTHOR = {Jia, X.},
	AUTHOR = {Wang, G.},
	AUTHOR = {Xia, C.},
	AUTHOR = {Zhang, X.},
	title={Alexandrov's theorem for anisotropic capillary hypersurfaces in the half-space},
	journal={Arch. Ration. Mech. Anal. },
	VOLUME = {247},
	YEAR = {2023},
	NUMBER = {2},
	PAGES = {19-25},
}

\bib{Le1}{article}{
	author={Lewy, H.},
	title={On the existence of a closed convex surface realizing a given Riemannian metric},
	journal={Proc. Nat. Acad. Sci. USA},
	volume={24},
	date={1938},
	number={2},
	pages={104–106},
}


\bib{Le2}{article}{
	author={Lewy, H.},
	title={On differential geometry in the large. I. Minkowski's problem},
	journal={Trans. Amer. Math. Soc.},
	volume={43},
	date={1938},
	number={2},
	pages={258--270},
	issn={0002-9947},
	review={\MR{1501942}},
	doi={10.2307/1990042},
}
\bib{Li-Xu}{article}{%i2022hyperbolicpsumhorosphericalpbrunnminkowski,
	title={Hyperbolic $p$-sum and Horospherical $p$-Brunn-Minkowski theory in hyperbolic space},
	author={Li, H.},
	author={Xu, B.},
	year={2022},
	eprint={2211.06875},
	archivePrefix={arXiv},
	primaryClass={math.MG},
	url={https://arxiv.org/abs/2211.06875},
}
\bib{Lions-Trudinger-1986}{article}{
	author={Lions, P.-L.},
	author={Trudinger, N. S.},
	author={Urbas, J. I. E.},
	title={The Neumann problem for equations of Monge-Amp\`ere type},
	journal={Comm. Pure Appl. Math.},
	volume={39},
	date={1986},
	number={4},
	pages={539--563},
	issn={0010-3640},
	review={\MR{840340}},
	doi={10.1002/cpa.3160390405},
}

\bib{M-S}{article}{
	AUTHOR = {Lieberman, G.},
	AUTHOR = {Trudinger, N.},
	TITLE = {Nonlinear oblique boundary value problems for nonlinear
		elliptic equations},
	JOURNAL = {Trans. Amer. Math. Soc.},
	FJOURNAL = {Transactions of the American Mathematical Society},
	VOLUME = {295},
	YEAR = {1986},
	NUMBER = {2},
	PAGES = {509--546},
	ISSN = {0002-9947,1088-6850},
	MRCLASS = {35J65},
	MRNUMBER = {833695},
	MRREVIEWER = {Pierre-Louis\ Lions},
	DOI = {10.2307/2000050},
	URL = {https://doi.org/10.2307/2000050},
}
\bib{Lutwak1993}{article}{
	author={Lutwak, E.},
	title={The Brunn-Minkowski-Firey theory. I. Mixed volumes and the
		Minkowski problem},
	journal={J. Differential Geom.},
	volume={38},
	date={1993},
	number={1},
	pages={131--150},
	issn={0022-040X},
	review={\MR{1231704}},
}
\bib{Lutwak1995}{article}{
	author={Lutwak, E.},
	author={Oliker, V.},
	title={On the regularity of solutions to a generalization of the
		Minkowski problem},
	journal={J. Differential Geom.},
	volume={41},
	date={1995},
	number={1},
	pages={227--246},
	issn={0022-040X},
	review={\MR{1316557}},
}

\bib{Lutwark2012}{article}{
	author={Lutwak, E.},
	author={Yang, D.},
	author={Zhang, G.},
	title={The Brunn-Minkowski-Firey inequality for nonconvex sets},
	journal={Adv. in Appl. Math.},
	volume={48},
	date={2012},
	number={2},
	pages={407–413},
}




\bib{Ma-Qiu}{article}{
	author={Ma, X.},
	author={Qiu, G.},
	title={The Neumann problem for Hessian equations},
	journal={Comm. Math. Phys.},
	volume={366},
	date={2019},
	number={1},
	pages={1--28},
	issn={0010-3616},
	review={\MR{3919441}},
	doi={10.1007/s00220-019-03339-1},
}
%\bib{Ma-Xu}{article}{
%	author={Ma, X.},
%	author={Xu, J.},
%	title={Gradient estimates of mean curvature equations with Neumann
%		boundary value problems},
%	journal={Adv. Math.},
%	volume={290},
%	date={2016},
%	pages={1010--1039},
%	issn={0001-8708},
%	review={\MR{3451945}},
%	doi={10.1016/j.aim.2015.10.031},
%}

%\bib{Mei-Wang-Weng-Lp_Minkowski}{article}
%{AUTHOR = {Mei, Xinqun and Wang, Guofang and Weng, Liangjun and Xia, 		Chao}}

\bib{Xia-arxiv}{article}{
	AUTHOR = {Mei, X.},
       AUTHOR = {Wang, G.},
	AUTHOR = {Weng, L.},
	AUTHOR = {Xia, C.},
	TITLE = {Alexandrov-{F}enchel inequalities for convex hypersurfaces in
		the half-space with capillary boundary {II}},
	JOURNAL = {Math. Z.},
	FJOURNAL = {Mathematische Zeitschrift},
	VOLUME = {310},
	YEAR = {2025},
	NUMBER = {4},
	PAGES = {Paper No. 71, 17},
	ISSN = {0025-5874,1432-1823},
	MRCLASS = {52A39 (53C24 58J50)},
	MRNUMBER = {4911815},
	DOI = {10.1007/s00209-025-03781-z},
	URL = {https://doi.org/10.1007/s00209-025-03781-z},
}

\bib{Mei-Wang-Weng-arXiv2504.14392}{article}{
	title={Convex capillary hypersurfaces of prescribed curvature problem},
	author={Mei, X.},
	author={Wang, G.},
	year={2025},
	eprint={2504.14392},
	archivePrefix={arXiv},
	primaryClass={math.DG},
	url={https://arxiv.org/abs/2504.14392},
}

\bib{M-W-W}{article}{
	AUTHOR = {Mei, X.},
	AUTHOR = {Wang, G.},
	AUTHOR = {Weng, L.},
	TITLE = {The capillary {M}inkowski problem},
	JOURNAL = {Adv. Math.},
	FJOURNAL = {Advances in Mathematics},
	VOLUME = {469},
	YEAR = {2025},
	PAGES = {Paper No. 110230, 29},
	ISSN = {0001-8708,1090-2082},
	MRCLASS = {53C65 (35J60 35J66 35J96 53C42 53C45)},
	MRNUMBER = {4884106},
	MRREVIEWER = {Changyu\ Ren},
	DOI = {10.1016/j.aim.2025.110230},
	URL = {https://doi.org/10.1016/j.aim.2025.110230},
}

\bib{M-W-W1}{article}{
	title={The capillary $L_p$-Minkowski problem},
	author={Mei, X.},
	author={Wang, G.},
	author={Weng, L.},
	year={2025},
	eprint={2505.07746},
	archivePrefix={arXiv},
	primaryClass={math.DG},
	url={https://arxiv.org/abs/2505.07746},
}

\bib{Mi1}{article}{
	author={Minkowski, H.},
	title={Allgemeine Lehrs\"{a}tze \"{u}ber die konvexen Polyeder},
	language={German},
	journal={Nachr. Ges. Wiss. Göttingen},
	date={1897},
	number={4},
	pages={198–219},
	issn={0025-5831},
}


\bib{Mi2}{article}{
	author={Minkowski, H.},
	title={Volumen und Oberfl\"{a}che},
	language={German},
	journal={Math. Ann.},
	volume={57},
	date={1903},
	number={4},
	pages={447--495},
	issn={0025-5831},
	review={\MR{1511220}},
	doi={10.1007/BF01445180},
}

\bib{Ni}{article}{
	author={Nirenberg, L.},
	title={The Weyl and Minkowski problems in differential geometry in the
		large},
	journal={Comm. Pure Appl. Math.},
	volume={6},
	date={1953},
	pages={337--394},
	issn={0010-3640},
	review={\MR{58265}},
	doi={10.1002/cpa.3160060303},
}
\bib{P-M}{article}{
    AUTHOR = {De Philippis, G.},
    author= {Maggi, F.},
     TITLE = {Regularity of free boundaries in anisotropic capillarity
              problems and the validity of {Y}oung's law},
   JOURNAL = {Arch. Ration. Mech. Anal.},
  FJOURNAL = {Archive for Rational Mechanics and Analysis},
    VOLUME = {216},
      YEAR = {2015},
    NUMBER = {2},
     PAGES = {473--568},
      ISSN = {0003-9527,1432-0673},
   MRCLASS = {35R30 (28A75 35B65)},
  MRNUMBER = {3317808},
MRREVIEWER = {Antoine\ Henrot},
       DOI = {10.1007/s00205-014-0813-2},
       URL = {https://doi.org/10.1007/s00205-014-0813-2},
}

\bib{Po2}{article}{
	author={Pogorelov, A. V.},
	title={A regular solution of the $n$-dimensional Minkowski problem},
	language={Russian},
	journal={Dokl. Akad. Nauk SSSR},
	volume={199},
	pages={785--788},
	issn={0002-3264},
	translation={
		journal={Soviet Math. Dokl.},
		volume={12},
		date={1971},
		pages={1192--1196},
		issn={0197-6788},
	},
	review={\MR{284956}},
}
\bib{Po1}{article}{
	author={Pogorelov, A. V.},
	title={Regularity of a convex surface with given Gaussian curvature},
	language={Russian},
	journal={Mat. Sbornik N.S.},
	volume={31(73)},
	date={1952},
	pages={88--103},
	review={\MR{52807}},
}	





\bib{book-convex-body}{book}{
	author={Schneider, R.},
	title={Convex bodies: the Brunn-Minkowski theory},
	series={Encyclopedia of Mathematics and its Applications},
	volume={151},
	edition={Second expanded edition},
	publisher={Cambridge University Press, Cambridge},
	date={2014},
	pages={xxii+736},
	isbn={978-1-107-60101-7},
	review={\MR{3155183}},
}
%\bib{Shenfeld19}{article}{
%	author={Shenfeld, Y.},
%	author={van Handel, R.},
%	title={Mixed volumes and the Bochner method},
%	journal={Proc. Amer. Math. Soc.},
%	volume={147},
%	date={2019},
%	number={12},
%	pages={5385--5402},
%	issn={0002-9939},
%	review={\MR{4021097}},
%	doi={10.1090/proc/14651},
%}
%\bib{Shenfeld22}{article}{
%	author={Shenfeld, Y.},
%	author={van Handel, R.},
%	title={The extremals of Minkowski's quadratic inequality},
%	journal={Duke Math. J.},
%	volume={171},
%	date={2022},
%	number={4},
%	pages={957--1027},
%	issn={0012-7094},
%	review={\MR{4393790}},
%	doi={10.1215/00127094-2021-0033},
%}


\bib{Wang-Weng-Xia}{article}{
	TITLE = {Alexandrov-{F}enchel inequalities for convex hypersurfaces in
		the half-space with capillary boundary},
	JOURNAL = {Math. Ann.},
	FJOURNAL = {Mathematische Annalen},
	VOLUME = {388},
	YEAR = {2024},
	NUMBER = {2},
	PAGES = {2121--2154},
	ISSN = {0025-5831},
	MRCLASS = {53E40 (35K96 53C21 53C24)},
	MRNUMBER = {4700391},
	DOI = {10.1007/s00208-023-02571-4},
	URL = {https://doi.org/10.1007/s00208-023-02571-4},
	author={Wang, G.},
	author={Weng, L.},
	author={Xia, C.},
	%year={2023},
	%pages={	arXiv:2206.04639}
	%https://doi.org/10.1007/s00208-023-02571-4		
}

\bib{W-Z}{article}{
	title={The capillary Orlicz-Minkowski problem},
	author={Wang, X.},
	author={Zhu, B.},
	year={2025},
	eprint={2509.10859},
	archivePrefix={arXiv},
	primaryClass={math.DG},
	url={https://arxiv.org/abs/2509.10859},
}

\bib{W-X}{article}{
	AUTHOR = {Wei, Y.},
	AUTHOR = {Xiong, C.},
	TITLE = {A fully nonlinear locally constrained anisotropic curvature
		flow},
	JOURNAL = {Nonlinear Anal.},
	FJOURNAL = {Nonlinear Analysis. Theory, Methods \& Applications. An
		International Multidisciplinary Journal},
	VOLUME = {217},
	YEAR = {2022},
	PAGES = {Paper No. 112760, 29},
	ISSN = {0362-546X,1873-5215},
	MRCLASS = {53E10 (53C21)},
	MRNUMBER = {4361845},
	MRREVIEWER = {Xiaolong\ Li},
	DOI = {10.1016/j.na.2021.112760},
	URL = {https://doi.org/10.1016/j.na.2021.112760},
}

\bib{Xia13}{article}{
	author={Xia, C.},
	%author={},
	title={On an anisotropic Minkowski problem},
	journal={Indiana Univ. Math. J.},
	volume={62},
	number={5},
	pages={1399–1430},
	year={2013},	
}%13

\bib{Xia-phd}{book}{%MR3188548,
	AUTHOR = {Xia, C.},
	TITLE = {On a Class of Anisotropic Problem},
	SERIES = {PhD Thesis},
	PUBLISHER = {Albert-Ludwigs University Freiburg},
	YEAR = {2012},
}





%\bib{kkk-2023}{article}{%MR4582024,
	%	AUTHOR = {Kwong, Kwok-Kun},
	%	TITLE = {Weighted isoperimetric inequalities in warped product
		%		manifolds},
	%	JOURNAL = {Differential Geom. Appl.},
	%	FJOURNAL = {Differential Geometry and its Applications},
	%	VOLUME = {89},
	%	YEAR = {2023},
	%	PAGES = {Paper No. 102011, 28},
	%	ISSN = {0926-2245},
	%	MRCLASS = {53C23 (49Q20 53C15)},
	%	MRNUMBER = {4582024},
	%	DOI = {10.1016/j.difgeo.2023.102011},
	%	URL = {https://doi.org/10.1016/j.difgeo.2023.102011},
	%}


\bib{Xia-2017-convex}{article}{%MR3667425,
	AUTHOR = {Xia, C.},
	TITLE = {Inverse anisotropic curvature flow from convex hypersurfaces},
	JOURNAL = {J. Geom. Anal.},
	FJOURNAL = {Journal of Geometric Analysis},
	VOLUME = {27},
	YEAR = {2017},
	NUMBER = {3},
	PAGES = {2131--2154},
	ISSN = {1050-6926},
	MRCLASS = {53C44 (52A20)},
	MRNUMBER = {3667425},
	MRREVIEWER = {Alina Stancu},
	DOI = {10.1007/s12220-016-9755-2},
	URL = {https://doi.org/10.1007/s12220-016-9755-2},
}

\bib{Xia2017}{article}{%MR3667582
	title={Inverse anisotropic mean curvature flow and a Minkowski type inequality},
	author={Xia, C.},
	%journal={Advances in Mathematics},
	JOURNAL = {Adv. Math.},
	FJOURNAL = {Advances in Mathematics},
	volume={315},
	pages={102-129},
	year={2017},
}




\bib{Y-S-T}{article}{
	AUTHOR = {Yau, S. T.},
	TITLE = {On the {R}icci curvature of a compact {K}\"ahler manifold and
		the complex {M}onge-{A}mp\`ere equation. {I}},
	JOURNAL = {Comm. Pure Appl. Math.},
	FJOURNAL = {Communications on Pure and Applied Mathematics},
	VOLUME = {31},
	YEAR = {1978},
	NUMBER = {3},
	PAGES = {339--411},
	ISSN = {0010-3640,1097-0312},
	MRCLASS = {53C55 (32C10 35J60)},
	MRNUMBER = {480350},
	MRREVIEWER = {Robert\ E.\ Greene},
	DOI = {10.1002/cpa.3160310304},
	URL = {https://doi.org/10.1002/cpa.3160310304},
}



\bib{Zhu}{article}{
	author={Zhu, G.},
	title={The $L_p$ Minkowski problem for polytopes for $0<p<1$},
	journal={J. Funct. Anal.},
	volume={269},
	date={2015},
	number={4},
	pages={1070--1094},
	issn={0022-1236},
	review={\MR{3352764}},
	doi={10.1016/j.jfa.2015.05.007},
}

\bib{Zhu-2017}{article}{
	author={Zhu, G.},
	title={The $L_p$ Minkowski problem for polytopes for $p<0$},
	journal={Indiana Univ. Math. J.},
	volume={66},
	date={2017},
	number={4},
	pages={1333--1350},
	issn={0022-2518},
	review={\MR{3689334}},
	doi={10.1512/iumj.2017.66.6110},
}	










%%%%%%%%%%%%%%%%%%%%%%%%%%%%%%%%%%%%%%%%%%%%%%%%%%%%%%%%%%%%%%%%%%%%%%%%%%%%%%%%%%%%%%%%%%%%%%%%%%%%%%%%%%%%%%%%%%%%%%%%%%%%%%%%%%%%%%%%%%%%%%%%%%%%%%%%%%%%%%%%%%%%%%%%%%%%	




















%%%%%%%%%%%%%%%%%%%%%%%%%%%%%%%%%%%%%%%%%%%%%%%%%%%%%%%%%%%%%%%%%%%%%%%%%%%%%%%%%%%%%%%%%%%%%%%%%%%%%%%%%%%%%%%%%%%%%%%%%%%%%%%%%%%%%%%%%%%%%%%%%%%%%%%%%%%%%%%%%%%%%%




\end{biblist}
\end{document}